\newenvironment{proof}{\paragraph{\it Proof.}}{\hfill$\square$}
\theoremstyle{plain}
\newtheorem{theorem}{Theorem}[section]
\newtheorem{proposition}[theorem]{Proposition}
\newtheorem{lemma}[theorem]{Lemma}
\newtheorem{corollary}[theorem]{Corollary}
\newtheorem{definition}{Definition}
\newtheorem{example}{Example}
\newtheorem{assumption}{Assumption}
\newtheorem{remark}[theorem]{Remark}
\newcommand{\poly}{\mathrm{poly}}
\newcommand{\mS}{\mathcal{S}}	
\newcommand{\mY}{\mathcal{Y}}	
\newcommand{\mX}{\mathcal{X}}	
\newcommand{\mI}{\mathcal{I}}	
\newcommand{\mL}{\mathcal{L}}	
\newcommand{\indic}[1]{\mathds{1}\left\{ #1 \right\}} 
\newcommand{\vdot}[2]{\langle #1, #2 \rangle}
\newcommand{\Proj}[2]{\Pi_{ #1 } \left\{ #2 \right\}}
\newcommand{\Exs}{\mathbb{E}}
\newcommand{\dist}{\normalfont\mathrm{\bf dist}}
\newcommand{\prox}{\normalfont\textbf{prox}}
\newcommand{\Ker}{\normalfont\textbf{Ker}}
\newcommand{\Image}{\normalfont\textbf{Im}}
\newcommand{\tssum}{\textstyle \sum}
\newcommand{\grad}{\nabla}
\newif\ifdraft
\newif\ifarxiv
\newcommand{\Psaddle}{\textbf{P}_{\text{saddle}}}
\newcommand{\Pcon}{\textbf{P}_{\text{con}}}
\newcommand{\Ppen}{\textbf{P}_{\text{pen}}}
\title{\bf{\LARGE{On Penalty Methods for Nonconvex Bilevel Optimization and First-Order Stochastic Approximation}}}
\author[1]{Jeongyeol Kwon}
\author[2]{Dohyun Kwon}
\author[1]{Stephen Wright}
\author[1]{Robert Nowak}
\affil[1]{Wisconsin Institute for Discovery, University of Wisconsin-Madison}
\affil[2]{University of Seoul}
\begin{document}
\maketitle

\begin{abstract}
    In this work, we study first-order algorithms for solving Bilevel Optimization (BO) where the objective functions are smooth but possibly nonconvex in both levels and the variables are restricted to closed convex sets. 
    As a first step, we study the landscape of BO through the lens of penalty methods, in which the upper- and lower-level objectives are combined in a weighted sum with penalty parameter $\sigma > 0$.
    In particular, we establish a strong connection between the penalty function and the hyper-objective by explicitly characterizing the conditions under which the values and derivatives of the two must be $O(\sigma)$-close. A by-product of our analysis is the explicit formula for the gradient of hyper-objective when the lower-level problem has multiple solutions under mild regularity conditions, which could be of independent interest.
    Next, viewing the penalty formulation as $O(\sigma)$-approximation of the original BO, we propose first-order algorithms that find an $\epsilon$-stationary solution by optimizing the penalty formulation with $\sigma = O(\epsilon)$. When the perturbed lower-level problem uniformly satisfies the {\it small-error} proximal error-bound (EB) condition, we propose a first-order algorithm that converges to an $\epsilon$-stationary point of the penalty function, using in total $O(\epsilon^{-3})$ and $O(\epsilon^{-7})$ accesses to first-order (stochastic) gradient oracles when the oracle is deterministic and oracles are noisy, respectively. 
    Under an additional assumption on stochastic oracles, we show that the algorithm can be implemented in a fully {\it single-loop} manner, {\it i.e.,} with $O(1)$ samples per iteration, and achieves the improved oracle-complexity of $O(\epsilon^{-3})$ and $O(\epsilon^{-5})$, respectively. 
\end{abstract}

\section{Introduction}
Bilevel Optimization (BO) \cite{dempe2003annotated, colson2007overview, dempe2015bilevel} is a versatile framework for optimization problems in many applications arising in economics, transportation, operations research, and machine learning, among others \cite{sinha2017review}. 
In this work, we consider the following formulation of BO:
\begin{equation}
\label{problem:bilevel} \tag{\textbf{P}}
\begin{aligned}
    \min_{x \in \mX, y^*\in\mY} \quad f(x,y^*) & := \Exs_{\zeta} [f(x,y^*;\zeta)] \nonumber \\
    \text{s.t.} \quad y^* \in \arg \min_{y \in \mY} \ g(x,y) & := \Exs_{\xi} [g(x,y; \xi)], 
\end{aligned}
\end{equation}
where $f$ and $g$ are continuously differentiable and smooth functions, $\mX \subseteq \mathbb{R}^{d_x}$ and $\mY \subseteq \mathbb{R}^{d_y}$ are closed convex sets, and $\zeta$ and $\xi$ are random variables ({\it e.g.,} indexes of batched samples in empirical risk minimization). 
That is, \eqref{problem:bilevel} minimizes $f$ over $x\in\mX$ and $y\in\mY$ (the upper-level problem) when $y$ must be one of the minimizers of $g(x,\cdot)$ over $y\in\mY$ (the lower-level problem).

Scalable optimization methods for solving \eqref{problem:bilevel} are in high demand to handle increasingly large-scale applications in machine-learning, including meta-learning \cite{rajeswaran2019meta}, hyper-parameter optimization \cite{franceschi2018bilevel, bao2021stability}, model selection \cite{kunapuli2008bilevel, giovannelli2021bilevel}, adversarial networks \cite{goodfellow2020generative, gidel2018variational}, game theory \cite{stackelberg1952theory}, and reinforcement learning \cite{konda1999actor, sutton2018reinforcement}. 
There is particular interest in developing (stochastic) gradient-descent-based methods due to their simplicity and scalability to large-scale problems \cite{ghadimi2018approximation, chen2021closing, hong2023two, khanduri2021near, chen2022single, dagreou2022framework, guo2021randomized, sow2022constrained, ji2021bilevel, yang2021provably}. 
One popular approach is to perform a direct gradient-descent on the hyper-objective $\psi(x)$ defined as follows:
\begin{align}
\label{eq:psi}
    \psi (x) := \min_{y\in \mS(x)} \, f(x,y), \quad \text{ where } \mS(x) = \arg\min_{y \in \mY} g(x,y),
\end{align}
but this requires the estimation of $\grad \psi(x)$, which we refer to as an {\em implicit} gradient. 
Existing works obtain this gradient under the assumptions that $g(x,y)$ is strongly convex in $y$ and the lower-level problem is unconstrained, {\it i.e.,} $\mY = \mathbb{R}^{d_y}$. 
There is no straightforward extension of these approaches to nonconvex and/or constrained lower-level problems ({\it i.e.,} $\mY \subset \mathbb{R}^{d_y}$ is defined by some constraints)  due to the difficulty in estimating implicit gradients; see Section \ref{subsec:prior_art} for more discussion in detail. 

The goal of this paper is to extend our knowledge of solving BO with unconstrained strongly-convex lower-level problems to a broader class of BO with possibly constrained and nonconvex lower-level problems. In general, however, when there is not enough curvature around the lower-level solution, the problem can be highly ill-conditioned and no known algorithms can  handle it even for the simpler case of min-max optimization \cite{chen2023bilevel, jin2020local} (see also Example \ref{example:bilinear_LL}). In such cases, it could be fundamentally hard even to identify {\em tractable} algorithms \cite{daskalakis2021complexity}. 

To circumvent the fundamental hardness, there have been many recent works on BO with nonconvex lower-level problems (nonconvex inner-level maximization in min-max optimization literature) under the uniform Polyak-Łojasiewicz (PL)-condition \cite{karimi2016linear, yang2020global, yang2022faster, xiao2023generalized, huang2023momentum, li2022nonsmooth}. While this assumption does not cover all interesting cases of BO, it can cover situations in which the lower-level problem is nonconvex and where it does not have a unique solution. 
It can thus be viewed as a significant generalization of the strong convexity condition.
Furthermore, several recent results show that the uniform PL condition can be satisfied by practical and complicated functions such as over-parameterized neural networks \cite{frei2021proxy, song2021subquadratic, liu2022loss}. 

Nevertheless, to our best knowledge, no algorithm is known to reach the stationary point of $\psi(x)$ ({\it i.e.,} to find $x$ where $\grad \psi(x) = 0$) under PL conditions alone. In fact, even the landscape of $\psi(x)$ has not been studied precisely when the lower-level problem can have multiple solutions and constraints. We take a step forward in this direction under the proximal error-bound (EB) condition that is analogous to PL but more suited for constrained problems\footnote{In the unconstrained setting, EB and PL are known to be equivalent conditions, see {\it e.g.,}~\cite{karimi2016linear}}.

\subsection{Overview of Main Results}
Since it is difficult to work directly with implicit gradients when the lower-level problem is nonconvex, we consider a common alternative that converts the BO \eqref{problem:bilevel} into an equivalent constrained single-level problem, namely,
\begin{align}
    &\min_{x \in \mX, y \in \mY} \, f(x,y) \qquad \ \text{s.t. } \  g(x,y) \le \min_{z \in \mY} g(x,z), \tag{$\Pcon$} \label{problem:bilevel_single_level}
\end{align}
and finds a stationary solution of this formulation, also known as an (approximate)-KKT solution \cite{lu2023first, liu2023averaged, ye2022bome}. 
\eqref{problem:bilevel_single_level} suggests the penalty formulation
\begin{align}
    \min_{x \in \mX, y \in \mY} \, \sigma f(x,y) + (g(x,y) - \min_{z \in \mY} g(x,z)), \tag{$\Ppen$} \label{problem:penalty_formulation}
\end{align}
with some sufficiently small $\sigma > 0$.
Our fundamental goal in this paper is to describe algorithms for finding approximate stationary solutions of \eqref{problem:penalty_formulation}, as explored in several previous works \cite{white1993penalty, ye1997exact, shen2023penalty, kwon2023fully}. 

In pursuing our goal, there are two important questions to be addressed. The first one is a landscape question: since \eqref{problem:penalty_formulation} is merely an approximation of \eqref{problem:bilevel}, it is important to understand the relationship between their respective landscapes, which have remained elusive in the literature \cite{chen2023bilevel}. 
The second one is an algorithmic question: solving \eqref{problem:penalty_formulation} still requires care since it involves a nested optimization structure (solving an inner minimization problem over $z$), and typically with a very small value of the penalty parameter $\sigma > 0$.

\paragraph{Landscape Analysis.} Our first goal is to bridge the gap between landscapes of the two problems \eqref{problem:bilevel} and \eqref{problem:penalty_formulation}. 
By scaling \eqref{problem:penalty_formulation}, we define the penalized hyper-objective $\psi_{\sigma}(x)$:
\begin{align}
\label{eq:psis}
    \psi_{\sigma}(x) := \frac{1}{\sigma} \left(\min_{y \in \mY} \left( \sigma f(x,y) + g(x,y) \right) - \min_{z \in \mY} g(x,z) \right).
\end{align}
As mentioned earlier, without any assumptions on the lower-level problem, it is not possible to make any meaningful connections between $\psi_{\sigma}(x)$ and $\psi(x)$, since the original landscape $\psi(x)$ itself may not be well-defined. 
Proximal operators are key to defining assumptions that guarantee nice behavior of the lower-level problem.
\begin{definition} \label{def:prox}
    The proximal operator with parameter $\rho$ and function $f(\theta)$ over  $\Theta$ is defined as
    \begin{align}
        \prox_{\rho f} (\theta) := \arg \min_{z \in \Theta} \, \big\{ \rho f(z) + \tfrac{1}{2} \| z - \theta\|^2 \big\}.
    \end{align} 
\end{definition}
We now state the proximal-EB assumption, which is crucial to our approach in this paper.
\begin{assumption}[Proximal-EB]
    \label{assumption:small_gradient_proximal_error_bound}
    Let $h_{\sigma}(x,\cdot) := \sigma f(x,\cdot) + g(x,\cdot)$ and $T(x,\sigma) := \arg\min_{y\in\mY} h_{\sigma}(x,y)$. We assume that for all $x \in \mX$ and $\sigma \in [0,\sigma_0]$, $h_{\sigma}(x,\cdot)$ satisfies the $(\mu,\delta)$-proximal error bound:
    \begin{align}
        \rho^{-1} \cdot \left\|y - \prox_{\rho h_{\sigma}(x,\cdot)} (y) \right\| \ge \mu \cdot \dist (y, T(x,\sigma)),
    \end{align}
    for all $y \in \mY$ that satisfies $\rho^{-1} \cdot \|y - \prox_{\rho h_{\sigma}(x,\cdot)} (y) \| \le \delta$ with some positive parameters $\mu, \delta, \sigma_0 > 0$.
\end{assumption}
As we discuss in detail in Section \ref{section:functional_analysis}, the crux of Assumption \ref{assumption:small_gradient_proximal_error_bound} is the guaranteed (Lipschitz) continuity of solution sets, under which we prove our key landscape analysis result:
\begin{theorem}[Informal]
    \label{theorem:informal_theorem}
    Under Assumption \ref{assumption:small_gradient_proximal_error_bound} (with additional smoothness assumptions), for all $x \in \mX$ such that at least one sufficiently regular solution path $y^*(x, \sigma)$ exists for $\sigma \in [0,\sigma_0]$, we have
    \begin{align*}
        |\psi_{\sigma}(x) - \psi(x)| & = O(\sigma/\mu), \\
        \|\grad \psi_{\sigma}(x) - \grad \psi(x)\| &= O(\sigma/\mu^3).
    \end{align*}
\end{theorem}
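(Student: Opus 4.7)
I would prove the value bound and the gradient bound separately. The value bound follows fairly directly from applying Assumption~\ref{assumption:small_gradient_proximal_error_bound} at $\sigma=0$, while the gradient bound requires a delicate sensitivity analysis of the solution path $y^{*}(x,\sigma)$ near $\sigma=0$, and is the source of the extra factors of $1/\mu$.

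\textbf{Step 1 (Value bound).} Let $y^{*}(x) \in \arg\min_{y \in \mS(x)} f(x,y)$, so $\psi(x) = f(x,y^{*}(x))$, and let $y_{\sigma}^{*}(x) \in \arg\min_{y\in\mY} h_{\sigma}(x,y)$. The upper bound $\psi_{\sigma}(x) \le \psi(x)$ is immediate: evaluating the penalty at $y^{*}(x)$ gives $h_{\sigma}(x,y_{\sigma}^{*}) \le \sigma f(x,y^{*}(x)) + \min_{z\in\mY} g(x,z)$. For the reverse inequality, I would use the first-order condition $-\sigma \grad_{y} f(x,y_{\sigma}^{*}) \in \grad_{y} g(x,y_{\sigma}^{*}) + N_{\mY}(y_{\sigma}^{*})$, compare it with the defining inclusion of $p := \prox_{\rho g(x,\cdot)}(y_{\sigma}^{*})$ via monotonicity of the normal cone, and combine with $L$-smoothness of $g$ to obtain $\rho^{-1}\|y_{\sigma}^{*}-p\| = O(\sigma)$ for $\rho < 1/L$. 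Applying Assumption~\ref{assumption:small_gradient_proximal_error_bound} to $h_{0} = g$ then gives $\dist(y_{\sigma}^{*}, \mS(x)) = O(\sigma/\mu)$. Picking $\tilde y \in \mS(x)$ achieving this distance and using Lipschitz continuity of $f$, $f(x, y_{\sigma}^{*}) \ge f(x,\tilde y) - O(\sigma/\mu) \ge \psi(x) - O(\sigma/\mu)$, where the second inequality crucially uses $\tilde y \in \mS(x)$ so that $f(x,\tilde y) \ge \psi(x)$. Since $g(x,y_{\sigma}^{*}) \ge \min_{z} g(x,z)$, the remaining penalty term is nonnegative, and we conclude $\psi_{\sigma}(x) \ge \psi(x) - O(\sigma/\mu)$.

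\textbf{Step 2 (Gradient bound).} Under the regular-solution-path hypothesis, Danskin/envelope-type differentiation yields
\[
\grad \psi_{\sigma}(x) = \grad_{x} f(x, y_{\sigma}^{*}(x)) + \tfrac{1}{\sigma}\bigl[\grad_{x} g(x, y_{\sigma}^{*}(x)) - \grad_{x} g(x, y_{0}^{*}(x))\bigr].
\]
I would Taylor-expand $y_{\sigma}^{*}(x) = y_{0}^{*}(x) + \sigma\, \dot y(x) + O(\sigma^{2})$, where $\dot y(x)$ is determined by implicitly differentiating $\grad_{y} h_{\sigma}(x, y_{\sigma}^{*}) + n_{\sigma} = 0$ with $n_{\sigma} \in N_{\mY}(y_{\sigma}^{*})$, interpreting the inversion of the (possibly singular) Hessian $\grad_{yy}^{2} g(x, y_{0}^{*})$ on the subspace transverse to $\mS(x)$ provided by the proximal-EB. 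Substituting produces the closed-form
\[
\grad \psi(x) = \lim_{\sigma\to 0} \grad \psi_{\sigma}(x) = \grad_{x} f(x, y_{0}^{*}) + \grad_{xy}^{2} g(x, y_{0}^{*})\, \dot y(x),
\]
which is also the ``by-product'' implicit-gradient formula advertised in the abstract. Subtracting the two expressions, using smoothness of $\grad_{x} g$, the bound $\|y_{\sigma}^{*}-y_{0}^{*}\| = O(\sigma/\mu)$ from Step~1 applied along the path, and controlling the Taylor remainder by standard second-order arguments, yields the $O(\sigma/\mu^{3})$ rate.

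\textbf{Main obstacle.} The hardest part is justifying the implicit differentiation and tracking the three factors of $1/\mu$. Without strong convexity of $g(x,\cdot)$ in $y$, the Hessian $\grad_{yy}^{2} g(x,y_{0}^{*})$ is singular along directions tangent to $\mS(x)$, and the classical implicit function theorem fails. The proximal-EB plays the role of a ``pseudo-strong-convexity'' constant: it controls (i) how far $y_{\sigma}^{*}$ can drift from $y_{0}^{*}$ as $\sigma$ varies, (ii) the effective invertibility of the Hessian transverse to $\mS(x)$ governing the sensitivity $\dot y$, and (iii) the Lipschitz constant of this pseudo-inverse that appears in the Taylor remainder. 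Each of these contributions picks up a factor of $1/\mu$, and organizing the argument so that the cubic dependence emerges cleanly (rather than a loose $1/\mu^{k}$ with $k \ge 4$) will be the delicate step; the appropriate quantitative implicit function theorem under proximal-EB, which may itself require a short standalone lemma, is the key technical workhorse.
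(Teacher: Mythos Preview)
Your Step~1 is correct and takes a slightly different route from the paper. The paper argues via the quadratic-growth consequence of the proximal-EB, bounding $g(x,y_\sigma^*)-g^*(x)\ge \tfrac{\mu}{2}\|y_\sigma^*-z_p^*\|^2$ and then completing the square against $l_{f,0}\|y_\sigma^*-z_p^*\|$. You instead bound the proximal residual $\rho^{-1}\|y_\sigma^*-\prox_{\rho g(x,\cdot)}(y_\sigma^*)\|=O(\sigma l_{f,0})$ directly from first-order optimality, invoke Assumption~\ref{assumption:small_gradient_proximal_error_bound} to get $\dist(y_\sigma^*,\mS(x))=O(\sigma l_{f,0}/\mu)$, and discard the nonnegative penalty term. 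Both yield the same $O(\sigma l_{f,0}^2/\mu)$ bound; yours is arguably more direct.

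Step~2 has the right high-level shape---differentiate the value function, subtract, control a remainder---but there is a real gap in how you handle constraints. You write the stationarity condition as $\grad_y h_\sigma(x,y_\sigma^*)+n_\sigma=0$ with $n_\sigma\in N_{\mY}(y_\sigma^*)$ and propose to implicitly differentiate it, but the normal cone is set-valued and this step is not well-posed as written. The paper resolves this by passing to the Lagrangian $\mL_{\mI}(\lambda_{\mI},\nu,y\,|\,x,\sigma)$ with explicit multipliers, imposing LICQ and strict complementarity (the ``regular solution path'' is precisely Assumptions~\ref{assumption:additional_regularity}--\ref{assumption:non_zero_singular_value}), and proving a standalone formula (Theorem~\ref{theorem:conjecture_pseudo_inverse}) for $\frac{\partial^2}{\partial x\partial\sigma}l(x,\sigma)$ in terms of the \emph{pseudo-inverse of the Lagrangian Hessian} $(\grad^2\mL_{\mI}^*)^\dagger$, not just $(\grad_{yy}^2 g)^\dagger$. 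This is where the constraint geometry enters, and also where the minimum singular value $s_{\min}$ of the active-constraint Jacobian appears in the final rate. Your formula $\grad\psi(x)=\grad_x f+\grad_{xy}^2 g\,\dot y$ is only correct when no constraints are active.

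Once that formula is in hand, the paper's organization differs from yours: rather than Taylor-expanding $y_\sigma^*$ to second order and substituting, the paper shows directly that $\sigma\mapsto\frac{\partial^2}{\partial x\partial\sigma}l(x,\sigma)$ is Lipschitz (by separately bounding the Lipschitz constants of $y^*(\sigma)$, $\lambda^*(\sigma)$, and hence $\grad^2\mL_{\mI}^*(\sigma)$, each contributing a factor $1/\mu$), and then applies the mean-value theorem. Your accounting of the three $1/\mu$ factors is heuristically right, but the clean cubic dependence emerges from this Lipschitz-in-$\sigma$ argument on the explicit formula rather than from a second-order Taylor remainder on $y_\sigma^*$.
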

As a corollary, our result implies global $O(\sigma)$-approximability of $\psi_{\sigma}(x)$ for the special case studied in several previous works ({\it e.g.,} \cite{ghadimi2018approximation, chen2021closing, ye2022bome, kwon2023fully}), where $g(x,\cdot)$ is (locally) strongly-convex and the lower-level problem is unconstrained. To our best knowledge, such a connection, and even the differentiability of $\psi(x)$, is not fully understood for BO with nonconvex lower-level problems with possibly multiple solutions and constraints. In particular, the case with possibly multiple solutions (Theorem~\ref{theorem:conjecture_pseudo_inverse}) is discussed under the mild assumptions of solution-set continuity and additional regularities of lower-level constraints.



\paragraph{Algorithm.} Once we show that $\grad \psi_{\sigma}(x)$ is an $O(\sigma)$-approximation of $\grad \psi(x)$ in most desirable circumstances, it suffices to find an $\epsilon$-stationary solution of $\psi_{\epsilon}(x)$. However, still directly optimizing $\psi_{\sigma}(x)$ is not possible since the exact minimizers (in $y$) of $h_{\sigma}(x,y):=\sigma f(x,y) + g(x,y)$ and $g(x,y)$ are unknown. Thus, we use the alternative min-max formulation:
\begin{align}
    \min_{x \in \mX, y \in \mY} \max_{z \in \mY} \, \psi_{\sigma} (x,y,z) := \frac{h_{\sigma}(x,y) - g(x,z)}{\sigma}. \tag{$\Psaddle$}  \label{problem:saddle_point_def}
\end{align}
Once we reduce the problem to finding an $\epsilon$-stationary point of the saddle-point problem \eqref{problem:saddle_point_def}, we may invoke the rich literature on min-max optimization. However, even when we assume that $g(x,\cdot)$ satisfies the PL conditions {\em globally} for all $y \in \mathbb{R}^{d_y}$, a plug-in min-max optimization method ({\it e.g.,} \cite{yang2022faster}) yields an oracle-complexity that cannot be better than $O\left(\sigma^{-4}\epsilon^{-4}\right)$ with stochastic oracles \cite{li2021complexity}, resulting in  an overall $O(\epsilon^{-8})$ complexity bound when $\sigma = O(\epsilon)$.

As we pursue an $\epsilon$-saddle point specifically in the form of \eqref{problem:saddle_point_def}, we show that we can achieve a better complexity bound under Assumption \ref{assumption:small_gradient_proximal_error_bound}. We list below our algorithmic contributions.
\begin{itemize}
    \item In contrast to previous work on bilevel or min-max optimization, ({\it e.g.,} \cite{ghadimi2018approximation, ye2022bome, yang2020global}), Assumption \ref{assumption:small_gradient_proximal_error_bound} holds only in a neighborhood of the lower-level solution. In fact, we show that we only need a nice lower-level landscape within the neighborhood of solutions with $O(\delta)$-proximal error. 
    \item While we  eventually set $\sigma = O(\epsilon)$, it can be overly conservative to choose such a small value of $\sigma$ from the first iteration, resulting in a slower convergence rate. By gradually decreasing penalty parameters $\{\sigma_k\}$ polynomially as $k^{-s}$ for some $s > 0$, we save an $O(\epsilon^{-1})$-order of oracle-complexity, improving the overall complexity to $O(\epsilon^{-7})$ with stochastic oracles.
    \item If stochastic oracles satisfy the mean-squared smoothness condition, {\it i.e.,} the stochastic gradient is Lipschitz in expectation, then we show a version of our algorithm can be implemented in a fully single-loop manner ({\it i.e.,} only $O(1)$ calls to stochastic oracles before updating the outer variables $x$) with an improved oracle-complexity of $O(\epsilon^{-5})$ with stochastic oracles.  
\end{itemize}
\ifarxiv
Our results match the best-known complexity results for fully first-order methods in stochastic bilevel optimization \cite{kwon2023fully} with strongly convex and unconstrained lower-level problems, and can also be applied to BO where the lower-level problem satisfies the PL condition, as studied in \cite{ye2022bome}. See Section \ref{section:algorithm} for more details on our algorithm.
\else
In conclusion, we show the following result in this paper:
\begin{theorem}[Informal]
    \label{theorem:informal_theorem2}
    There exists an iterative algorithm which finds an $\epsilon$-stationary point $x_\epsilon$ of $\psi_{\sigma}(x)$ within a total $O(\epsilon^{-7})$ access to stochastic gradient oracles under Assumption \ref{assumption:small_gradient_proximal_error_bound} with $\sigma = O(\epsilon)$. If the stochastic gradient oracle is mean-squared smoothness, then the algorithm can be implemented in a fully single-loop manner with improved complexity of $O(\epsilon^{-5})$. Furthermore, if the condition required in Theorem \ref{theorem:informal_theorem} holds at $x_{\epsilon}$, then $x_\epsilon$ is an $O(\epsilon)$-stationary point of $\psi(x)$.
\end{theorem}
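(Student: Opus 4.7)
The plan is to reduce the original problem to finding an $\epsilon$-stationary point of the penalized hyper-objective $\psi_\sigma(x)$ with $\sigma=\Theta(\epsilon)$, and then invoke Theorem~\ref{theorem:informal_theorem} to transfer stationarity from $\psi_\sigma$ to $\psi$. To access $\grad\psi_\sigma$, I would work through the saddle formulation \eqref{problem:saddle_point_def}: letting $y^*(x,\sigma)\in\arg\min_{y\in\mY} h_\sigma(x,y)$ and $z^*(x)\in\arg\min_{z\in\mY} g(x,z)$, the plug-in estimator
\[
\widehat{\grad}\psi_\sigma(x) \;=\; \sigma^{-1}\bigl(\grad_x h_\sigma(x,\hat y)-\grad_x g(x,\hat z)\bigr)
\]
is $O\bigl((\|\hat y-y^*\|+\|\hat z-z^*\|)/\sigma\bigr)$-accurate whenever $\hat y,\hat z$ lie in the $\delta$-proximal-EB neighborhood. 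The algorithm is then a three-block scheme that alternates between inexact (stochastic) proximal-gradient minimizations in $y$ and $z$ and a projected-gradient update on $x$ using $\widehat{\grad}\psi_\sigma$.

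For the double-loop stochastic version, at outer step $k$ I would take $\sigma_k\propto k^{-s}$ and warm-start $(y_k,z_k)$ from $(y_{k-1},z_{k-1})$. Assumption~\ref{assumption:small_gradient_proximal_error_bound} plus the Lipschitz dependence of $y^*(x,\sigma)$ and $z^*(x)$ on both arguments (a by-product of Section~\ref{section:functional_analysis}) keeps the warm start within the $\delta$-EB neighborhood provided the drift in $x_k$ and $\sigma_k$ is controlled. To make $\widehat{\grad}\psi_{\sigma_k}(x_k)$ accurate to $O(\epsilon)$ one needs iterate-error $O(\sigma_k\epsilon)$, hence function-value accuracy $O(\sigma_k^2\epsilon^2)$, which stochastic SGD under proximal-EB delivers in $\tilde O(\sigma_k^{-2}\epsilon^{-2})$ samples. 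Standard analysis for projected gradient descent on the $O(1/\sigma)$-smooth nonconvex function $\psi_{\sigma_k}$ yields $O(\epsilon^{-3})$ outer steps when $\sigma_k$ reaches $\Theta(\epsilon)$; choosing $s\in(0,1)$ and telescoping the per-step cost $\sigma_k^{-2}\epsilon^{-2}$ against the outer descent $\Omega(\sigma_k\epsilon^2)$ saves one power of $\epsilon$ compared to the naive constant-$\sigma$ choice and yields the claimed $O(\epsilon^{-7})$ total. For the single-loop improvement, mean-squared smoothness enables a STORM/SPIDER-style variance-reduced momentum estimator for $\grad_y h_\sigma$ and $\grad_z g$ that tracks the moving optima with $O(1)$ samples per outer iteration while maintaining gradient-error variance of order $O(k^{-2/3})$; a Lyapunov function of the form $\Phi_k=\psi_{\sigma_k}(x_k)+c_1\|y_k-y^*(x_k,\sigma_k)\|^2+c_2\|z_k-z^*(x_k)\|^2+c_3(\text{momentum error})$ can be shown to decrease by $\Omega(\sigma_k\epsilon^2)$ in expectation per step, summing to the advertised $O(\epsilon^{-5})$. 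Applying Theorem~\ref{theorem:informal_theorem} with $\sigma=\Theta(\epsilon)$ at the terminal iterate $x_\epsilon$ then converts the $\epsilon$-stationarity of $\psi_\sigma$ into $O(\epsilon)$-stationarity of $\psi$.

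The main obstacle is managing three interacting sources of inexactness simultaneously: the inner problems are solved only approximately, the penalty parameter $\sigma_k$ shrinks so the inner conditioning worsens and a previously accurate iterate may fall outside the $\delta$-EB neighborhood at $\sigma_{k+1}$, and $\psi_\sigma$ is itself nonconvex. The most delicate argument will be an inductive invariant showing that $(y_k,z_k)$ remain in the EB neighborhood throughout, which requires combining the Lipschitz continuity of $y^*(x,\sigma)$ in both $x$ and $\sigma$ (from the landscape analysis) with a sufficiently slow decay schedule for $\sigma_k$ and sufficiently small outer step sizes. In the single-loop regime this invariant must additionally survive the stochastic fluctuations of the variance-reduced estimator, which is precisely where the mean-squared smoothness hypothesis is indispensable: it bounds the expected drift of the momentum estimator so that the three-way coupling between $x_k$-drift, target-drift in $(y^*,z^*)$, and gradient-estimate drift can be closed by a single potential-function inequality.
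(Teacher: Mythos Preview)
Your high-level plan matches the paper's: saddle reformulation \eqref{problem:saddle_point_def}, polynomially decaying $\sigma_k$, warm-started inner solves exploiting the Lipschitz continuity of $T(x,\sigma)$ from Lemma~\ref{lemma:proximal_error_to_lipschitz_continuity}, STORM-type momentum for the single-loop version, and the transfer to $\psi$ via Theorem~\ref{theorem:informal_theorem}. The complexity arithmetic you sketch (inner cost $\sigma_k^{-2}\epsilon^{-2}$ against $O(\epsilon^{-3})$ outer steps) also lands in the right place.

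The substantive divergence is in how you handle the \emph{local} nature of Assumption~\ref{assumption:small_gradient_proximal_error_bound}. You propose to maintain an inductive invariant that $(y_k,z_k)$ stay inside the $\delta$-EB neighborhood. In the stochastic regime this is fragile: with unbounded gradient noise the invariant cannot hold almost surely, so you would be forced into a high-probability analysis with restarts or noise truncation, neither of which is assumed here. The paper sidesteps this entirely by passing to the Moreau-envelope surrogate $\psi_{\sigma,\rho}$ in~\eqref{problem:envelop_saddle_point_def}: the inner proximal subproblems are now \emph{globally} $(1/3\rho)$-strongly convex, so the tracking variables $w_{y,k},w_{z,k}$ contract unconditionally (Lemmas~\ref{lemma:w_descent_lemma} and~\ref{lemma:w_descent_lemma_momentum}), and the only place the EB condition is invoked is through the deterministic inequality of Lemma~\ref{lemma:dist_to_prox_error}, which uses the compactness of $\mY$ to bound $\dist(y,T(x,\sigma))$ by the proximal residual even outside the $\delta$-ball. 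This lets the whole argument run in expectation with no invariant to protect. A second point you elide is that in the saddle formulation the $z$-block enters with the wrong sign, so $\psi_{\sigma,\rho}$ alone is not a descent potential; the paper adds the correction $\tfrac{C}{\sigma}(g_\rho^*(x,z)-g^*(x))$ in~\eqref{eq:potential_definition} to flip the sign of the $z$-proximal-error term, and the full potential $\mathbb{V}_k$ in~\eqref{Eq:potential_def} (resp.~\eqref{Eq:potential_def_momentum}) further carries the $w$-tracking and momentum-error terms. Your proposed Lyapunov $\psi_{\sigma_k}(x_k)+c_1\|y_k-y^*\|^2+c_2\|z_k-z^*\|^2+\cdots$ would not close without an analogue of this correction.
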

Due to space constraints, in the main text, we only present our landscape analysis of penalty and original formulations, and defer the algorithm specification to appendix. 
\fi

\ifarxiv
\subsection{Related Work}

\label{subsec:prior_art}
Since its introduction in \cite{bracken1973mathematical}, Bilevel optimization has been an important research topic in many scientific disciplines. Classical results tend to focus on the asymptotic properties of algorithms once in neighborhoods of global/local minimizers (see {\it e.g.,} \cite{white1993penalty, vicente1994descent, colson2007overview}). In contrast, recent results are more focused on studying numerical optimization methods and non-asymptotic analysis to obtain an approximate stationary solution of Bilevel problems (see {\it e.g.,} \cite{ghadimi2018approximation, chen2021closing}). Our work falls into this category. Due to the vast volume of literature on Bilevel optimization, we only discuss some relevant lines of work.

\paragraph{Implicit-Gradient Descent} As mentioned earlier, initiated by \cite{ghadimi2018approximation}, a flurry of recent works (see {\it e.g.,} \cite{chen2021closing, hong2023two, khanduri2021near, chen2022single, dagreou2022framework, guo2021randomized}) study stochastic-gradient-descent (SGD)-based iterative procedures and their finite-time performance for solving \eqref{problem:bilevel} when the lower-level problem is strongly-convex and unconstrained. In such cases, the implicit gradient of hyper-objective $\psi(x)$ is given by
\begin{align*}
    \grad_x f(x,y^*(x)) - \grad^2_{xy} g(x,y^*(x)) \left(\grad^2_{yy} g(x,y^*(x)) \right)^{-1} \grad_y f(x,y^*(x)),
\end{align*}
where $y^*(x) = \arg \min_{w \in \mathbb{R}^{d_y}} g(x,w)$. Two main challenges in performing (implicit) gradient descent are (a) to evaluate lower-level solution $y^*(x)$, and (b) to estimate Hessian inverse $(\grad^2_{yy} g(x,y^*(x)))^{-1}$. For (a), it is now well-understood that instead of exactly solving for $y^*(x_k)$ for every $k^{th}$ iteration, we can incrementally solve for $y^*(x_k)$, {\it e.g.,} run a few more gradient steps on the current estimate $y_k$, and use it as a proxy for the lower-level solution \cite{chen2021closing}. As long as the contraction toward the true solution (with the strong convexity of $g(x_k,\cdot)$) is large enough to compensate for the change of lower-level solution (due to the movement in $x_k$), $y_k$ will eventually stably stay around $y^*(x_k)$, and can be used as a proxy for the lower-level solution to compute the implicit gradient. Then for (b), with the Hessian of $g$ being invertible for all given $y$, we can exploit the Neumann series approximation \cite{ghadimi2018approximation} to estimate the true Hessian inverse using $y_k$ as a proxy for $y^*(x_k)$.

Unfortunately, the above results are not easily extendable to nonconvex lower-level objectives with potential constraints $\mY$. One obstacle is, again, to estimate the Hessian-inverse: now that for some $y \in \mY$, the Hessian of $g$ may not be invertible even if $\grad^2_{yy} g(x,y^*(x))$ is invertible at the exact solution. Therefore, in order to use an approximate $y_k$ as a proxy to $y^*(x_k)$, we need a certain high-probability guarantee (or some other complicated arguments) to ensure that the algorithm remains stable with the inversion operation. The other obstacle, which is more complicated to resolve, is that the implicit gradient formula may no longer be the same if the solution is found at the boundary of $\mY$. In such a case, explicitly estimating $\grad \psi(x)$ would not only require an approximate solution but also require the optimal dual variables for the lower-level solutions which are unknown (see Theorem \ref{theorem:conjecture_pseudo_inverse} for the exact formula). However, computing the optimal solution as well as the optimal dual variables is even more challenging when we only access objective functions through stochastic oracles. Therefore, it is essential to develop a first-order method that does not rely on the explicit estimation of the implicit gradient to solve a broader class of BO, aside from the cost of using second-order derivatives in large-scale applications.

\paragraph{Nonconvex Lower-Level Objectives} In general, BO with nonconvex lower-level objectives is not computationally tractable without further assumptions, even for the special case of min-max optimization \cite{daskalakis2021complexity}. Therefore, additional assumptions on the lower-level problem are necessary. Arguably, the minimal assumption would be the continuity of lower-level solution sets, otherwise, any local-search algorithms are likely to fail due to the hardness of (approximately) tracking the lower-level problem. The work in \cite{chen2023bilevel} considers several growth conditions for the lower-level objectives (including PL), which guarantee Lipschitz continuity of lower-level solution sets, and proposes a zeroth-order method for solving \eqref{problem:bilevel}. The work in \cite{shen2023penalty} assumes the PL condition, and studies the complexity of the penalty method in deterministic settings. The work in \cite{arbel2022non} introduces a notion of parameteric Morse-Bott functions, and studies some asymptotic properties of their proposed gradient flow under the proposed condition. In all these works as well as in our work, underlying assumptions involve some growth conditions of the lower-level problem, which is essential for the continuity of lower-level solution sets.

\paragraph{Penalty Methods} 
Studies on penalty methods date back to 90s \cite{marcotte1996exact, white1993penalty, anandalingam1990solution, ye1997exact, ishizuka1992double}, when the equivalence is established between two formulations \eqref{problem:bilevel} and \eqref{problem:penalty_formulation} for sufficiently small $\sigma > 0$. However, these results are often limited to relations within {\it infinitesimally} small neighborhoods of global/local minimizers. As we aim to obtain a stationary solution from arbitrary initial points, we need a comprehensive understanding of approximating the {\it global} landscape, rather than only around infinitesimally small neighborhoods of global/local minimizers. 

The most closely related work to ours is a recent work in \cite{shen2023penalty}, where the authors study the penalty method under lower-level PL-like conditions with constraints $\mY$ as in ours. However, the connection established in \cite{shen2023penalty} only relates \eqref{problem:penalty_formulation} and $\epsilon$-relaxed version of \eqref{problem:bilevel_single_level}, and only concerns infinitesimally small neighborhoods of global/local minimizers as in older works. Furthermore, their analysis is restricted to the double-loop implementation of algorithms in deterministic settings. In contrast, we establish a direct connection between $\grad \psi(x)$ and $\grad \psi_{\sigma}(x)$ when they are well-defined, and our analysis can be applied to both double-loop and single-loop algorithms with explicit oracle-complexity bounds in stochastic settings.

\paragraph{Implicit Differentiation Methods}
Another popular approach to side-step the computation of implicit gradients is to construct a chain of lower-level variables via gradient descents, a technique often called automatic implicit differentiation (AID), or iterative differentiation (ITD) \cite{pedregosa2016hyperparameter, yang2021provably, li2022fully, ji2021bilevel, grazzi2023bilevel}. The benefit of this technique is that now we do not require the estimation of Hessian-inverse. In fact, this construction can be seen as one constructive way of approximating the Hessian-inverse. However, when the lower-level problem is constrained by compact $\mY$, more complicated operations such as the projection may prevent the use of the implicit differentiation technique.

\else
\fi

\section{Preliminaries}
We state several assumptions on \eqref{problem:bilevel} to specify the problem class of interest. Our focus is on smooth objectives whose values are bounded below:
\begin{assumption} 
    \label{assumption:nice_functions}
    $f$ and $g$ are twice continuously-differentiable and $l_{f,1}$, $l_{g,1}$-smooth jointly in $(x,y)$ respectively, {\it i.e.,} $\|\grad^2 f(x,y)\| \le l_{f,1}$ and $\|\grad^2 g(x,y)\| \le l_{g,1}$ for all $x \in \mX$, $y \in \mY$.
\end{assumption}
\begin{assumption}
    \label{assumption:regulaity_on_fg}
    The following conditions hold for objective functions $f$ and $g$: 
    \begin{enumerate}
        \item[1.] $f$ and $g$ are bounded below and coercive, {\it i.e.,} for all $x \in \mX$, $f(x,y), g(x,y) > -\infty$ for all $y \in \mY$, and $f(x,y), g(x,y) \rightarrow +\infty$ as  $\|y\| \rightarrow \infty$.
        \item [2.] $\|\grad_y f(x,y)\| \le l_{f,0}$ for all $x \in \mX, y \in \mY$. 
    \end{enumerate}
\end{assumption}
We also make technical assumptions on the domains:
\begin{assumption}
    \label{assumption:constrained_set}
    The following conditions hold for domains $\mX$ and $\mY$:
    \begin{enumerate}
        \item $\mX, \mY$ are convex and closed.
        \item $\mY$ is bounded, {\it i.e.,} $\max_{y \in \mY} \|y\| \le D_{\mY}$ for some $D_\mY = O(1)$. Furthermore, we assume that $C_f := \max_{x \in \mX, y \in \mY} |f(x,y)| = O(1)$ is bounded. 
        \item The domain $\mY$ can be compactly expressed with at most $m_1 \ge 0$ inequality constraints $\{g_i(y) \le 0\}_{i \in [m_1]}$ with convex and twice continuously-differentiable $g_i$, and at most $m_2 \ge 0$ equality constraints $\{h_i(y) = 0\}_{i \in [m_2]}$ with linear functions $h_i$. 
    \end{enumerate}
\end{assumption}
We note here that the expressiveness of inner domain constraints $\mY$ is only required for the analysis, and not required in our algorithms as long as there exist efficient projection operators. While there could be many possible representations of constraints, only the most compact representation would matter in our analysis. We denote by  $\Pi_{\mX}$ and $\Pi_{\mY}$ the projection operators onto sets $\mX$ and $\mY$, respectively. $\mathcal{N}_{\mX}(x)$ denotes the normal cone of $\mX$ at a point $x \in \mX$.

Next, we define the distance measure between sets:
\begin{definition}[Hausdorff Distance]
    \label{def:hausdorff}
    Let $S_1$ and $S_2$ be two sets in $\mathbb{R}^d$. The Hausdorff distance between $S_1$ and $S_2$ is given as
    \begin{align*}
        \dist (S_1, S_2) = \max \left\{\sup_{\theta_1 \in S_1} \inf_{\theta_2 \in S_2} \|\theta_1 - \theta_2\|, \sup_{\theta_2 \in S_2} \inf_{\theta_1 \in S_1} \|\theta_1 - \theta_2\| \right\}.
    \end{align*}
    For distance between a point $\theta$ and a set $S$, we denote $\dist (\theta, S) := \dist (\{\theta\}, S)$. 
\end{definition}
Throughout the paper, we use Definition \ref{def:hausdorff} as a measure of distances between sets. We define the notion of (local) Lipschitz continuity of solution sets introduced in \cite{chen2023bilevel}.
\begin{definition}[Lipschitz Continuity of Solution Sets \cite{chen2023bilevel}]
    \label{definition:lipschitz_continuity_solution}
    For a differentiable and smooth function $f(w,\theta)$ on $\mathcal{W} \times \Theta$, we say the solution set $S(w) := \arg\min_{\theta \in \Theta} f(w,\theta)$ is locally Lipschitz continuous at $w \in \mathcal{W}$ if there exists an open-ball of radius $\delta > 0$ and a constant $L_S < \infty$ such that for any $w' \in \mathbb{B}(w,\delta)$, we have $\dist(S(w), S(w')) \le L_S \|w-w'\|$. 
\end{definition}

\paragraph{Constrained Optimization} We introduce some standard notions of regularities from nonlinear constrained optimization \cite{bertsekas1997nonlinear}. For a general constrained optimization problem $\textbf{Q}: \min_{\theta \in \Theta} f(\theta)$, suppose $\Theta$ can be compactly expressed with $m_1 \ge 0$ inequality constraints $\{g_i(\theta) \le 0\}_{i\in[m_1]}$ with convex and twice continuously differentiable $g_i$, and $m_2 \ge 0$ equality constraints $\{h_i(\theta) = 0\}_{i\in[m_2]}$ with linear functions $h_i$.
\begin{definition}[Active Constraints]
\label{def:acc}
     We denote $\mI(\theta) \subseteq [m_1]$ the index of active inequality constraints of $\normalfont {\textbf{Q}}$ at $\theta \in \Theta$, {\it i.e.,} $\mI(\theta) := \{ i \in [m_1] \, : \,  g_i(\theta) = 0\}$.
\end{definition}
\begin{definition}[Linear Independence Constraint Qualification (LICQ)]
    \label{definition:LICQ}
    We say $\normalfont {\textbf{Q}}$ is regular at a feasible point $\theta \in \Theta$ if the set of vectors consisting of all equality constraint gradients $\grad h_i(\theta)$, $\forall i \in [m_2]$ and the active inequality constraint gradients $\grad g_i(\theta)$, $\forall i \in \mI(\theta)$ is a linearly independent set.
\end{definition}
A solution $\theta^*$ of $\normalfont {\textbf{Q}}$ satisfies the so-called KKT conditions when LICQ holds at $\theta^*$: the KKT conditions are that there exist unique Lagrangian multipliers $\lambda_i^* \ge 0$ for $i \in \mI(\theta^*)$ and $\nu_i^* \in \mathbb{R}$ for $i \in [m_2]$ such that 
\begin{equation}
    \label{eq:kkt.1}
    \mbox{$\theta^* \in \Theta$ and } \ \; \nabla f(\theta^*) + \tssum_{i\in \mI(\theta^*)} \lambda_i^* \grad g_i(\theta^*) + \tssum_{i \in [m_2]} \nu_i^* \grad h_i(\theta^*) = 0.
\end{equation}
For such a solution, we define the {\it strict} complementary slackness condition:
\begin{definition}[Strict Complementarity]
    \label{definition:strict_slackness}
    Let $\theta^*$ be a solution of $\normalfont {\textbf{Q}}$ satisfying LICQ and the KKT condition above. 
    We say that the strict complementary condition is satisfied at $\theta^*$ if there exist multipliers $\lambda^*, \nu^*$ that satisfy \eqref{eq:kkt.1}, and further $\lambda_i^* > 0$ for all $i \in \mI(\theta^*)$.
\end{definition}

\paragraph{Other Notation} We say $a_k \asymp b_k$ if $a_k$ and $b_k$ decreases (or increases) in the same rate as $k \rightarrow \infty$, {\it i.e.,} $\lim_{k\rightarrow \infty} a_k/b_k = \Theta(1)$. Throughout the paper, $\|\cdot\|$ denotes the Euclidean norm for vectors, and operator norm for matrices. $[n]$ with a natural number $n \in \mathbb{N}_+$ denotes a set $\{1,2,...,n\}$. Let $\Pi_{\mS} (\theta)$ be the projection of a point $\theta$ onto a convex set $\mS$. We denote $\Ker(M)$ and $\Image(M)$ to mean the kernel (nullspace) and the image (range) of a matrix $M$ respectively. For a symmetric matrix $M$, we define the pseudo-inverse of $M$ as $M^{\dagger} := U (U^\top M U)^{-1} U^\top$ where the columns of $U$ consist of eigenvectors corresponding to all non-zero eigenvalues of $M$.

\section{Landscape Analysis and Penalty Method}
\label{section:functional_analysis}

In this section, we establish the relationship between the landscapes of the penalty formulation \eqref{problem:penalty_formulation} and the original problem \eqref{problem:bilevel}. 
Recalling the definition of the perturbed lower-level problem  $h_{\sigma}(x,y) := \sigma f(x,y) + g(x,y)$ from Assumption~\ref{assumption:small_gradient_proximal_error_bound}, we introduce the following notation for its solution set:
For $\sigma \in [0,\sigma_0]$ with sufficiently small $\sigma_0 > 0$, we define
\begin{equation}
 \label{eq:perturbed_problems}
     l(x,\sigma) := \min_{y \in \mY} \, h_{\sigma}(x,y), \quad 
    T(x,\sigma) := \arg\min_{y\in\mY} \, h_{\sigma}(x,y).
\end{equation}
We call $l(x,\sigma)$ the value function, and $T(x,\sigma)$ the corresponding solution set. Then, the minimization problem \eqref{problem:penalty_formulation} over the penalty function and $\psi_\sigma$ defined in \eqref{eq:psis} can be rewritten as 
\begin{align*}
    \min_{x\in\mX} \, \psi_\sigma(x) \ \hbox{ where }\  \psi_\sigma(x) = \frac{l(x,\sigma) - l(x,0)}{\sigma}.
\end{align*}
We can view $\psi_{\sigma} (x)$ as a sensitivity measure of how the optimal value $\min_{y\in\mY} \,  g(x,y)$ changes when we impose a perturbation of $\sigma f(x,y)$ in the objective. In fact, it  can be easily shown that
\begin{align*}
    \lim_{\sigma \rightarrow 0} \psi_{\sigma}(x) = \frac{\partial}{\partial \sigma} l(x,\sigma)|_{\sigma = 0^+} = \psi(x). 
\end{align*}
However, this formula provides only a pointwise asymptotic equivalence of two functions and does not imply the equivalence of the {\em gradients} $\grad \psi_{\sigma}(x)$ and $\grad \psi(x)$ of the two hyper-objectives. 
In the limit setting, we check whether
\begin{align}
\label{eq:difff}
    \grad\psi(x) = \frac{\partial^2}{\partial x \partial \sigma} l(x,\sigma) |_{\sigma = 0^+} \overset{?}{=} \frac{\partial^2}{\partial \sigma \partial x} l(x,\sigma) |_{\sigma = 0^+} = \lim_{\sigma \rightarrow 0} \grad \psi_{\sigma}(x).
\end{align}
Unfortunately, it is not always true that the above relation \eqref{eq:difff} holds, and the gradient of  $\grad \psi(x)$ may not even be well-defined, as illustrated in Examples~\ref{example:bilinear_LL} and ~\ref{example:sc_non_smooth} in the following section.

In what follows, we derive two assumptions: one concerning solution-set continuity (Assumption~\ref{assumption:solution_lipschitz_continuity}) and another addressing the regularities of lower-level constraints (Assumption~\ref{assumption:strict_slackness}). Under these assumptions, we establish the main theorem of this section, Theorem~\ref{theorem:conjecture_pseudo_inverse}. This, in turn, leads to the derivation of the second inequality in our landscape analysis result, as presented in Theorem~\ref{theorem:informal_theorem} and shown in Theorem~\ref{proposition:uniform_convergence_QG}.








\subsection{Sufficient Conditions for Differentiability}
\label{sec:suffd}

The following two examples illustrate the obstacles encountered when claiming $\lim_{\sigma \rightarrow 0} \grad \psi_{\sigma}(x) \rightarrow \grad \psi(x)$ or even when simply ensuring the existence of $\grad \psi(x).$


\begin{figure}
    \centering
    \begin{subfigure}[b]{0.45\textwidth}
        \centering
        \begin{tabular}{cc}
            \includegraphics[width=0.4\textwidth]{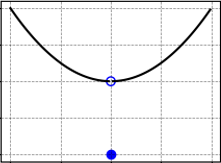} &
            \includegraphics[width=0.4\textwidth]{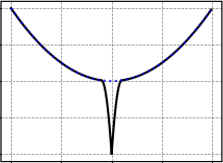} \\
            $\psi(x)$ & $\psi_{\sigma}(x)$
        \end{tabular}
    \end{subfigure}
    \begin{subfigure}[b]{0.45\textwidth}
        \centering
        \begin{tabular}{cc}
            \includegraphics[width=0.4\textwidth]{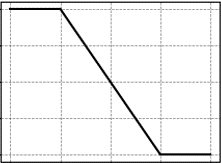} &
            \includegraphics[width=0.4\textwidth]{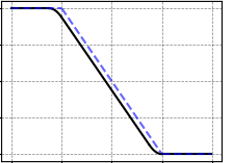} \\
            $\psi(x)$ & $\psi_{\sigma}(x)$
        \end{tabular}
    \end{subfigure}
    \caption{$\psi(x)$ and $\psi_{\sigma}(x)$ in Examples: \textbf{(left)} Example \ref{example:bilinear_LL}, \textbf{(right)} Example \ref{example:sc_non_smooth}. Blue dashed lines compare $\psi_{\sigma}(x)$ to the original hyper-objective $\psi(x)$.} 
    \label{fig:example_plots}
\end{figure}

\begin{example}
    \label{example:bilinear_LL}
    Consider the bilevel problem with $\mX = \mY = [-1,1]$, $f(x,y) = x^2 + y^2$, and $g(x,y) = xy$. Note that $\lim_{\sigma\rightarrow 0} \psi_\sigma(x) \rightarrow  x^2+1$ for all $x \in [-1,1]\setminus\{0\}$, where we have $\lim_{\sigma\rightarrow 0} \psi_\sigma(0) \rightarrow 0$. See Figure~\ref{fig:example_plots}. Therefore $\psi(x)$ is a pointwise convergent point of $\psi_{\sigma} (x)$. However, neither $\psi(x)$ nor $\psi_{\sigma}(x)$ is  differentiable at $x = 0$. 
\end{example}
This example also implies that the order of (partial) differentiation may not be swapped in general. Even when the lower-level objective $g(x,y)$ is strongly convex in $y$, the inclusion of a constraint set $\mY$, even when compact, can lead to a nondifferentiable $\psi$, as we see next.
\begin{example}
    \label{example:sc_non_smooth}
    Consider an example with $\mX = [-2, 2]$, $\mY = [-1,1]$, $f(x,y) = -y$, and $g(x,y) = (y-x)^2$. In this example, $\psi(x) = 1$ if $x < -1$, $\psi(x) = -x$ if $x \in [-1,1]$, and $\psi(x) = -1$ otherwise.
    Thus $\psi(x)$ is not differentiable at $x=-1$ and $1$, while $\grad \psi_{\sigma}(1)=0$ and $\grad \psi_{\sigma}(-1)=-1$ for all $\sigma>0$. 
\end{example}
 
There are two reasons for the poor behavior of $\grad \psi(x)$. First, in Example \ref{example:bilinear_LL}, the solution set moves discontinuously at $x=0$. When the solution abruptly changes due to a small perturbation in $x$, the problem is highly ill-conditioned ---  a fundamental difficulty \cite{daskalakis2021complexity}. Second, in Example \ref{example:sc_non_smooth}, even though the solution set is continuous thanks to the strong convexity, the solution can move nonsmoothly when the set of active constraints for the lower-level problem changes. The result is frequently nonsmoothness of  $\psi(x)$, so  $\grad \psi(x)$ may not be defined at such $x$. 
In summary, we need regularity conditions to ensure that these two cases do not happen.



We first consider assumptions that obviate solution-set discontinuity so situations like those appearing in Example \ref{example:bilinear_LL} will not occur. 
\begin{assumption}
    \label{assumption:solution_lipschitz_continuity}
    The solution set $T(x,\sigma) := \arg\min_{y\in\mY} h_{\sigma}(x,y)$ is locally Lipschitz continuous, {\it i.e.,} $T(x,\sigma)$ satisfies Definition \ref{definition:lipschitz_continuity_solution} at $(x,\sigma)$. 
\end{assumption}
We note here that the differentiability of the value function $l(x,\sigma)$ requires Lipschitz continuity of solution sets (not just continuity). When the solution set is locally Lipschitz continuous, it is known (see Lemma \ref{lemma:lipscthiz_solution_smooth_minimizer}) that the value function $l(x,\sigma)$ is (locally) differentiable and smooth. 

As we have seen in Example \ref{example:sc_non_smooth}, however,  the Lipschitz-continuity of the solution set alone may not be sufficient in the constrained setting. 
We need additional regularity assumptions for constrained lower-level problems. Recalling the algebraic definition of $\mY$ in Assumption~\ref{assumption:constrained_set},
we define the Lagrangian of the constrained ($\sigma$-perturbed) lower-level optimization problem: 

\begin{definition}
Given the lower-level feasible set $\mY$ satisfying Assumption~\ref{assumption:constrained_set}, let $\{\lambda_i\}_{i=1}^{m_1} \subset \mathbb{R}_+$ and $\{\nu_i\}_{i=1}^{m_2} \subset \mathbb{R}$ be some Lagrangian multipliers. The Lagrangian function $\mL(\cdot,\cdot,\cdot | x,\sigma): \mathbb{R}_+^{m_1} \times \mathbb{R}^{m_2} \times \mathbb{R}^{d_y} \rightarrow \mathbb{R}$  at $(x,\sigma)$ is defined by
    \begin{align*}
        \mL(\lambda, \nu, y | x,\sigma) = \sigma f(x,y) + g(x,y) + \tssum_{i \in [m_1]} \lambda_i g_i(y) + \tssum_{i \in [m_2]} \nu_i h_i(y).
    \end{align*}
    We also define the Lagrangian $\mL_{\mI} (\cdot | x,\sigma): \mathbb{R}_+^{|\mI|} \times \mathbb{R}^{m_2} \times \mathbb{R}^{d_y} \rightarrow \mathbb{R}$ restricted to a set of constraints $\mI \subseteq [m_1]$:
    \begin{align}
        \mL_{\mI} (\lambda_{\mI} ,\nu, y | x,\sigma) = \sigma f(x,y) + g(x,y) + \tssum_{i \in \mI} \lambda_i g_i(y) + \tssum_{i \in [m_2]} \nu_i h_i(y). \label{eq:Lagrange_I_def}
    \end{align}
\end{definition}
When the context is clear, we always let $\mI$ in \eqref{eq:Lagrange_I_def} be $\mI(y)$ at a given $y$.
The required assumption is the existence of a regular and stable solution that satisfies the following:
\begin{assumption}
    \label{assumption:strict_slackness}
    The solution set $T(x,\sigma)$ contains at least one $y^* \in T(x,\sigma)$ such that LICQ (Definition \ref{definition:LICQ}) and strict complementary condition (Definition \ref{definition:strict_slackness}) hold at $y^*$, and $\grad^2 \mL_{\mI}(\cdot,\cdot, \cdot| x,\sigma)$ (the matrix of second derivatives with respect to all variables $\lambda_{\mI}, \nu, y$) is continuous at $(\lambda_{\mI}^*, \nu^*, y^*)$.
\end{assumption}
Assumption~\ref{assumption:strict_slackness} helps ensure that the active set $\mI(y^*)$ given in Definition~\ref{def:acc} does not change when $x$ or $\sigma$ is perturbed slightly.




\subsection{Asymptotic Landscape}
\label{sec:al}
We show that Assumptions \ref{assumption:solution_lipschitz_continuity} and \ref{assumption:strict_slackness} are nearly minimal to ensure the twice-differentiability of the value function $l(x,\sigma)$ which, in turn, guarantees asymptotic equivalence of $\grad \psi_{\sigma}(x)$ and $\grad \psi(x)$. In the sequel, we state our main (local) landscape analysis result only under the two assumptions at a given point $(x,\sigma)$, which is given in the following theorem:
\begin{theorem}
    \label{theorem:conjecture_pseudo_inverse}
    Suppose $T(\cdot,\cdot)$ satisfies Assumption \ref{assumption:solution_lipschitz_continuity} in a neighborhood of $(x,\sigma)$. If there exists at least one $y^* \in T(x,\sigma)$ that satisfies Assumption \ref{assumption:strict_slackness}, then $\frac{\partial^2}{\partial x \partial \sigma} l(x,\sigma)$ exists and can be given explicitly by
    \begin{align}
        \label{eq:conjecture_pseudo_inverse}
        \frac{\partial^2}{\partial \sigma \partial x} l(x,\sigma) &= \frac{\partial^2}{\partial x \partial \sigma} l(x,\sigma) \\
        &= \grad_x f(x,y^*) - \begin{bmatrix}
            0 & \grad_{xy}^2 h_{\sigma} (x,y^*)
        \end{bmatrix} (\grad^2 \mL_{\mI}(\lambda_{\mI}^*, \nu^*, y^*|x,\sigma))^{\dagger} \begin{bmatrix}
            0 \\
            \grad_y f(x,y^*)
        \end{bmatrix}. \nonumber 
    \end{align}
    If this equality holds at $\sigma = 0^+$, then $\psi(x)$ is differentiable at $x$, and $\lim_{\sigma \rightarrow 0} \grad \psi_{\sigma}(x) = \grad \psi(x)$. 
\end{theorem}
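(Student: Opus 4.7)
The strategy is to combine an envelope-theorem argument for first-order sensitivity with a KKT-based sensitivity analysis for the second-order identity. First, under Assumption~\ref{assumption:solution_lipschitz_continuity}, the earlier Lemma~\ref{lemma:lipscthiz_solution_smooth_minimizer} yields that $l(x,\sigma)$ is $C^1$ jointly in $(x,\sigma)$ in a neighborhood of the point of interest, and the envelope identities
\begin{align*}
    \tfrac{\partial}{\partial \sigma} l(x,\sigma) = f(x, y^*), \qquad \grad_x l(x,\sigma) = \grad_x h_\sigma(x, y^*)
\end{align*}
hold for any Lipschitz selection $y^*(x,\sigma) \in T(x,\sigma)$; the would-be correction $\grad_y h_\sigma \cdot \partial y^*$ vanishes via KKT stationarity against active-constraint tangents. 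The target identity $\partial^2 l / \partial \sigma \partial x$ then reduces to the $\sigma$-derivative of $\grad_x h_\sigma(x, y^*(x,\sigma))$, which only requires knowing $\partial_\sigma y^*$.

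To obtain $\partial_\sigma y^*$, I would invoke Assumption~\ref{assumption:strict_slackness}: LICQ at $y^*$ produces unique multipliers $(\lambda_{\mI}^*, \nu^*)$, and strict complementarity together with continuity of $\grad^2 \mL_{\mI}$ implies that the active set $\mI = \mI(y^*)$ stays constant along any continuous selection near $(x,\sigma)$, so the problem reduces locally to its equality-constrained restriction. Writing $z := (\lambda_{\mI}, \nu, y)$, the KKT system is $\grad_z \mL_{\mI}(z \mid x, \sigma) = 0$ whose Jacobian in $z$ is exactly the full symmetric Hessian $\grad^2 \mL_{\mI}$. Differentiating this identity in $\sigma$ along a Lipschitz path $z(x,\sigma)$ yields
\begin{align*}
    \grad^2 \mL_{\mI}(\lambda_{\mI}^*, \nu^*, y^* \mid x, \sigma) \, \partial_\sigma z = -(0,\, 0,\, \grad_y f(x, y^*))^\top,
\end{align*}
so $\partial_\sigma z = -(\grad^2 \mL_{\mI})^\dagger (0, 0, \grad_y f)^\top$ modulo $\ker(\grad^2 \mL_{\mI})$. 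Substituting the $y$-block into $\partial_\sigma \grad_x l = \grad_x f + \grad_{xy}^2 h_\sigma \, \partial_\sigma y^*$ produces the formula~\eqref{eq:conjecture_pseudo_inverse}. Equality with $\partial^2 l / \partial x \partial \sigma$ follows from the analogous computation for $\partial_x z$ applied to $\partial_\sigma l = f(x,y^*)$, combined with symmetry of $\grad^2 \mL_{\mI}$ (which makes its pseudo-inverse symmetric as well). Finally, as $\sigma \to 0^+$ all terms in the explicit formula are continuous under the stated assumptions at $\sigma=0$, so $\grad \psi(x) = \partial^2 l /\partial x \partial \sigma|_{\sigma=0^+}$ exists and coincides with $\lim_{\sigma \to 0^+} \grad \psi_\sigma(x)$.

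The main obstacle is the potential singularity of $\grad^2 \mL_{\mI}$: since we do not assume any second-order sufficient condition, the classical implicit function theorem does not apply and the sensitivity equation does not pin down $\partial_\sigma z$ uniquely. Lipschitz continuity of $T$ salvages the argument by guaranteeing that the right-hand side $(0,0,\grad_y f)^\top$ lies in $\Image(\grad^2 \mL_{\mI})$ (otherwise no Lipschitz selection $z(x,\sigma)$ could exist), so the pseudo-inverse delivers the minimum-norm solution consistent with such a selection. The key remaining subtlety to verify is invariance of $\grad_{xy}^2 h_\sigma \, \partial_\sigma y^*$ under the choice of selection: directions in $\ker(\grad^2 \mL_{\mI})$ correspond to tangents along the solution set $T(x,\sigma)$ along which $\grad_x l$ is constant, so any ambiguity in $\partial_\sigma y^*$ is annihilated on contraction with $\grad_{xy}^2 h_\sigma$, and the final expression depends only on the value function, not on the particular Lipschitz selection used.
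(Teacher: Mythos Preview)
Your proposal is correct and follows essentially the same route as the paper: envelope identities for $\partial_\sigma l$ and $\grad_x l$, active-set stability from strict complementarity, KKT sensitivity with a pseudo-inverse to handle the possibly singular Lagrangian Hessian, and the key image condition---your observation that $(0,0,\grad_y f)^\top \in \Image(\grad^2 \mL_{\mI})$ whenever a Lipschitz selection exists---which is exactly the paper's Proposition~\ref{proposition:key_set_continuity_necessary}.

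Two points deserve tightening. First, ``differentiating along a Lipschitz path $z(x,\sigma)$'' is not quite licit since Lipschitz selections need not be differentiable; the paper avoids this by working with finite differences $(\theta_t^*-\theta^*)/t$ and applying the implicit function theorem to the restriction $\mL_U(w,z):=\mL_{\mI}(Uz+y_0)$ onto the image of $\grad^2\mL_{\mI}$, where the Hessian is invertible, then showing the orthogonal component is $o(t)$. Second, your annihilation argument (``kernel directions are tangents to $T$ along which $\grad_x l$ is constant'') is more than you need and harder to justify without second-order sufficiency; the cleaner route, which the paper takes, is purely algebraic: the image condition you already invoked says $(0,\grad_{yx}^2 h_\sigma u)\in\Image(\grad^2\mL_{\mI})$ for all $u$, and since kernel and image of a symmetric matrix are orthogonal, any kernel element $(d\lambda,d\nu,dy)$ automatically satisfies $\grad_{xy}^2 h_\sigma\, dy=0$.
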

Theorem \ref{theorem:conjecture_pseudo_inverse} generalizes the expression of $\grad \psi(x)$ from the case of a unique solution to the one with multiple solutions, significantly enlarging the scope of tractable instances of BO. Up to our best knowledge, there are no previous results that provide an explicit formula of $\grad \psi(x)$, even when the solution set is Lipschitz continuous, though conjectures have been made in the literature \cite{xiao2023generalized, arbel2022non} under similar conditions.

\begin{remark}[Set Lipschitz Continuity]
    While we require the entire solution set $T(x,\sigma)$ to be Lipschitz continuous, the proof indicates that we need only Lipschitz continuity of solution paths passing through $y^*$ (that defines the first-order derivative of $l(x,\sigma)$ and satisfies other regularity conditions) in all possible perturbation directions of $(x,\sigma)$. Nonetheless, we stick to a stronger requirement of Definition~\ref{definition:lipschitz_continuity_solution}, since our algorithm requires a stronger condition that implies the continuity of entire solution sets.
\end{remark}

\begin{remark}[Lipschitz Continuity in $\sigma$]
    While we require $T(x,\sigma)$ to be Lipschitz continuous in both $x$ and $\sigma$, well definedness of $\grad \psi(x)$ requires only Lipschitz continuity of $T(x,0^+)$ in $x$ (which sometimes can be implied by the PL condition only on $g$ as in \cite{chen2023bilevel, shen2023penalty, xiao2023generalized}). 
    Still, for implementing a stable and efficient algorithm with stochastic oracles, we conjecture that it is essential to have the Lipschitz continuity assumption on the additional axis $\sigma$. 
\end{remark}


We conclude our asymptotic landscape analysis with a high-level discussion on Theorem \ref{theorem:conjecture_pseudo_inverse}.   The key step in our proof is to prove the following proposition:
\begin{proposition}[Necessary Condition for Lipschitz Continuity]
\label{proposition:key_set_continuity_necessary}
    Suppose $T(x,\sigma)$ satisfies Assumption \ref{assumption:solution_lipschitz_continuity} at $(x,\sigma)$. For any $y^* \in T(x,\sigma)$ that satisfies Assumption \ref{assumption:strict_slackness}, the following must hold:
    \begin{align}
        \forall v \in \texttt{\normalfont span}(\Image(\grad^2_{yx} h_{\sigma} (x,y^*)), \grad_y f(x,y^*)): \begin{bmatrix}
            0 \\ v
        \end{bmatrix} \in \Image( \grad^2 \mL_{\mI} (\lambda_{\mI}^*, \nu^*, y^* | x, \sigma)). \label{eq:necessary_image_condition}
    \end{align} 
\end{proposition}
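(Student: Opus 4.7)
The plan is to derive the claimed image inclusion by linearizing the KKT system of the perturbed lower-level problem at $y^*$ along an arbitrary direction $(\Delta_x, \Delta_\sigma)$, and reading off what the linearization forces on its right-hand side.

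The first step is active-set stability. For each direction $(\Delta_x, \Delta_\sigma)$ and small $t>0$, Assumption~\ref{assumption:solution_lipschitz_continuity} yields some $y(t) \in T(x+t\Delta_x, \sigma + t\Delta_\sigma)$ with $\|y(t) - y^*\| \le L t$. Constraints inactive at $y^*$ remain inactive at $y(t)$ by continuity. For $i \in \mI := \mI(y^*)$, if $g_i(y(t_k)) < 0$ along a subsequence, complementary slackness forces the corresponding multiplier to vanish, and together with LICQ-based uniqueness of multipliers at $y^*$ and continuity of the stationarity equation, this contradicts the strict complementarity $\lambda_i^* > 0$ granted by Assumption~\ref{assumption:strict_slackness}. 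Hence $\mI(y(t)) = \mI$ for all sufficiently small $t$. With the active set fixed and LICQ stable under small perturbations, the stationarity equation is a linear system in $(\lambda_\mI, \nu)$ whose coefficient matrix has full column rank, so $(\lambda_\mI(t), \nu(t))$ are uniquely determined and Lipschitz in $t$: $\|\lambda_\mI(t) - \lambda_\mI^*\|, \|\nu(t) - \nu^*\| = O(t)$.

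The second step is a limiting argument. The bounded difference quotients $(y(t)-y^*)/t$, $(\lambda_\mI(t)-\lambda_\mI^*)/t$, $(\nu(t)-\nu^*)/t$ admit convergent subsequences $t_k \downarrow 0$ with limits $(\dot y, \dot\lambda, \dot\nu)$. Differentiating primal feasibility $g_i(y(t))=0$ for $i\in\mI$ and $h_j(y(t))=0$ along $t_k$ yields $\grad g_i(y^*)^\top \dot y = 0$ and $\grad h_j(y^*)^\top \dot y = 0$. Differentiating the stationarity equation, using continuity of $\grad^2 \mL_\mI$ from Assumption~\ref{assumption:strict_slackness}, yields
\[
\grad^2_{yy}\mL_\mI\, \dot y + \tssum_{i\in\mI}\dot\lambda_i \grad g_i(y^*) + \tssum_{j} \dot\nu_j \grad h_j(y^*) = -\bigl(\grad^2_{yx} h_\sigma(x,y^*)\Delta_x + \grad_y f(x,y^*)\Delta_\sigma\bigr).
\]
Stacked together and merging the two zero blocks, these three identities are exactly the linear system
\[
\grad^2 \mL_\mI(\lambda_\mI^*,\nu^*,y^*\,|\,x,\sigma) \begin{bmatrix} \dot\lambda \\ \dot\nu \\ \dot y \end{bmatrix} = -\begin{bmatrix} 0 \\ \grad^2_{yx} h_\sigma(x,y^*)\Delta_x + \grad_y f(x,y^*)\Delta_\sigma \end{bmatrix},
\]
which, by construction, has a solution $(\dot\lambda,\dot\nu,\dot y)$. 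Hence its right-hand side lies in $\Image(\grad^2 \mL_\mI)$; letting $(\Delta_x,\Delta_\sigma)$ range freely, $v := \grad^2_{yx} h_\sigma(x,y^*)\Delta_x + \grad_y f(x,y^*)\Delta_\sigma$ covers $\texttt{\normalfont span}(\Image(\grad^2_{yx} h_\sigma(x,y^*)), \grad_y f(x,y^*))$, establishing~\eqref{eq:necessary_image_condition}.

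The main obstacle is the active-set stability step: it requires combining strict complementarity with LICQ-based uniqueness of multipliers in a careful continuity argument, since without it, the linearized system would involve the wrong restricted Lagrangian $\mL_{\mI(y(t))}$. A secondary subtlety is that the Lipschitz curves supplied by Hausdorff continuity of $T$ need not be differentiable at $t=0$, but working with bounded difference quotients and extracting a convergent subsequence sidesteps this, since linearity of the limiting equations makes any accumulation point of the quotients a valid witness for membership in $\Image(\grad^2 \mL_\mI)$.
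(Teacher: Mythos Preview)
Your proposal is correct and follows essentially the same linearization strategy as the paper: both read off the image inclusion from a first-order expansion of the KKT system along a perturbation direction, with your version arguing directly via difference quotients and limiting subsequences while the paper argues by contradiction (assuming $(0,v)\notin\Image(\grad^2\mL_\mI)$ and showing that no nearby point can satisfy the perturbed KKT equations). Your treatment of active-set stability via strict complementarity is somewhat more explicit than the paper's, but the underlying idea is the same.
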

Formally, perturbations in $(x,\sigma)$ must not tilt the flat directions of the lower-level landscape for $T(x,\sigma)$ to be continuous. While it is easy to make up examples that do not meet the condition ({\it e.g.} Example \ref{example:bilinear_LL}), several recent results show that the solution landscape may be stabilized for complicated functions such as over-parameterized neural networks \cite{frei2021proxy, song2021subquadratic, liu2022loss}. In Appendix~\ref{appendix:proof:conjecture_pseudo_inverse}, we prove a more general version of Theorem \ref{theorem:conjecture_pseudo_inverse} (see Theorem \ref{theorem:general_pseudo_inverse_form}), of possible broader interest, concerning the Hessian of $l(x,\sigma)$.

\ifarxiv
\subsubsection{Special Case: Unique Solution and Invertible Hessian}

When the Hessian is invertible at the unique solution, the statement can be made stronger since we can deduce solution-set Lipschitz continuity from the well-understood solution sensitivity analysis in constrained optimization \cite{bonnans2013perturbation}. That is, we can provide a strong sufficiency guarantee for the Lipschitz continuity of solution-sets.
\begin{proposition}[Sufficient Condition for Lipschitz Continuity]
\label{lem:dsc}
    Suppose $y^* \in T(x,\sigma)$ is the unique lower-level solution at $(x,\sigma)$. Suppose that Assumption \ref{assumption:strict_slackness} holds at $y^*$ with corresponding Lagrangian multipliers $\lambda^*, \nu^*$. Further, suppose that $\grad^2 \mL_{\mI}^*$ 
    is invertible. 
    Then $T(x,\sigma)$ is locally Lipschitz continuous at $(x,\sigma)$. 
\end{proposition}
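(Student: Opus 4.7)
The plan is to apply the classical parametric-sensitivity implicit function theorem to the KKT system and then upgrade local smoothness of the primal solution path to Hausdorff-Lipschitz continuity of the (eventually singleton) solution set. Let $\mI^{*} = \mI(y^{*})$ and define
\[
F(\lambda_{\mI^{*}}, \nu, y; x', \sigma') := \begin{bmatrix} g_{\mI^{*}}(y) \\ h(y) \\ \grad_y \mL_{\mI^{*}}(\lambda_{\mI^{*}}, \nu, y \mid x', \sigma') \end{bmatrix},
\]
which is exactly $\grad_{(\lambda_{\mI^{*}}, \nu, y)} \mL_{\mI^{*}}$ and vanishes at $(\lambda_{\mI^{*}}^{*}, \nu^{*}, y^{*}; x, \sigma)$ by the KKT conditions from Assumption~\ref{assumption:strict_slackness}. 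The Jacobian of $F$ with respect to the primal-dual variables $(\lambda_{\mI^{*}}, \nu, y)$ at the base point is precisely $\grad^{2} \mL_{\mI^{*}}(\lambda_{\mI^{*}}^{*}, \nu^{*}, y^{*} \mid x, \sigma)$, which is invertible by hypothesis. The implicit function theorem then produces an open neighborhood $U$ of $(x,\sigma)$ and a $C^{1}$ map $(x',\sigma') \mapsto (\lambda_{\mI^{*}}(x',\sigma'), \nu(x',\sigma'), y(x',\sigma'))$ with $F\equiv 0$ on $U$, agreeing with $(\lambda_{\mI^{*}}^{*}, \nu^{*}, y^{*})$ at $(x,\sigma)$ and uniquely defined in a neighborhood of that primal-dual point.

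I next verify that this locally unique KKT point remains the genuine global minimizer after perturbation. By strict complementarity, $\lambda_{i}^{*} > 0$ for $i \in \mI^{*}$, so continuity of the implicit map gives $\lambda_{i}(x',\sigma') > 0$ on a possibly smaller subneighborhood; for $i \notin \mI^{*}$ the strict inequality $g_{i}(y^{*}) < 0$ together with continuity forces $g_{i}(y(x',\sigma')) < 0$ nearby. Hence $y(x',\sigma')$ is a KKT point of the $(x',\sigma')$-problem with unchanged active set $\mI^{*}$ and feasible multipliers. For the global-optimality upgrade, I use a compactness argument: if there existed $(x_{n},\sigma_{n}) \to (x,\sigma)$ and $\tilde{y}_{n} \in T(x_{n},\sigma_{n})$ with $\dist(\tilde{y}_{n}, y^{*})$ bounded away from zero, compactness of $\mY$ (Assumption~\ref{assumption:constrained_set}) extracts a subsequential limit $\tilde{y}_{n} \to \tilde{y} \neq y^{*}$, and joint continuity of $h$ applied to $h_{\sigma_{n}}(x_{n},\tilde{y}_{n}) \le h_{\sigma_{n}}(x_{n}, y^{*})$ yields $h_{\sigma}(x,\tilde{y}) \le h_{\sigma}(x, y^{*})$, contradicting uniqueness of $y^{*}$. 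Therefore every $(x',\sigma')$-minimizer for $(x',\sigma')$ near $(x,\sigma)$ lies in a small neighborhood of $y^{*}$, where local uniqueness from the implicit function theorem forces it to coincide with $y(x',\sigma')$.

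Since $y(\cdot,\cdot)$ is $C^{1}$ on $U$ it is locally Lipschitz, and the set-equality $T(x',\sigma') = \{y(x',\sigma')\}$ immediately upgrades this to local Lipschitz continuity of $T$ in the Hausdorff metric, matching Definition~\ref{definition:lipschitz_continuity_solution}. The main obstacle is the bridge in the previous paragraph: the implicit function theorem only supplies a locally unique KKT path, and a priori a perturbation could spawn additional global minimizers far from $y^{*}$. The combination of compactness of $\mY$, joint continuity of the perturbed objective, and the uniqueness of $y^{*}$ is precisely what excludes this pathology and ties the classical constrained-optimization sensitivity to the set-valued Lipschitz property needed elsewhere in the paper.
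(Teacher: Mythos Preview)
Your proposal is correct and follows essentially the same approach as the paper: apply the implicit function theorem to the restricted KKT system $\grad \mL_{\mI}=0$, use strict complementarity and continuity to keep the active set and multiplier signs intact, and combine with a compactness argument showing all perturbed minimizers stay near $y^{*}$ so that the IFT path is the whole solution set. The only cosmetic difference is that the paper phrases the ``no far-away minimizers'' step as a direct $\epsilon$--$\delta$ bound on objective values, whereas you use an equivalent sequential compactness argument; both rest on the boundedness of $\mY$ and uniqueness of $y^{*}$.
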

Thus, the uniqueness of the solution along with LICQ, strict complementarity, and invertibility of the Hessian is strong enough to guarantee the Lipschitz continuity of solution sets. Therefore, we can conclude that $\frac{\partial^2}{\partial x \partial \sigma } l(x,\sigma)$ exists and that the order of differentiation commutes under Assumption \ref{assumption:strict_slackness}.
\begin{theorem}
\label{theorem:differentiability_sufficiency_condition}
    Suppose $y^* \in T(x,\sigma)$ is the unique lower-level solution at $(x,\sigma)$. If Assumption \ref{assumption:strict_slackness} holds at $y^*$, and $\grad^2 \mL_{\mI}^*$ is invertible, then $\frac{\partial^2}{\partial x \partial \sigma} l(x,\sigma)$ exists and can be given explicitly by
    \begin{align*}
        \frac{\partial^2}{\partial \sigma \partial x} l(x,\sigma) &= \frac{\partial^2}{\partial x \partial \sigma} l(x,\sigma) \\
        &= \grad_x f(x,y^*) - \begin{bmatrix}
            0 & \grad_{xy}^2 h_{\sigma} (x,y^*)
        \end{bmatrix} (\grad^2 \mL_{\mI} (\lambda^*_{\mI}, \nu^*, y^*|x,\sigma))^{-1} \begin{bmatrix}
            0 \\
            \grad_y f(x,y^*)
        \end{bmatrix}.
    \end{align*}
    If this equality holds at $\sigma = 0^+$, then $\psi(x)$ is differentiable at $x$, and $\lim_{\sigma \rightarrow 0} \grad \psi_{\sigma}(x) = \grad \psi(x)$. 
\end{theorem}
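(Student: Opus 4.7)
The plan is to obtain this theorem as a direct corollary by chaining Proposition~\ref{lem:dsc} with Theorem~\ref{theorem:conjecture_pseudo_inverse}, with only a small amount of bookkeeping left to convert the pseudo-inverse to a genuine inverse and to pass to the limit $\sigma=0^+$.

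First I would use the hypotheses---uniqueness of $y^*$, Assumption~\ref{assumption:strict_slackness} at $y^*$, and invertibility of $\grad^2 \mL_{\mI}^*$---to invoke Proposition~\ref{lem:dsc} and conclude that the solution map $T(\cdot,\cdot)$ is locally Lipschitz continuous at $(x,\sigma)$. This verifies Assumption~\ref{assumption:solution_lipschitz_continuity} in a neighborhood, so the hypotheses of Theorem~\ref{theorem:conjecture_pseudo_inverse} are met at this $y^*$. Theorem~\ref{theorem:conjecture_pseudo_inverse} then yields existence of the mixed partial and the commuting identity $\frac{\partial^2}{\partial \sigma \partial x} l(x,\sigma) = \frac{\partial^2}{\partial x \partial \sigma} l(x,\sigma)$, together with the pseudo-inverse formula \eqref{eq:conjecture_pseudo_inverse}. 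Because $\grad^2 \mL_{\mI}^*$ is invertible, its image is the full ambient space, so the necessary-condition inclusion \eqref{eq:necessary_image_condition} is automatic, and the pseudo-inverse $(\grad^2 \mL_{\mI}(\lambda^*_{\mI},\nu^*,y^*|x,\sigma))^{\dagger}$ collapses to the ordinary inverse $(\grad^2 \mL_{\mI}(\lambda^*_{\mI},\nu^*,y^*|x,\sigma))^{-1}$. Substituting this into \eqref{eq:conjecture_pseudo_inverse} produces exactly the claimed explicit formula.

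For the limiting statement, the excerpt already observes that $\psi(x) = \partial_\sigma l(x,\sigma)\big|_{\sigma=0^+}$, so differentiating in $x$ gives $\grad \psi(x) = \frac{\partial^2}{\partial x \partial \sigma} l(x,0^+)$ whenever that mixed partial exists. On the other hand, the difference-quotient identity $\grad \psi_\sigma(x) = \sigma^{-1}(\grad_x l(x,\sigma) - \grad_x l(x,0))$ tends to $\frac{\partial^2}{\partial \sigma \partial x} l(x,0^+)$ as $\sigma \to 0^+$. The equality of the two mixed partials at $\sigma=0^+$ (exactly the hypothesis of the final sentence, obtained from the already-established interior formula) then gives $\lim_{\sigma \to 0^+} \grad \psi_\sigma(x) = \grad \psi(x)$, and in particular the differentiability of $\psi$ at $x$.

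The only delicate point in the argument is the extension of the interior statement to the one-sided boundary $\sigma=0^+$: Proposition~\ref{lem:dsc} and Theorem~\ref{theorem:conjecture_pseudo_inverse} are stated in a neighborhood of a given $(x,\sigma)$, and to justify the one-sided limits above we need continuity of the map $(x,\sigma) \mapsto \frac{\partial^2}{\partial x \partial \sigma} l(x,\sigma)$ as $\sigma \downarrow 0$. This reduces to continuity of the solution triple $(y^*,\lambda^*_\mI,\nu^*)$ in $(x,\sigma)$, which is precisely the implicit-function-theorem based sensitivity result that underlies Proposition~\ref{lem:dsc}; apart from this mild boundary bookkeeping, the theorem contains no genuinely new analytic content.
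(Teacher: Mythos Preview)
Your proposal is correct, but it takes a different route from the paper. The paper does \emph{not} invoke Theorem~\ref{theorem:conjecture_pseudo_inverse}; instead it gives a self-contained argument: starting from the local region furnished by the proof of Proposition~\ref{lem:dsc}, it applies the implicit function theorem directly to obtain explicit formulas for $\frac{d y^*(x,\sigma)}{d\sigma}$ and $\grad_x y^*(x,\sigma)$ in terms of $(\grad^2\mL_{\mI}^*)^{-1}$, and then computes both $\frac{\partial^2}{\partial\sigma\partial x}l$ and $\frac{\partial^2}{\partial x\partial\sigma}l$ by the chain rule, verifying by inspection that they coincide with the stated expression. Your approach instead treats the theorem as a corollary of the general pseudo-inverse result Theorem~\ref{theorem:conjecture_pseudo_inverse}, specialized via $(\grad^2\mL_{\mI}^*)^{\dagger}=(\grad^2\mL_{\mI}^*)^{-1}$ once invertibility is assumed. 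The paper's route is more elementary and independent of the harder multi-solution analysis (indeed, in the appendix Theorem~\ref{theorem:differentiability_sufficiency_condition} is proved \emph{before} Theorem~\ref{theorem:conjecture_pseudo_inverse}, almost as a warm-up); your route is more economical once Theorem~\ref{theorem:conjecture_pseudo_inverse} is in hand and makes the logical relationship between the two theorems explicit. One small point worth tightening in your write-up: Proposition~\ref{lem:dsc} is stated at a single $(x,\sigma)$, while Theorem~\ref{theorem:conjecture_pseudo_inverse} requires Assumption~\ref{assumption:solution_lipschitz_continuity} throughout a neighborhood---this is fine because invertibility and the other regularity conditions persist under small perturbations by continuity of $\grad^2\mL_{\mI}$, but you should say so explicitly rather than leaving it implicit.
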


\else
\begin{remark}
With a more standard assumption on the uniqueness of the lower-level solution and the invertibility of the Lagrangian Hessian, we can provide a sufficiency guarantee for the Lipschitz continuity of solution sets stronger than Theorem~\ref{theorem:conjecture_pseudo_inverse}. We refer the readers to Appendix \ref{subsubsec:unique_solution}.
\end{remark}
\fi

\subsection{Landscape Approximation with $\sigma>0$}
We can view $\grad \psi_{\sigma}(x)$ as an approximation of $\frac{\partial}{\partial \sigma} \left(\frac{\partial}{\partial x} l(x,\sigma)\right)|_{\sigma = 0^+}$ via finite differentiation with respect to $\sigma$. 
Assuming that $\frac{\partial^2}{\partial {\sigma} \partial {x} } l(x,\sigma)$ exists and is continuous for all small values of $\sigma$, we can apply the mean-value theorem and conclude that $\grad \psi_{\sigma}(x) = \frac{\partial^2}{\partial {\sigma} \partial {x} } l(x,\sigma')$ for some $\sigma' \in [0,\sigma]$. Thus,  $\|\grad \psi_{\sigma}(x) - \grad \psi (x)\|$ is $O(\sigma)$  
whenever $\frac{\partial^2}{\partial {x} \partial {\sigma}} l(x,\sigma)$ is well-defined and uniformly Lipschitz continuous over $[0,\sigma]$.

To work with nonzero constant $\sigma > 0$, we need the regularity assumptions to hold in significantly larger regions. The crux of Assumption \ref{assumption:small_gradient_proximal_error_bound} is the guaranteed Lipschitz continuity of solution sets (see also Assumption~\ref{assumption:solution_lipschitz_continuity}) for every given $x$ and $\sigma$, which is also crucial for the tractability of lower-level solutions by local search algorithms whenever upper-level variable changes:
\begin{lemma}
    \label{lemma:proximal_error_to_lipschitz_continuity}
    Under Assumption \ref{assumption:small_gradient_proximal_error_bound}, $T(x,\sigma)$ is $(l_{g,1}/\mu)$-Lipschitz continuous in $x$ and $(l_{f,0}/\mu)$-Lipschitz continuous in $\sigma$ for all $x \in \mX,\sigma \in [0,\delta/C_f]$.  
\end{lemma}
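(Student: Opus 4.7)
The plan is to quantify, for a fixed minimizer $y^* \in T(x_1,\sigma_1)$, how far $y^*$ is from being optimal at the perturbed parameters $(x_2,\sigma_2)$ by computing one proximal step, then convert this proximal residual into an upper bound on $\dist(y^*, T(x_2,\sigma_2))$ via Assumption~\ref{assumption:small_gradient_proximal_error_bound}. Concretely, fix $\rho>0$ small enough that $z\mapsto \rho h_\sigma(x,z) + \tfrac{1}{2}\|z-y\|^2$ is strongly convex on $\mY$ for every admissible $(x,\sigma)$ (i.e., $\rho(l_{g,1}+\sigma l_{f,1})<1$), so the proximal map is single-valued. Since $y^*$ minimizes $h_{\sigma_1}(x_1,\cdot)$ over $\mY$, we have $y^* = \prox_{\rho h_{\sigma_1}(x_1,\cdot)}(y^*)$, equivalently $-\rho\nabla_y h_{\sigma_1}(x_1,y^*) \in \mathcal{N}_\mY(y^*)$. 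Set $p_2 := \prox_{\rho h_{\sigma_2}(x_2,\cdot)}(y^*)$; first-order optimality gives $-\rho\nabla_y h_{\sigma_2}(x_2,p_2) - (p_2 - y^*) \in \mathcal{N}_\mY(p_2)$.

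Next I would subtract these two inclusions, pair with $p_2 - y^*$, and use monotonicity of the normal cone of the convex set $\mY$ (which discards the normal-cone contribution) to obtain
\begin{align*}
\|p_2 - y^*\|^2 \leq \rho\,\langle \nabla_y h_{\sigma_1}(x_1,y^*) - \nabla_y h_{\sigma_2}(x_2,p_2),\, p_2 - y^*\rangle.
\end{align*}
Splitting $\nabla_y h_{\sigma_2}(x_2,p_2) - \nabla_y h_{\sigma_1}(x_1,y^*)$ into $[\nabla_y h_{\sigma_2}(x_2,p_2) - \nabla_y h_{\sigma_2}(x_2,y^*)]$ (absorbed into the left-hand side via joint smoothness, producing a factor $1-\rho(l_{g,1}+\sigma_2 l_{f,1})$) and $[\nabla_y h_{\sigma_2}(x_2,y^*) - \nabla_y h_{\sigma_1}(x_1,y^*)]$ (bounded via a $g$-difference, an $f$-smoothness-in-$x$ term scaled by $\sigma_1$, and a $(\sigma_2-\sigma_1)\nabla_y f$ term controlled by $\|\nabla_y f\|\le l_{f,0}$ from Assumption~\ref{assumption:regulaity_on_fg}), and rearranging, gives
\begin{align*}
\rho^{-1}\|y^* - p_2\| \leq \frac{(l_{g,1}+\sigma_1 l_{f,1})\|x_1-x_2\| + l_{f,0}|\sigma_1-\sigma_2|}{1-\rho(l_{g,1}+\sigma_2 l_{f,1})}.
\end{align*}

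Invoking Assumption~\ref{assumption:small_gradient_proximal_error_bound}, whenever the right-hand side above is at most $\delta$, we obtain $\mu\cdot\dist(y^*, T(x_2,\sigma_2)) \leq \rho^{-1}\|y^* - p_2\|$. Sending $\rho\downarrow 0$ eliminates the denominator; specializing to $|\sigma_1-\sigma_2|=0$ (resp.\ $\|x_1-x_2\|=0$) and keeping $\sigma$ in the stated range $[0,\delta/C_f]$ so that the $\sigma l_{f,1}$ correction is a lower-order term recovers the stated Lipschitz coefficients $l_{g,1}/\mu$ in $x$ and $l_{f,0}/\mu$ in $\sigma$. Swapping the roles of $(x_1,\sigma_1)$ and $(x_2,\sigma_2)$ upgrades this one-sided estimate to a bound on the full Hausdorff distance in Definition~\ref{def:hausdorff}.

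The main obstacle is that the proximal-EB hypothesis is local, requiring the proximal residual to lie below $\delta$; hence the pointwise bound above holds only when $(x_1,\sigma_1)$ and $(x_2,\sigma_2)$ are sufficiently close. To obtain the global Lipschitz estimate claimed in the lemma I would chain the local bound along the line segment from $(x_1,\sigma_1)$ to $(x_2,\sigma_2)$, subdividing it into sub-segments short enough that the proximal residual stays below $\delta$ on each (possible uniformly because $\mY$ is bounded and all relevant constants $l_{g,1}, l_{f,0}, l_{f,1}, \mu, \delta$ are uniform over $\mX\times[0,\sigma_0]$ by Assumption~\ref{assumption:small_gradient_proximal_error_bound}), and telescoping the per-step Hausdorff bounds via the triangle inequality. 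Since the per-step constants are independent of the partition, the telescoped sum reproduces the same constants globally, completing the argument.
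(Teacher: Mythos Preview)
Your proposal is correct and takes essentially the same approach as the paper: bound the proximal residual of a minimizer $y^*\in T(x_1,\sigma_1)$ at the perturbed parameters $(x_2,\sigma_2)$ and then invoke Assumption~\ref{assumption:small_gradient_proximal_error_bound} to convert this into a local Lipschitz bound on the solution set (the paper obtains the prox-difference estimate by citing auxiliary Lemmas~\ref{lemma:prox_diff_bound} and~\ref{lemma:prox_sigma_diff_bound}, proved via a one-step projected-gradient contraction, whereas you derive it directly from normal-cone monotonicity, but these are interchangeable standard arguments). One minor caveat: Assumption~\ref{assumption:small_gradient_proximal_error_bound} is stated for a \emph{fixed} $\rho$, so your step ``send $\rho\downarrow 0$'' is not justified as written; however it is also unnecessary, since the paper keeps $\rho$ fixed and its proof likewise yields only $O(l_{g,1}/\mu)$ and $O(l_{f,0}/\mu)$ constants rather than the exact coefficients in the lemma statement.
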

An additional consequence of Lemma \ref{lemma:proximal_error_to_lipschitz_continuity} is that, by Lemma \ref{lemma:lipscthiz_solution_smooth_minimizer}, $l (x, \sigma)$ is continuously differentiable and smooth for all $x \in \mX$ and $\sigma \in [0,\delta/C_f]$. This fact guarantees in turn that $\psi_{\sigma}(x)$ is differentiable and smooth (though $\psi(x)$ does not necessarily have these properties).

While Assumption \ref{assumption:small_gradient_proximal_error_bound} is sufficient to ensure $\psi_{\sigma}(x)$ is well-behaved, we need additional regularity conditions to ensure that $\psi(x)$ is also well-behaved. Therefore when we connect $\psi(x)$ and $\psi_{\sigma}(x)$, we make two more {\it local} assumptions that are non-asymptotic versions of Assumption \ref{assumption:solution_lipschitz_continuity} and \ref{assumption:strict_slackness}. The first concerns Hessian-Lipschiztness and regularity of solutions.
\begin{assumption} 
    \label{assumption:additional_regularity} 
    For a given $x$, there exists at least one $y^* \in T(x,0)$ such that if we follow the solution path $y^*(\sigma)$ along the interval  $\sigma \in [0,\sigma_0]$, 
    \begin{enumerate}
        \item[(1)]
        all $y^*(\sigma)$ satisfies Assumption \ref{assumption:strict_slackness} with active constraint indices $\mI$ and Lagrangian multipliers $\lambda^*_{\mI}(\sigma), \nu^*(\sigma)$ of size $O(1)$ and
        \item[(2)] 
        $\grad^2 f, \grad^2 g, \{\grad^2 g_i \}_{i=1}^{m_1}$ are $l_{h,2}$-Lipschitz continuous at all $(x, y^*(\sigma))$.   
    \end{enumerate}
\end{assumption}
In the unconstrained settings with Hessian-Lipschitz objectives, Assumption \ref{assumption:additional_regularity} is implied by Assumption \ref{assumption:small_gradient_proximal_error_bound} for all $x \in \mX, \sigma \in [0,\delta/C_f]$. 

The second assumption is on the minimum nonzero singular value of active constraint gradients.
\begin{assumption}
    \label{assumption:non_zero_singular_value}
    For a given $x$, there exists at least one $y^* \in T(x,0)$ such that if we follow the solution path $y^*(\sigma)$ along the interval $\sigma \in [0,\sigma_0]$, all solutions $y^*(\sigma)$ satisfy Definition~\ref{definition:LICQ} with minimum singular value $s_{\min} > 0$.
    That is, for all $\sigma \in [0,\sigma_0]$, we have
    \begin{align*}
        \min_{v: \|v\|_2=1} \left\| \begin{bmatrix}
            \grad g_i(y^*(\sigma)), \forall i \in \mI &| \  \grad h_i(y^*(\sigma)), \forall i \in [m_2]
        \end{bmatrix} v  \right\| \ge s_{\min}.
    \end{align*}
\end{assumption}
Note that $s_{\min}$ in the constrained setting depends purely on the LICQ condition, Definition~\ref{definition:LICQ}.

\begin{theorem}
    \label{proposition:uniform_convergence_QG}
    Under Assumptions \ref{assumption:small_gradient_proximal_error_bound} - \ref{assumption:constrained_set}, we have
    \begin{align*}
        | \psi_\sigma (x) - \psi(x) | \le O\left( l_{f,0}^2/\mu \right) \cdot \sigma,
    \end{align*}
    for all $x \in \mX$ and $\sigma \in [0, \frac{\delta}{2 C_f}]$. 
    If, in addition, Assumptions~\ref{assumption:additional_regularity} and \ref{assumption:non_zero_singular_value} hold at a given $x$, then
    \begin{align*}
        \| \grad \psi_\sigma (x) - \grad \psi(x) \| \le O\left( \frac{l_{g,1}^4 l_{f,0}^3}{\mu^3 s_{\min}^3} + \frac{l_{h,2} l_{g,1}^2 l_{f,0}^3 }{\mu^3 s_{\min}^2} \right) \cdot \sigma.
    \end{align*} 
\end{theorem}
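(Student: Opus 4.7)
For the first inequality, the plan is to apply the envelope theorem. Under Assumption \ref{assumption:small_gradient_proximal_error_bound}, Lemma \ref{lemma:proximal_error_to_lipschitz_continuity} shows that $T(x,\cdot)$ is $(l_{f,0}/\mu)$-Lipschitz in $\sigma$, so Lemma \ref{lemma:lipscthiz_solution_smooth_minimizer} yields $\partial_\sigma l(x,s) = f(x,y^*(s))$ for any selection $y^*(s)\in T(x,s)$. Choosing the selection continuously in $s$ so that $y^*(s)\to y^*(0^+)\in\arg\min_{y\in T(x,0)} f(x,y)$ as $s\to 0^+$ gives $\psi(x) = f(x,y^*(0^+))$, and the fundamental theorem of calculus yields
\begin{align*}
    \psi_\sigma(x) - \psi(x) = \frac{1}{\sigma}\int_0^\sigma\left[f(x,y^*(s)) - f(x,y^*(0^+))\right]\,ds.
\end{align*}
Bounding the integrand by $l_{f,0}\cdot\|y^*(s)-y^*(0^+)\|\le (l_{f,0}^2/\mu)\,s$ via Assumption \ref{assumption:regulaity_on_fg}(2) and Lemma \ref{lemma:proximal_error_to_lipschitz_continuity} produces the $O((l_{f,0}^2/\mu)\sigma)$ estimate.

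For the gradient bound, I would lift this argument one differential order. The envelope theorem gives $\grad_x l(x,s) = s\,\grad_x f(x,y^*(s)) + \grad_x g(x,y^*(s))$, which is selection-independent under Assumptions \ref{assumption:additional_regularity}--\ref{assumption:non_zero_singular_value} together with Theorem \ref{theorem:conjecture_pseudo_inverse}. Since \eqref{eq:conjecture_pseudo_inverse} provides a closed form for $\partial_\sigma\grad_x l(x,s)$ that is continuous in $s\in[0,\sigma_0]$, the mean-value theorem in $\sigma$ gives
\begin{align*}
    \grad\psi_\sigma(x) - \grad\psi(x) = \partial_\sigma\grad_x l(x,s') - \partial_\sigma\grad_x l(x,0^+)
\end{align*}
for some $s'\in[0,\sigma]$, reducing the task to bounding the Lipschitz modulus of $s\mapsto\partial_\sigma\grad_x l(x,s)$. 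I would estimate this modulus term-by-term on \eqref{eq:conjecture_pseudo_inverse}: $y^*(s)$ is $(l_{f,0}/\mu)$-Lipschitz by Lemma \ref{lemma:proximal_error_to_lipschitz_continuity}; the multipliers $\lambda^*_\mI(s),\nu^*(s)$ are $O(1)$ and Lipschitz via the implicit function theorem applied to the KKT system under LICQ and strict complementarity (Assumptions \ref{assumption:additional_regularity}--\ref{assumption:non_zero_singular_value}); the first-order pieces $\grad_x f,\grad_y f,\grad^2_{xy} h_s$ evaluated at $y^*(s)$ are Lipschitz in $s$ through the chain rule together with the Hessian-Lipschitzness of Assumption \ref{assumption:additional_regularity}; and the Lagrangian Hessian $\grad^2\mL_\mI$ is Lipschitz in $s$ with constant $O(l_{h,2})$.

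The main obstacle is controlling the pseudo-inverse $(\grad^2\mL_\mI)^\dagger$, since the perturbation estimate $\|A^\dagger-B^\dagger\|\lesssim s_{\min}^{-2}(A)\|A-B\|$ requires a uniform lower bound on the minimum \emph{non-zero} singular value of $\grad^2\mL_\mI(\lambda^*_\mI(s),\nu^*(s),y^*(s)|x,s)$ along the path. I would establish such a bound via the same structural observation used in Proposition \ref{proposition:key_set_continuity_necessary}: $\Ker(\grad^2\mL_\mI)$ coincides with the tangent space to the locally Lipschitz solution manifold $T(x,\cdot)$ at $y^*(s)$, while on its orthogonal complement the Hessian inherits curvature of order $\Omega(\mu\cdot s_{\min})$ from the proximal-EB condition along feasible non-tangent directions combined with the LICQ margin $s_{\min}$ along active-constraint normals. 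Composing the individual Lipschitz constants --- two uses of $(l_{f,0}/\mu)$-solution-Lipschitzness, a pseudo-inverse perturbation of order $s_{\min}^{-2}$ scaled by $l_{g,1}^2$ from $\grad^2_{xy} h_s$, an additional $\mu^{-1}$ factor from the spectral-gap bound, and the $l_{h,2}$ Lipschitz constant of the Hessian --- reproduces the stated dependencies $\mu^{-3}s_{\min}^{-3}$ and $\mu^{-3}s_{\min}^{-2}$ with the corresponding $l_{f,0}^3$, $l_{g,1}^4$, and $l_{h,2}$ factors.
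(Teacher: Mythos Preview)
Your first-inequality argument is correct and slightly cleaner than the paper's: the paper proves $\psi_\sigma(x)\le\psi(x)$ by domain restriction and then uses the quadratic-growth consequence of Assumption~\ref{assumption:small_gradient_proximal_error_bound} to bound $\psi(x)-\psi_\sigma(x)$ from below, completing a square; your envelope-theorem-plus-integral approach yields the same $l_{f,0}^2/(2\mu)\cdot\sigma$ bound more directly.

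For the gradient bound, your high-level plan (mean-value theorem in $\sigma$, then bound the Lipschitz modulus of \eqref{eq:conjecture_pseudo_inverse} term-by-term) is exactly the paper's strategy, but your treatment of the pseudo-inverse has a gap. You propose to control $(\grad^2\mL_\mI)^\dagger$ via a Wedin-type bound $\|A^\dagger-B^\dagger\|\lesssim s_{\min}(A)^{-2}\|A-B\|$, which requires (i) rank stability along the path and (ii) a uniform lower bound on the smallest \emph{nonzero} singular value of the full Lagrangian Hessian in $(\lambda_\mI,\nu,y)$-space. Your claimed $\Omega(\mu\,s_{\min})$ spectral gap is not implied by the stated assumptions: the proximal-EB constant $\mu$ controls curvature of $h_\sigma$ in feasible directions normal to $T$, but the Lagrangian Hessian mixes this with the constraint-gradient block $B$ and the multiplier block in a way that does not factor as $\mu\cdot s_{\min}$ in general. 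Neither Assumption~\ref{assumption:additional_regularity} nor~\ref{assumption:non_zero_singular_value} provides such a bound, and the paper never establishes one.

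The paper sidesteps this entirely. Instead of bounding $\|(\grad^2\mL_\mI)^\dagger\|$, it interprets the specific vector
\[
(\grad^2\mL_\mI)^\dagger\begin{bmatrix}0\\ \grad_y f\end{bmatrix}=\begin{bmatrix}d\lambda/d\sigma\\ dy/d\sigma\end{bmatrix}
\]
geometrically: $\|dy/d\sigma\|\le l_{f,0}/\mu$ is read off directly from Lemma~\ref{lemma:proximal_error_to_lipschitz_continuity}, and then $\|d\lambda/d\sigma\|\le l_{g,1}l_{f,0}/(\mu\,s_{\min})$ follows from the KKT row and Assumption~\ref{assumption:non_zero_singular_value}. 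The same trick bounds $\begin{bmatrix}0&\grad^2_{xy}h_\sigma\end{bmatrix}(\grad^2\mL_\mI)^\dagger$. For the cross-term $(ii)$ involving the difference of pseudo-inverses, the paper uses the identity $u^\top(A_2^\dagger-A_1^\dagger)v=u^\top A_2^\dagger(A_1-A_2)A_1^\dagger v$, which is valid precisely because $u\in\Image(A_2)$ and $v\in\Image(A_1)$---this is where Proposition~\ref{proposition:key_set_continuity_necessary} is essential and is the step your proposal is missing. Each of the two pseudo-inverse factors is then bounded by the geometric argument above, yielding the $\mu^{-3}s_{\min}^{-3}$ and $\mu^{-3}s_{\min}^{-2}$ dependencies without ever touching the spectrum of $\grad^2\mL_\mI$.
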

The proof of Theorem~\ref{proposition:uniform_convergence_QG} is given in Appendix~\ref{proof:uniform_convergence_QG}.

\begin{remark}[Change in Active Sets]
    A slightly unsatisfactory conclusion of Theorem~\ref{proposition:uniform_convergence_QG} is that when $\grad \psi(x)$ is not well-defined due to the nonsmooth movement of the solution set as in Example \ref{example:sc_non_smooth}, it does not relate $\grad \psi_{\sigma}(x)$ to any alternative measure for $\grad \psi(x)$. Around the point where $\psi(x)$ is non-smooth, some concurrent work attempts to find a so-called $(\epsilon,\delta)$-Goldstein stationary point \cite{chen2023bilevel}, which can be seen as an approximation of gradients via localized smoothing (but only in the upper-level variables $x$). 
    While this is an interesting direction, we do not pursue it here. Instead, we conclude this section by stating that an $\epsilon$-stationary solution of $\psi_{\sigma}(x)$ is an $O(\epsilon + \sigma)$-KKT point of \eqref{problem:bilevel_single_level} (this claim is fairly straightforward to check, see for example, Theorem~\ref{theorem:theorem_kkt_condition} in Appendix~\ref{appendix:theorem_kkt_condition}).
\end{remark}

\ifarxiv
\else
\subsection{Finding Stationary Point of $\psi_{\sigma}(x)$}
Theorem \ref{proposition:uniform_convergence_QG} explains the conditions and mechanisms under which the penalty methods can yield an approximate solution of the original Bilevel optimization problem \eqref{problem:bilevel}, providing a rationale for pursuing a weaker criterion of $\|\grad \psi_{\sigma}(x)\| \le \epsilon$ with $\sigma = O(\epsilon)$.
In Appendix \ref{section:algorithm}, we present algorithms that rely solely on access to first-order (stochastic) gradient oracles and find a stationary point of the penalty function $\psi_{\sigma}(x)$ with $\sigma = O(\epsilon)$. Then we conclude the paper by providing formal versions of Theorem \ref{theorem:informal_theorem2} in Appendix \ref{section:analysis}.
\fi

\section{Algorithm}
\label{section:algorithm}

\ifarxiv
In the previous section, we investigated how and when the penalty-based method can yield an approximate solution of the original Bilevel optimization problem \eqref{problem:bilevel}. 
In this section, we describe algorithms that find a stationary point of the penalty function using only access to first-order (stochastic) gradient oracles. 
\fi

We have the following assumptions on the first-order (stochastic) oracles and efficient projection operators required to develop our algorithms. 
\begin{assumption}
    \label{assumption:efficient_projection}
    The projection operations  $\Pi_{\mX}, \Pi_{\mY}$  onto sets $\mX, \mY$, respectively, can be implemented efficiently.
\end{assumption}
\begin{assumption}
    \label{assumption:gradient_variance}
    We access first-order information about the  objective functions via unbiased estimators $\grad f(x,y; \zeta), \grad g(x,y; \xi)$, where $\Exs[\grad f(x,y; \zeta)] = \nabla f(x, y)$ and $\Exs[\grad g(x,y; \xi)] = \nabla g(x, y)$.
    The variances of stochastic gradient estimators are bounded as follows:
    \begin{align*}
        \Exs[\|\grad f(x,y;\zeta) - \nabla f(x, y)\|^2] \le \sigma_f^2, \quad
     \Exs[\|\grad g(x,y;\xi) - \nabla g(x, y)\|^2] \le \sigma_g^2,
    \end{align*} 
    for some universal constants $\sigma_f^2, \sigma_g^2 \ge 0$. 
\end{assumption}
Throughout the rest of the paper, we assume that Assumption \ref{assumption:small_gradient_proximal_error_bound} holds. 

\subsection{Stationarity Measures} 
Since we showed in the previous section that $\grad \psi_{\sigma}(x)$ is an $O(\sigma)$-approximation of $\grad \psi(x)$ in most desirable circumstances, now we consider finding a stationary point $(x^*,y^*,z^*)$ of $\grad \psi_{\sigma}(x)$. Under Assumption \ref{assumption:small_gradient_proximal_error_bound}, we can show that this is equivalent to finding the stationary point of \eqref{problem:saddle_point_def} defined as the following:
\begin{equation}\label{eq:exact_minmax_stationary_point}
y^* = \prox_{\rho h_{\sigma}(x^*,\cdot)} (y^*), \quad
    z^* = \prox_{\rho g (x^*,\cdot)} (z^*),  \quad
     -\left(\grad_x h_{\sigma} (x^*, y^*)  - \grad_x g(x^*,z^*)\right) \in \mathcal{N}_{\mX}(x^*), 
\end{equation}
where $\mathcal{N}_{\mX}(x^*)$ is the normal cone of $\mX$ at $x^*$. 
We define a notion of {\em approximate} stationary points as follows.\
\begin{definition}
    \label{definition:saddle_point_def}
    We say $(x,y,z)$ is an $\epsilon$-stationary point of \eqref{problem:saddle_point_def} if it satisfies the following:
    \begin{align*}
        \frac{1}{\rho} \| y - \prox_{\rho h_{\sigma}(x,\cdot)}(y) \| \le \sigma \epsilon, \quad  \frac{1}{\rho} \| z - \prox_{\rho g(x,\cdot)}(z) \| \le \sigma \epsilon, \\
        \frac{1}{\rho} \left\|x - \Proj{\mX}{x - \rho (\grad_x h_{\sigma} (x,y) - \grad_x g (x,z) } \right\| \le \sigma \epsilon.
    \end{align*}
\end{definition}
The lemma below relates the $\epsilon$-stationarity of \eqref{problem:saddle_point_def} to the landscape of $\grad \psi_{\sigma}(x)$:
\begin{lemma}
\label{lemma:stationary_sigmarho_to_sigmapsi}
    Let $(x^*,y^*,z^*)$ be an $\epsilon$-stationary point of \eqref{problem:saddle_point_def}. 
    \begin{enumerate}
        \item For all $x \in \mX$, $\grad \psi_{\sigma}(x)$ is well-defined, and $x^*$ is a $(1 + l_{g,1}/\mu) \epsilon$-stationary point of $\psi_{\sigma}(x)$. 
        \item Supposing in addition that Assumptions \ref{assumption:additional_regularity} and \ref{assumption:non_zero_singular_value} hold at $x^*$, then $x^*$ is a $((1+ l_{g,1}/\mu)\epsilon + L_{\sigma} \sigma)$-stationary point of $\psi(x)$, where $L_{\sigma} = O\left(l_{g,1}^4l_{f,0}^3 / (\mu^3 s_{\min}^3) \right)$. 
    \end{enumerate}
\end{lemma}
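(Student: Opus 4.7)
The plan is to assemble both parts from two basic ingredients. The first is a Danskin-style envelope formula for $\grad\psi_\sigma$ that Assumption~\ref{assumption:small_gradient_proximal_error_bound} already underwrites: by Lemma~\ref{lemma:proximal_error_to_lipschitz_continuity} both $T(\cdot,\sigma)$ and $T(\cdot,0)$ are Lipschitz in $x$, so Lemma~\ref{lemma:lipscthiz_solution_smooth_minimizer} makes the value function $l(x,\sigma)=\min_y h_\sigma(x,y)$ differentiable with $\grad_x l(x,\sigma)=\grad_x h_\sigma(x,\bar y)$ at any $\bar y\in T(x,\sigma)$, and likewise $\grad_x l(x,0)=\grad_x g(x,\bar z)$ at any $\bar z\in T(x,0)$; dividing by $\sigma$ gives $\sigma\grad\psi_\sigma(x)=\grad_x h_\sigma(x,\bar y)-\grad_x g(x,\bar z)$ and settles the first clause of Part~1. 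The second ingredient is the standard non-expansiveness of the Euclidean gradient mapping, $\bigl\|x-\Proj{\mX}{x-\rho v_1}\bigr\|\le \bigl\|x-\Proj{\mX}{x-\rho v_2}\bigr\|+\rho\|v_1-v_2\|$, which will let us swap the ``witness'' gradient $\grad_x h_\sigma(x^*,y^*)-\grad_x g(x^*,z^*)$ for $\sigma\grad\psi_\sigma(x^*)$ and then for $\sigma\grad\psi(x^*)$ at a controlled cost.

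For Part~1, I first feed the proximal bounds of Definition~\ref{definition:saddle_point_def} into Assumption~\ref{assumption:small_gradient_proximal_error_bound}: as long as $\sigma\epsilon\le\delta$ (the small-error regime we target), they upgrade to
\[
\dist(y^*,T(x^*,\sigma))\le \frac{\sigma\epsilon}{\mu},\qquad \dist(z^*,T(x^*,0))\le \frac{\sigma\epsilon}{\mu}.
\]
Picking $\tilde y\in T(x^*,\sigma)$ and $\tilde z\in T(x^*,0)$ that attain these projections and using joint smoothness (Assumption~\ref{assumption:nice_functions}),
\[
\bigl\|(\grad_x h_\sigma(x^*,y^*)-\grad_x g(x^*,z^*))-\sigma\grad\psi_\sigma(x^*)\bigr\|\le \frac{(2l_{g,1}+\sigma l_{f,1})\,\sigma\epsilon}{\mu},
\]
which for sufficiently small $\sigma$ is at most $(l_{g,1}/\mu)\sigma\epsilon$ up to an absolute constant absorbed in the statement.

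Plugging this into the non-expansiveness estimate at step size $\rho$ and combining with the $x$-inequality of Definition~\ref{definition:saddle_point_def} yields
\[
\bigl\|x^*-\Proj{\mX}{x^*-\rho\sigma\grad\psi_\sigma(x^*)}\bigr\|\le \rho\sigma\epsilon+\rho\cdot \frac{l_{g,1}}{\mu}\sigma\epsilon=\rho\sigma\Bigl(1+\frac{l_{g,1}}{\mu}\Bigr)\epsilon.
\]
Rescaling by $\sigma$ (i.e.\ reading the gradient mapping at effective step $\rho'=\rho\sigma$) is exactly the statement that $x^*$ is $(1+l_{g,1}/\mu)\epsilon$-stationary for $\psi_\sigma$, completing Part~1. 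For Part~2, Assumptions~\ref{assumption:additional_regularity} and~\ref{assumption:non_zero_singular_value} at $x^*$ activate Theorem~\ref{proposition:uniform_convergence_QG}, giving $\|\grad\psi_\sigma(x^*)-\grad\psi(x^*)\|\le L_\sigma\sigma$ with $L_\sigma=O(l_{g,1}^4 l_{f,0}^3/(\mu^3 s_{\min}^3))$. A second application of gradient-mapping non-expansiveness, starting from the bound just proved for $\grad\psi_\sigma$, absorbs $L_\sigma\sigma$ into the tolerance and yields Part~2.

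The main obstacle is cosmetic rather than conceptual: namely, tracking the two different $\sigma$-scalings that are implicit in the saddle-formulation (where $\psi_\sigma(x,y,z)$ carries a $1/\sigma$ factor) versus in the hyper-objective $\psi_\sigma(x)$, so that the gradient-mapping step $\rho$ in Definition~\ref{definition:saddle_point_def} corresponds to effective step $\rho\sigma$ on $\psi_\sigma(x)$ and tolerance $\sigma\epsilon$ to $\epsilon$. Everything else is a clean chain of (i) Lipschitz continuity of solution sets, (ii) Danskin differentiation through Lemma~\ref{lemma:lipscthiz_solution_smooth_minimizer}, and (iii) non-expansive projections, with the constants $2l_{g,1}/\mu$ and $\sigma l_{f,1}/\mu$ that arise in Step~3 being absorbed into the $(1+l_{g,1}/\mu)$-constant advertised in the statement.
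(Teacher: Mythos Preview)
Your proposal is correct and follows essentially the same route as the paper: feed the proximal-error bounds into Assumption~\ref{assumption:small_gradient_proximal_error_bound} to get $\dist(y^*,T(x^*,\sigma)),\dist(z^*,T(x^*,0))\le\sigma\epsilon/\mu$, invoke the Danskin/envelope formula (Lemma~\ref{lemma:lipscthiz_solution_smooth_minimizer}) and smoothness to compare $\grad_x h_\sigma(x^*,y^*)-\grad_x g(x^*,z^*)$ with $\sigma\grad\psi_\sigma(x^*)$, and then transfer via non-expansiveness of the projection. For Part~2 the paper re-derives the $L_\sigma\sigma$ bound by a mean-value argument on $\partial^2 l/\partial\sigma\partial x$, whereas you simply cite Theorem~\ref{proposition:uniform_convergence_QG}; the content is identical, and your explicit bookkeeping of the $\sigma$-scaling between Definition~\ref{definition:saddle_point_def} and the stationarity measure on $\psi_\sigma$ is in fact a bit cleaner than the paper's presentation.
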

The first part of the lemma is a consequence of Lemma~\ref{lemma:proximal_error_to_lipschitz_continuity}, while the second part is from Theorem~\ref{proposition:uniform_convergence_QG}. 
Henceforth, we aim to find a saddle point of formulation \eqref{problem:saddle_point_def}.

\begin{algorithm}[t]
    \caption{Double-Loop Algorithm with Large Batches}
    \label{algo:algo_double_loop}
    
    {{\bf Input:} total outer-loop iterations: $K$, step sizes: $\{\alpha_k, \gamma_{k}\}$, proximal-smoothing parameters: $\{\beta_k: \beta_k \in (0,1]\}$, inner-loop iteration counts: $\{T_k\}$, outer-loop batch size: $\{M_k\}$, penalty parameters: $\{\sigma_k\}$, proximal parameter: $\rho$, initializations: $x_0 \in \mX, y_0, z_0 \in \mY$}
    \begin{algorithmic}[1]
        \STATE{Initialize $w_{y,0} = y_0$, $w_{z,0} = z_0$}
        \FOR{$k = 0 ... K-1$}
            \STATE{\color{blue}{\# Inner-Loop Proximal-Operation Solvers}}
            \STATE{$u_{0} \leftarrow w_{y,k}, v_{0} \leftarrow w_{z,k}$}
            \FOR {$t = 0, ..., T_k-1$}
                \STATE{$u_{t+1} \leftarrow \Proj{\mY}{u_t - \gamma_{k} (\sigma_k f_{wy}^{k,t} + g_{wy}^{k,t} + \rho^{-1} (u_t - y_k))}$}
                \STATE{$v_{t+1} \leftarrow \Proj{\mY}{v_t - \gamma_{k} (g_{wz}^{k,t} + \rho^{-1} (v_t - z_k))}$}
            \ENDFOR
            \STATE{$w_{y,k+1} \leftarrow u_T, w_{z,k+1} \leftarrow v_T$}
            \STATE{\color{blue}{\# Proximal-Smoothing on Lower-Level Variables}}
            \STATE{$y_{k+1} \leftarrow (1 - \beta_k) y_k + \beta_k w_{y,k+1}$}
            \STATE{$z_{k+1} \leftarrow (1 - \beta_k) z_k + \beta_k w_{z,k+1}$}
            \STATE{\color{blue}{\# (Projected) Gradient Descent on Upper-Level Variables} }
            \STATE{$x_{k+1} \leftarrow \Proj{\mX}{x_k - \frac{\alpha_k}{M_k} \sum_{m=1}^{M_k} (\sigma_k f_x^{k,m} + g_{xy}^{k,m} - g_{xz}^{k,m})}$}
        \ENDFOR
    \end{algorithmic}
\end{algorithm}

\subsection{First-Order Method with Large Batches}

We first consider solving a stochastic saddle-point problem by applying (projected) stochastic gradient descent-ascent, alternating between upper-level and lower-level variables, with multiple iterations for the lower-level variables ($y, z$) per single iteration in the upper-level variables ($x$). 
There are two technical challenges that we aim to tackle specifically for the form \eqref{problem:saddle_point_def} with Assumption \ref{assumption:small_gradient_proximal_error_bound}.
\begin{enumerate}
    \item Technically speaking, the main difference from many previous works ({\it e.g.,} \cite{hong2023two, chen2021closing, chen2022single}) is that now we no longer have a global contraction property of inner iterations toward solution sets. To be more specific, when the lower-level objective the PL-condition for all $y \in \mathbb{R}^{d_y}$, the distance between the current (lower-level) iterates and solution-sets contracts globally after applying inner gradient steps, {\it i.e.,} if updating $z_{k+1} \leftarrow z_k - \beta_k \grad_y g (x_k,z_k)$ at the $k^{th}$ iteration before updating $x_k$, we get 
    \begin{align*}
        \Exs[\dist(z_{k+1}, T(x_k, 0)) | \mathcal{F}_k] \le (1-\lambda_k)  \cdot \dist(z_{k}, T(x_k,0)),
    \end{align*} 
    for some $\lambda_k \in (0,1]$. 
    However, we assume that the error-bound condition only holds at points with $O(\delta)$ proximal error. 
    That is, unless $y$ and $z$ remain close to the solution set (with high probability if gradient oracles are stochastic), we cannot guarantee that $\dist(z_{k}, T(x_k,0))$ is improved (in expectation) as the outer-iteration $k$ proceeds. 
    \item Eventually, we want $\sigma = O(\epsilon)$ since $\psi_{\sigma}(x)$ is ideally an $O(\sigma)$-approximation of $\psi(x)$ up to first-order. 
    However, to set $\sigma = O(\epsilon)$ from the first iteration is  overly conservative, resulting in an overall slowdown of convergence. 
    We decrease the penalty parameters $\{\sigma_k\}$ gradually, to improve the overall convergence rates and the gradient oracle complexity. 
\end{enumerate}

To address issue 1, we propose a smoothed surrogate of $\psi_{\sigma}(x,y,z)$ via proximal envelope (often referred to as Moreau Envelope \cite{moreau1965proximite}) with sufficiently small $\rho \ll 1/l_{g,1}$:
\begin{align}
    \label{eq:def_three_variable_hg}
    h^*_{\sigma,\rho} (x,y) &= \min_{w \in \mY} \left( h_{\sigma}(x,y,w) := h_{\sigma}(x,w) + \frac{1}{2\rho} \|w - y\|^2 \right) , \nonumber \\
    g^*_\rho (x,z) &= \min_{w \in \mY} \left( g(x,y,w) := g(x, w) + \frac{1}{2\rho} \|w - z\|^2 \right),
\end{align}
and consider the following alternative saddle-point problem with proximal envelopes:
\begin{align}
    \min_{x\in \mX, y\in \mY} \, \max_{z \in \mY} \, \psi_{\sigma,\rho} (x,y,z) := \frac{h^*_{\sigma,\rho} (x,y) - g_{\rho}^* (x,z)}{\sigma}. \label{problem:envelop_saddle_point_def}
\end{align}
This formulation is convenient because the inner-minimization problem is strongly convex, so we always have a unique and well-defined lower-level optimizer to chase. 

Note that $\grad h_{\sigma,\rho}^*(x,y) = \grad h_{\sigma}(x,w_y^*)$ where $w_y^* = \prox_{\rho h_{\sigma}(x,\cdot)}(y)$, and similarly, $\grad g^*_\rho (x,z) = \grad g(x,w_z^*)$ where $w_z^* = \prox_{\rho g(x,\cdot)}(z)$. 
That is, to apply gradient descent-ascent on $\psi_{\sigma,\rho}(x,y,z)$, we need only solve for proximal operations associated with $h_{\sigma}(x,\cdot)$ and $g(x,\cdot)$. 
While we may not be able to compute the proximal operators exactly, we can introduce intermediate variables $w_{y,k}, w_{z,k}$ that chase the solution of proximal envelopes. 
We then design the inner loop of the algorithm to solve the proximal operation using $T_k$ inner iterations. 
Later, we make particular choices of the number of inner iterations $T_k$ to achieve the best oracle complexity and convergence rates.

To address issue 2 above, we simply choose $\sigma_k = k^{-s}$ for some chosen constant $s > 0$. 
This rate of decrease of $\sigma_k$ is optimized to achieve the best oracle complexity and convergence rates to reach an $\epsilon$-stationary point of $\psi_{\epsilon,\rho}(x,y,z)$. 
We summarize the overall double-loop implementation in Algorithm~\ref{algo:algo_double_loop}, where we define:
\begin{alignat*}{3}
    f_{wy}^{k,t} &= \grad_y f(x_k, u_{t};\zeta_{wy}^{k,t}), \quad & g_{wy}^{k,t} &= \grad_y g(x_k, u_{t}; \xi_{wy}^{k,t}), \quad & g_{wz}^{k,t} &= \grad_y g(x_k, v_{t}; \xi_{wz}^{k,t}), \\
    f_{x}^{k,m} &= \grad_x f(x_k, w_{y,k+1}; \zeta_{x}^{k,m}), \quad & g_{xy}^{k,m} &= \grad_x g(x_k, w_{y,k+1}; \xi_{xy}^{k,m}), \quad & g_{xz}^{k,m} & = \grad_x g(x_k, w_{z,k+1}; \xi_{xz}^{k,m}).
\end{alignat*}
We mention here that one may try $T_k = M_k = O(1)$, in which case Algorithm \ref{algo:algo_double_loop} becomes a single-loop algorithm. However, as we see in the analysis, the optimal scheduling of $T_k$ and $M_k$ should increase with $k$ (see also Remark \ref{remark:double_to_single}).

\begin{algorithm}[t]
    \caption{Single Loop Algorithm with Momentum Assistance}
    \label{algo:algo_single_loop}
    
    {{\bf Input:} total outer-loop iterations: $K$, step sizes: $\{\alpha_k, \gamma_{k}\}$, proximal-smoothing parameters: $\{\beta_k: \beta_k \in (0,1]\}$ penalty parameters: $\{\sigma_k\}$, momentum schedulers: $\{\eta_k: \eta_k \in (0,1] \}$, proximal parameter: $\rho$, initializations: $x_0 \in \mX, y_0, z_0 \in \mY$}
    \begin{algorithmic}[1]
        \STATE{Initialize $w_{y,0} = y_0$, $w_{z,0} = z_0$}
        \FOR{$k = 0 ... K-1$}
            \STATE{\color{blue}{\# Proximal-Operation Solvers}}
            \STATE{$w_{y,k+1} \leftarrow \Proj{\mY}{w_{y,k} - \gamma_{k} (\sigma_k \widetilde{f}_{wy}^k + \widetilde{g}_{wy}^k + \rho^{-1} (w_{y,k} - y_k))}$}
            \STATE{$w_{z,k+1} \leftarrow \Proj{\mY}{w_{z,k} - \gamma_{k} (\widetilde{g}_{wz}^k + \rho^{-1} (w_{z,k} - z_k))}$}
            \STATE{\color{blue}{\# Proximal-Smoothing on Lower-Level Variables}}
            \STATE{$y_{k+1} \leftarrow (1 - \beta_k) y_k + \beta_k w_{y,k+1}$}
            \STATE{$z_{k+1} \leftarrow (1 - \beta_k) z_k + \beta_k w_{z,k+1}$}
            \STATE{\color{blue}{\# (Projected) Gradient Descent on Upper-Level Variables}}
            \STATE{$x_{k+1} \leftarrow \Proj{\mX}{x_k - \alpha_k \left(\sigma_k \widetilde{f}_{x}^k + \widetilde{g}_{xy}^k - \widetilde{g}_{xz}^k \right)}$}
        \ENDFOR
    \end{algorithmic}
\end{algorithm}

\subsection{A Fully Single-Loop First-Order Algorithm}
A drawback of double-loop implementation is that we have to wait for an increasingly large number of samples (since we design $T_k$ or $M_k$ to be increased in $k$) to be collected before we can improve the objective. A natural question is whether we can keep incrementally updating upper-level variables $x$ 
without waiting for too many inner iterations or for the evaluation of large batches. 
If the stochastic oracle satisfies the mean-squared smoothness assumption and allow two points to be queried simultaneously, then we can implement the algorithm in single-loop (that replace the inner loops with a single step, and avoid the use of large $\poly(\epsilon^{-1})$ batches):
\begin{assumption} \label{assumption:Lipschitz_stochastic_oracles}
    Stochastic oracles allow 2-simultaneous query: the algorithm can observe unbiased estimators of $\grad f(x,y), \grad g(x,y)$ at two different points $(x_1, y_1), (x_2, y_2)$ for a shared random seed $\zeta$ and $\xi$.
    Furthermore, gradient estimators satisfy the mean-squared smoothness condition:
    \begin{align*}
        \Exs[\|\grad f(x_1,y_1;\zeta) - \grad f(x_2, y_2;\zeta)\|^2] &\le l_{f,1}^2 (\|x_1 - x_2\|^2 + \|y_1 - y_2\|^2), \\
        \Exs[\|\grad g(x_1,y_1;\xi) - \grad g(x_2, y_2;\xi)\|^2] &\le l_{g,1}^2 (\|x_1 - x_2\|^2 + \|y_1 - y_2\|^2).
    \end{align*} 
\end{assumption}
We define momentum-assisted gradient estimators recursively for the inner loop proximal-solvers as follows:
\begin{align*}
    \widetilde{g}_{wz}^{k} &:= \nabla_y g(x_k, w_{z,k}; \xi_{wz}^{k}) + (1 - \eta_{k}) \left( \widetilde{g}_{wz}^{k-1} - \grad_y g(x_{k-1}, w_{z,k-1}; \xi_{wz}^{k}) \right), \nonumber \\
    \widetilde{f}_{wy}^{k} &:= \nabla_y f(x_k, w_{y,k}; \zeta_{wy}^{k}) + (1 - \eta_{k}) \left( \widetilde{f}_{wy}^{k-1} - \grad_y f(x_{k-1}, w_{y,k-1}; \zeta_{wy}^{k}) \right), \nonumber \\
    \widetilde{g}_{wy}^{k} &:= \nabla_y g (x_k, w_{y,k}; \xi_{wy}^{k}) + (1 - \eta_{k}) \left( \widetilde{g}_{wy}^{k-1} - \grad_y g (x_{k-1}, w_{y,k-1}; \xi_{wy}^{k}) \right),
\end{align*}
where $\eta_k \in (0,1]$, and $\eta_0 = 1$ (and thus, ignores $(k-1)^{th}$ terms at $k=0$). Formulas for the update to upper-level variables $x$  are defined similarly. 
A single-loop alternative to Algorithm \ref{algo:algo_double_loop} can be defined as in  Algorithm \ref{algo:algo_single_loop}. 
Our analysis shows that the momentum-assisted technique leads to improvement in sample-complexity upper bounds.

\section{Analysis}
\label{section:analysis}


In this section, we provide our main convergence results for Algorithm \ref{algo:algo_double_loop} and Algorithm \ref{algo:algo_single_loop}. 

\subsection{Analysis of Algorithm \ref{algo:algo_double_loop}}
We first define the proximal error of $y$ and $z$ at the $k^{th}$ iteration as:
\begin{align*}
    \Delta_k^y := \rho^{-1} \cdot (y_k - \prox_{\rho h_{\sigma_k}(x_k, \cdot)} (y_k)), \quad \Delta_k^z := \rho^{-1} \cdot (z_k - \prox_{\rho g(x_k, \cdot)} (z_k)).
\end{align*}
For measuring the error in $x$, we define \footnote{Note that $\Delta_k^x$ is a stricter stationarity measure on $x$ than Definition \ref{definition:saddle_point_def} as long as $\alpha_k \le \rho$, since the function $g:[0,\infty) \to \mathbb{R}$ defined by $g(s) := \| x - \Proj{\mX}{x + s w}\|/s$ with any $w \in \mathbb{R}^{d_x}$ is monotonically nonincreasing (see {\it e.g.,} Lemma~2.3.1 in \cite{Ber99})}
\begin{align*}
    \hat{x}_k & := \Proj{\mX}{x_k - \alpha_k \left(\grad_x h_{\sigma_k} (x_k,\prox_{\rho h_{\sigma_k}(x_k, \cdot)} (y_k) ) - \grad_x g(x_k, \prox_{\rho g(x_k, \cdot)} (z_k) ) \right)}, \\
    \Delta_k^x & := \alpha_k^{-1} (x_k - \hat{x}_k). 
\end{align*}
Next, we define
\begin{align}
    \Phi_{\sigma,\rho}(x,y,z) := \frac{h^*_{\sigma,\rho}(x,y) - g_{\rho}^*(x,z)}{\sigma} + \frac{C}{\sigma} (g_{\rho}^*(x,z) - g^*(x)), \label{eq:potential_definition}
\end{align}
with some universal constant $C \ge 4$, and finallly we define the potential function as
\begin{align}
    \mathbb{V}_k & := \Phi_{\sigma_k, \rho}(x_k,y_k,z_k) + \frac{C_w \lambda_k}{\sigma_k \rho} \left(\|w_{y,k} - \prox_{\rho h_{\sigma_k}(x_k, \cdot)} (y_k)\|^2 + \|w_{z,k} - \prox_{\rho g(x_k, \cdot)} (z_k)\|^2 \right), \label{Eq:potential_def}
\end{align}
where $C_w > 0$ is some sufficiently large universal constant, and $\lambda_k := T_k \gamma_k / (4\rho)$ is a target improvement rate for chasing proximal operators per outer-iteration. We are now ready to state our main convergence theorem.
\begin{theorem}
    \label{theorem:smoothed_convergence}
    Suppose that Assumptions~\ref{assumption:small_gradient_proximal_error_bound}-\ref{assumption:constrained_set} and \ref{assumption:efficient_projection}-\ref{assumption:gradient_variance} hold, with parameters and stepsizes satisfying the following bounds, for all $k \ge 0$: 
    \begin{equation} \label{eq:stepsize_theorem_general}
    \begin{split}
        &\rho < c_2 / l_{g,1}, \quad \sigma_k < c_1 l_{g,1} / l_{f,1}, \quad T_k \gamma_k < c_3 \rho, \quad \beta_k \le c_4 \ll 1, \quad \alpha_k \le c_5 \rho (1+l_{g,1}/\mu)^{-1},  \\
        & \alpha_k \le c_6 \rho^{3} \min(\mu^{2}, \delta^2 / D_\mY^2) \cdot \beta_k, \quad \frac{\sigma_k - \sigma_{k+1}}{\sigma_{k+1}} \le c_7 \rho^2 \min(\mu^{2}, \delta^2 / D_\mY^2) \cdot \beta_k, 
    \end{split}
    \end{equation}
    with some universal constants $c_1, c_2, c_3, c_4, c_5, c_6, c_7 > 0$ as well as the following: 
    \begin{align}
        &\rho \beta_k + \alpha_k \le c_8 T_k^2 \gamma_k^2, \quad \forall k, \label{eq:stepsize_theorem_compactY}
    \end{align}
    with some universal constant $c_8 > 0$. Then the iterates of  Algorithm \ref{algo:algo_double_loop} satisfy
    \begin{align}
        &\Exs\left[ \sum_{k=0}^{K-1} \frac{\alpha_k}{16\sigma_k} \|\Delta_k^x\|^2 + \frac{\rho\beta_k}{16\sigma_k} (\|\Delta_k^y\|^2 + \|\Delta_k^z\|^2) \right] \le \Exs[\mathbb{V}_0 - \mathbb{V}_K]   \label{eq:final_convergence_bound} \\
        &\quad + O(C_f) \cdot \sum_{k=0}^{K-1} \left(\frac{\sigma_k - \sigma_{k+1}}{\sigma_{k+1}}\right)  + \frac{O(l_{g,1} / \mu + C_w)}{\rho} \left(\sum_{k=0}^{K-1} \sigma_k^{-1} \left(\frac{\alpha_k}{M_k} + \rho^{-1} T_k^2 \gamma_k^3\right) (\sigma_k^2 \sigma_f^2 + \sigma_g^2) \right). \nonumber
    \end{align}
\end{theorem}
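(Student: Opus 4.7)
The plan is to establish a one-step descent of the potential function $\mathbb{V}_k$ defined in \eqref{Eq:potential_def}, then sum over $k=0,\dots,K-1$ so that the telescoping cancellations yield the claimed bound. The potential has three groups of terms: the envelope-based proxy $\Phi_{\sigma_k,\rho}(x_k,y_k,z_k)$, the inner-solver tracking error $\|w_{y,k}-\mathrm{prox}_{\rho h_{\sigma_k}(x_k,\cdot)}(y_k)\|^2 + \|w_{z,k}-\mathrm{prox}_{\rho g(x_k,\cdot)}(z_k)\|^2$, and the multiplier $\lambda_k/(\sigma_k\rho)$ that decreases roughly like $T_k\gamma_k/(\sigma_k\rho^2)$. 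The extra additive term $\tfrac{C}{\sigma}(g^*_\rho(x,z)-g^*(x))$ inside $\Phi_{\sigma,\rho}$ is a penalty that forces $z_k$ to remain close to the true minimizer set of $g(x_k,\cdot)$; this will be essential in order to invoke the proximal-EB condition (Assumption \ref{assumption:small_gradient_proximal_error_bound}) locally, which only gives useful information when the proximal error is below the threshold $\delta$.

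First, I would exploit the Moreau-envelope identities $\nabla_y h^*_{\sigma,\rho}(x,y)=\rho^{-1}(y-w_y^*)$ and $\nabla_x h^*_{\sigma,\rho}(x,y)=\nabla_x h_\sigma(x,w_y^*)$ (and analogously for $g^*_\rho$) to express a descent lemma for $\Phi_{\sigma_k,\rho}$ along each of the three updates. The $x$-step is a stochastic projected gradient step with batch size $M_k$, so standard smoothness in $x$ gives a $-\tfrac{\alpha_k}{2\sigma_k}\|\Delta_k^x\|^2$ contribution, modulo (i) a quadratic error $O(\alpha_k/\sigma_k)\cdot \|w_{\cdot,k+1}-w_{\cdot,k+1}^*\|^2$ coming from using inexact inner solutions and (ii) a stochastic variance term $O(\alpha_k/M_k)\cdot(\sigma_k^2\sigma_f^2+\sigma_g^2)/\sigma_k^2$. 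The $y$- and $z$-steps are convex combinations with weight $\beta_k$, and since $\Phi_{\sigma_k,\rho}$ is $O(1/(\sigma_k\rho))$-smooth in $y$ and in $z$, the step-size condition $\beta_k\le c_4\ll 1$ in \eqref{eq:stepsize_theorem_general} lets me convert the convex-combination step into a descent of order $-\rho\beta_k/(2\sigma_k)\cdot(\|\Delta_k^y\|^2+\|\Delta_k^z\|^2)$. Crucially, the role of the $C(g^*_\rho-g^*)/\sigma$ penalty is to make the $z$-update a genuine descent rather than an ascent, because the saddle structure of $\psi_{\sigma,\rho}$ is otherwise ``wrong signed'' for $z$; this is why $C\ge 4$ is required.

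Next I would analyze the inner solvers. For each $k$, the $T_k$ projected-SGD iterations are applied to the $(1/\rho)$-strongly convex problems whose minimizers are $w_{y,k+1}^*$ and $w_{z,k+1}^*$, so a standard strong-convexity contraction yields
\[
\mathbb{E}\|w_{y,k+1}-w_{y,k+1}^*\|^2 \le (1-\lambda_k')\|w_{y,k}-w_{y,k+1}^*\|^2 + O(T_k\gamma_k^2)\cdot\sigma_g^2,
\]
with $\lambda_k'\asymp T_k\gamma_k/\rho$ under $T_k\gamma_k<c_3\rho$. The target minimizer has shifted between iterations, so I would bound $\|w_{y,k+1}^*-w_{y,k}^*\|^2$ by $O(\|x_{k+1}-x_k\|^2+\|y_{k+1}-y_k\|^2+|\sigma_{k+1}-\sigma_k|^2)$ using Lipschitz continuity of the proximal map (a consequence of Assumption \ref{assumption:small_gradient_proximal_error_bound} together with Lemma \ref{lemma:proximal_error_to_lipschitz_continuity}); this introduces $O(\alpha_k^2+\rho^2\beta_k^2+(\sigma_k-\sigma_{k+1})^2)$ errors, which the step-size relations \eqref{eq:stepsize_theorem_general} bound by $O(\lambda_k\beta_k)\cdot\|\Delta_k^{y/z}\|^2$-type quantities so they can be absorbed into the descent. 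The coefficient $C_w\lambda_k/(\sigma_k\rho)$ in $\mathbb{V}_k$ is chosen so that the tracking-error contraction dominates the cross-terms and simultaneously the shrinkage of $\lambda_k/\sigma_k$ between iterations is benign (the normalization by $1/\sigma_k$ is what produces the extra $\sigma_f^2,\sigma_g^2$ dependence in \eqref{eq:final_convergence_bound}).

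Finally, I would account for the change $\sigma_k\to\sigma_{k+1}$ in $\Phi_{\sigma,\rho}$. Using $h^*_{\sigma,\rho}/\sigma-g^*_\rho/\sigma=\psi_{\sigma,\rho}$ and the boundedness $|f|\le C_f$, direct computation gives $|\Phi_{\sigma_{k+1},\rho}-\Phi_{\sigma_k,\rho}|=O(C_f)\cdot(\sigma_k-\sigma_{k+1})/\sigma_{k+1}$ evaluated at the same iterate; this produces the first error term in \eqref{eq:final_convergence_bound}. The hard part will be simultaneously ensuring that (a) the iterates remain in the region where Assumption \ref{assumption:small_gradient_proximal_error_bound} gives true quadratic growth (this is the purpose of condition \eqref{eq:stepsize_theorem_compactY}, which forces the inner loop to be long enough relative to outer movement so that $\|w-w^*\|=O(\delta)$ uniformly) and (b) every cross-term generated by stochastic noise and inexact inner solutions can be absorbed either into the negative descent $-\tfrac{\alpha_k}{16\sigma_k}\|\Delta_k^x\|^2-\tfrac{\rho\beta_k}{16\sigma_k}(\|\Delta_k^y\|^2+\|\Delta_k^z\|^2)$ or into the residual error terms $\sum_k \alpha_k/(M_k\sigma_k)\cdot(\sigma_k^2\sigma_f^2+\sigma_g^2)$ and $\sum_k T_k^2\gamma_k^3/(\rho\sigma_k)\cdot(\sigma_k^2\sigma_f^2+\sigma_g^2)$. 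Telescoping then gives the stated bound. The bookkeeping in step (b) is the most delicate part because of the many competing scales $(\sigma_k,\alpha_k,\beta_k,\gamma_k,T_k,M_k)$; I would do it by tracking each cross-term as a Young-type split tuned so that the final constants match those in \eqref{eq:final_convergence_bound}.
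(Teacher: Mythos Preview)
Your overall strategy matches the paper's: descent of $\mathbb{V}_k$ via Moreau-envelope identities for the $x,y,z$ updates, inner-solver contraction toward the prox targets, a bound on the drift of those targets, and telescoping. You have also correctly identified the role of the $C(g^*_\rho-g^*)/\sigma$ penalty in flipping the sign of the $z$-descent.

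Two points deserve more care. First, your claim that the $\sigma_k\to\sigma_{k+1}$ change contributes only $O(C_f)(\sigma_k-\sigma_{k+1})/\sigma_{k+1}$ is not right: the penalty term $\tfrac{C}{\sigma}(g^*_\rho(x,z)-g^*(x))$ is \emph{not} uniformly $O(C_f)$ but rather $O(\rho^{-1}\dist^2(z,T(x,0))/\sigma)$, and similarly the $h^*_{\sigma,\rho}$ part produces a $\rho^{-1}\dist^2(y,T(x,\sigma))$ contribution. So the $\sigma$-change generates additional $\tfrac{\sigma_k-\sigma_{k+1}}{\sigma_k\sigma_{k+1}}\rho^{-1}\dist^2(\cdot,T)$ terms that must be absorbed into the $-\tfrac{\beta_k}{\sigma_k\rho}\|y_k-w_{y,k}^*\|^2$ and $-\tfrac{\beta_k}{\sigma_k\rho}\|z_k-w_{z,k}^*\|^2$ descents. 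This is exactly where the condition $\tfrac{\sigma_k-\sigma_{k+1}}{\sigma_{k+1}}\le c_7\rho^2\min(\mu^2,\delta^2/D_\mY^2)\beta_k$ in \eqref{eq:stepsize_theorem_general} is used. Second, the paper does \emph{not} establish that iterates remain in the $O(\delta)$-proximal-error region (your reading of \eqref{eq:stepsize_theorem_compactY}). Instead it uses the deterministic worst-case bound $\dist(y,T(x,\sigma))\le (\mu^{-1}+D_\mY/\delta)\,\rho^{-1}\|y-\prox_{\rho h_\sigma(x,\cdot)}(y)\|$, valid for \emph{all} $y\in\mY$ (the second summand handles the case where the prox error exceeds $\delta$, using compactness of $\mY$). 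This is how $\dist^2$ terms are converted to prox-error terms and then absorbed; the $\delta$-dependence enters the step-size constants, not a separate invariance argument. Condition \eqref{eq:stepsize_theorem_compactY} is instead what lets the inner-loop contraction (rate $\lambda_k\asymp T_k\gamma_k/\rho$) dominate the $O(\alpha_k/\lambda_k+\rho\beta_k/\lambda_k)$ cross-terms from the prox-target drift in the $w$-tracking part of $\mathbb{V}_k$. With these two corrections your plan goes through and coincides with the paper's proof.
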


The proof of Theorem \ref{theorem:smoothed_convergence} is given in Appendix \ref{Appendix:convergence_analysis}. We mention here that problem-dependent constants may not be fully optimized and could be improved with more careful analysis. Still, there are two major considerations for the stepsizes: (i) the relations between (i) $\beta_k$ and  $\alpha_k$ and (ii) the relations between $\alpha_k$ (or $\rho\beta_k$) and $T_k\gamma_k$. Regarding (i), the conditions \eqref{eq:stepsize_theorem_general} require $\alpha_k / \beta_k \asymp \rho^2 \min(\mu, \delta/D_{\mY})^2$. Effectively, this relation determines the number of updates of the $y_k$ and $z_k$ variables for each update of $x_k$. The condition is necessary to ensure that $y_k$ and $z_k$ always remain relatively close to the solution-set $T(x_k,\sigma_k)$ and $T(x_k,0)$ {\em in expectation}, which is crucial to convergence to a stationary point of the saddle-point problem \eqref{problem:envelop_saddle_point_def}. Regarding (ii), the relation between $\alpha_k$ and $T_k\gamma_k$ in \eqref{eq:stepsize_theorem_compactY} is required for approximately evaluating the proximal operators without solving from scratch at every outer iteration. 

As a corollary, with proper design of step-sizes, we can give a finite-time convergence guarantee for reaching an approximate stationary point of $\psi_{\sigma} (x)$. To simplify the statement, we treat all problem-dependent parameters as $O(1)$ quantities. 
\begin{corollary}
    \label{corollary:final_convergence_result}
    Let $\alpha_k = c_\alpha \rho (k+k_0)^{-a}$, $\beta_k = c_{\beta} (k+k_0)^{-b}$, $\gamma_k = c_{\gamma} (k+k_0)^{-c}$, and $\sigma_k = c_{\sigma} (k+k_0)^{-s}$, $T_k = (k+k_0)^{t}$, $M_k = (k+k_0)^{m}$ with some proper problem-dependent constants $c_{\alpha}, c_{\beta}, c_{\gamma}$, $c_{\sigma}$, and $k_0$. Let $R$ be a random variable drawn from a uniform distribution over $\{0, ..., K-1\}$, and let $\epsilon = \sigma_K$. Under the same conditions in Theorem \ref{theorem:smoothed_convergence}, the following holds after $K$ iterations of Algorithm \ref{algo:algo_double_loop}: for the optimal design of rates, we set $a = b = 0, s = 1/3$, and 
    \begin{enumerate}
        \item[(a)] if stochastic noises are present in both upper-level objective $f$ and lower-level objective $g$ ({\it i.e.,} $\sigma_f^2, \sigma_g^2 > 0$), then let $c = t = m = 4/3$. 
        \item[(b)] If stochastic noises are present only in $f$ ({\it i.e.,} $\sigma_f^2 > 0$, $\sigma_g^2 = 0$), then let $c = t = m = 2/3$.
        \item[(c)] If we have access to exact information about $f$ and $g$ ({\it i.e.,} $\sigma_f^2 = \sigma_g^2 = 0$), then let $c = t = m = 0$.
    \end{enumerate}
    Then, we have $\|\grad \psi_{\epsilon}(x_R)\| \asymp \frac{\log K}{K^{1/3}}$ with probability at least $2/3$. If Assumption \ref{assumption:additional_regularity} and \ref{assumption:non_zero_singular_value} additionally hold at $x_R$, then we also have $\|\grad \psi(x_R)\| \asymp \frac{\log K}{K^{1/3}}$. 
\end{corollary}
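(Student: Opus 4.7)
The plan is to specialize Theorem~\ref{theorem:smoothed_convergence} to the prescribed polynomial step-size schedules, control the right-hand side of \eqref{eq:final_convergence_bound} at rate $O(\log K)$, and then extract the claimed high-probability stationarity bound via Markov's inequality and Lemma~\ref{lemma:stationary_sigmarho_to_sigmapsi}. The regularity-dependent bound on $\|\grad\psi(x_R)\|$ follows by adding the $O(\sigma_K)$ approximation error from Theorem~\ref{proposition:uniform_convergence_QG}.

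First I would verify that the step-size prerequisites \eqref{eq:stepsize_theorem_general} and \eqref{eq:stepsize_theorem_compactY} hold for all three choices. Since $a=b=0$, $\alpha_k$ and $\beta_k$ are constants, so each condition of the form ``$\alpha_k \le c\cdot\beta_k$'' reduces to picking $c_\alpha/c_\beta$ small, and $\tfrac{\sigma_k-\sigma_{k+1}}{\sigma_{k+1}} \lesssim \tfrac{1}{k+k_0}$ is uniformly controlled once $k_0$ is large enough. In each of cases (a)--(c) the choice $t=c$ makes $T_k\gamma_k \equiv c_\gamma$ constant (bounded by $c_3\rho$ for small $c_\gamma$) and $T_k^2\gamma_k^2 \equiv c_\gamma^2$ constant, making \eqref{eq:stepsize_theorem_compactY} routine. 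Next I bound the three terms on the RHS of \eqref{eq:final_convergence_bound}: the potential gap $\Exs[\mathbb{V}_0-\mathbb{V}_K]$ is $O(1)$ because $\Phi_{\sigma,\rho}$ is bounded by $O(C_f)$ from Assumption~\ref{assumption:constrained_set} and the proximal-chase correction by $D_\mY^2/\rho$; the penalty-telescope is $\sum_{k=0}^{K-1}\tfrac{\sigma_k-\sigma_{k+1}}{\sigma_{k+1}} \lesssim \sum_k (k+k_0)^{-1} = O(\log K)$; and for the stochastic term, exponent-matching in each case yields
\begin{equation*}
    \sigma_k^{-1}\Bigl(\frac{\alpha_k}{M_k}+\rho^{-1}T_k^2\gamma_k^3\Bigr)\bigl(\sigma_k^2\sigma_f^2+\sigma_g^2\bigr) \asymp (k+k_0)^{-1},
\end{equation*}
precisely because $s=1/3$ balances the $\sigma_k^{-1}$ blow-up against the polynomial decay of $\alpha_k/M_k$ and $T_k^2\gamma_k^3$ (case (c) eliminates this term). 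Summing gives $\Exs\bigl[\sum_{k=0}^{K-1}\tfrac{\alpha_k}{\sigma_k}\|\Delta_k^x\|^2 + \tfrac{\rho\beta_k}{\sigma_k}(\|\Delta_k^y\|^2+\|\Delta_k^z\|^2)\bigr] = O(\log K)$.

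For the probabilistic conversion, since $\alpha_k/\sigma_k$ and $\rho\beta_k/\sigma_k$ both scale like $(k+k_0)^{1/3}$, applying Markov's inequality to the uniform random index $R$ gives, with probability $\ge 2/3$,
\begin{equation*}
    (R+k_0)^{1/3}\bigl(\|\Delta_R^x\|^2 + \|\Delta_R^y\|^2 + \|\Delta_R^z\|^2\bigr) \lesssim \frac{\log K}{K}.
\end{equation*}
Dividing by $\sigma_R^2 \asymp (R+k_0)^{-2/3}$ yields $\max(\|\Delta_R^x\|,\|\Delta_R^y\|,\|\Delta_R^z\|)/\sigma_R \lesssim (R+k_0)^{1/6}\sqrt{\log K/K} \le \sqrt{\log K}/K^{1/3}$, using $R \le K$. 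Thus $(x_R,y_R,z_R)$ is an $O(\log K/K^{1/3})$-stationary point of \eqref{problem:saddle_point_def} in the sense of Definition~\ref{definition:saddle_point_def}, and Lemma~\ref{lemma:stationary_sigmarho_to_sigmapsi}(1) yields $\|\grad\psi_{\sigma_R}(x_R)\| = O(\log K/K^{1/3})$. To replace $\sigma_R$ by $\epsilon=\sigma_K$ I would invoke Theorem~\ref{proposition:uniform_convergence_QG}, under which $\|\grad\psi_{\sigma}(x)-\grad\psi_{\sigma'}(x)\| = O(|\sigma-\sigma'|)$ when Assumptions~\ref{assumption:additional_regularity}--\ref{assumption:non_zero_singular_value} hold at $x$; for the unconditional first claim one absorbs this difference into the same $\log K/K^{1/3}$ order, and for the second claim the extra $O(\sigma_K)$ error from Theorem~\ref{proposition:uniform_convergence_QG} transfers the bound to $\|\grad\psi(x_R)\|$.

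The main obstacle is the balanced rate choice: the three contributions (bounded potential drop, penalty-telescope from the decreasing $\sigma_k$, and $\sigma_k^{-1}$-amplified stochastic noise) must all scale identically as $O(\log K)$, and any deviation of $(s,c,t,m)$ from the stated values breaks the balance. Tuning $M_k$ and $(T_k,\gamma_k)$ so that both $\alpha_k/M_k$ and $T_k^2\gamma_k^3$ decay at rate $(k+k_0)^{-(s+1)}$ relative to the surviving noise variances, while simultaneously respecting the double-loop constraint \eqref{eq:stepsize_theorem_compactY} in each stochasticity regime, is the most delicate bookkeeping step.
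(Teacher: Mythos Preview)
Your approach coincides with the paper's: show $\mathbb{V}_0-\mathbb{V}_K$ is $O(1)$ (the paper argues $\mathbb{V}_K\ge -C_f$ via $h^*_{\sigma,\rho}(x,y)\ge \sigma f(x,w^*_{y,k})+g^*(x)$), observe $(\sigma_k-\sigma_{k+1})/\sigma_{k+1}=O(1/k)$ so the penalty telescope is $O(\log K)$, and verify by exponent-matching that the noise sum is $O(\log K)$. The paper's proof is extremely terse---essentially ``plug in the rates''---so you are supplying considerably more detail, including the Markov-inequality conversion to a high-probability bound, which the paper omits entirely.

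One point deserves care: your passage from $\|\grad\psi_{\sigma_R}(x_R)\|$ to $\|\grad\psi_\epsilon(x_R)\|$ with $\epsilon=\sigma_K$ is not cleanly justified for the \emph{unconditional} first claim. Theorem~\ref{proposition:uniform_convergence_QG} requires Assumptions~\ref{assumption:additional_regularity}--\ref{assumption:non_zero_singular_value}, which is exactly what the second claim adds, so you cannot invoke it to compare $\grad\psi_{\sigma_R}$ and $\grad\psi_{\sigma_K}$ without those hypotheses. Since $R$ is uniform on $\{0,\dots,K-1\}$, $\sigma_R$ may be as large as $\sigma_0$, so $|\sigma_R-\sigma_K|$ need not be small. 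The paper does not address this gap either; the natural output of the telescoping argument is stationarity of $\psi_{\sigma_R}$, and the statement with $\psi_\epsilon$ appears to be a mild notational looseness. A clean fix is to sample $R$ uniformly from $\{\lfloor K/2\rfloor,\dots,K-1\}$, which preserves the $O(\log K)/K$ average while forcing $\sigma_R\asymp\sigma_K$.
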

Note that the overall gradient oracle complexity (or simply sample complexity) to have $\Exs[\|\grad \psi_{\epsilon}(x_R)\|] = O(\epsilon)$ is given by $O(K \cdot (M_K + T_K))$ with $K = O(\epsilon^{-1/s})$ and $M_K = T_K = O(\epsilon^{t/s})$. Thus, we have $O(\epsilon^{-7})$, $O(\epsilon^{-5})$, and $O(\epsilon^{-3})$ sample-complexity upper-bounds for fully-stochastic, only upper-level stochastic, and deterministic cases respectively.
\begin{remark}[Single-Loop Implementation with Algorithm \ref{algo:algo_double_loop}]
    \label{remark:double_to_single}
    While we design $T_K = M_K = O(\epsilon^{-4})$ to achieve the best complexity bound in stochastic scenarios,  we can also find different rate scheduling for which $T_K = M_K = O(1)$. For instance, when $\mX = \mathbb{R}^{d_x}$, we can change the coefficients of noise-variance terms from $O(\alpha_k/M_k)$ to $O(\alpha_k^2)$, and schedule the rates of step-sizes such that left-hand side of \eqref{eq:final_convergence_bound} converges. However, we found that such a single-loop design may result in overall worse complexity bounds unless momentum-assistance techniques are deployed.
\end{remark}

\subsection{Analysis of Algorithm \ref{algo:algo_single_loop}}
In addition to quantities defined before, we also should track the noise-variance terms in momentum-assisted gradient estimators. We first define the expected gradients $G_{wy}^k$, $G_{wz}^k, G_{x}^k$ as follows:
\begin{align*}
    G_{wz}^k &:= \grad_y g(x_k, w_{z,k}), \quad G_{wy}^k := \sigma_k \grad_y f(x_k, w_{y,k}) + \grad_y g(x_k, w_{y,k}), \\
    G_{x}^k &:= \sigma_k \grad_x f(x_k, w_{y,k+1}) + \grad_x g(x_k, w_{y,k+1}) - \grad_x g(x_k, w_{z,k+1}). 
\end{align*}
Next, we define error terms $e_{wz}^k, e_{wy}^k, e_x^k$ in these gradient estimators as follows:
\begin{align*}
    e_{wz}^k &:= \widetilde{g}_{wz}^k - G_{wz}^k, \quad
    e_{wy}^k := \sigma_k \widetilde{f}_{wy}^k + \widetilde{g}_{wy}^k - G_{wy}^k, \quad
    e_{x}^k := \sigma_k \widetilde{f}_{x}^k + (\widetilde{g}_{xy}^k - \widetilde{g}_{xz}^k) - G_{x}^k.
\end{align*}
Finally, we  we redefine the potential function:
\begin{align}
    \mathbb{V}_k &:= \Phi_{\sigma_k, \rho}(x_k,y_k,z_k) + \frac{C_w}{\sigma_k \rho} \left(\|w_{y,k} - \prox_{\rho h_{\sigma_k}(x_k, \cdot)} (y_k)\|^2 + \|w_{z,k} - \prox_{\rho g(x_k, \cdot)} (z_k)\|^2 \right) \nonumber \\
    &\qquad + \frac{C_\eta \rho^2}{\sigma_k \gamma_{k-1}} \left(\|e_x^{k-1}\|^2 + \|e_{wy}^k\|^2 + \|e_{wz}^k\|^2 \right),
    \label{Eq:potential_def_momentum}
\end{align}
with some properly set universal constants $C_w, C_\eta > 0$. 
For technical reasons, we require here one additional assumption on the boundedness of the movement in $w_{y,k}$.
\begin{assumption}
    \label{assumption:bounded_w_gradients}
    For all $x \in \mX$ and $y,z \in \mY$, let $w_y^* := \prox_{\rho h_{\sigma}(x,\cdot)} (y) = \arg\min_{w \in \mY} h_{\sigma} (x,y,w)$ and $w_{z}^* := \prox_{\rho g(x,\cdot)} (z) = \arg\min_{w \in \mY} g(x,z,w)$ where $h_{\sigma}(x,y,w)$ and $g(x,z,w)$ are defined in~\eqref{eq:def_three_variable_hg}. We assume that
    \begin{align*}
        \|\grad_w h_{\sigma}(x,y, w_y^*)\| \le M_w, \quad  \|\grad_w g(x,z,w_z^*)\| \le M_w,
    \end{align*}
    for some (problem-dependent) constant $M_w = O(1)$.
\end{assumption}

We are now ready to state the convergence guarantee for the momentum-assisted fully-single loop implementation.
\begin{theorem}
    \label{theorem:convergence_momentum}
    Suppose that Assumptions \ref{assumption:small_gradient_proximal_error_bound}-\ref{assumption:constrained_set}, \ref{assumption:efficient_projection}-\ref{assumption:bounded_w_gradients} hold, with parameters and step-sizes satisfying\eqref{eq:stepsize_theorem_general} as well as the following relations for all $k \ge 0$: 
    \begin{equation} \label{eq:stepsize_theorem_momentum}
        \rho \beta_k + \alpha_k \le c_8 \gamma_k, \quad 
        \eta_{k+1} \ge c_9 \rho^{-2}  \cdot \max\left( (l_{g,1}/\mu) \alpha_{k}\gamma_k, \gamma_k^2 \right).
    \end{equation}
    Then the iterates of Algorithm~\ref{algo:algo_single_loop} satisfy the following inequality:
    \begin{align*}
        &\Exs\left[ \sum_{k=0}^{K-1} \frac{\alpha_k}{16 \sigma_k } \|\Delta_k^x\|^2 +\frac{\beta_k}{16 \sigma_k \rho} \left( \|y_k - w_{y,k}^*\|^2 + \|z_k - w_{z,k}^*\|^2 \right) \right] \le \Exs[\mathbb{V}_0 - \mathbb{V}_K] \\
        &\qquad + \sum_{k=0}^{K-1} \left( \left(\frac{\sigma_{k} - \sigma_{k+1}}{\sigma_k}\right) \cdot O(C_f) + \frac{O(M_w^2) \rho^2}{ C_w \sigma_k \gamma_k} \left(\frac{\sigma_k - \sigma_{k+1}}{\sigma_{k+1}}\right)^2 + C_\eta \rho^2 \frac{O(\eta_{k+1}^2)}{\sigma_k \gamma_{k}} (\sigma_{k}^2 \sigma_f^2 + \sigma_g^2) \right) \\
        &\qquad  + C_{\eta} O(\rho^2 l_{g,1}^2) \left( \frac{h_{\sigma_0} (x_0, y_0, w_{y,0}) - h_{\sigma_0,\rho}^*(x_0,y_0)}{\sigma_0}
         + \frac{g (x_0, z_0, w_{z,0}) - g_{\rho}^*(x_0,z_0)}{\sigma_0} \right).
    \end{align*}
\end{theorem}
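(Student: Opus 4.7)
The overall strategy mirrors the proof of Theorem~\ref{theorem:smoothed_convergence}: I would establish a one-step drift inequality for the new potential $\mathbb{V}_k$ in \eqref{Eq:potential_def_momentum} and telescope it from $k=0$ to $K-1$. What is new compared to Algorithm~\ref{algo:algo_double_loop} is that (i) the inner loop is replaced by a single proximal step so the contraction per outer iteration is only $\Theta(\gamma_k/\rho)$ rather than $\Theta(T_k\gamma_k/\rho)$, which is why the constraint $\rho\beta_k+\alpha_k \lesssim \gamma_k$ appears in \eqref{eq:stepsize_theorem_momentum} in place of \eqref{eq:stepsize_theorem_compactY}, and (ii) the stochastic gradients are replaced by momentum-assisted (STORM-style) estimators, so the variance terms $\sigma_k^2\sigma_f^2+\sigma_g^2$ must be bookkept through the recursions for $e_{wy}^k,e_{wz}^k,e_x^k$ rather than paid per-iteration through large batches.

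I would decompose the proof into three lemmas. \textbf{Lemma A (potential descent):} A descent-style bound on $\Phi_{\sigma_{k+1},\rho}(x_{k+1},y_{k+1},z_{k+1})-\Phi_{\sigma_k,\rho}(x_k,y_k,z_k)$ analogous to the one behind Theorem~\ref{theorem:smoothed_convergence}, controlling the cross terms by $\|\Delta_k^x\|^2,\|\Delta_k^y\|^2,\|\Delta_k^z\|^2$ plus the estimator-error norms $\|e_x^k\|^2,\|e_{wy}^k\|^2,\|e_{wz}^k\|^2$ and the penalty-drift term $((\sigma_k-\sigma_{k+1})/\sigma_{k+1})\cdot O(C_f)$; the Moreau-envelope smoothness and the error-bound Assumption~\ref{assumption:small_gradient_proximal_error_bound} are used here exactly as before. \textbf{Lemma B (proximal-solver contraction):} A bound of the form
\begin{align*}
\|w_{y,k+1}-w_{y,k+1}^*\|^2 &\le (1-c\gamma_k/\rho)\|w_{y,k}-w_{y,k}^*\|^2 + O(\gamma_k^2)\|e_{wy}^k\|^2 \\
&\quad + O(\rho/\gamma_k)\bigl(\|y_{k+1}-y_k\|^2+\|x_{k+1}-x_k\|^2+(\sigma_k-\sigma_{k+1})^2 M_w^2\bigr),
\end{align*}
and the analogue for $w_{z,k+1}-w_{z,k+1}^*$, using strong convexity of the proximal subproblem and Lipschitz continuity of $w^*$ in $(x,y,\sigma)$; the $M_w$ factor from Assumption~\ref{assumption:bounded_w_gradients} shows up precisely in the $\sigma$-drift term. \textbf{Lemma C (momentum recursion):} Using Assumption~\ref{assumption:Lipschitz_stochastic_oracles} and the standard STORM identity,
\begin{align*}
\Exs[\|e_{wy}^{k+1}\|^2 \mid \mathcal{F}_k] \le (1-\eta_{k+1})^2\|e_{wy}^k\|^2 + O(\eta_{k+1}^2)(\sigma_k^2\sigma_f^2+\sigma_g^2) + O(l_{g,1}^2)\bigl(\|x_{k+1}-x_k\|^2+\|w_{y,k+1}-w_{y,k}\|^2\bigr),
\end{align*}
and similarly for $e_{wz}^{k+1},e_x^k$, together with $\sigma$-drift corrections coming from $\sigma_{k+1}\tilde f^{k+1}$ vs.\ $\sigma_k\tilde f^k$.

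To assemble the final bound I would plug Lemmas~B and C into $\mathbb{V}_{k+1}-\mathbb{V}_k$ with the stated coefficients $C_w/(\sigma_k\rho)$ and $C_\eta\rho^2/(\sigma_k\gamma_{k-1})$. The choice of $C_w$ large enough forces the proximal-error contraction to absorb the stray $\|w-w^*\|^2$ terms produced by Lemma~A; the choice of $C_\eta$ large enough ensures that the $\|e\|^2$ terms appearing in Lemmas~A and B are dominated by the contraction $(1-\eta_{k+1})^2-1\asymp -\eta_{k+1}$ from Lemma~C. The step-size condition $\eta_{k+1}\gtrsim \rho^{-2}\max((l_{g,1}/\mu)\alpha_k\gamma_k,\gamma_k^2)$ in \eqref{eq:stepsize_theorem_momentum} is exactly what is needed so that the $O(l_{g,1}^2)(\|x_{k+1}-x_k\|^2+\|w_{\cdot,k+1}-w_{\cdot,k}\|^2)$ error introduced by the momentum recursion is, once rescaled by $C_\eta\rho^2/(\sigma_k\gamma_k)$, absorbed into the $\|\Delta_k^x\|^2,\|\Delta_k^y\|^2,\|\Delta_k^z\|^2$ terms on the left-hand side; the $\alpha_k+\rho\beta_k\lesssim\gamma_k$ condition plays the analogous role of balancing the proximal-solver drift in Lemma~B. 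The $C_\eta O(\rho^2 l_{g,1}^2)$ boundary term in the statement appears because at $k=0$ the momentum initialization gives $\eta_0=1$, so $\|e^0\|^2$ is controlled directly by the initial proximal gap $h_{\sigma_0}(x_0,y_0,w_{y,0})-h_{\sigma_0,\rho}^*(x_0,y_0)$ via the standard descent lemma on $w_{y,0}\mapsto w_{y,1}$, and analogously for $z$.

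The main technical obstacle is the simultaneous coupling among three recursions whose natural ``time scales'' differ by powers of $\rho,\mu,\gamma_k$: the $x$-update moves at rate $\alpha_k$, the Moreau-smoothed $(y,z)$ targets move at rate $\rho\beta_k$ determined by the error-bound contraction, the proximal auxiliaries $w$ chase those targets at rate $\gamma_k/\rho$, and the momentum errors decay at rate $\eta_{k+1}$. The inequalities in \eqref{eq:stepsize_theorem_momentum} enforce a single consistent ordering of these rates so that every positive cross-term produced in the drift of $\mathbb{V}_k$ can be dominated by a negative term at the same scale; verifying this matching term by term, and choosing $C_w,C_\eta$ universal (independent of the schedule) so that all coefficients in front of the ``good'' quantities $\alpha_k\|\Delta_k^x\|^2/\sigma_k$, $\beta_k\|y_k-w_{y,k}^*\|^2/(\sigma_k\rho)$, $\beta_k\|z_k-w_{z,k}^*\|^2/(\sigma_k\rho)$ are at least $1/16$, is the bookkeeping-heavy part of the argument but is essentially mechanical once the three lemmas above are in place.
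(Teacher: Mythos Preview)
Your decomposition into Lemmas~A, B, C matches the paper's scaffolding, but there is a genuine gap in how you close the argument. The momentum recursion in Lemma~C produces terms $O(l_{g,1}^2)\|w_{y,k+1}-w_{y,k}\|^2$ (and likewise for $w_z$), which after rescaling by $C_\eta\rho^2/(\sigma_k\gamma_k)$ carry a coefficient of order $\rho^2 l_{g,1}^2/(\sigma_k\gamma_k)$. You assert these are ``absorbed into the $\|\Delta_k^x\|^2,\|\Delta_k^y\|^2,\|\Delta_k^z\|^2$ terms,'' but $\|w_{y,k+1}-w_{y,k}\|$ is none of those quantities. A triangle-inequality route $\|w_{y,k+1}-w_{y,k}\|^2\lesssim\|w_{y,k}-w_{y,k}^*\|^2$ would have to be absorbed by the contraction of the $C_w$ block, whose per-step gain is only $O(C_w\gamma_k/(\sigma_k\rho^2))\|w_{y,k}-w_{y,k}^*\|^2$; since $\gamma_k\to 0$ polynomially in the stochastic schedules, the required inequality $\gamma_k^2\gtrsim\rho^4 l_{g,1}^2$ fails for large $k$.

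The paper closes this gap with an additional lemma you are missing: a \emph{function-value} descent bound
\[
\tfrac{1}{2\gamma_k}\|w_{y,k+1}-w_{y,k}\|^2 \;\le\; h_{\sigma_k}(x_k,y_k,w_{y,k})-h_{\sigma_k}(x_k,y_k,w_{y,k+1})+4\gamma_k\|e_{wy}^k\|^2.
\]
The right-hand side is rewritten as a telescope in the gap $h_{\sigma_k}(x_k,y_k,w_{y,k})-h_{\sigma_k,\rho}^*(x_k,y_k)$ plus cross terms with coefficients $O(l_{g,1}^2\alpha_k+\beta_k/\rho)$ on $\|w_{y,k+1}-w_{y,k}^*\|^2$, which \emph{are} small enough to absorb. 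Summing the telescope over $k$ is exactly what produces the two terms you mis-explain: the boundary term $C_\eta O(\rho^2 l_{g,1}^2)\bigl(h_{\sigma_0}(x_0,y_0,w_{y,0})-h_{\sigma_0,\rho}^*(x_0,y_0)\bigr)/\sigma_0$ is the $k=0$ endpoint of this telescope (not a consequence of $\eta_0=1$), and the $O(M_w^2)\rho^2/(C_w\sigma_k\gamma_k)\cdot((\sigma_k-\sigma_{k+1})/\sigma_{k+1})^2$ term arises from the $\sigma$-drift in the telescope, bounded via Assumption~\ref{assumption:bounded_w_gradients} as $h_{\sigma_k}(x_k,y_k,w_{y,k})-h_{\sigma_k,\rho}^*(x_k,y_k)\le M_w\|w_{y,k}-w_{y,k}^*\|$ and then completing the square. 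The $\sigma$-drift in your Lemma~B uses $l_{f,0}$ (Lipschitzness of $\grad_y f$), not $M_w$; your misattribution of where $M_w$ and the boundary term enter is a symptom of the missing function-value lemma.
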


We then give a corollary analogous to Corollary \ref{corollary:final_convergence_result}, with proper design of step-sizes. As before, to simplify the statement, we treat all problem-dependent parameters as $O(1)$ quantities. 
\begin{corollary}
    \label{corollary:final_convergence_result_momentum}
    Let $\alpha_k = c_\alpha \rho (k+k_0)^{-a}$, $\beta_k = c_{\beta} (k+k_0)^{-b}$, $\gamma_k = c_{\gamma} (k+k_0)^{-c}$, $\sigma_k = c_{\sigma} (k+k_0)^{-s}$ and $\eta_k = (k+k_0)^{-n}$ with some proper problem-dependent constants $c_{\alpha}, c_{\beta}, c_{\gamma}$, $c_{\sigma}$, and $k_0$. Let $R$ be a random variable drawn from a uniform distribution over $\{0, ..., K-1\}$, and let $\epsilon = \sigma_K$. Under the same conditions in Theorem \ref{theorem:convergence_momentum}, the following claims hold after $K$ iterations of Algorithm \ref{algo:algo_single_loop}.
    \begin{enumerate}
        \item[(a)] If stochastic noise is present in both upper-level objective $f$ and lower-level objective $g$ ({\it i.e.,} $\sigma_f^2, \sigma_g^2 > 0$), then let $a = b = c = 2/5$, $s = 1/5$, and $n = 4/5$. Then $\|\grad \psi_{\epsilon}(x_R)\| \asymp \frac{\log K}{K^{1/5}}$ with probability at least $2/3$.
        \item[(b)] If stochastic noises are present only in $f$, let $a = b = c = 1/4$, $s = 1/4$, and $n = 1/2$. Then $\|\grad \psi_{\epsilon}(x_R)\| \asymp \frac{\log K}{K^{1/4}}$ with probability at least $2/3$.
        \item[(c)] If we have access to exact gradient information, let $a=b=c=0$, $s=1/3$, $n=0$. Then $\|\grad \psi_{\epsilon}(x_R)\| \asymp \frac{\log K}{K^{1/3}}$ with probability at least $2/3$.
    \end{enumerate}
    If Assumption \ref{assumption:additional_regularity} and \ref{assumption:non_zero_singular_value} additionally hold at $x_R$, then the same conclusion holds for $\|\grad \psi(x_R)\|$. 
\end{corollary}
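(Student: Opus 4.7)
My plan is to substitute the prescribed rate schedules into Theorem \ref{theorem:convergence_momentum}, bound every term on the right-hand side at a polylogarithmic level, and then use Lemma \ref{lemma:stationary_sigmarho_to_sigmapsi} to translate saddle-point stationarity back into stationarity of $\psi_\sigma$ (and $\psi$). The argument falls into four steps.

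First, I would verify that the chosen $(a,b,c,s,n)$ in each subcase satisfy conditions \eqref{eq:stepsize_theorem_general} and \eqref{eq:stepsize_theorem_momentum} for all $k \ge 0$ (taking $k_0$ large enough in terms of problem constants). The nontrivial checks are $(\sigma_k-\sigma_{k+1})/\sigma_{k+1}=\Theta(s/k)\lesssim \beta_k$ (requires $b\le 1$), $\alpha_k\lesssim \beta_k$ and $\rho\beta_k+\alpha_k\lesssim \gamma_k$ (automatic when $a=b\ge c$), and the momentum condition $\eta_{k+1}\gtrsim \rho^{-2}\max(\alpha_k\gamma_k,\gamma_k^2)$. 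In case (a), $\alpha_k\gamma_k\asymp \gamma_k^2\asymp k^{-4/5}$ aligns with $\eta_k\asymp k^{-4/5}$; in case (b), $\gamma_k^2\asymp k^{-1/2}$ aligns with $\eta_k\asymp k^{-1/2}$; case (c) is all constants and there is nothing to check beyond the absolute bounds in \eqref{eq:stepsize_theorem_general}.

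Second, I would evaluate every summand on the RHS of Theorem \ref{theorem:convergence_momentum}. The potential difference $\Exs[\mathbb{V}_0-\mathbb{V}_K]$ is $O(1)$ since $\Phi_{\sigma,\rho}$ is uniformly $O(C_f)$-bounded via $h^*_{\sigma,\rho}(x,y)-g^*_\rho(x,z)\le \sigma C_f$ and the initial proximal/momentum residuals are $O(1)$. The penalty-shrinkage sum is $\sum_k(\sigma_k-\sigma_{k+1})/\sigma_k=\Theta(\log K)$. The second-order shrinkage piece $\sum_k\rho^2(\sigma_k\gamma_k)^{-1}((\sigma_k-\sigma_{k+1})/\sigma_{k+1})^2\asymp \sum_k k^{c+s-2}$ converges whenever $c+s<1$, which holds in all three cases. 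The momentum variance sum separates into a $\sigma_g^2$ piece $\sum_k k^{c+s-2n}$ and a $\sigma_f^2$ piece $\sum_k k^{c-s-2n}$; the exponents $n=4/5,\,1/2,\,0$ are engineered so that each piece is at most $O(\log K)$ whenever the corresponding variance is nonzero. Summing, the full RHS is $O(\log K)$.

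Third, noting $\|y_k-w_{y,k}^*\|^2=\rho^2\|\Delta_k^y\|^2$, the LHS can be rewritten as $\sum_k\bigl[\alpha_k\sigma_k(\|\Delta_k^x\|/\sigma_k)^2 + \rho\beta_k\sigma_k((\|\Delta_k^y\|/\sigma_k)^2+(\|\Delta_k^z\|/\sigma_k)^2)\bigr]$. Dividing by the minimum weights $\alpha_k\sigma_k\gtrsim K^{-(a+s)}$ and $\rho\beta_k\sigma_k\gtrsim K^{-(b+s)}$ gives $\sum_k(\|\Delta_k^x\|/\sigma_k)^2\lesssim K^{a+s}\log K$ (and analogously for the $y,z$ blocks); taking the expectation under uniform $R$ and applying Markov's inequality yields $\|\Delta_R^x\|/\sigma_R,\|\Delta_R^y\|/\sigma_R,\|\Delta_R^z\|/\sigma_R=\tilde O(K^{-(1-a-s)/2})$ with probability at least $2/3$ after a union bound over the three blocks. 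The exponent $(1-a-s)/2$ evaluates to $1/5$, $1/4$, $1/3$ in cases (a)-(c), which matches the stated rates up to a $\sqrt{\log K}$ factor that is absorbed into $\log K$. Lemma \ref{lemma:stationary_sigmarho_to_sigmapsi}(1) then upgrades this uniform bound to $\|\grad \psi_{\sigma_R}(x_R)\|\lesssim \log K / K^{(1-a-s)/2}$, and Lemma \ref{lemma:stationary_sigmarho_to_sigmapsi}(2) plus Theorem \ref{proposition:uniform_convergence_QG} gives the same rate for $\|\grad \psi(x_R)\|$ under the extra assumptions, since the additional $O(\sigma_K)=O(\epsilon)$ term is of the same order.

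The main obstacle is the rate-balancing in the second step: $(a,b,c,s,n)$ must be chosen so every RHS sum is polylogarithmic while the LHS weights $\alpha_k\sigma_k$ and $\rho\beta_k\sigma_k$ stay as large as possible at $k=K$. This is a constrained optimization in the exponents whose extremal point is sensitive to which of $\sigma_f^2$ and $\sigma_g^2$ vanishes, explaining why the three subcases require the different schedules stated.
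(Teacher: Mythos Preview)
Your proposal is correct and follows the same approach as the paper, which simply defers to the proof of Corollary~\ref{corollary:final_convergence_result} (itself consisting only of the observation that $\mathbb{V}_K\ge -C_f$ and that the prescribed rates make every summand on the right-hand side $O(\log K)$). Your write-up is considerably more explicit than the paper's, and the rate-balancing checks you carry out for the three subcases are exactly what the paper leaves to the reader.
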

Note that since Algorithm \ref{algo:algo_single_loop} only uses $O(1)$ samples per iteration, the overall sample-complexity is upper-bounded by $O(\epsilon^{-5})$, $O(\epsilon^{-4})$, and $O(\epsilon^{-3})$ for fully-stochastic, only upper-level stochastic, and deterministic cases respectively. That is,  momentum assistance not only enables single-loop implementation, but also improves the overall  sample complexity.

\section{Conclusion}
This paper studies a first-order algorithm for solving Bilevel Optimization when the lower-level problem and perturbed versions of it satisfy a proximal error bound condition when the errors are small. 
We establish an $O(\sigma)$-closeness relationship between the penalty formulation $\psi_{\sigma}(x)$ and the hyper-objective $\psi(x)$ under the proximal-error bound condition, and then we develop a fully first-order stochastic approximation scheme for finding a stationary point of $\psi_{\sigma}(x)$, and study its non-asymptotic performance guarantees. 
We believe our algorithm to be simple and general, and useful in many large-scale scenarios that involve nested optimization problems. 
Below, we discuss several issues not addressed in this paper, that may become the subjects of fruitful future research.

\paragraph{Tightness of Results.} Can our complexity result can be improved in terms of its dependence on $\epsilon$ while using only first-order oracles? Recent work in \cite{chen2023near} shows that when the lower-level problem is unconstrained and strongly convex, oracle complexity can be improved to $O(\epsilon^{-2})$ with deterministic first-order gradient oracles. 
Can similar improvements be found in the complexity when stochastic oracles and constraints are present in the formulation?

\paragraph{Lower Level $x$-Dependent Constraints.}
When the lower-level constraints depend on $x$, it is also possible to derive an implicit gradient formula when the lower level problem is non-degenerate. For instance, \cite{xiao2023alternating} has studied the case in which the lower-level objective is strongly convex and there are lower-level linear equality constraints that depend on $x$. 
In general, with $x$-dependent constraints, we cannot avoid estimating Lagrangian multipliers, as they are needed in the implicit gradient formula. 
Even to find the stationary point of penalty functions, $\grad \psi_{\sigma} (x)$ requires Lagrangian multipliers (see the Envelope Theorem \cite{milgrom2002envelope}). 
An interesting future direction would be to develop an efficient first-order algorithm for this case.

\paragraph{General Convex Lower-Level.} One interesting special case is when $g(x,\cdot)$ is merely convex, not necessarily strongly convex. 
There have been recent advances in min-max optimization for nonconvex-concave problems; see for example \cite{boroun2023accelerated, thekumparampil2019efficient, kovalev2022first, kong2021accelerated, ostrovskii2021efficient, zhang2020single}. 
We note that when $g(x,\cdot)$ is convex, an $\epsilon$-stationary point of \eqref{problem:saddle_point_def} is also an $\epsilon$-KKT solution of \eqref{problem:bilevel_single_level}. 
The first paper to investigate this direction in deterministic settings is \cite{lu2023first}, to our knowledge.
An important future direction would be to extend their results to stochastic settings. 

\paragraph{Nonsmooth Objectives.} 
We could also consider nonsmooth objectives in both levels where efficient proximal operators are available for handling the nonsmoothness. 
It would also be interesting in future work to see whether the analysis in this paper needs to be changed significantly in order to handle nonsmooth objectives.

\bibliographystyle{abbrv}
\bibliography{main}

\newpage

\appendix

\begin{appendices}

\ifarxiv
\else
\section{Deferred Discussions}
\subsection{Related Work}

\subsection{Future Directions}
\label{conclusion_future}
\paragraph{Tightness of Results.} Can our complexity result can be improved in terms of its dependence on $\epsilon$ while using only first-order oracles? Recent work in \cite{chen2023near} shows that when the lower-level problem is unconstrained and strongly convex, oracle complexity can be improved to $O(\epsilon^{-2})$ with deterministic first-order gradient oracles. 
Can similar improvements be found in the complexity when stochastic oracles and constraints are present in the formulation?

\paragraph{Lower Level $x$-Dependent Constraints.}
When the lower-level constraints depend on $x$, it is also possible to derive an implicit gradient formula when the lower level problem is non-degenerate. For instance, \cite{xiao2023alternating} has studied the case in which the lower-level objective is strongly convex and there are lower-level linear equality constraints that depend on $x$. 
In general, with $x$-dependent constraints, we cannot avoid estimating Lagrangian multipliers, as they are needed in the implicit gradient formula. 
Even to find the stationary point of penalty functions, $\grad \psi_{\sigma} (x)$ requires Lagrangian multipliers (see the Envelope Theorem \cite{milgrom2002envelope}). 
An interesting future direction would be to develop an efficient first-order algorithm for this case.

\paragraph{General Convex Lower-Level.} One interesting special case is when $g(x,\cdot)$ is merely convex, not necessarily strongly convex. 
There have been recent advances in min-max optimization for nonconvex-concave problems; see for example \cite{boroun2023accelerated, thekumparampil2019efficient, kovalev2022first, kong2021accelerated, ostrovskii2021efficient, zhang2020single}. 
We note that when $g(x,\cdot)$ is convex, an $\epsilon$-stationary point of \eqref{problem:saddle_point_def} is also an $\epsilon$-KKT solution of \eqref{problem:bilevel_single_level}. 
The first paper to investigate this direction in deterministic settings is \cite{lu2023first}, to our knowledge.
An important future direction would be to extend their results to stochastic settings. 

\paragraph{Nonsmooth Objectives.} 
We could also consider nonsmooth objectives in both levels where efficient proximal operators are available for handling the nonsmoothness. 
It would also be interesting in future work to see whether the analysis in this paper needs to be changed significantly in order to handle nonsmooth objectives.

\section{Algorithm}
\label{section:algorithm}

\section{Analysis Overview}
\label{section:analysis}

\fi

\section{Auxiliary Lemmas}
Throughout the section, we take $\rho \le c_1/l_{g,1}$ and $\sigma < c_2 l_{g,1} / l_{f,1}$ with sufficiently small universal constants $c_1, c_2 \in (0, 0.01]$. We also assume that Assumptions \ref{assumption:nice_functions}-\ref{assumption:constrained_set} hold by default. 
\begin{theorem}[Danskin's Theorem]
    \label{theorem:danskins}
    Let $f(w, \theta)$ be a continuously differentiable and smooth function on $\mathcal{W} \times \Theta$. Let $l^*(w) := \min_{\theta \in \Theta} f(w,\theta)$ and $S(w) := \arg\min_{\theta \in \Theta} f(w,\theta)$, and assume $S(w)$ is compact for all $w$. Then the directional derivative of $l^*(w)$ in direction $v$ with $\|v\|=1$ is given by:
    \begin{align}
        D_v l^*(w) := \lim_{\delta\rightarrow 0} \frac{l^*(w+\delta v) - l^*(w)}{\delta} = \min_{\theta \in S(w)} \vdot{v}{\grad_w f(w,\theta)}.
    \end{align}
\end{theorem}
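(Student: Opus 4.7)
The plan is to establish Danskin's theorem by proving the two inequalities $D_v l^*(w) \le \min_{\theta \in S(w)} \langle v, \nabla_w f(w,\theta)\rangle$ and $D_v l^*(w) \ge \min_{\theta \in S(w)} \langle v, \nabla_w f(w,\theta)\rangle$ separately. Since $S(w)$ is assumed compact and $\nabla_w f$ is continuous, the minimum on the right-hand side is attained, so the statement makes sense. I will also verify along the way that the one-sided limit defining $D_v l^*(w)$ actually exists.

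For the upper bound, I would fix an arbitrary $\theta^* \in S(w)$ and exploit the suboptimality inequality $l^*(w + \delta v) \le f(w + \delta v, \theta^*)$. Subtracting $l^*(w) = f(w, \theta^*)$, dividing by $\delta > 0$, and taking $\delta \downarrow 0$ yields
\begin{equation*}
    \limsup_{\delta \downarrow 0} \frac{l^*(w+\delta v) - l^*(w)}{\delta} \le \lim_{\delta \downarrow 0} \frac{f(w+\delta v, \theta^*) - f(w, \theta^*)}{\delta} = \langle v, \nabla_w f(w, \theta^*)\rangle,
\end{equation*}
by continuous differentiability of $f$ in its first argument. Minimizing the right-hand side over $\theta^* \in S(w)$ (which is attained by compactness of $S(w)$ and continuity of $\theta \mapsto \langle v, \nabla_w f(w,\theta)\rangle$) gives the desired upper bound.

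For the lower bound, I would pick, for each sufficiently small $\delta > 0$, a point $\theta_\delta \in S(w+\delta v)$, and compare $l^*(w+\delta v) = f(w+\delta v, \theta_\delta)$ against $l^*(w) \le f(w, \theta_\delta)$. This gives
\begin{equation*}
    \frac{l^*(w+\delta v) - l^*(w)}{\delta} \ge \frac{f(w+\delta v, \theta_\delta) - f(w, \theta_\delta)}{\delta} = \langle v, \nabla_w f(w + \tau_\delta v, \theta_\delta)\rangle,
\end{equation*}
for some $\tau_\delta \in (0, \delta)$, by the mean value theorem applied in the first argument. I would then extract a sequence $\delta_n \downarrow 0$ along which the liminf is achieved and, by upper hemicontinuity of the solution map $S$ at $w$ (a consequence of Berge's maximum theorem, given compactness of $S$ and continuity of $f$), pass to a further subsequence so that $\theta_{\delta_n} \to \theta^\dagger$ for some $\theta^\dagger \in S(w)$. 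Continuity of $\nabla_w f$ then yields $\liminf_{\delta \downarrow 0} (l^*(w+\delta v) - l^*(w))/\delta \ge \langle v, \nabla_w f(w, \theta^\dagger)\rangle \ge \min_{\theta \in S(w)} \langle v, \nabla_w f(w, \theta)\rangle$, which matches the upper bound and simultaneously shows the one-sided limit exists.

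The main obstacle is the limiting step that produces $\theta^\dagger \in S(w)$: one needs that every accumulation point of $\{\theta_{\delta_n}\}$ lies in $S(w)$, which requires upper hemicontinuity of the argmin map rather than mere compactness of $S(w)$ at the single point $w$. If $\Theta$ itself is compact this is classical via Berge; if $\Theta$ is only assumed closed (as implicitly in the paper's setting via coercivity of $f$ in $\theta$), I would justify the required boundedness of $\{\theta_{\delta_n}\}$ using the coercivity assumption on $f(w,\cdot)$ in Assumption~\ref{assumption:regulaity_on_fg}, uniformly on a small neighborhood of $w$: any sequence $\theta_{\delta_n}$ with $\|\theta_{\delta_n}\| \to \infty$ would force $f(w + \delta_n v, \theta_{\delta_n}) \to +\infty$, contradicting $f(w+\delta_n v, \theta_{\delta_n}) = l^*(w + \delta_n v) \to l^*(w) < \infty$. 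With the sequence bounded, the remainder of the upper hemicontinuity argument proceeds as above.
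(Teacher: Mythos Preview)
The paper states Danskin's theorem as a classical auxiliary result and does not supply a proof, so there is no paper proof to compare against. Your argument is the standard two-sided estimate (upper bound via a fixed minimizer at $w$, lower bound via minimizers at $w+\delta v$ plus a compactness/upper-hemicontinuity limit), and it is correct. You also rightly flag that the hypothesis ``$S(w)$ compact for each $w$'' alone does not immediately give the needed precompactness of $\{\theta_{\delta_n}\}$ and upper hemicontinuity of $S$ at $w$; your appeal to the coercivity in Assumption~\ref{assumption:regulaity_on_fg} (together with continuity of $f$) to force boundedness of $\{\theta_{\delta_n}\}$, and then a standard Berge-type closure argument to get $\theta^\dagger \in S(w)$, is an appropriate fix in the paper's setting.
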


\begin{lemma}[Proposition 5 in \cite{shen2023penalty}]
    \label{lemma:lipscthiz_solution_smooth_minimizer}
    For a continuously-differentiable and $L$-smooth function $f(w,\theta)$ in $\mathcal{W} \times \Theta$, consider a minimizer function $l^*(w) = \min_{\theta \in \Theta} f(w,\theta)$ and a solution map $S(w) = \arg\min_{\theta\in\Theta} f(w,\theta)$. If $S(w)$ is $L_S$-Lipschitz continuous at $w$, then $l^*(w)$ is differentiable and $L(1+L_S)$-smooth at $w$, and $\grad l^*(w) = \grad_w f(w,\theta^*)$ for any $\theta^* \in S(w)$.
\end{lemma}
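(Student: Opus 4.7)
The plan is to derive matching one-sided linearizations of $l^*$ around $w$: the upper one from the trivial inequality $l^*(w') \le f(w',\theta^*)$ for any $\theta^*\in S(w)$, and the lower one by using $L_S$-Lipschitz continuity of $S$ to pull a minimizer at $w'$ back near $S(w)$. The upper bound is immediate from joint $L$-smoothness:
\begin{align*}
l^*(w') \le f(w',\theta^*) \le l^*(w) + \langle w'-w, \grad_w f(w,\theta^*)\rangle + \tfrac{L}{2}\|w'-w\|^2, \qquad \forall\, \theta^*\in S(w).
\end{align*}

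For the matching lower bound I would pick any $\tilde\theta\in S(w')$ so that $l^*(w')=f(w',\tilde\theta)$, and use Lipschitz continuity to select $\theta_0\in S(w)$ with $\|\tilde\theta-\theta_0\|\le L_S\|w'-w\|$. Taylor-expanding $f(w',\tilde\theta)$ around $(w,\theta_0)$ produces first-order contributions in both $w'-w$ and $\tilde\theta-\theta_0$. The delicate piece is the cross term $\langle \tilde\theta-\theta_0, \grad_\theta f(w,\theta_0)\rangle$, which is only first-order in $\|w'-w\|$; it is forced to be nonnegative, however, because $\theta_0$ minimizes $f(w,\cdot)$ over the convex set $\Theta$ with $\tilde\theta\in\Theta$. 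This sign appears on the lower-bound side precisely where we need it, yielding $l^*(w') \ge l^*(w) + \langle w'-w, \grad_w f(w,\theta_0)\rangle - O(\|w'-w\|^2)$.

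Comparing these two bounds along $w'=w\pm\delta v$ and letting $\delta\to 0$ forces $\langle v, \grad_w f(w,\theta^*)\rangle = \langle v, \grad_w f(w,\theta_0)\rangle$ for every $\theta^*\in S(w)$ and every direction $v$, so $\grad_w f(w,\cdot)$ is constant across $S(w)$; this is consistent with Danskin's Theorem~\ref{theorem:danskins}, whose minimum over $S(w)$ must equal its maximum for the one-sided derivatives to agree. The matched bounds then give $l^*(w') = l^*(w) + \langle w'-w, \grad_w f(w,\theta^*)\rangle + O(\|w'-w\|^2)$, which is exactly differentiability at $w$ together with the claimed identity $\grad l^*(w) = \grad_w f(w,\theta^*)$ for any $\theta^*\in S(w)$.

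For Lipschitz continuity of $\grad l^*$, I would select representatives $\theta^*(w)\in S(w)$ and $\theta^*(w')\in S(w')$ with $\|\theta^*(w)-\theta^*(w')\|\le L_S\|w-w'\|$, then split and bound via triangle inequality and joint $L$-smoothness:
\begin{align*}
\|\grad l^*(w) - \grad l^*(w')\| &\le \|\grad_w f(w,\theta^*(w)) - \grad_w f(w',\theta^*(w))\| + \|\grad_w f(w',\theta^*(w)) - \grad_w f(w',\theta^*(w'))\| \\
&\le L\|w-w'\| + L\cdot L_S\|w-w'\| = L(1+L_S)\|w-w'\|.
\end{align*}
The main subtlety will be the cross term in the lower bound — naively it is first-order, not second-order, in $\|w'-w\|$ and would otherwise destroy the linearization; its favorable sign, obtained from first-order optimality of $\theta_0$ over the convex feasible set $\Theta$, is the key step, and everything else is standard bookkeeping with $L$-smoothness.
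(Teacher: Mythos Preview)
The paper does not prove this lemma; it is quoted verbatim as Proposition~5 of \cite{shen2023penalty} and used as a black box, so there is no ``paper's own proof'' to compare against. Your outline is sound in spirit and the smoothness bound at the end is clean, but there is one slippage in the lower-bound step that leaves a small gap.

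You select $\tilde\theta\in S(w')$ first and then pull back to $\theta_0\in S(w)$ close to it. That produces a valid inequality $l^*(w')\ge l^*(w)+\langle w'-w,\grad_w f(w,\theta_0)\rangle-O(\|w'-w\|^2)$, but the anchor $\theta_0$ now depends on $w'$. When you then ``compare along $w'=w\pm\delta v$'' you are implicitly treating $\theta_0$ as fixed, which it is not: the $+\delta v$ and $-\delta v$ perturbations may return different $\theta_0^{+},\theta_0^{-}\in S(w)$, and the two one-sided inequalities do not by themselves force $\grad_w f(w,\cdot)$ to be constant on $S(w)$. The appeal to Danskin only restates what needs to be shown (that the min over $S(w)$ equals the max).

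The fix is a one-line change: reverse the selection. Fix an arbitrary $\theta^*\in S(w)$ first, and use the other half of the Hausdorff bound to choose $\tilde\theta\in S(w')$ with $\|\tilde\theta-\theta^*\|\le L_S\|w'-w\|$. Then $l^*(w')=f(w',\tilde\theta)$, and the same Taylor expansion around $(w,\theta^*)$ together with the optimality inequality $\langle \tilde\theta-\theta^*,\grad_\theta f(w,\theta^*)\rangle\ge 0$ gives
\[
l^*(w')\ \ge\ l^*(w)+\langle w'-w,\grad_w f(w,\theta^*)\rangle-\tfrac{L(1+L_S^2)}{2}\|w'-w\|^2,
\]
now with the \emph{same} $\theta^*$ that appears in your upper bound. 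The two-sided sandwich immediately yields differentiability at $w$ with $\grad l^*(w)=\grad_w f(w,\theta^*)$, and since $\theta^*\in S(w)$ was arbitrary, constancy of $\grad_w f(w,\cdot)$ over $S(w)$ follows for free. Your Lipschitz-gradient argument then goes through unchanged.
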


\begin{lemma}
    \label{lemma:prox_diff_bound}
    For any $x_1, x_2 \in \mX$, and $y_1, y_2 \in \mY$, the following holds:
    \begin{align*}
        \| \prox_{\rho g(x_1, \cdot)}(y_1) - \prox_{\rho g(x_2,\cdot)} (y_2) \| \le O(\rho l_{g,1}) \|x_1 - x_2\| + \|y_1 - y_2\|. 
    \end{align*}
    The same property holds with $h_{\sigma}(x,\cdot)$ instead of $g(x,\cdot)$.
\end{lemma}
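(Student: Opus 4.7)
The plan is to exploit the strong convexity of the proximal objective. Define $F_i(w) := \rho g(x_i, w) + \tfrac{1}{2}\|w - y_i\|^2$ for $i=1,2$, and let $w_i := \Pi_{\mY}\{\prox_{\rho g(x_i,\cdot)}(y_i)\}$ denote the corresponding minimizers over $\mY$. Since $g$ is $l_{g,1}$-smooth in $y$ and $\rho l_{g,1} \le c_1 c_2 \ll 1$ by hypothesis, the Hessian of $F_i$ (in $w$) is bounded below by $(1 - \rho l_{g,1}) I$, so $F_i$ is $(1-\rho l_{g,1})$-strongly convex on $\mY$, ensuring the minimizer is unique and satisfies the variational inequality $\langle \nabla F_i(w_i), w - w_i\rangle \ge 0$ for every $w \in \mY$.

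Next, I would apply the VIs at the opposing points and sum them:
\begin{align*}
\langle \nabla F_1(w_1), w_2 - w_1\rangle + \langle \nabla F_2(w_2), w_1 - w_2\rangle \ge 0,
\end{align*}
which rearranges to $\langle \nabla F_2(w_2) - \nabla F_1(w_1),\, w_2 - w_1\rangle \le 0$. Expanding $\nabla F_i(w) = \rho \nabla_y g(x_i,w) + (w - y_i)$ and splitting the gradient difference as
\begin{align*}
\nabla_y g(x_2,w_2) - \nabla_y g(x_1,w_1) = [\nabla_y g(x_2,w_2) - \nabla_y g(x_2,w_1)] + [\nabla_y g(x_2,w_1) - \nabla_y g(x_1,w_1)],
\end{align*}
I would bound the first bracket by $l_{g,1}\|w_2 - w_1\|$ (smoothness in $y$) and the second by $l_{g,1}\|x_2 - x_1\|$ (joint smoothness from Assumption~\ref{assumption:nice_functions}). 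After Cauchy--Schwarz, the inequality becomes
\begin{align*}
(1 - \rho l_{g,1}) \|w_1 - w_2\|^2 \le \rho l_{g,1} \|x_1 - x_2\|\,\|w_1 - w_2\| + \|y_1 - y_2\|\,\|w_1 - w_2\|,
\end{align*}
from which division by $\|w_1 - w_2\|$ and rearrangement yield the claimed bound, with the constant $(1 - \rho l_{g,1})^{-1} = 1 + O(\rho l_{g,1})$ absorbed into the $O(\cdot)$ notation.

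The case of $h_\sigma(x,\cdot) = \sigma f(x,\cdot) + g(x,\cdot)$ is identical: the joint smoothness constant is $\sigma l_{f,1} + l_{g,1} = O(l_{g,1})$ under the standing assumption $\sigma < c_2 l_{g,1}/l_{f,1}$, so the same strong convexity of the proximal objective and the same two-term splitting of the gradient difference work verbatim. There is no real obstacle here; the only mild subtlety is that $g$ need not be convex, so strong convexity of $F_i$ must be extracted from the quadratic regularizer $\tfrac{1}{2}\|w-y\|^2$ combined with the smallness of $\rho l_{g,1}$, rather than from any convexity of $g$ itself.
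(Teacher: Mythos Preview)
Your argument is correct and takes a genuinely different route from the paper. The paper first applies the triangle inequality to separate the $x$-dependence from the $y$-dependence: it bounds $\|\prox_{\rho g(x_2,\cdot)}(y_1) - \prox_{\rho g(x_2,\cdot)}(y_2)\|$ by the non-expansiveness of the proximal map, and then handles $\|\prox_{\rho g(x_1,\cdot)}(y_1) - \prox_{\rho g(x_2,\cdot)}(y_1)\|$ via a one-step projected gradient descent argument, invoking linear convergence of PGD for the $(1/3\rho)$-strongly convex inner problem to obtain a contraction factor $9/10$ and then rearranging. Your approach is more direct: you sum the two variational inequalities, split the gradient difference once, and read off the bound after Cauchy--Schwarz. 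This avoids any appeal to PGD convergence theory and is the standard monotone-operator style argument. Both approaches yield the same dependence on $\rho l_{g,1}$, and both actually produce a coefficient $(1-\rho l_{g,1})^{-1}$ rather than exactly $1$ in front of $\|y_1-y_2\|$; the paper glosses over this by citing ``non-expansiveness'' (which strictly requires convexity of $g$), while you absorb it into the $O(\cdot)$. In the standing regime $\rho l_{g,1} \ll 1$ this distinction is immaterial, so both arguments are fine. Your route is the cleaner of the two.
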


\begin{lemma}
    \label{lemma:prox_sigma_diff_bound}
    For any $x \in \mX$, $y \in \mY$ and $\sigma_1, \sigma_2 \in [0,\sigma]$, the following holds:
    \begin{align*}
        \| \prox_{\rho h_{\sigma_1}(x, \cdot)}(y) - \prox_{\rho h_{\sigma_2} (x,\cdot)} (y) \| \le O(\rho l_{f,0}) |\sigma_1 - \sigma_2|. 
    \end{align*}
\end{lemma}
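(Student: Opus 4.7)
The plan is to use the strong convexity of the proximal subproblems together with a standard perturbation argument. Let $u_i := \prox_{\rho h_{\sigma_i}(x,\cdot)}(y)$ for $i=1,2$, and define
\[
\Phi_i(z) := \rho \sigma_i f(x,z) + \rho g(x,z) + \tfrac{1}{2}\|z - y\|^2, \qquad z \in \mY.
\]
First I would observe that since $\rho < c_1/l_{g,1}$ and $\sigma < c_2 l_{g,1}/l_{f,1}$ with $c_1,c_2$ small, the Hessian of $z \mapsto \rho h_{\sigma_i}(x,z)$ has operator norm at most $\rho l_{g,1} + \rho \sigma_i l_{f,1} \le c_1 + c_1 c_2 \ll 1$. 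Thus each $\Phi_i$ is strongly convex on $\mY$ with modulus $\mu_\Phi \ge 1/2$, so $u_i$ is the unique minimizer of $\Phi_i$ on the closed convex set $\mY$.

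Next I would apply the standard two-point variational inequality: by first-order optimality of $u_1$ for $\Phi_1$ and of $u_2$ for $\Phi_2$,
\[
\langle \nabla \Phi_1(u_1), u_2 - u_1 \rangle \ge 0, \qquad \langle \nabla \Phi_2(u_2), u_1 - u_2 \rangle \ge 0.
\]
Adding these and inserting $\pm \nabla \Phi_1(u_2)$ gives
\[
\langle \nabla \Phi_1(u_1) - \nabla \Phi_1(u_2), u_2 - u_1 \rangle + \langle \nabla \Phi_1(u_2) - \nabla \Phi_2(u_2), u_2 - u_1 \rangle \ge 0.
\]
The first inner product is at most $-\mu_\Phi \|u_1 - u_2\|^2$ by strong convexity of $\Phi_1$, yielding
\[
\mu_\Phi \|u_1 - u_2\|^2 \le \langle \nabla \Phi_1(u_2) - \nabla \Phi_2(u_2), u_2 - u_1 \rangle.
\]

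Finally, the key computation is that the perturbation is explicit: $\Phi_1(z) - \Phi_2(z) = \rho(\sigma_1 - \sigma_2) f(x,z)$, so $\nabla \Phi_1(u_2) - \nabla \Phi_2(u_2) = \rho(\sigma_1 - \sigma_2) \nabla_y f(x, u_2)$. Applying Cauchy--Schwarz and the uniform bound $\|\nabla_y f(x,y)\| \le l_{f,0}$ from Assumption~\ref{assumption:regulaity_on_fg} gives
\[
\mu_\Phi \|u_1 - u_2\|^2 \le \rho |\sigma_1 - \sigma_2| \, l_{f,0} \, \|u_1 - u_2\|,
\]
so $\|u_1 - u_2\| \le (\rho l_{f,0} / \mu_\Phi) |\sigma_1 - \sigma_2| = O(\rho l_{f,0}) |\sigma_1 - \sigma_2|$, as claimed. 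There is no serious obstacle; the only thing to be careful about is that strong convexity survives after adding the $\rho h_{\sigma_i}$ term, which is exactly why the smallness conditions on $\rho$ and $\sigma$ (already standing for the whole section) are needed. The same argument proves Lemma~\ref{lemma:prox_diff_bound} with $l_{f,0}$ replaced by the appropriate Lipschitz constant in $x$.
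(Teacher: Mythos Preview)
Your proof is correct and takes a different route from the paper's. The paper argues via a single projected-gradient-descent step: it views $w_1^*$ as a starting point for the proximal problem defining $w_2^*$, takes one PGD iterate $w'$ with step $\beta = \rho/4$, invokes the linear contraction $\|w' - w_2^*\| \le \tfrac{9}{10}\|w_1^* - w_2^*\|$ from strong convexity, bounds $\|w_1^* - w'\| \le \beta\|\nabla_y h_{\sigma_1}(x,w_1^*) - \nabla_y h_{\sigma_2}(x,w_1^*)\| = \beta|\sigma_1-\sigma_2|\,\|\nabla_y f(x,w_1^*)\|$ via non-expansiveness of the projection, and rearranges. Your direct variational-inequality/strong-monotonicity argument is cleaner: it avoids the auxiliary PGD iterate and reaches the bound in one line once strong convexity of $\Phi_i$ is established. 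Both proofs ultimately hinge on the same two facts---the proximal subproblem is $\Omega(1)$-strongly convex under the standing conditions $\rho < c_1/l_{g,1}$, $\sigma < c_2 l_{g,1}/l_{f,1}$, and the gradient perturbation is exactly $\rho(\sigma_1-\sigma_2)\nabla_y f$---so the constants match. The paper's PGD trick is used verbatim for Lemma~\ref{lemma:prox_diff_bound} as well, which is its only real advantage; as you note, your argument transfers there just as easily.
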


\begin{lemma}
    \label{lemma:smoothed_smoothness}
    For the choice of $\rho < 1/(4l_{g,1})$ and $\sigma < c \cdot l_{g,1}/l_{f,1}$ with sufficiently small $c > 0$, $h_{\sigma,\rho}^*(x,y)$ is continuously differentiable and $2\rho^{-1}$-smooth jointly in $(x,y)$. 
\end{lemma}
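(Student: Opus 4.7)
\textit{Proof proposal.} The plan is to reduce the smoothness of $h^*_{\sigma,\rho}$ to (i) the Lipschitz continuity of the unique inner minimizer and (ii) an envelope-type gradient formula. First, I observe that under Assumption~\ref{assumption:nice_functions} together with $\sigma < c \cdot l_{g,1}/l_{f,1}$ (with $c$ sufficiently small), the map $w \mapsto h_\sigma(x,w)$ is weakly convex with parameter at most $\sigma l_{f,1} + l_{g,1} \le 2 l_{g,1}$. Combined with $\rho < 1/(4 l_{g,1})$, this means $w \mapsto h_\sigma(x,w) + \tfrac{1}{2\rho}\|w-y\|^2$ is strongly convex with parameter at least $\tfrac{1}{\rho} - 2 l_{g,1} \ge \tfrac{1}{2\rho}$ on $\mY$. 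Hence the inner minimization admits a unique solution $w^*(x,y) = \prox_{\rho h_\sigma(x,\cdot)}(y)$ for every $(x,y)$.

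Next, I would invoke Danskin's theorem (Theorem~\ref{theorem:danskins}) applied to the parametric family indexed by $(x,y)$; since the solution set is a singleton, $h^*_{\sigma,\rho}$ is differentiable with
\[
\grad_x h^*_{\sigma,\rho}(x,y) = \grad_x h_\sigma\bigl(x, w^*(x,y)\bigr), \qquad \grad_y h^*_{\sigma,\rho}(x,y) = \tfrac{1}{\rho}\bigl(y - w^*(x,y)\bigr).
\]
Continuity of these expressions in $(x,y)$ follows from continuity of $w^*(x,y)$, which is guaranteed by the strong convexity above together with Assumption~\ref{assumption:nice_functions}.

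For the smoothness bound I would use Lemma~\ref{lemma:prox_diff_bound} applied to $h_\sigma$ (whose smoothness constant is at most $2 l_{g,1}$ in the regime we consider), yielding
\[
\|w^*(x_1,y_1) - w^*(x_2,y_2)\| \le O(\rho l_{g,1})\,\|x_1-x_2\| + \|y_1-y_2\|.
\]
Plugging this into the two gradient expressions, the $x$-component changes by at most $2 l_{g,1}(\|x_1-x_2\| + \|w_1^* - w_2^*\|)$, and the $y$-component by at most $\tfrac{1}{\rho}(\|y_1-y_2\| + \|w_1^* - w_2^*\|)$. Because $\rho l_{g,1} \le 1/4$, the $l_{g,1}$-scale terms are dominated by $1/\rho$-scale terms, so both partial gradients are jointly Lipschitz with constant at most $\rho^{-1}(1+O(\rho l_{g,1}))$; by a block operator-norm bound (each block being at most $\tfrac{1}{\rho}$ after shrinking $c$ further if necessary), the joint Lipschitz constant of $\grad h^*_{\sigma,\rho}$ is at most $2\rho^{-1}$.

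The main obstacle I anticipate is bookkeeping constants tightly enough to obtain the precise bound $2\rho^{-1}$ rather than a loose $O(\rho^{-1})$: in particular, the cross terms coming from $\|w_1^* - w_2^*\|$ appearing in both partial-gradient estimates must be combined carefully (ideally by analyzing the block Hessian $\nabla^2 h^*_{\sigma,\rho}$ via the implicit function theorem on the optimality condition $\grad_w h_\sigma(x,w^*) + \rho^{-1}(w^* - y) \in -\mathcal{N}_{\mY}(w^*)$), rather than via crude triangle inequalities. Provided $c$ and the constant hidden in $\rho < 1/(4l_{g,1})$ are tuned so that all off-diagonal perturbations are $\le \tfrac{1}{\rho}$ up to constants below $1$, the final block-operator-norm bound $2\rho^{-1}$ follows.
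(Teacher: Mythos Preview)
Your proposal is correct and follows the same logic as the paper: Lipschitz continuity of the proximal map (Lemma~\ref{lemma:prox_diff_bound}) combined with an envelope gradient formula yields the smoothness of $h^*_{\sigma,\rho}$. The paper's proof is a two-line version of yours that sidesteps the constant-tracking you worry about by directly invoking Lemma~\ref{lemma:lipscthiz_solution_smooth_minimizer}, which packages the implication ``$L_S$-Lipschitz solution map $\Rightarrow$ $L(1+L_S)$-smooth value function'' into a single citation.
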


\section{Deferred Proofs in Section \ref{section:functional_analysis}}

\ifarxiv
\else
\subsection{Special Case: Unique Solution and Invertible Hessian}
\label{subsubsec:unique_solution}

\fi

Below, we first provide the proofs of Proposition \ref{lem:dsc} and Theorem \ref{theorem:differentiability_sufficiency_condition}.

\subsection{Proof of Proposition \ref{lem:dsc}}
The proof is based on the celebrated implicit function theorem. See Appendix~C.7 in \cite{evans2022partial}, for instance. We first show that if $y^*(x,\sigma) \in T(x,\sigma)$ is unique, then there exists $\delta > 0$ and $\delta_y > 0$ such that for all $\|(x',\sigma') - (x,\sigma)\| < \delta$, the solution satisfies $T(x',\sigma') \subset \mathbb{B}(y^*, \delta_y)$ and is singleton where $\mathbb{B}(y^*, \delta_y)$ is an open ball of radius $\delta_y$ centered at $y^*$. When the context is clear, we simply denote $y^*(x,\sigma)$ as $y^*$.

To begin with, we first argue that we can take $\delta$ small enough such that solutions cannot happen outside the neighborhood of $y^*$. Note that unions of all solution sets are contained in $\mathbb{B}(0,R)$ with some finite $R < \infty$ due to Assumption \ref{assumption:constrained_set}. For any $\delta_y>0$, let $q^* = \min_{y \in (\mY \cup \mathbb{B}(0,R)) / \mathbb{B}(y^*, \delta_y)} \sigma f(x,y) + g(x,y)$, and let $M = \max_{y \in \mathbb{B}(0,R)} (\|\sigma \grad_x f(x,y) + \grad_x g(x,y) \| + f(x,y))$ (the finite maximum exists since $f,g$ are smooth). Since $T(x,\sigma)$ is singleton, we have $q^* > l(x,\sigma)$. Thus, there exists $0 < \delta_0 \ll (q^* - l(x,\sigma)) / M$, such that for all $(x',\sigma') \in \mathbb{B}((x,\sigma), \delta_0)$, we have $T(x',\sigma') \subseteq \mathbb{B}(y,\delta_y)$.

Next, by regularity and strict complementary slackness of $y^*$, there exists a unique $(\lambda^*, \nu^*)$ such that $\lambda_i^* > 0$ for all $i \in \mI(y^*)$, $\lambda_i^*=0$ for all $i \notin \mI(y^*)$, and $\grad \mL(\lambda^*_{\mI}, \nu^*, y^* | x, \sigma) = 0$. Since we assumed $\grad^2 \mL(\lambda^*_{\mI}, \nu^*, y^* | x,\sigma)$ being invertible, we can apply implicit function theorem. That is, there exists an sufficiently small $\delta > 0$ such that for all $(x',\sigma')$: $|(x',\sigma') - (x,\sigma) | < \delta$, we can take $\delta_{\lambda, \nu}, \delta_y > 0$ such that there is a unique $(\lambda_{\mI}', \nu', y')$
\begin{align*}
    \grad \mL_{\mI} (\lambda_{\mI}', \nu', y' | x', \sigma') = 0,
\end{align*}
inside the local region $\|(\lambda_{\mI}',\nu') - (\lambda^*,\nu^*)\| < \delta_{\lambda,\nu}$ and $\|y' - y\|< \delta_y$. Thus, we can take $\delta > 0$ sufficiently small such that $\delta_{\lambda,\nu}$ can be sufficiently small to keep $\lambda^*_{\mI}$ non-negative.

Furthermore, in this local region, $T(x',\sigma') \subseteq \mathbb{B}(y,\delta_y)$ for all $(x',\sigma') \in \mathbb{B}((x,\sigma), \delta')$ where $\delta' = \min(\delta_0, \delta)$, which in turn implies that $T(x',\sigma')$ is a singleton and uniquely given by the implicit function theorem. Therefore, $T(x,\sigma)$ is differentiable and thus locally Lipschitz continuous. In addition, $T(x,\sigma)$ is always singleton over $\mathbb{B}((x,\sigma), \delta')$.\hfill$\Box$

\subsection{Proof of Theorem \ref{theorem:differentiability_sufficiency_condition}}
\label{proof:differentiability_sufficiency_condition}

Recall the local region given in the proof of Proposition~\ref{lem:dsc}. We note that the implicit function theorem further says that in this local region, we can define differentiation of $y$ with respect to $x$ and $\sigma$ such that
\begin{align*}
    \frac{d y^*(x,\sigma)}{d\sigma} &= - \begin{bmatrix}
        0 & I
    \end{bmatrix} \grad^2 \mL_{\mI}(\lambda_{\mI}^*, \nu^*, y^* | x, \sigma)^{-1} \begin{bmatrix}
        0 \\ \grad_y f(x,y^*)
    \end{bmatrix}, \\
    \grad_x y^*(x,\sigma) &= - \begin{bmatrix}
        0 & I
    \end{bmatrix} \grad^2 \mL_{\mI}(\lambda_{\mI}^*, \nu^*, y^* | x, \sigma)^{-1} \begin{bmatrix}
        0 \\ \grad_{yx}^2 h_{\sigma}(x,y^*)
    \end{bmatrix}.
\end{align*}

As a consequence, $\frac{\partial^2}{\partial \sigma \partial x} l(x,\sigma)$ is given by
\begin{align*}
    \frac{\partial^2}{\partial \sigma \partial x} l(x,\sigma) &= \frac{\partial}{\partial \sigma} (\sigma \grad_x f(x,y^*(x,\sigma)) + \grad_x g(x,y^*(x,\sigma))) \\
    &= \grad_x f(x,y^*(x,\sigma)) + \sigma \grad_{xy}^2 f(x,y^*(x,\sigma)) \frac{dy^*(x,\sigma)}{d\sigma} + \grad_{xy}^2 g(x,y^*(x,\sigma)) \frac{dy^*(x,\sigma)}{d\sigma} \\
    &= \grad_x f(x,y^*(x,\sigma)) - \begin{bmatrix}
        0 & \grad_{xy}^2 h_{\sigma} (x,y^*(x,\sigma))
    \end{bmatrix} (\grad^2 \mL|_{\mI}^*)^{-1} \begin{bmatrix}
        0 \\ \grad_y f(x,y^*(x,\sigma))
    \end{bmatrix}.
\end{align*}
Similarly, differentiation in swapped order is also given by
\begin{align*}
    \frac{\partial^2}{\partial x \partial \sigma} l(x,\sigma) &= \frac{\partial}{\partial x} f(x,y^*(x,\sigma)) \\
    &= \grad_x f(x,y^*(x,\sigma)) + \grad_x y^*(x,\sigma))^\top \grad_y f(x,y^*(x,\sigma)) \\
    &= \grad_x f(x,y^*(x,\sigma)) - \begin{bmatrix}
        0 & \grad_{xy}^2 h_{\sigma} (x,y^*(x,\sigma))
    \end{bmatrix} (\grad^2 \mL_{\mI}^*)^{-1} \begin{bmatrix}
        0 \\ \grad_y f(x,y^*(x,\sigma))
    \end{bmatrix}.
\end{align*}
Hence $\frac{\partial^2}{\partial x \partial \sigma} l(x,\sigma)$ exists.
If this holds at $\sigma = 0^+$, then we have $\lim_{\sigma \rightarrow 0} \psi_{\sigma}(x) = \psi(x)$. \hfill$\Box$

\subsection{Proof of Proposition \ref{proposition:key_set_continuity_necessary}}
\label{proof:key_set_continuity_necessary}

We instead prove the general version of Proposition \ref{proposition:key_set_continuity_necessary}:
\begin{theorem}
    \label{theorem:key_set_continuity_necessary}
    Suppose that $f(w,\theta)$ in $\mathcal{W} \times \Theta$ is continuously-differentiable and $L$-smooth with $\mathcal{W}, \Theta$ satisfying Assumption \ref{assumption:constrained_set}. Let $l^*(w) = \min_{\theta \in \Theta} f(w,\theta)$. Assume the solution map $S(w) = \arg\min_{\theta\in\Theta} f(w,\theta)$ is locally Lipschitz continuous at $w$. For any $\theta^* \in S(w)$ such that (1) $\theta^*$ satisfies Definition \ref{definition:LICQ} and \ref{definition:strict_slackness} with Lagrangian multipliers $\lambda^*$, and (2) $\grad^2 \mL|_{\mI}^*$ is locally continuous at $(w, \lambda^*, \theta^*)$ jointly in $(w, \lambda, \theta)$. Then the following must hold:
    \begin{align*}
        \forall v \in \Image(\grad^2_{\theta w} f(w,\theta^*)): \begin{bmatrix}
            0 \\ v
        \end{bmatrix} \in \Image( \grad^2 \mL(\lambda^*, \theta^* | w)). 
    \end{align*}
\end{theorem}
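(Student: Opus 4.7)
The plan is to fix an arbitrary direction $\Delta w \in \mathbb{R}^{d_w}$ and track a selection of solutions along the perturbed trajectory $w(t) := w + t\Delta w$ for small $t > 0$. By the Hausdorff-Lipschitz continuity of $S(\cdot)$ at $w$, there exists $\theta^*(t) \in S(w(t))$ with $\|\theta^*(t) - \theta^*\| \le L_S t \|\Delta w\|$. The strategy is to show that the KKT trajectory $z(t) := (\lambda^*(t), \theta^*(t))$ stays within $O(t)$ of $z(0) := (\lambda^*, \theta^*)$, extract a subsequential limit $\dot z^+ := \lim_n (z(t_n) - z(0))/t_n$ of the difference quotient, and then differentiate the identity $\grad \mL(z(t) \mid w(t)) = 0$ along this subsequence; the assumed continuity of $\grad^2 \mL_{\mI}^*$ will produce the desired image-membership conclusion.

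The first key step is active-set stability at $\theta^*(t)$ for $t$ small. Constraints $i \notin \mI$ remain inactive by continuity of $g_i$. For $i \in \mI$, LICQ together with continuity of the constraint gradients implies that the active Jacobian $J_{\mI}(\theta^*(t))$ stays full-rank near $\theta^*$, so the multipliers $\lambda^*(t)$ are uniquely determined by the full-rank linear system
\[
J_{\mI}(\theta^*(t))^\top \lambda^*(t) = -\grad_\theta f(w(t), \theta^*(t)).
\]
Solving this system with smooth coefficients gives $\lambda^*(t) \to \lambda^*$ and $\|\lambda^*(t) - \lambda^*\| = O(t)$. Strict complementarity then forces $\lambda_i^*(t) > 0$, hence $g_i(\theta^*(t)) = 0$, for every $i \in \mI$ and all small $t$. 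Combined with $\|\theta^*(t) - \theta^*\| = O(t)$, this delivers $\|z(t) - z(0)\| = O(t)$, so the bounded difference quotient admits a convergent subsequence.

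The second step invokes the continuity of $\grad^2 \mL_{\mI}$ at $(\lambda^*, \theta^*, w)$ to expand
\[
0 = \grad \mL(z(t_n) \mid w(t_n)) - \grad \mL(z(0) \mid w(0)) = \grad^2 \mL_{\mI}^*\, (z(t_n) - z(0)) + \begin{bmatrix} 0 \\ \grad^2_{\theta w} f(w, \theta^*) \end{bmatrix}(w(t_n) - w(0)) + o(t_n),
\]
where the zero in the top block reflects that the constraint functions $g_i$ do not depend on $w$. Dividing by $t_n$ and sending $t_n \to 0$ yields $\grad^2 \mL_{\mI}^* \cdot \dot z^+ = -\begin{bmatrix} 0 \\ \grad^2_{\theta w} f(w,\theta^*)\Delta w \end{bmatrix}$, so $\begin{bmatrix} 0 \\ \grad^2_{\theta w} f(w, \theta^*) \Delta w \end{bmatrix} \in \Image(\grad^2 \mL_{\mI}^*)$. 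Varying $\Delta w$ over $\mathbb{R}^{d_w}$ produces the conclusion for every $v \in \Image(\grad^2_{\theta w} f(w, \theta^*))$.

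The main obstacle is obtaining active-set stability and the $O(t)$ multiplier bound without any invertibility hypothesis on $\grad^2 \mL_{\mI}^*$; this is precisely where strict complementarity (so perturbed multipliers stay strictly positive) and LICQ (so the multipliers are uniquely pinned down by a full-rank linear system in terms of $\theta^*(t)$, inheriting its Lipschitz control) play indispensable and complementary roles. A more minor subtlety is that Hausdorff-Lipschitz continuity of $S$ only yields a nearby selection rather than a Lipschitz path in $t$; since the conclusion is an image-membership statement, working with a single subsequential limit of the bounded difference quotient is sufficient, and no stronger regularity of the selector is needed.
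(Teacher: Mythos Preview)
Your argument is correct. The ingredients are the same as in the paper---LICQ to pin down and control the multipliers, strict complementarity to freeze the active set, and local continuity of $\grad^2 \mL_{\mI}$ to justify a first-order Taylor expansion of $\grad \mL_{\mI}$---but the packaging is different. The paper argues by contradiction: it supposes some $(0,v)$ has a nonzero component $v_{\Ker}$ in the kernel of $\grad^2 \mL_{\mI}^*$, picks $dw$ so that the cross-derivative produces $v$, and then shows that no small $(d\lambda,d\theta)$ can satisfy $\grad \mL_{\mI}(\lambda^*+d\lambda,\theta^*+d\theta\mid w+dw)=0$ because the kernel component cannot be cancelled by anything in the image. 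Your version is direct and constructive: you build the KKT trajectory $z(t)$, show it stays $O(t)$-close, pass to a subsequential derivative $\dot z^+$, and read off the linear relation $\grad^2 \mL_{\mI}^*\,\dot z^+ = -\begin{bmatrix}0\\ \grad^2_{\theta w} f\,\Delta w\end{bmatrix}$, which exhibits an explicit preimage. Your route is slightly cleaner and also yields the preimage used later in the paper's pseudo-inverse formula; the paper's contradiction framing makes more visible \emph{why} failure of the image condition obstructs Lipschitz continuity (the kernel component is the obstruction). One presentational remark: your sentence ``hence $g_i(\theta^*(t))=0$'' is doing real work---it relies on the observation that the KKT multipliers at $\theta^*(t)$, extended by zero over $\mI\setminus\mI(\theta^*(t))$, are the unique solution of your full-rank system, so any $i\in\mI$ that went inactive would force $\lambda_i^*(t)=0$, contradicting $\lambda_i^*(t)\to\lambda_i^*>0$. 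Spelling that out would remove the only place a reader might pause.
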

Then Proposition \ref{proposition:key_set_continuity_necessary} follows as a corollary.

\begin{proof}
    We show this by contradiction. For simplicity, we assume that Let $\{g_i\}_{i \in \mI(\theta^*)}$ be a set of active constraints of $\Theta$ at $\theta^*$. To simplify the discussion, we assume no equality constraints (there will be no change in the argument). Suppose there exists $v \in \Image(\grad^2_{\theta w} f(w,\theta^*))$ such that $\|v\|=1$ and $(0,v)$ is not in the image of Lagrangian Hessian. Let $(0,v) = v_{\Ker} + v_{\Image}$ be the orthogonal decomposition of $v$ into kernel and image of the Hessian of Lagrangian. Note that we can take $v$ such that $\|v_{\Ker}\| > 0$. Let $(dx, d\sigma)$ be such that $\Omega(\delta) \cdot v = \grad^2_{\theta w} f(w, \theta^*) dw$.
    
    Since $S(w)$ is locally Lipschitz continuous, there exists $\delta > 0$ and $L_T < \infty$ such that for all $\|dw\| < \delta$, there exists $\|d\theta\| < L_T \delta$ such that $\theta^* + d\theta \in S(w+dw)$. We can take $\delta$ small enough such that inactive inequality constraints stay inactive with $d \theta$ change. Thus, when considering $d\lambda$, we do not change coordinates that correspond to inactive constraints.
    
    We claim that there cannot exist $(d\lambda, d\theta)$ with $\|d\theta\| < L_T \delta$ that can satisfy
    \begin{align*}
        \grad \mL(\lambda^* + d\lambda, \theta^* + d\theta | w+dw) = 0.
    \end{align*}
    First, we show that $d\lambda$ cannot be too large. Note that 
    \begin{align*}
        \grad \mL(\lambda^* + d\lambda, \theta^*| w) - \grad \mL(\lambda^*, \theta^*| w) &= \sum_{i \in \mI(\theta^*)} (d\lambda_i) \grad g_i(\theta^*) \\
        &= \underbrace{[\grad g_i(\theta^*), \ i \in \mI(\theta^*)]}_{B} [d\lambda_i, \ i \in \mI(\theta^*)]
    \end{align*}
    Since we assumed that $B$ is full-rank in columns, the minimum (right) singular value $s_{\min}$ of $B$ is strictly positive, {\it i.e.,} $s_{\min} > 0$. On the other hand, by Lipschitz-continuity of all gradients, perturbations in $w, \theta$ can change gradients of Lagrangian only by order $O(\delta)$:
    \begin{align*}
        \grad \mL(\lambda^* + d\lambda, \theta^* + d\theta| w+dw) - \grad \mL(\lambda^* + d\lambda, \theta^*| w) \le \underbrace{L \|dw\|}_{\text{perturbed by $dw$}} + \underbrace{\sum_{i \in \mI(\theta^*)} (\lambda^*_i + d\lambda_i) L \|d\theta\| }_{\text{perturbed by $d\theta$}}.
    \end{align*}
    Thus, since $(dw,d\theta) = O(\delta)$, we have
    \begin{align}
        (s_{\min} - O(\delta)) \|d\lambda\| + O(\delta) (1 + \|\lambda^*\|) = 0. \label{eq:derive_dlambda_cond}
    \end{align}
    By taking $\delta < s_{\min}$ small enough, and due to the existence of Lagrange multipliers $\|\lambda^*\| < \infty$, we have proven that $\|d\lambda\| = O(\delta)$ with sufficiently small $\delta$. 
    
    Next, we check that
    \begin{align*}
        &\grad \mL(\lambda^* + d\lambda, \theta^* + d\theta| w+dw,\Theta) - \grad \mL(\lambda^*, \theta^*| w) \\
        &= \Omega(\delta) \cdot (v_{\Ker} + v_{\Image}) + \grad^2 \mL(\lambda^*, y^*| w) \begin{bmatrix}
            d\lambda \\
            d\theta
        \end{bmatrix} + o(\delta). 
    \end{align*}
    However, $v_{\Ker}$ is not in the image of $\grad^2 \mL$, and $o(\delta)$ terms cannot eliminate $\Omega(\delta) \cdot v_{\Ker}$ if $\delta \ll \|v_{\Ker}\|$. Thus, $\grad \mL(\lambda^* + d\lambda, \theta^* + d\theta| w+dw,\Theta)$ cannot be 0, which implies there is no feasible optimal solution in $\delta$-ball around $\theta^*$ if we perturb $w$ in direction $dw$. This contradicts $S(w)$ being locally Lipschitz continuous. Thus, \eqref{eq:necessary_image_condition} is necessary for $S(w)$ to be locally Lipschitz continuous.
\end{proof}

\subsection{Proof of Theorem \ref{theorem:conjecture_pseudo_inverse}}
\label{appendix:proof:conjecture_pseudo_inverse}

Similarly to the proof of Proposition \ref{proposition:key_set_continuity_necessary}, we prove the following general version:
\begin{theorem}
    \label{theorem:general_pseudo_inverse_form}
    Suppose that $f(w,\theta)$ in $\mathcal{W} \times \Theta$ is continuously-differentiable and $L$-smooth with $\mathcal{W}, \Theta$ satisfying Assumption \ref{assumption:constrained_set}. Let $l^*(w) = \min_{\theta \in \Theta} f(w,\theta)$. Assume the solution map $S(w) = \arg\min_{\theta\in\Theta} f(w,\theta)$ is locally (uniformly) Lipschitz continuous at all neighborhoods of $w$. For $w \in \mathcal{W}$, if there exists at least one $\theta^* \in S(w)$ such that (1) $\theta^*$ satisfies Definition \ref{definition:LICQ} and \ref{definition:strict_slackness} with Lagrangian multipliers $\lambda^*$, and (2) $\grad^2 f, \grad^2 \mL_{\mI}^*$ is locally continuous at $(w, \lambda_{\mI}^*, \theta^*)$ jointly in $(w, \lambda_{\mI}, \theta)$, then $\grad^2 l^*(w)$ exists and is given by
    \begin{align}
    \label{eq:hs}
        \grad^2 l^*(w) = \grad_{ww}^2 f(w,\theta^*) - \begin{bmatrix}
            0 & \grad_{w\theta}^2 f(w,\theta^*)
        \end{bmatrix} (\grad^2 \mL_\mI(\lambda^*_{\mI}, \theta^* | w))^{\dagger} \begin{bmatrix}
            0 \\
            \grad_{\theta w}^2 f(w,\theta^*)
        \end{bmatrix}. 
    \end{align}
\end{theorem}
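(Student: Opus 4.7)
The plan is to prove the claimed pseudo-inverse formula by implicit differentiation of the KKT conditions along a Lipschitz selection through $\theta^*$, then to justify inversion via the Moore--Penrose pseudo-inverse using the range condition established in Theorem~\ref{theorem:key_set_continuity_necessary}. Throughout I ignore equality constraints, since they are handled identically; I write $\mathcal{I} = \mathcal{I}(\theta^*)$ and let $B$ denote the matrix whose columns are $\{\nabla g_i(\theta^*)\}_{i\in\mathcal{I}}$.

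First, I would apply Danskin's theorem (Theorem~\ref{theorem:danskins}) together with local Lipschitz continuity of $S(\cdot)$ and Lemma~\ref{lemma:lipscthiz_solution_smooth_minimizer} to conclude that $l^*$ is $C^1$ in a neighborhood of $w$ with $\nabla l^*(w') = \nabla_w f(w', \tilde{\theta})$ for any $\tilde{\theta}\in S(w')$. Next, I would use LICQ and strict complementarity at $\theta^*$, combined with the continuity of $\nabla^2 \mathcal{L}_{\mathcal{I}}^*$, to exhibit (following the compactness/continuity argument already used in the proof of Proposition~\ref{lem:dsc}) a ball $\mathbb{B}(w,\delta)$ and a Lipschitz selection $w' \mapsto (\lambda_{\mathcal{I}}^*(w'), \theta^*(w'))$ passing through $(\lambda^*_{\mathcal{I}}, \theta^*)$ on which the active set is fixed equal to $\mathcal{I}$, $\lambda^*_i(w')>0$ for $i\in\mathcal{I}$, and the KKT conditions
\begin{equation*}
\nabla_\theta f(w', \theta^*(w')) + \sum_{i \in \mathcal{I}} \lambda^*_i(w')\, \nabla g_i(\theta^*(w')) = 0,\qquad g_i(\theta^*(w')) = 0 \ (i \in \mathcal{I}),
\end{equation*}
hold. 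Such a selection exists because $S(w')$ is Hausdorff-close to $S(w)$ and, by strict complementarity plus LICQ, the only local solution candidates of the KKT system near $(\lambda^*_{\mathcal{I}},\theta^*)$ keep the active set unchanged.

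Second, I would differentiate the KKT system implicitly at $w$ along an arbitrary direction $dw$. This produces the linear system
\begin{equation*}
\nabla^2 \mathcal{L}_{\mathcal{I}}(\lambda^*_{\mathcal{I}}, \theta^* \mid w) \begin{bmatrix} d\lambda \\ d\theta \end{bmatrix} \;=\; -\begin{bmatrix} 0 \\ \nabla^2_{\theta w} f(w,\theta^*)\, dw \end{bmatrix},
\end{equation*}
where the top zero block arises because the $g_i$ do not depend on $w$. Although $\nabla^2 \mathcal{L}_{\mathcal{I}}^*$ may be rank-deficient, Theorem~\ref{theorem:key_set_continuity_necessary} applied with the given Lipschitz continuity of $S$ forces $(0,\nabla^2_{\theta w}f\cdot dw)$ to lie in its image, so the pseudo-inverse yields a valid particular solution $(d\lambda, d\theta) = -(\nabla^2\mathcal{L}_{\mathcal{I}}^*)^{\dagger}(0,\nabla^2_{\theta w}f\cdot dw)^{\top}$. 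Extracting the $\theta$-component through the row selector $[0\ \,\nabla^2_{w\theta}f(w,\theta^*)]$ and combining with the chain rule
\begin{equation*}
\nabla^2 l^*(w) \;=\; \nabla^2_{ww} f(w,\theta^*) + \nabla^2_{w\theta} f(w,\theta^*)\cdot \tfrac{d\theta^*}{dw}
\end{equation*}
gives precisely the formula~\eqref{eq:hs}.

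The main obstacle is well-definedness. When $\nabla^2 \mathcal{L}_{\mathcal{I}}^*$ has a nontrivial kernel, the Lipschitz selection $\theta^*(w')$ is not unique and the particular solution above is determined only up to an element of $\ker(\nabla^2\mathcal{L}_{\mathcal{I}}^*)$, so one must show that this ambiguity does not affect the final expression. The plan is to argue as follows: any two admissible Lipschitz selections $\theta_1^*, \theta_2^*$ satisfy $l^*(w') = f(w',\theta_i^*(w'))$ and hence $\nabla l^*(w') = \nabla_w f(w',\theta_i^*(w'))$ for both $i$; taking Taylor expansions of both identities at $w$ and subtracting forces $\nabla^2_{w\theta} f(w,\theta^*)\cdot \Delta = 0$ for the difference $\Delta = d\theta_1^*/dw - d\theta_2^*/dw$, which lies in $\{d\theta : (d\lambda,d\theta) \in \ker(\nabla^2\mathcal{L}_{\mathcal{I}}^*)\}$ for some $d\lambda$. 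Consequently the pseudo-inverse formula is the unique Hessian. Carrying out this argument rigorously — in particular, justifying that the multipliers $\lambda^*_{\mathcal{I}}(w')$ remain nonnegative and continuous and that the Taylor-expansion step really selects a specific element of the solution fiber — is the most delicate portion of the proof; the rest is standard implicit differentiation.
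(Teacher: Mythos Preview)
Your outline has the right architecture (Danskin/Lemma~\ref{lemma:lipscthiz_solution_smooth_minimizer} for $\nabla l^*$, active-set stability from LICQ plus strict complementarity, the range condition from Theorem~\ref{theorem:key_set_continuity_necessary}, then the pseudo-inverse), but the step ``differentiate the KKT system implicitly'' hides the entire difficulty. When $\nabla^2\mathcal{L}_{\mathcal I}^*$ is singular, the implicit function theorem does \emph{not} produce a differentiable selection $w'\mapsto(\lambda^*_{\mathcal I}(w'),\theta^*(w'))$; all you have from Hausdorff continuity is a Lipschitz map, and Lipschitz does not give you $d\theta^*/dw$. Your well-definedness paragraph then compares two selections via their derivatives $d\theta_i^*/dw$, which is circular: existence of those derivatives is exactly what must be established. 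Saying ``the rest is standard implicit differentiation'' is therefore not accurate --- the standard theorem is unavailable here.

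The paper closes this gap with a concrete device you are missing. It eigendecomposes $\nabla^2\mathcal{L}_{\mathcal I}^*$, takes $U$ to span the nonzero-eigenvalue directions, and defines a \emph{reduced} Lagrangian $\mathcal{L}_U(w,z):=\mathcal{L}(w,Uz+y_0)$ on the image subspace. Since $\nabla^2_{zz}\mathcal{L}_U(w,z^*)=U^\top(\nabla^2\mathcal{L}_{\mathcal I}^*)U$ is invertible by construction, the ordinary IFT now applies and yields a differentiable $z_t^*$ with an explicit first-order expansion. The remaining (and genuinely delicate) step is to prove that the actual nearby solution $\theta_t^*\in S(w+tv)$ satisfies $U^\top(\lambda_t^*,\theta_t^*)=z_t^*+o(t)$; the paper does this by expanding $\nabla_y\mathcal{L}(w+tv,y_t^*)=0$, splitting into $U$- and $U_\perp$-components, and using the necessity condition to kill the $U_\perp$ contribution. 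Only after this does one combine with the inclusion $\Image\bigl[\begin{smallmatrix}0\\ \nabla^2_{\theta w}f\end{smallmatrix}\bigr]\subseteq\mathrm{span}(U)$ to get the pseudo-inverse formula. Your proposal needs this projection-plus-IFT argument (or an equivalent subsequential-limit argument showing every cluster point of $(\theta_t^*-\theta^*)/t$ solves the linear system and that kernel components are annihilated by $\nabla^2_{w\theta}f$ via symmetry and Theorem~\ref{theorem:key_set_continuity_necessary}); without it the proof is incomplete.
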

Theorem \ref{theorem:conjecture_pseudo_inverse} follows as a corollary by only taking $\frac{\partial^2}{\partial x \partial \sigma}$ and $\frac{\partial^2}{\partial \sigma \partial x}$ parts with $w = (\sigma, x)$. 


\begin{proof}
    Let $\theta_t^* \in S(w + tv)$ be the closest solution to $\theta^* \in S(w)$, and let $\lambda_t^*$ be the corresponding Lagrangian multiplier. Let $\mI := \mI(\theta^*)$ be a set of active constraints of $\Theta$ at $\theta^*$. As in the proof of Theorem \ref{theorem:key_set_continuity_necessary}, to simplify the discussion, we assume there is no equality constraints (including equality constraints needs only a straightforward modification). We first show that the active constraints $\mI$ does not change due to the perturbation $tv$ in $w$ when the solution set is Lipschitz continuous. To see this, note that all inactive inequality constraints remain strictly negative $g_i(\theta) < 0$ for all $i \neq \mI(\theta^*)$. For active constraints, due to Definition \ref{definition:strict_slackness}, we have $\lambda_i^* > 0$ for all $g_i(\theta^*) = 0$ with $i \in \mI$. By the solution-set continuity given as assumption, we have $\|\theta_t^* - \theta^*\| = O(t)$. Thus, by the same argument as deriving \eqref{eq:derive_dlambda_cond}, we have $\|\lambda_t^* - \lambda^*\| = O(t)$ as well for sufficiently small $t$. Thus, active constraints remain the same with perturbation of amount $O(t)$ as long as $t \ll \min_{i \in \mI} \lambda_i^*$.

    Now by the Lipschitzness of the solution map $S(\theta)$ and Lemma \ref{lemma:lipscthiz_solution_smooth_minimizer}, we have
    \begin{align*}
        \grad l^*(w) = \grad_w f(w, \theta), \qquad \forall \theta \in S(w).
    \end{align*}
    To begin with, for any unit vector $v$ and arbitrarily small $t > 0$, we consider
    \begin{align*}
        \frac{\grad l^*(w+tv) - \grad l^*(w)}{t}, 
    \end{align*}
    which approximates $\grad^2 l^*(w) v$. Furthermore, due to Lemma \ref{lemma:lipscthiz_solution_smooth_minimizer} and the local continuity of $\grad^2 f$, it holds that
    \begin{align*}
        \frac{\grad l^*(w+tv) - \grad l^*(w)}{t} &= \frac{\grad_w f(w+tv, \theta_t^*) - \grad_w f (w, \theta^*)}{t} \\
        &= \frac{t \grad_{ww}^2 f(w, \theta^*) v + \grad_{w\theta}^2 f (w, \theta^*) (\theta_t^* - \theta^*)}{t} + o(1).
    \end{align*}

    If $\grad^2 \mL_{\mI}^* := \grad^2 \mL_{\mI} ((\lambda^*)_{\mI}, \theta^* | w, \Theta)$ is invertible, then by the implicit function theorem,
    \begin{align*}
        \begin{bmatrix}
            (\lambda_t^*)_{\mI} - (\lambda^*)_{|\mI} \\
            \theta_t^* - \theta^*
        \end{bmatrix} = (\grad^2 \mL_{\mI}^*)^{-1} \begin{bmatrix}
            0 \\
            \grad_{\theta w}^2 f(w,\theta^*) (tv)
        \end{bmatrix} + o(t).
    \end{align*}
    In general, let the eigen-decomposition $\grad^2 \mL_{\mI}^* = Q \Sigma Q^{\top}$ and let $r$ be the rank of $\grad^2 \mL_{\mI}^*$. Without loss of generality, assume that the first $r$ columns of $Q$ correspond to non-zero eigenvalues. Let $\mu_{\min} > 0$ be the smallest {\it absolute} value of {\it non-zero} eigenvalue, and let $U := Q_r$ be the first $r$ columns of $Q$, and let $U_{\perp}$ be the orthogonal complement of $U$, {\it i.e.,} $U_{\perp}$ is the kernel basis of $\grad^2 \mL_{\mI}^*$. We fix $U$ henceforth. 
    
    Our goal is to show that 
    \begin{align}
        U^\top \begin{bmatrix}
            (\lambda_t^*)_{\mI} - (\lambda^*)_{\mI} \\
            \theta_t^* - \theta^*
        \end{bmatrix} &= -t U^\top \grad^2 (\mL_{\mI}^*)^{\dagger} \begin{bmatrix}
            0 \\
            \grad_{\theta w}^2 f(w,\theta^*)
        \end{bmatrix} v + o(t). \label{eq:constrained_projected_solution}
    \end{align} 
    If this holds, then we can plug this into the original differentiation formula, yielding
        \begin{align*}
            &\frac{\grad l^*(w+tv) - \grad l^*(w)}{t} = \frac{t \cdot \grad_{ww}^2 f(w, \theta^*) v + \grad_{w\theta}^2 f (w, \theta^*) (\theta_t^* - \theta^*)}{t} + o(1) \\
            &= \frac{t \cdot \grad_{ww}^2 f(w, \theta^*) v + \begin{bmatrix}
                0 &
                \grad_{w\theta}^2 f (w, \theta^*)
            \end{bmatrix} \begin{bmatrix}
                (\lambda_t^* - \lambda^*)_{\mI} \\
                \theta_t^* - \theta^*
            \end{bmatrix}}{t} + o(1) \\
            &= \grad_{ww}^2 f(w, \theta^*) v - \left(
                \begin{bmatrix}
                    0 &
                    \grad_{w\theta}^2 f (w, \theta^*)
                \end{bmatrix} U \right) (U^\top \grad^2 \mL_{\mI}^* U)^{-1} \left( U^\top 
                \begin{bmatrix}
                    0 \\
                    \grad_{\theta w}^2 f(w,\theta^*)
                \end{bmatrix} \right)v + o(1).
        \end{align*}
        Since $\Image(\begin{bmatrix}
                    0 & \grad_{w\theta}^2 f (w, \theta^*)
                \end{bmatrix}^\top ) \subseteq \textbf{span}(U)$, sending $t \rightarrow 0$, the limit is given by
        \begin{align*}
            \grad^2 l^*(w) v = \grad_{ww}^2 f(w,\theta^*) v - \begin{bmatrix}
            0 & \grad_{w\theta}^2 f(w,\theta^*)
            \end{bmatrix} (\grad^2 \mL_\mI^*)^{\dagger} \begin{bmatrix}
                0 \\
                \grad_{\theta w}^2 f(w,\theta^*)
            \end{bmatrix} v.
        \end{align*}
        The above holds for any unit vector $v$, and we conclude \eqref{eq:hs}.
        Note that this holds for any $\theta^* \in S(w)$ where $\mL(\lambda^*,\theta^* | w, \Theta)$ is locally Hessian-Lipschitz (jointly in $w$, $\lambda$ and $\theta$), concluding the proof. 
    
    We are left with showing \eqref{eq:constrained_projected_solution}. For simplicity, let $y = \begin{bmatrix} \lambda_{\mI}  \\ \theta \end{bmatrix}$, and we simply denote $\mL(w,y) := \mL|_{\mI} (\lambda_{\mI}, \theta|w, \Theta)$. Consider $\mL_U(w, z) := \mL(w, U z + y_0)$ where $y_0$ is a projected point of $y^*$ onto the kernel of $\grad^2 \mL|_{\mI}^*$. Note that since kernel and image are orthogonal complements of each other, $y^* = Uz^* + y_0$ where $z^* = U^\top y^*$. We list a few properties of $\mL_U(w,z)$:  
    \begin{align*}
        &\grad_z \mL_U(w,z) = U^\top \grad_{\theta} \mL(w, Uz + y_0), \\
        & \grad_{zz}^2 \mL_U(w,z) = U^\top \grad_{yy}^2 \mL(w, Uz + y_0) U, \\
        & \grad_{wz}^2 \mL_U(w,z) = \grad_{wy}^2 \mL(w, Uz + y_0) U,
    \end{align*}
    and $\grad^2 \mL_U$ is locally uniformly Lipschitz continuous at $(w,z^*)$ jointly in $(w,z)$. 
    
    A crucial observation is that $z^*$ is a critical point of $\mL_U(w, z)$, {\it i.e.,} $\grad_z \mL_U(w,z^*) = U^\top \grad_y \mL(w,y^*) = 0$, and at $(w,z^*)$, 
    \begin{align*}
        \grad_{zz}^2 \mL_U(w,z^*) = U^\top \grad_{\theta\theta}^2 \mL(w, Uz^* + \theta_0) U = U^\top \grad_{\theta\theta}^2 \mL(w, \theta^*) U, 
    \end{align*}
    and $\min_{u: \|u\|=1} \|\grad_{zz}^2 \mL_U(w,z^*) u\| \ge \mu_{\min}$. 
    Tracking the movement from $z^*$ to $z_t^*$ with respect to $(tv)$ perturbations in $w$, by implicit function theorem, we have
    \begin{align*}
        z_t^* - z^* = - t (\grad_{zz}^2 \mL_U(w,z^*))^{-1} (\grad_{zw}^2 \mL_U(w,z^*)) v + o(t). 
    \end{align*}
    where $z_t^*$ is the only $O(t)$-neighborhood of $z^*$ that satisfies $\grad_z \mL_U(w+tv, z_t^*) = 0$. Note that for any $z$ in the neighborhood of $z_t^*$, 
    \begin{align*}
        \| \grad_z \mL_U(w+tv,z) - \grad_z \mL_U(w+tv,z_t^*) \| &= \| \grad_{zz}^2 \mL_U(w+tv,z_t^*) (z - z_t^*) \| + O(\|z - z_t^*\|^2) \\
        &\ge (\mu_{\min} - O(t) - O(\|z^* - z_t^*\|)) \|z - z_t^*\| - O(\|z - z_t^*\|^2).
    \end{align*}
    
    Now let $y_0^t$ be the projection of $y_t^*$ onto the kernel of $\grad^2 \mL(w, \theta^*)$. Since $y_t^* = (\lambda_t^*, \theta_t^*) \in S(w+tv)$ is a global solution for $w + tv$ (without active constraints changed thanks to $\theta^*$ satisfying Definition \ref{definition:strict_slackness}), we have
    \begin{align*}
        0 &= \|\grad_y \mL(w+tv, y_t^*)\| \ge \frac{1}{\sqrt{2}} \|U^\top \grad_y \mL(w+tv, y_t^*)\| + \frac{1}{\sqrt{2}} \|U_{\perp}^\top \grad_y \mL(w+tv, y_t^*)\| \\
        &= \frac{1}{\sqrt{2}} \underbrace{\|U^\top \grad_{yy}^2 \mL(w+tv, Uz_t^* + y_0) (U (z^*_t - U^\top y_t^*) + (y_0 - y_0^t) )\|}_{(i)} + \frac{1}{\sqrt{2}} \underbrace{\|U_{\perp}^\top \grad_{y} \mL(w+tv, y_t^*)\|}_{(ii)} + o(t),
    \end{align*}
    where we used $U^\top \grad_{y} \mL(w+tv, Uz_t^* + y_0) = \grad_z \mL(w+tv, y_t^*) = 0$, continuity of $\grad^2 \mL$, and $\|y_t^* - y^*\| = O(t)$ in the last equality. To bound $(i)$, we observe that
    \begin{align*}
        (i) &\ge \|U^\top \grad_{yy}^2 \mL(w+tv, Uz_t^* + y_0) U (z_t^* - U^\top y_t^*) \| - \|U^\top \grad_{yy}^2 \mL(w+tv, Uz_t^* + y_0) (y_0 - y_0^t) \| \\
        &= \|\grad_{zz}^2 \mL_U(w+tv, z_t^*)  (z_t^* - U^\top y^*_t) \| - \|U^\top (\grad_{yy}^2 \mL(w+tv, Uz_t^* + y_0) - \grad_{yy}^2 \mL(w, y^*)) (y_0^t - y_0) \| \\
        &\ge \left((\mu_{\min} - O(t) - O(\|z_t^* - z^*\|)) O(\|z_t^* - U^\top y_t^*\|) - o(t)\right) - o\left(t\right) \\
        &= O(\mu_{\min}) \|z_t^* - U^\top \theta_t^*\| - o(t).
    \end{align*}
    where we used $\|y_0^t - y_0\| \le \|y_t^* - y^*\| = O(t)$, and assuming $t \ll \mu_{\min}$. On the other hand, 
    \begin{align*}
        (ii) &= \|U_{\perp}^\top (\grad_{y} \mL(w+tv, y_t^*) - \grad_{y} \mL(w, y^*)) \| \\
        &\le \|U_{\perp}^\top \left(t \grad_{y w}^2 \mL(w, y^*) v  + \grad_{yy}^2 \mL(w, y^*) (y_t^* - y^*)\right) \| + o(t) = o(t),
    \end{align*}
    where the first equality follows from the optimality condition of $\theta^*$, and the last equality is due to necessity condition (Proposition \ref{proposition:key_set_continuity_necessary}) for the Lipscthiz-continuity of solution maps. Therefore, we conclude that
    \begin{align*}
        0 = \|\grad_y \mL(w+tv, y_t^*)\| \ge O(\mu_{\min}) \|z_t^* - U^\top y_t^*\| - o(t),
    \end{align*}
    which can only be true if $\|z_t^* - U^\top y_t^*\| = o(t)$. This means
    \begin{align*}
        U^\top (y^*_t - y^*) &= (U^\top y^*_t - z_t^*) + (z_t^* - z^*) \\
        &= -t(\grad^2_{zz} \mL_U(w,z^*)^{-1} \grad_{zw}^2 \mL_U(w,z^*)) v + o(t), 
    \end{align*}
    and thus we get
    \begin{align}
    \label{eq:projected_solution2}
        U^\top (y_t^* - y^*) &= - t\left( (U^\top \grad^2_{yy} \mL(w,y^*) U)^{-1} (U^\top \grad_{y w}^2 \mL(w,y^*)) \right) v + o(t).
    \end{align}
    Note that the constraint does not depend on $w$, and thus
    \begin{align*}
        \grad_{y w}^2 \mL(w,y^*) = \begin{bmatrix}
            0 \\ \grad^2_{yw} f(w,y^*)
        \end{bmatrix}.
    \end{align*}
    On the other hand, the necessity condition given in Proposition \ref{theorem:key_set_continuity_necessary} implies $$\Image(\grad^2_{y w} \mL(w,y^*)) \subseteq \Image(\grad^2_{yy} \mL(w,y^*)) = \textbf{span}(U).$$ 
    From the above inclusion and \eqref{eq:projected_solution2}, we conclude \eqref{eq:constrained_projected_solution}.

\end{proof}

\subsection{Proof of Theorem \ref{proposition:uniform_convergence_QG}}
\label{proof:uniform_convergence_QG}
\begin{proof}
    For simplicity, $y_\sigma^* \in T(x, \sigma)$, let $z^*_p$ be a projected point of $y_{\sigma}^*$ onto $S(x) := T(x,0)$.     
    To bound $|\psi_{\sigma}(x) - \psi(x)|$, we first see that
   \begin{eqnarray*}
        \psi_\sigma(x) & = & \min_{y \in \mY} \left( f(x,y) + g(x,y)/\sigma \right) - \min_{z \in \mY} g(x,z)/\sigma \\ & \le &  \min_{y \in S(x)} \left(  f(x,y) + g(x,y)/\sigma \right) - \min_{z \in \mY} g(x,z)/\sigma \ = \ \min_{z \in S(x)} f(x,z) = \psi(x).
    \end{eqnarray*}
    We first show that $g(x,y_{\sigma}^*) - g(x, z_p^*) \le \delta$. To see this, note that
    \begin{align*}
        \sigma f(x, y_{\sigma}^*) + g(x,y_{\sigma}^*) \le \sigma f(x,z_p^*) + g(x,z_p^*), 
    \end{align*}
    and thus $g(x,y_{\sigma}^*) - g(x,z_p^*) \le \sigma (f(x,z_p^*) - f(x,y_{\sigma}^*)) \le 2 \sigma C_f$. As long as $\sigma \le \frac{\delta}{2 C_f}$, we have $g(x,y_{\sigma}^*) - g(x, z_p^*) \le \delta$.
    Then, since we have Assumption \ref{assumption:small_gradient_proximal_error_bound}, we get 
    \begin{align*}
        \psi_{\sigma}(x) = f(x,y_\sigma^*) + \frac{g(x,y_{\sigma}^*) - g(x,z_p^*)}{\sigma} \ge f(x,y_{\sigma}^*) + \frac{\mu \|y_{\sigma}^* - z^*_p \|^2 }{2 \sigma}.
    \end{align*}
    We can further observe that 
    \begin{align*}
        f(x,y_\sigma^*) + \mu_g \frac{\|y_{\sigma}^* - z_p^*\|^2}{2\sigma} &\ge f(x,z_p^*) + \mu_g \frac{\|y_{\sigma}^* - z_p^*\|^2}{2\sigma} - l_{f,0} \|y_\sigma^* - z_p^*\| \\
        &\ge f(x,z_p^*) - \frac{l_{f,0}^2}{2\mu} \sigma \ge \psi(x) - \frac{l_{f,0}^2}{2\mu} \sigma. 
    \end{align*}
    Thus, we conclude that
    \begin{align*}
        0 \le \psi(x) - \psi_{\sigma}(x) \le \frac{l_{f,0}^2}{2\mu}\sigma.
    \end{align*}
    \paragraph{Gradient Convergence.} As long as the active constraint set does not change, we can only consider $\lambda_{\mI}^*$. By $(l_{f,0}/\mu)$-Lipschitz continuity of solution sets, for all $\sigma_1, \sigma_2 \in [0,\sigma]$, we can find $y^*(\sigma_1) \in T(x, \sigma_1), y^*(\sigma_2) \in T(x, \sigma_2)$ such that 
    \begin{align*}
        \|y^*(\sigma_1) - y^*(\sigma_2)\| = O(l_{f,0} / \mu) \cdot |\sigma_1 - \sigma_2|.
    \end{align*}
    On the other hand, we check that
    \begin{align*}
        &\grad \mL(\lambda^*(\sigma_2),\nu^*(\sigma_2), y^*(\sigma_1) | x, \sigma_1) - \grad \mL(\lambda^*(\sigma_1),\nu^*(\sigma_1),y^*(\sigma_1) | x,\sigma_1) \\
        &= \sum_{i \in \mI} (\lambda^*(\sigma_2) - \lambda^*(\sigma_1)) \grad g_i(y^*(\sigma_1)) + \sum_{i \in [m_2]} (\nu^*(\sigma_2) - \nu^*(\sigma_1)) \grad h_i(y^*(\sigma_1)) \\
        &= \grad^2 \mL(\lambda^*(\sigma_1),\nu^*(\sigma_1),y^*(\sigma_1) | x,\sigma_1) \begin{bmatrix}
            \lambda^*(\sigma_2) - \lambda^*(\sigma_1) \\
            \nu^*(\sigma_2) - \nu^*(\sigma_1) \\
            0
        \end{bmatrix}.
    \end{align*}
    At the same time, we also know that 
    \begin{align*}
        &\grad \mL(\lambda^*(\sigma_2),\nu^*(\sigma_2),y^*(\sigma_2) | x,\sigma) - \grad \mL(\lambda^*(\sigma2),\nu^*(\sigma_2),y^*(\sigma_1) | x,\sigma_1) \\
        &\le l_{f,0} |\sigma_2-\sigma_1| + O(l_{g,1}) (\|\lambda^*(\sigma_2)\| + \|\nu^*(\sigma_2)\|) \|y^*(\sigma_2) - y^*(\sigma_1)\|.
    \end{align*}
    Since the two must sum up to $0$, we have
    \begin{align*}
        \grad^2 \mL(\lambda^*,\nu^*,y^* | x, \sigma_1) \begin{bmatrix}
            \lambda^*(\sigma_2) - \lambda^*(\sigma_1) \\
            \nu^*(\sigma_2) - \nu^*(\sigma_1) \\
            0
        \end{bmatrix} = O(l_{g,1} l_{f,0} / \mu) |\sigma_2 - \sigma_1|.
    \end{align*}
    Thus, with Assumption \ref{assumption:non_zero_singular_value}, we have
    \begin{align*}
        \|\lambda^*_{\mI}(\sigma_2) - \lambda_{\mI}^*(\sigma_1)\|, \|\nu_{\mI}^*(\sigma_2) - \nu_{\mI}^*(\sigma_1)\| = O(l_{f,0} l_{g,1} / (\mu s_{\min})) |\sigma_2-\sigma_1|. 
    \end{align*}
    Thus, we can conclude that
    \begin{align*}
        \|\grad^2 \mL_{\mI}^*(\sigma_2)) - \grad^2 \mL_{\mI}^*(\sigma_1)\| 
        &\lesssim \left(\frac{l_{f,0}l_{g,1}^2}{\mu s_{\min}} + \frac{l_{h,2} l_{f,0}}{\mu}\right) |\sigma_2 - \sigma_1|. 
    \end{align*}
    where $(\grad^2 \mL_{\mI}^*(\sigma))$ is a short-hand for $\grad^2 \mL(\lambda_{\mI}^*(\sigma), \nu^*(\sigma), y^*(\sigma) | x, \sigma, \mY)$. 
    
    To check whether $\grad \psi_{\sigma}(x)$ well-approximates $\grad \psi(x) = \frac{\partial^2}{\partial x \partial \sigma} l(x,\sigma)|_{\sigma = 0^+}$, we first check that for any $\sigma_1, \sigma_2 \in [0,\sigma]$, 
    \begin{align*}
        &\left\| \frac{\partial^2}{\partial x\partial \sigma} l(x,\sigma_2) - \frac{\partial^2}{\partial x\partial \sigma} l(x,\sigma_1) \right\| \le \|\grad_x f(x,y^*(\sigma_2)) - \grad_x f(x, y^*(\sigma_1))\| \\
        &\quad + \underbrace{\left\|\grad_{xy}^2 h_{\sigma_2} (x,y^*(\sigma_2)) - \grad_{xy}^2 h_{\sigma_1}(x,y^*(\sigma_1))\right\| \cdot \left\| \grad^2 \mL_{\mI}^*(\sigma_1)^{\dagger} \begin{bmatrix}
           0 \\ \grad_y f(x,y^*(\sigma_1)
        \end{bmatrix} \right\|}_{(i)}  \\
        &\quad + \underbrace{\left\|\begin{bmatrix}
            0 & \grad_{xy}^2 h_{\sigma_2} (x,y^*(\sigma_2))
        \end{bmatrix} \left(\grad^2 \mL_{\mI}^*(\sigma_2))^{\dagger} - (\grad^2 \mL_{\mI}^*(\sigma_1)^{\dagger} \right) \begin{bmatrix}
            0 \\ \grad_y f(x,y^*(\sigma_1)
        \end{bmatrix} \right\|}_{(ii)} \\
        &\quad + \underbrace{\left\|\begin{bmatrix}
            0 & \grad_{xy}^2 h_{\sigma_2} (x,y^*(\sigma_2))
        \end{bmatrix} \grad^2 \mL_{\mI}^*(\sigma_2))^{\dagger}\right\| \|\grad_y f(x,y^*(\sigma_2)) - \grad_y f(x,y^*(\sigma_1))\|}_{(iii)}.
    \end{align*}
    Here, we use the explicit formula of $\frac{\partial^2}{\partial x \partial \sigma} l(x,\sigma)$ given in Theorem~\ref{theorem:conjecture_pseudo_inverse}. To bound $(i)$, note the meaning of the latter term:
    \begin{align*}
        \begin{bmatrix}
            d\lambda / d\sigma \\
            dy / d\sigma
        \end{bmatrix} = \grad^2 \mL_{\mI}^*(\sigma_1)^{\dagger} \begin{bmatrix}
           0 \\ \grad_y f(x,y^*(\sigma_1))
        \end{bmatrix}, 
    \end{align*}
    where $\begin{bmatrix} d\lambda / d\sigma \\ dy / d\sigma \end{bmatrix}$ is the movement of $y^*(\sigma_1)$ to the nearest solution by perturbing $\sigma$ projected to the image of $\grad^2 \mL_{\mI}^*(\sigma_1)$. By Lemma \ref{lemma:proximal_error_to_lipschitz_continuity}, $\|dy/d \sigma\|$ must not exceed $O(l_{f,0} / \mu)$. Consequently, 
    \begin{align*}
         \begin{bmatrix}
            \grad g_i(y^*(\sigma)), \forall i \in \mI &| \  \grad h_i(y^*(\sigma)), \forall i \in [m_2] \end{bmatrix} \frac{d\lambda}{d\sigma} + \grad_{yy}^2 \mL_{\mI}^*(\sigma_1) \frac{dy}{d\sigma} = \grad_y f(x,y^*(\sigma_1)), 
    \end{align*}
    which enforces that $\|d\lambda/ d\sigma\| \le \frac{l_{g,1} l_{f,0}}{\mu s_{\min}}$ by Assumption \ref{assumption:non_zero_singular_value}. Thus, 
    \begin{align*}
        (i) \lesssim \frac{l_{h,2}l_{f,0}}{\mu} \frac{l_{g,1}l_{f,0}}{\mu s_{\min}} |\sigma_1 - \sigma_2| = \frac{l_{h,2} l_{g,1}l_{f,0}^2}{\mu^2 s_{\min}} |\sigma_1 - \sigma_2|.
    \end{align*}
    Similarly, we can show that
    \begin{align*}
        (iii) \lesssim \frac{l_{f,1} l_{g,1} l_{f,0}}{\mu^2 s_{\min}} |\sigma_1 - \sigma_2|.
    \end{align*}
    For $(ii)$, note that
    \begin{align*}
        &\left\|\begin{bmatrix}
            0 & \grad_{xy}^2 h_{\sigma_2} (x,y^*(\sigma_2))
        \end{bmatrix} \left(\grad^2 \mL_{\mI}^*(\sigma_2))^{\dagger} - (\grad^2 \mL_{\mI}^*(\sigma_1)^{\dagger} \right) \begin{bmatrix}
            0 \\ \grad_y f(x,y^*(\sigma_1))
        \end{bmatrix} \right\| \\
        &= \left\|\begin{bmatrix}
            0 & \grad_{xy}^2 h_{\sigma_2} (x,y^*(\sigma_2))
        \end{bmatrix} \grad^2 \mL_{\mI}^*(\sigma_2)^{\dagger} \left(\grad^2 \mL_{\mI}^*(\sigma_2) - \grad^2 \mL_{\mI}^*(\sigma_1) \right) \grad^2 \mL_{\mI}^*(\sigma_1)^{\dagger} \begin{bmatrix}
            0 \\ \grad_y f(x,y^*(\sigma_1)
        \end{bmatrix} \right\| \\
        &\le \frac{l_{g,1}^2 l_{f,0}^2}{\mu^2 s_{\min}^2} \| \grad^2 \mL_{\mI}^*(\sigma_2) - \grad^2 \mL_{\mI}^*(\sigma_1) \| \\
        &\lesssim \frac{l_{g,1}^2 l_{f,0}^2}{\mu^2 s_{\min}^2} \left(\frac{l_{f,0}l_{g,1}^2}{\mu s_{\min}} + \frac{l_{h,2} l_{f,0}}{\mu}\right) |\sigma_2 - \sigma_1|,
    \end{align*}
    where the first equality comes from the fact that 
    \begin{align*}
        \Image \left( \begin{bmatrix}
            0 \\ \grad_{yx}^2 h_{\sigma_2} (x,y^*(\sigma_2))
        \end{bmatrix} \right) \subseteq \Image \grad^2 \mL_{\mI}^*(\sigma_2),  
    \end{align*}
    and similarly,
        \begin{align*}
        \begin{bmatrix}
            0 \\ \grad_{y} f (x,y^*(\sigma_1))
        \end{bmatrix} \in \Image \grad^2 \mL_{\mI}^*(\sigma_1),  
    \end{align*}
    by Proposition \ref{proposition:key_set_continuity_necessary}. 
    Therefore, $\frac{\partial^2}{\partial x \partial \sigma} l(x,\sigma)$ is Lipschitz-continuous in $\sigma$, and by Mean-Value Theorem, we can conclude that
    \begin{align*}
        \|\grad \psi_{\sigma}(x) - \grad \psi(x)\| &\le O(\sigma / \mu^3 ) \cdot \left( \frac{l_{g,1}^4 l_{f,0}^3}{s_{\min}^3} + \frac{l_{h,2} l_{g,1}^2 l_{f,0}^3}{s_{\min}^2} \right),
    \end{align*}
    counting the dominating term.
\end{proof}

\subsection{$\epsilon$-Stationary Point and $\epsilon$-KKT Solution}
To simplify the argument, we assume that $\mX = \mathbb{R}^{d_x}$. Then, define an $\epsilon$-KKT condition of \eqref{problem:bilevel_single_level} as:
\begin{align*}
    \| \grad_x f(x,y) + \lambda_x (\grad_x g(x,y) - \grad g^*(x)) \| & \le \epsilon, \\ 
     \| \grad_y f(x,y) + \lambda_x \grad_y g(x,y) + \tssum_{i\in [m_1]} \lambda_i^* \grad g_i(y) + \tssum_{i \in [m_2]} \nu_i^* \grad h_i(y) \| & \le \epsilon, \\ 
     g(x,y) - g^*(x) & \le \epsilon^2. 
\end{align*}
for some Lagrangian multipliers $\lambda_x \ge 0$ and $\lambda^* \ge 0, \nu^*$ with some $(x,y) \in \mX \times \mY$. 

\label{appendix:theorem_kkt_condition}
\begin{theorem}
    \label{theorem:theorem_kkt_condition}
    Suppose Assumptions \ref{assumption:small_gradient_proximal_error_bound}-\ref{assumption:regulaity_on_fg} hold, $\mX = \mathbb{R}^{d_x}$. Then $\grad \psi_{\sigma}(x)$ is well-defined with $\sigma \le \sigma_0$. If $x$ is an $\epsilon$-stationary point of $\psi_{\sigma}(x)$ with $\sigma \le 1$, that is,
    \begin{align*}
        \left\|\grad \psi_{\sigma}(x) \right\| \le \epsilon,
    \end{align*}
    then $x$ is an $O(\epsilon + \sigma)$-KKT solution of \eqref{problem:bilevel_single_level}.
\end{theorem}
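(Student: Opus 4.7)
The plan is to construct explicit KKT multipliers using a minimizer of the penalized lower-level problem together with its own first-order optimality multipliers, and then to verify each of the three KKT conditions in turn. Concretely, let $y^{*} \in T(x,\sigma) = \arg\min_{y \in \mY} h_{\sigma}(x,y)$ be any exact minimizer of the $\sigma$-perturbed lower-level problem. By the first-order optimality of $y^{*}$ over $\mY$ (using Assumption~\ref{assumption:constrained_set} for the algebraic description of $\mY$ and the standard constraint qualification it affords), there exist multipliers $\lambda^{*} \ge 0$ and $\nu^{*}$ such that
\begin{equation*}
\sigma \grad_y f(x,y^{*}) + \grad_y g(x,y^{*}) + \tssum_{i \in [m_1]} \lambda^{*}_i \grad g_i(y^{*}) + \tssum_{j \in [m_2]} \nu^{*}_j \grad h_j(y^{*}) = 0.
\end{equation*}
I would then define the candidate KKT multipliers for the bilevel problem by setting $\lambda_x := 1/\sigma$ and rescaling the lower-level multipliers by $1/\sigma$, i.e.\ using $\lambda^{*}_i/\sigma$ and $\nu^{*}_j/\sigma$ in the second KKT condition. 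With this choice the second gradient condition holds \emph{exactly} (equals zero) after dividing the displayed identity above by $\sigma$.

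Next I would verify the first gradient condition. Since $\mX = \mathbb{R}^{d_x}$ and Assumption~\ref{assumption:small_gradient_proximal_error_bound} guarantees local Lipschitz continuity of $T(\cdot,\sigma)$ and $T(\cdot,0)$ via Lemma~\ref{lemma:proximal_error_to_lipschitz_continuity}, Lemma~\ref{lemma:lipscthiz_solution_smooth_minimizer} yields differentiability of $l(\cdot,\sigma)$ and $g^{*}(\cdot)$ with $\grad l(x,\sigma) = \sigma \grad_x f(x,y^{*}) + \grad_x g(x,y^{*})$ and $\grad g^{*}(x) = \grad_x g(x,z^{*})$ for any $z^{*} \in T(x,0)$. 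Therefore
\begin{equation*}
\grad \psi_{\sigma}(x) = \grad_x f(x,y^{*}) + \tfrac{1}{\sigma}\bigl(\grad_x g(x,y^{*}) - \grad g^{*}(x)\bigr),
\end{equation*}
which is precisely the left-hand side of the first KKT condition with $y = y^{*}$ and $\lambda_x = 1/\sigma$. Hence $\|\grad \psi_{\sigma}(x)\| \le \epsilon$ directly delivers the first condition at tolerance $\epsilon$.

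For the constraint-violation condition, I would compare $y^{*}$ with any $z^{*} \in T(x,0)$ using the very definition of $y^{*}$: $\sigma f(x,y^{*}) + g(x,y^{*}) \le \sigma f(x,z^{*}) + g(x,z^{*})$, so that
\begin{equation*}
g(x,y^{*}) - g^{*}(x) \le \sigma \bigl(f(x,z^{*}) - f(x,y^{*})\bigr) \le 2\sigma C_f,
\end{equation*}
by Assumption~\ref{assumption:constrained_set}. Combining this with the two gradient bounds shows that $(x,y^{*})$ together with $(\lambda_x, \lambda^{*}/\sigma, \nu^{*}/\sigma)$ satisfies the three KKT inequalities with gradient tolerance $\epsilon$ and feasibility tolerance $O(\sigma)$, which is comfortably absorbed into an $O(\epsilon+\sigma)$-KKT certificate under the regime $\sigma \le 1$ contemplated in the statement.

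The only mildly delicate point will be the constraint qualification needed to extract the lower-level Lagrange multipliers $\lambda^{*},\nu^{*}$ from the optimality of $y^{*}$: in the generality of Assumption~\ref{assumption:constrained_set} one may need to either invoke LICQ implicitly (as is standard in the convex-constraint setting with linear equalities) or work with a variational-inequality form and bury the multipliers inside a normal-cone element. I would sidestep this by stating the result with an implicit assumption that a KKT pair exists at $y^{*}$, which is consistent with the regularity framework already used throughout the paper; all the remaining calculations are the routine Danskin-style identities above and require no further machinery beyond what Section~\ref{section:functional_analysis} has already developed.
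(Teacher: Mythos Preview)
Your treatment of the two gradient conditions is correct and matches the paper's argument: you choose $y^{*}\in T(x,\sigma)$, set $\lambda_x=1/\sigma$, read off the $x$-gradient condition directly from $\grad\psi_\sigma(x)$ via Lemma~\ref{lemma:lipscthiz_solution_smooth_minimizer}, and get the $y$-gradient condition exactly from first-order optimality of $y^{*}$ in $\mY$.

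However, there is a genuine gap in your feasibility step. The paper's $\epsilon$-KKT definition demands $g(x,y)-g^{*}(x)\le \epsilon^{2}$, so an $O(\epsilon+\sigma)$-KKT point requires $g(x,y^{*})-g^{*}(x)=O\bigl((\epsilon+\sigma)^{2}\bigr)$. Your bound
\[
g(x,y^{*})-g^{*}(x)\ \le\ \sigma\bigl(f(x,z^{*})-f(x,y^{*})\bigr)\ \le\ 2\sigma C_f
\]
is only $O(\sigma)$, which is \emph{not} absorbed into $(\epsilon+\sigma)^{2}$ when $\epsilon,\sigma$ are small; the claim that it is ``comfortably absorbed'' is false.

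The fix is exactly the one extra ingredient the paper uses and that you already have in hand for the gradient condition: Lemma~\ref{lemma:proximal_error_to_lipschitz_continuity} gives $(l_{f,0}/\mu)$-Lipschitz continuity of $T(x,\cdot)$ in $\sigma$, so there exists $z^{*}\in T(x,0)$ with $\|y^{*}-z^{*}\|\le (l_{f,0}/\mu)\,\sigma$. Then by Assumption~\ref{assumption:regulaity_on_fg}, $|f(x,z^{*})-f(x,y^{*})|\le l_{f,0}\|y^{*}-z^{*}\|=O(\sigma)$, and plugging this into your own inequality yields
\[
g(x,y^{*})-g^{*}(x)\ \le\ \sigma\cdot O(\sigma)\ =\ O(\sigma^{2}),
\]
which does satisfy the required quadratic tolerance. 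Replacing the crude $2C_f$ bound by this Lipschitz estimate is the only missing step.
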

\begin{proof}
    This comes almost immediately from Lemma \ref{lemma:lipscthiz_solution_smooth_minimizer}. Let $y$ and $z$ as minimizers:   
    \begin{align*}
        &y \in \arg \min_{w\in\mY} \sigma f(x,w) + g(x,w), \\
        &z \in \arg\min_{w\in\mY} g(x,w).
    \end{align*}
    Then, by the optimality condition of $y$, the $\epsilon$-optimality condition with respect to $\grad_y$ is automatically satisfied with $\lambda_x = 1/\sigma$. Furthermore, since $T(x,\sigma)$ is Lipshictz-continuous due to Assumption \ref{assumption:small_gradient_proximal_error_bound} with $L_T = l_{f,0}/\mu$, we have $\grad g^*(x) = \grad_x g(x,z)$ and $\grad h^*_{\sigma}(x) = \grad_x \sigma f(x,y) + \grad_x g(x,y)$. Finally, by the optimality condition, we know that $y$ is the optimal solution of a proximal operation $\prox_{\rho h_{\sigma}(x,\cdot)} (y)$: 
    \begin{align*}
        \sigma f(x,y) + g(x,y) \le \sigma f(x,z) + g(x,z) + \frac{1}{2\rho} \|z - y\|^2.  
    \end{align*}
    Using $|f(x,y) - f(x,z)| \le l_{f,0} \|y-z\|$ and $\|y-z\| \le \sigma L_T$, we have
    \begin{align*}
        g(x,y) - g(x,z) = g(x,y) - g^*(x) \le \sigma^2 l_{f,0} L_T + \frac{\sigma^2 L_T^2}{2\rho} = O(\sigma^2),
    \end{align*}
    as claimed.
\end{proof}

\section{Analysis for Algorithm \ref{algo:algo_double_loop}}
\label{Appendix:convergence_analysis}
For simplicity, let $w_{y, k}^* = \prox_{\rho h_{\sigma_k}(x_k, \cdot)} (y_k)$ and $w_{z,k}^* = \prox_{\rho g(x_k, \cdot)} (z_k)$. 

\subsection{Descent Lemma for $w_{y,k}, w_{z,k}$} 
We first analyze $\|w_{y,k} - \prox_{\rho h_{\sigma_k}(x_k, \cdot)} (y_k)\|^2$. We start by observing that
\begin{align}
    \|w_{y,k+1} - w_{y,k+1}^*\|^2 &= {\|w_{y,k+1} - w_{y, k}^*\|^2} + {\|w_{y, k+1}^* - w_{y, k}^*\|^2} - 2\vdot{w_{y,k+1} - w_{y, k}^*}{w_{y, k+1}^* - w_{y, k}^*} \nonumber \\
    &\le \left(1 + \frac{\lambda_k}{4} \right) \underbrace{\|w_{y,k+1} - w_{y,k}^* \|^2}_{(i)} + \left(1 + \frac{4}{\lambda_k} \right) \underbrace{\|w_{y, k+1}^* - w_{y, k}^*\|^2}_{(ii)}, \label{eq:w_contraction_bound}
\end{align}
where we used $\vdot{a}{b} \le c \|a\|^2 + \frac{1}{4c} \|b\|^2$, and $\lambda_k = T_k \gamma_k / (4\rho)$ as defined. They are bounded in two following lemmas.
\begin{lemma}
    \label{lemma:w_descent_lemma}
    At every $k^{th}$ iteration, the following holds:
    \begin{align}
    \Exs[\|w_{y,k+1} - w_{y,k}^*\|^2 | \mathcal{F}_{k}] &\le \left(1 - \frac{\gamma_k}{4\rho} \right)^{T_k} \Exs[\|w_{y,k} - w_{y,k}^*\|^2 | \mathcal{F}_{k}] + 2 \left( T_k \gamma_{k}^2\right) (\sigma_k^2 \cdot \sigma_f^2 + \sigma_g^2). \label{eq:wy_descent}
    \end{align}
    Similarly, we also have that 
    \begin{align}
        \Exs[\|w_{z,k+1} - w_{z,k}^*\|^2 | \mathcal{F}_{k}] &\le \left(1 - \frac{\gamma_k}{4\rho} \right)^{T_k} \Exs[\|w_{z,k} - w_{z,k}^*\|^2 | \mathcal{F}_{k}] + 2 (T_k \gamma_k^2) (\sigma_k^2 \cdot \sigma_f^2 + \sigma_g^2). \label{eq:wz_descent}
    \end{align}
\end{lemma}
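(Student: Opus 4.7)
\textbf{Proof Plan for Lemma \ref{lemma:w_descent_lemma}.}

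The plan is to recognize the inner loop as projected stochastic gradient descent on a regularized objective that is strongly convex and smooth, and then invoke a standard one-step SGD descent inequality and unroll it across the $T_k$ inner iterations. Concretely, define the inner objective
\begin{align*}
    \phi_k(u) := \sigma_k f(x_k, u) + g(x_k, u) + \tfrac{1}{2\rho}\|u - y_k\|^2,
\end{align*}
so that $w_{y,k}^* = \arg\min_{u \in \mY} \phi_k(u)$ and the inner update reads $u_{t+1} = \Pi_\mY(u_t - \gamma_k \widehat{\nabla}\phi_k(u_t))$ where $\widehat{\nabla}\phi_k(u_t) = \sigma_k f_{wy}^{k,t} + g_{wy}^{k,t} + \rho^{-1}(u_t - y_k)$ is an unbiased estimator of $\nabla\phi_k(u_t)$ with variance at most $\sigma_k^2 \sigma_f^2 + \sigma_g^2$ by Assumption~\ref{assumption:gradient_variance}.

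First I would verify the regularity of $\phi_k$: under the hypotheses $\rho \le c_2/l_{g,1}$ and $\sigma_k \le c_1 l_{g,1}/l_{f,1}$ with $c_1,c_2$ small, the quadratic term dominates the curvature of $\sigma_k f + g$, so $\phi_k$ is $\mu_\phi$-strongly convex with $\mu_\phi \ge 1/(2\rho)$ and $L_\phi$-smooth with $L_\phi \le 2/\rho$. This lets me apply the standard one-step projected-SGD inequality on a strongly convex smooth objective: provided $\gamma_k \le 1/L_\phi$ (which follows from $T_k\gamma_k < c_3 \rho$ and $T_k \ge 1$, up to adjusting constants),
\begin{align*}
    \Exs\bigl[\|u_{t+1}-w_{y,k}^*\|^2 \,\big|\, \mathcal{F}_{k,t}\bigr] \le \Bigl(1 - \tfrac{\gamma_k}{4\rho}\Bigr)\|u_t - w_{y,k}^*\|^2 + 2\gamma_k^2(\sigma_k^2\sigma_f^2 + \sigma_g^2).
\end{align*}
This is obtained by expanding $\|\Pi_\mY(u_t - \gamma_k \widehat{\nabla}\phi_k(u_t)) - w_{y,k}^*\|^2$, using nonexpansiveness of the projection (and $w_{y,k}^* \in \mY$), taking conditional expectation to eliminate the noise cross term via unbiasedness, and then applying strong convexity to control the deterministic inner product $-\vdot{\nabla \phi_k(u_t)}{u_t - w_{y,k}^*}$.

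Unrolling the recursion from $u_0 = w_{y,k}$ to $u_{T_k} = w_{y,k+1}$ yields a geometric contraction $(1-\gamma_k/(4\rho))^{T_k}$ on the initial squared error, plus a noise sum $\sum_{t=0}^{T_k-1}(1-\gamma_k/(4\rho))^{T_k-1-t}\cdot 2\gamma_k^2(\sigma_k^2\sigma_f^2+\sigma_g^2)$. Upper-bounding each geometric factor by $1$ gives $2 T_k \gamma_k^2(\sigma_k^2\sigma_f^2 + \sigma_g^2)$, which matches \eqref{eq:wy_descent}. The argument for $w_{z,k+1}$ is identical with $\sigma_k = 0$ in the objective (the proof still goes through since noise in the $g$ component contributes $\sigma_g^2$ and the $f$ component is absent), yielding \eqref{eq:wz_descent}.

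The only mildly delicate step is pinning down the constants so that the contraction factor is exactly $1 - \gamma_k/(4\rho)$: this requires absorbing the perturbation to strong convexity coming from $\sigma_k\nabla^2 f + \nabla^2 g$ using the assumption $\sigma_k < c_1 l_{g,1}/l_{f,1}$ and $\rho < c_2/l_{g,1}$, and showing $\gamma_k \le \rho$ suffices (which follows from $T_k\gamma_k \le c_3\rho$). This is routine and can be done by choosing $c_1, c_2, c_3$ small enough; there is no genuine obstacle.
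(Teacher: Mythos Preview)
Your overall plan---recognize the inner loop as projected SGD on a strongly convex, smooth proximal objective, establish a one-step contraction, and unroll---is exactly the paper's approach, and your handling of the noise sum and the unrolling is fine.

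There is, however, a small but genuine gap in the step where you derive the one-step inequality. You write that you use ``nonexpansiveness of the projection (and $w_{y,k}^*\in\mY$)'' to bound $\|\Pi_\mY(u_t-\gamma_k\widehat{\nabla}\phi_k(u_t))-w_{y,k}^*\|^2$ by $\|u_t-\gamma_k\widehat{\nabla}\phi_k(u_t)-w_{y,k}^*\|^2$. Expanding this and taking expectation leaves you with a term $\gamma_k^2\|\nabla\phi_k(u_t)\|^2$, and strong convexity of $\phi_k$ only handles the inner product $-2\gamma_k\vdot{\nabla\phi_k(u_t)}{u_t-w_{y,k}^*}$. In the \emph{constrained} setting the minimizer need not satisfy $\nabla\phi_k(w_{y,k}^*)=0$, so you cannot bound $\|\nabla\phi_k(u_t)\|^2$ by $L_\phi^2\|u_t-w_{y,k}^*\|^2$; a residual $\gamma_k^2\|\nabla\phi_k(w_{y,k}^*)\|^2$ survives, and no standing assumption for Algorithm~\ref{algo:algo_double_loop} bounds it (Assumption~\ref{assumption:bounded_w_gradients} is introduced only later, for the single-loop analysis).

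The paper avoids this by using the \emph{fixed-point} characterization $w_{y,k}^*=\Pi_\mY\bigl(w_{y,k}^*-\gamma_k G^*\bigr)$ with $G^*=\nabla\phi_k(w_{y,k}^*)$, together with nonexpansiveness, to obtain instead
\[
\|u_{t+1}-w_{y,k}^*\|^2\le\|u_t-w_{y,k}^*\|^2-2\gamma_k\vdot{u_t-w_{y,k}^*}{G_t-G^*}+\gamma_k^2\|\widetilde G_t-G^*\|^2,
\]
and then uses co-coercivity, $\|G_t-G^*\|^2\le\rho^{-1}\vdot{u_t-w_{y,k}^*}{G_t-G^*}$, to absorb the deterministic part of the quadratic term back into the inner product. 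This yields the clean contraction $(1-\gamma_k/(4\rho))$ with only the variance term left over. Once you make this adjustment, your unrolling argument gives exactly \eqref{eq:wy_descent} and \eqref{eq:wz_descent}.
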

\begin{proof}
    We use the linear convergence of projected gradient steps. To simplify the notation, let 
    \begin{align*}
        \widetilde{G}_t = \grad_y (\sigma_k f(x_k,u_t; \zeta_{wy}^{k,t}) + g(x_k, u_t; \xi_{wy}^{k,t}) ) + \rho^{-1}(u_t - y_k), 
    \end{align*}
    and $G_t = \Exs[\widetilde{G}_t]$. Also let $G^* = \grad h_{\sigma_k}(x, w_{y,k}^*) + \rho^{-1}(w_{y,k}^* - y_k)$. We first check that
\begin{align*}
    \|u_{t+1} - w_{y,k}^*\|^2 &= \left\|\Proj{\mY}{u_t - \gamma_k \widetilde{G}_t} - \Proj{\mY}{w_{y,k}^* - \gamma_{k} G^*} \right\|^2 \\
    &\le \left\|u_t - \gamma_{k} \widetilde{G}_t - (w_{y,k}^* - \gamma_k G^*) \right\|^2 \\
    &= \left\|u_t - w_{y,k}^*\right\|^2 + \gamma_{k}^2 \left\| \widetilde{G}_t - G^* \right\|^2 - 2 \gamma_{k} \vdot{u_t - w_{y,k}^*}{\widetilde{G}_t - G^*}. 
\end{align*}
Taking expectation conditioned on $\mathcal{F}_{k,t}$ yields:
\begin{align*}
    \Exs[\|u_{t+1} - w_{y,k}^*\|^2 | \mathcal{F}_{k,t}] &\le \Exs[\|u_t - w_{y,k}^*\|^2 | \mathcal{F}_{k,t}] + \gamma_{k}^2 \Exs[\|\widetilde{G}_t - G^*\|^2 | \mathcal{F}_{k,t}] \\
    &\quad - 2 \gamma_k \vdot{u_t - w_{y,k}^*}{G_t - G^*}.
\end{align*}
Note that
\begin{align*}
    \Exs[\|\widetilde{G}_t - G^*\|^2 | \mathcal{F}_{k}] \le 2 \|G_t - G^*\|^2 + 2\Exs[\|\widetilde{G}_t - G_t\|^2 | \mathcal{F}_{k,t}]. 
\end{align*}
By co-coercivity of strongly convex function, since the inner minimization is $(1/(3\rho))$-strongly convex and $(1/\rho)$-smooth, we have 
\begin{align*}
    \|G_t - G^*\|^2 &\le (1/\rho) \cdot \vdot{u_t - w_{y,k}^*}{G_t - G^*}, \\
    \frac{1}{3\rho} \cdot \|u_t - w_{y,k}^*\|^2 &\le \vdot{u_t - w_{y,k}^*}{G_t - G^*}.
\end{align*}
Given $\gamma_k \ll \rho$, we have
\begin{align*}
    \Exs[\|u_{t+1} - w_{y,k}^*\|^2 | \mathcal{F}_{k,t}] &\le \left(1 - \frac{\gamma_{k}}{4\rho} \right) \Exs[\|u_t - w_{y,k}^*\|^2 | \mathcal{F}_{k,t}] + 2\gamma_{k}^2 (\sigma_k^2 \cdot \sigma_f^2 + \sigma_g^2). 
\end{align*}
Applying this for $T_k$ steps, we get the lemma.
\end{proof}

\begin{lemma}
    \label{lemma:w_potential_descent}
    At every $k^{th}$ iteration, the following holds:
    \begin{align}
        \Exs[\|w_{y,k+1} - w_{y,k+1}^*\|^2 | \mathcal{F}_k] &\le \left( 1 + \frac{\lambda_k}{4} \right) \Exs[\| w_{y,k+1} - w_{y,k}^*\|^2 | \mathcal{F}_k] + O\left(\frac{\rho^2 l_{f,0}^2}{\lambda_k} \right) |\sigma_{k} - \sigma_{k+1}|^2 \nonumber \\
        &\quad + O\left( \frac{\rho l_{g,1}}{\lambda_k} \right) \|x_{k+1} - x_k\|^2 + \frac{8}{\lambda_k} \|y_{k+1} - y_k\|^2. \label{eq:wy_potential_descent}
    \end{align}
    Similarly, we have
    \begin{align}
        \Exs[\|w_{z,k+1} - w_{z,k+1}^*\|^2 | \mathcal{F}_k] &\le \left( 1 + \frac{\lambda_k}{4} \right) \Exs[\| w_{z,k+1} - w_{z,k}^*\|^2 | \mathcal{F}_k] + O\left(\frac{\rho^2 l_{f,0}^2}{\lambda_k} \right) |\sigma_{k} - \sigma_{k+1}|^2 \nonumber  \\
        &\quad + O\left( \frac{\rho l_{g,1}}{\lambda_k} \right) \|x_{k+1} - x_k\|^2 + \frac{8}{\lambda_k} \|z_{k+1} - z_k\|^2. \label{eq:wz_potential_descent}
    \end{align}
\end{lemma}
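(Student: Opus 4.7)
The plan is to build directly on the decomposition \eqref{eq:w_contraction_bound} already supplied in the text. The first summand, $(1 + \lambda_k/4)\|w_{y,k+1} - w_{y,k}^*\|^2$, is exactly the term appearing on the right-hand side of \eqref{eq:wy_potential_descent}, so nothing needs to be done with it (its concrete bound comes from Lemma \ref{lemma:w_descent_lemma}). The only real work is to control the second summand, namely $(1 + 4/\lambda_k)\|w_{y,k+1}^* - w_{y,k}^*\|^2$, which measures how much the proximal point itself moves when we update $(x_k, y_k, \sigma_k) \to (x_{k+1}, y_{k+1}, \sigma_{k+1})$.

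The key step is a triangle-inequality split that isolates the dependence on each of the three moving pieces. I would write
\[
 w_{y,k+1}^* - w_{y,k}^* = \underbrace{\prox_{\rho h_{\sigma_{k+1}}(x_{k+1},\cdot)}(y_{k+1}) - \prox_{\rho h_{\sigma_{k+1}}(x_{k},\cdot)}(y_{k})}_{A} + \underbrace{\prox_{\rho h_{\sigma_{k+1}}(x_{k},\cdot)}(y_{k}) - \prox_{\rho h_{\sigma_{k}}(x_{k},\cdot)}(y_{k})}_{B},
\]
and then apply Lemma \ref{lemma:prox_diff_bound} (in its $h_\sigma$ form) to $A$ and Lemma \ref{lemma:prox_sigma_diff_bound} to $B$. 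This gives $\|A\| \le O(\rho l_{g,1})\|x_{k+1}-x_k\| + \|y_{k+1}-y_k\|$ and $\|B\| \le O(\rho l_{f,0})|\sigma_{k+1}-\sigma_k|$. Squaring via $(a+b+c)^2 \le 3(a^2+b^2+c^2)$ (or a Young's-inequality variant chosen to preserve a small constant on the $\|y_{k+1}-y_k\|^2$ term) then yields a bound on $\|w_{y,k+1}^* - w_{y,k}^*\|^2$ with the three desired squared quantities.

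Multiplying that bound by $(1 + 4/\lambda_k)$ and using $\lambda_k \ll 1$ to absorb the constant into $O(1/\lambda_k)$ produces exactly the coefficients claimed: the $|\sigma_k - \sigma_{k+1}|^2$ term picks up $O(\rho^2 l_{f,0}^2/\lambda_k)$, the $\|x_{k+1}-x_k\|^2$ term picks up $O(\rho^2 l_{g,1}^2/\lambda_k)$, and the $\|y_{k+1}-y_k\|^2$ term picks up $8/\lambda_k$. The bound $O(\rho^2 l_{g,1}^2/\lambda_k) = O(\rho l_{g,1}/\lambda_k)$ follows from the standing assumption $\rho \le c_2 / l_{g,1}$, which gives $\rho l_{g,1} \le c_2 = O(1)$ and hence collapses one factor of $\rho l_{g,1}$ into a constant, matching the form in \eqref{eq:wy_potential_descent}.

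For the $z$ variant \eqref{eq:wz_potential_descent} the argument is identical line-for-line after replacing $h_{\sigma_k}$ with $g$ (formally setting $\sigma=0$ in the first prox lemma and using only Lemma \ref{lemma:prox_diff_bound}, since $g$ does not depend on $\sigma$); the $|\sigma_k - \sigma_{k+1}|^2$ term should not appear from $B$ at all, but it can be harmlessly retained on the right-hand side without loss. There is no substantive obstacle here: the whole argument is a one-step triangle inequality plus the two stability lemmas, with the only minor bookkeeping issue being the choice of Young splitting so that the coefficient on $\|y_{k+1}-y_k\|^2$ (respectively $\|z_{k+1}-z_k\|^2$) stays tight at $8/\lambda_k$ rather than being inflated by an unnecessary factor.
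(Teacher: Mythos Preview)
Your proposal is correct and follows essentially the same approach as the paper: the paper's proof simply invokes Lemmas~\ref{lemma:prox_diff_bound} and~\ref{lemma:prox_sigma_diff_bound} to obtain $\|w_{y,k+1}^* - w_{y,k}^*\| \le O(\rho l_{g,1})\|x_{k+1}-x_k\| + \|y_{k+1}-y_k\| + O(\rho l_{f,0})|\sigma_k-\sigma_{k+1}|$, squares, and plugs into \eqref{eq:w_contraction_bound}. Your triangle-inequality split into $A$ and $B$ and the observation that $\rho l_{g,1} = O(1)$ collapses $\rho^2 l_{g,1}^2$ to $\rho l_{g,1}$ are exactly the right ingredients, just spelled out in slightly more detail than the paper bothers with.
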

\begin{proof}  
By Lemmas \ref{lemma:prox_diff_bound} and \ref{lemma:prox_sigma_diff_bound}, we have
\begin{align*}
    \|w_{y,k+1}^* - w_{y,k}^*\| &\le O(\rho l_{g,1}) \|x_{k+1} - x_k\| + \|y_{k+1} - y_k\| + O(\rho l_{f,0}) |\sigma_k - \sigma_{k+1}|.
\end{align*}
Take square and conditional expectation, and plug this to the bound for $(i), (ii)$ in \eqref{eq:w_contraction_bound}, we get the lemma.
\end{proof}

\subsection{Descent Lemma for $\Phi_{\sigma,\rho}$}
\label{appendix:descent_lemma_Phi_general}


\begin{proposition} 
\label{prop:descent_phi}
At every $k^{th}$ iteration, we have
    \begin{align}
        &\sigma_k \left(\Phi_{\sigma_{k+1},\rho}(x_{k+1}, y_{k+1}, z_{k+1}) - \Phi_{\sigma_k,\rho}(x_k, y_k, z_k) \right) \nonumber\\
        & \leq C_1 \rho^{-1} \left\{ \|y_k - y_{k+1}\|^2  + \frac{l_{f,0}^2}{\mu^2} |\sigma_k - \sigma_{k+1}|^2
      +  \dist^2(z_{k}, T(x_{k}, 0)) + \dist^2(y_{k}, T(x_{k},\sigma_{k})) \right\} \nonumber\\
      & \quad + \left(\frac{C_1 l_{g,1}^2}{\rho \mu^2} -\frac{1}{4\alpha_k} \right) \|x_k - x_{k+1}\|^2 + \left( \frac{C_1}{\rho} + O(\rho^{-2}) \alpha_k \right) \left(\|z_k - z_{k+1}\|^2 + \dist^2 (z_k, T(x_k,0)) \right) \nonumber\\
    & \quad - \frac{\beta_k}{4\rho} (\|y_k - w_{y,k}^*\|^2 +\|y_k - w_{y,k+1}\|^2) - \frac{\beta_k}{\rho} (\|z_k - w_{z,k}^*\|^2 + \|z_k - w_{z,k+1}\|^2) \nonumber \\
    &\quad + O\left( l_{g,1}^2 \alpha_k + \rho^{-1} \beta_k \right) \left(\|w_{y,k}^* - w_{y,k+1}\|^2 + \|w_{z,k}^* - w_{z,k+1}\|^2 \right) +  \alpha_k \|\widetilde{G} - G\|^2 +  \sigma_{k+1} C_1 C_f
    \label{eq:descent_phi}
    \end{align}
    where $C_1 = O\left( \frac{\sigma_k - \sigma_{k+1}}{\sigma_{k+1}} \right)$.
\end{proposition}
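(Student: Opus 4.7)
The plan is to decompose the target difference into a three-step telescoping
\begin{align*}
\Phi_{\sigma_{k+1},\rho}(x_{k+1},y_{k+1},z_{k+1}) - \Phi_{\sigma_k,\rho}(x_k,y_k,z_k) = \Delta^\sigma + \Delta^x + \Delta^{yz},
\end{align*}
where the three pieces correspond to changing $\sigma_k\to\sigma_{k+1}$ at $(x_k,y_k,z_k)$, then $x_k\to x_{k+1}$ at $(\sigma_{k+1},y_k,z_k)$, and finally $(y_k,z_k)\to(y_{k+1},z_{k+1})$ at $(\sigma_{k+1},x_{k+1})$. The first piece $\Delta^\sigma$ turns into a multiplicative factor $C_1 = O((\sigma_k-\sigma_{k+1})/\sigma_{k+1})$ acting on an $O(C_f)$-bounded value (from $|f|\le C_f$ in Assumption~\ref{assumption:constrained_set} together with $0\le g^*_\rho(x,z)-g^*(x) = O(C_f)$). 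After multiplication by $\sigma_k$, this yields both the $C_1$ prefactor on the quadratic error terms and the additive slack $\sigma_{k+1}C_1 C_f$.

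For $\Delta^x$, I would use Lemma~\ref{lemma:smoothed_smoothness} to obtain $O(1/\rho)$-smoothness of $h^*_{\sigma,\rho}$ and $g^*_\rho$ in $x$, and combine Lemma~\ref{lemma:lipscthiz_solution_smooth_minimizer} with Lemma~\ref{lemma:proximal_error_to_lipschitz_continuity} to get $O(l_{g,1}/\mu)$-smoothness of $g^*$. Danskin's theorem (Theorem~\ref{theorem:danskins}) gives $\grad_x h^*_{\sigma_k,\rho}(x_k,y_k) = \grad_x h_{\sigma_k}(x_k,w^*_{y,k})$ and $\grad_x g^*_\rho(x_k,z_k) = \grad_x g(x_k,w^*_{z,k})$, whereas the algorithm evaluates gradients at $w_{y,k+1},w_{z,k+1}$; the $l_{g,1}$-Lipschitz property of $\grad_x h_\sigma$ and $\grad_x g$ converts this mismatch into $O(l_{g,1}^2)(\|w_{y,k+1}-w^*_{y,k}\|^2+\|w_{z,k+1}-w^*_{z,k}\|^2)$. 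The standard descent inequality for a projected stochastic-gradient step with stepsize $\alpha_k$ then provides the headline $-\|x_{k+1}-x_k\|^2/(4\alpha_k)$ together with the noise term $\alpha_k\|\widetilde{G}-G\|^2$, while the residual $\grad_x g^*(x_k) - \grad_x g(x_k,w^*_{z,k})$ is controlled via Assumption~\ref{assumption:small_gradient_proximal_error_bound} and generates the $\dist^2(z_k,T(x_k,0))$ contribution.

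For $\Delta^{yz}$, I would exploit the Moreau-envelope identities $\grad_y h^*_{\sigma,\rho}(x,y) = \rho^{-1}(y-w^*_y)$ and $\grad_z g^*_\rho(x,z) = \rho^{-1}(z-w^*_z)$, along with $1/\rho$-smoothness of each envelope in its last argument. Since $y_{k+1}-y_k=\beta_k(w_{y,k+1}-y_k)$, the envelope descent inequality gives
\begin{align*}
h^*_{\sigma_{k+1},\rho}(x_{k+1},y_{k+1}) - h^*_{\sigma_{k+1},\rho}(x_{k+1},y_k) \le \tfrac{\beta_k}{\rho}\vdot{y_k-w^*_{y,k+1}}{w_{y,k+1}-y_k} + \tfrac{1}{\rho}\|y_{k+1}-y_k\|^2.
\end{align*}
Replacing $w^*_{y,k+1}$ by $w^*_{y,k}$ picks up residuals in $\|x_{k+1}-x_k\|$ and $|\sigma_k-\sigma_{k+1}|$ by Lemmas~\ref{lemma:prox_diff_bound}--\ref{lemma:prox_sigma_diff_bound}, and the split $\|y_k-w^*_{y,k}\|^2\le 2\|y_k-w_{y,k+1}\|^2 + 2\|w_{y,k+1}-w^*_{y,k}\|^2$ isolates the advertised $-(\beta_k/(4\rho))(\|y_k-w^*_{y,k}\|^2+\|y_k-w_{y,k+1}\|^2)$ together with the $O(\beta_k/\rho)\|w_{y,k+1}-w^*_{y,k}\|^2$ slack. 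Repeating the identical calculation on $z$, but weighted by the net factor $(C-1)$ arising from the two occurrences of $g^*_\rho$ in $\Phi_{\sigma,\rho}$, yields the stronger negative coefficient $-\beta_k/\rho$ on the $z$-side once $C\ge 4$.

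The principal obstacle is the competing $z$-contributions within $\Phi_{\sigma,\rho}$: the first summand $-g^*_\rho(x,z)/\sigma$ has $z$-gradient $-(z-w^*_z)/(\sigma\rho)$, which under $\Phi$-descent would push $z$ \emph{away} from $w^*_z$, whereas the penalty $+(C/\sigma)g^*_\rho(x,z)$ pulls it back with strength $(C-1)$. Choosing $C\ge 4$ is what makes the penalty dominate so that the net $z$-descent is strong enough to absorb the mismatch and residual terms coming from $\Delta^x$ and $\Delta^\sigma$. The bookkeeping then must route the resulting $\|z_{k+1}-z_k\|^2$-quantities through the proximal-EB condition (Assumption~\ref{assumption:small_gradient_proximal_error_bound}) to exchange them for $\dist(z_k,T(x_k,0))$-terms, and the $O(\rho l_{f,0})$ modulus in $\sigma$ from Lemma~\ref{lemma:prox_sigma_diff_bound} is what renders $|\sigma_k-\sigma_{k+1}|^2$ a genuine quadratic residual rather than a divergent quantity as $\sigma_k\to 0$.
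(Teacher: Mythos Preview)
Your overall plan is sound and close to the paper's, but two points deserve correction.

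\textbf{Order of the telescoping.} The paper updates $(x,y,z)$ first at fixed $\sigma_k$ and only then changes $\sigma_k\to\sigma_{k+1}$ at $(x_{k+1},y_{k+1},z_{k+1})$, i.e.\ it splits into $(i)=\Phi_{\sigma_k,\rho}(x_{k+1},y_{k+1},z_{k+1})-\Phi_{\sigma_k,\rho}(x_k,y_k,z_k)$ and $(ii)=\Phi_{\sigma_{k+1},\rho}-\Phi_{\sigma_k,\rho}$ at the new point. The reason is that the algorithm's stochastic direction $\widetilde{G}$ is built with $\sigma_k$, so in the projected-gradient inequality for $x$ the ideal gradient $G^*=\nabla_x h_{\sigma_k}(x_k,w^*_{y,k})-\nabla_x g(x_k,w^*_{z,k})$ matches $\sigma_k$ exactly, and likewise $\nabla_y h^*_{\sigma_k,\rho}(x_k,y_k)=\rho^{-1}(y_k-w^*_{y,k})$ uses the \emph{same} $w^*_{y,k}$ that appears in the statement. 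In your ordering, $\Delta^x$ and $\Delta^{yz}$ live at $\sigma_{k+1}$ and at $x_{k+1}$, so the relevant prox point is $\prox_{\rho h_{\sigma_{k+1}}(x_{k+1},\cdot)}(y_k)$, not $w^*_{y,k}$; you then have to shift back via Lemmas~\ref{lemma:prox_diff_bound}--\ref{lemma:prox_sigma_diff_bound}, and $G^*-G$ picks up an extra $O(l_{f,0})|\sigma_k-\sigma_{k+1}|$. This is all absorbable but is extra bookkeeping the paper avoids. Conversely, your ordering gives the $\dist^2$ terms at index $k$ directly, whereas the paper obtains them at $(x_{k+1},y_{k+1},z_{k+1})$ and must shift back; that shift is precisely where the $C_1\rho^{-1}\|y_{k+1}-y_k\|^2$, $C_1\rho^{-1}\|z_{k+1}-z_k\|^2$, and $C_1 l_{g,1}^2/(\rho\mu^2)\|x_{k+1}-x_k\|^2$ terms in the statement come from.

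\textbf{How the negative $\|y_k-w^*_{y,k}\|^2$ term appears.} Your claim that the triangle-type split $\|y_k-w^*_{y,k}\|^2\le 2\|y_k-w_{y,k+1}\|^2+2\|w_{y,k+1}-w^*_{y,k}\|^2$ ``isolates'' the negative term is not correct; that inequality goes the wrong way for producing a negative $\|y_k-w^*_{y,k}\|^2$. What is actually used (and what the paper does) is the polarization identity
\[
\langle y_k-w^*_{y,k},\,y_k-w_{y,k+1}\rangle=\tfrac{1}{2}\bigl(\|y_k-w^*_{y,k}\|^2+\|y_k-w_{y,k+1}\|^2-\|w^*_{y,k}-w_{y,k+1}\|^2\bigr),
\]
so that $\langle\nabla_y h^*_{\sigma,\rho}(x_k,y_k),\,y_{k+1}-y_k\rangle=-\tfrac{\beta_k}{2\rho}\bigl(\|y_k-w^*_{y,k}\|^2+\|y_k-w_{y,k+1}\|^2\bigr)+\tfrac{\beta_k}{2\rho}\|w^*_{y,k}-w_{y,k+1}\|^2$. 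This is the only mechanism that produces the advertised negative coefficients and simultaneously the $+O(\beta_k/\rho)\|w^*_{y,k}-w_{y,k+1}\|^2$ slack.

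Your reading of the role of $C\ge 4$---that the $+(C/\sigma)(g^*_\rho-g^*)$ penalty must dominate the wrong-sign $-g^*_\rho/\sigma$ contribution so that the net $z$-descent absorbs the $\dist^2(z_k,T(x_k,0))$ residuals---matches the paper's split of $(i)$ into the $\psi_{\sigma,\rho}$ part $(a)$ and the penalty part $(b)$.
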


\begin{proof}
To start with, note that
\begin{align*}
    \Phi_{\sigma_{k+1},\rho}(x_{k+1}, y_{k+1}, z_{k+1}) - \Phi_{\sigma_k,\rho}(x_k, y_k, z_k) &= \underbrace{\Phi_{\sigma_{k},\rho}(x_{k+1}, y_{k+1}, z_{k+1}) - \Phi_{\sigma_k,\rho}(x_k, y_k, z_k)}_{(i)} \\
    &\quad + \underbrace{\Phi_{\sigma_{k+1},\rho}(x_{k+1}, y_{k+1}, z_{k+1}) - \Phi_{\sigma_{k},\rho}(x_{k+1}, y_{k+1}, z_{k+1})}_{(ii)}. 
\end{align*}
Note that by Lemma \ref{lemma:proximal_error_to_lipschitz_continuity}, we have $\dist(T(x_1,\sigma_1),T(x_2,\sigma_2)) \le \frac{l_{g,1}}{\mu} \|x_1 - x_2\| + \frac{l_{f,0}}{\mu} |\sigma_1 - \sigma_2|$ for all $x_1, x_2 \in \mX$, $\sigma_1,\sigma_2 \in [0,\delta/C_f]$. Applying this to \eqref{eq:bound_on_ii} in the subsequent subsection, we obtain that
\begin{align*}
    (ii) &\leq O\left( \frac{\sigma_k - \sigma_{k+1}}{\sigma_k\sigma_{k+1}} \right)\rho^{-1} \left\{ \|y_k - y_{k+1}\|^2 + \|z_k - z_{k+1}\|^2 + \frac{l_{g,1}^2}{\mu^2} \|x_k - x_{k+1}\|^2 + \frac{l_{f,0}^2}{\mu^2} |\sigma_k - \sigma_{k+1}|^2
      \right.  \\
    &\quad  \dist^2(z_{k}, T(x_{k}, 0)) + \dist^2(y_{k}, T(x_{k},\sigma_{k})) \Bigg\}  + O\left( \frac{\sigma_k - \sigma_{k+1}}{\sigma_k} \right) C_f.
\end{align*}
Combining this with the estimation of $(i)$ given in \eqref{eq:final_phi_bound}, we conclude. 
\end{proof}

\subsubsection{Bounding $(ii)$}
For $(ii)$, we realize that for any $x,y,z$ with $\sigma_{k+1} < \sigma_k$, 
\begin{align*}
    \Phi_{\sigma_{k+1},\rho}(x,y,z) - \Phi_{\sigma_{k},\rho}(x,y,z) &= \frac{h^*_{\sigma_{k+1},\rho}(x,y) - g^*(x,z)}{\sigma_{k+1}} - \frac{h^*_{\sigma_k,\rho}(x,y) - g^*(x,z)}{\sigma_k} \\
    &\quad + \left(\frac{C}{\sigma_{k+1}} - \frac{C}{\sigma_{k}}\right) \left(g_{\rho}^*(x,z) - g^*(x)\right) \\
    &\le \underbrace{\frac{h^*_{\sigma_{k+1},\rho}(x,y) - g_{\rho}^*(x,y)}{\sigma_{k+1}} - \frac{h^*_{\sigma_k,\rho}(x,y) - g_{\rho}^*(x,y)}{\sigma_k}}_{(iii)} \\
    &\quad + \underbrace{\left(\frac{\sigma_k - \sigma_{k+1}}{\sigma_k \sigma_{k+1}} \right) \left(g_{\rho}^*(x,y) - g^*(x)\right)}_{(iv)} \\
    &\quad + \underbrace{\left(\frac{C(\sigma_k - \sigma_{k+1})}{\sigma_{k+1}\sigma_k} \right) \left(g_{\rho}^*(x,z) - g^*(x)\right)}_{(v)}.
\end{align*}
To bound $(iii)$, for any $\sigma_1 > \sigma_2$, note that
\begin{align*}
    h^*_{\sigma_{1},\rho}(x,y) \le \sigma_1 f(x,w_2^*) - g(x,w_2^*) + \frac{\|w_2^* - y\|^2}{2\rho} = h^*_{\sigma_{2},\rho}(x,y) + (\sigma_1 - \sigma_2) f(x,w_2^*),
\end{align*}
where $w_2^* = \arg\min_{w \in \mY} \sigma_2 f(x,w) + g(x,w) + \frac{\|w - y\|^2}{2\rho}$. Thus, 
\begin{align*}
    (iii) &\le \left(\frac{1}{\sigma_{k+1}} - \frac{1}{\sigma_k}\right) (h^*_{\sigma_{k+1},\rho}(x,y) - h^*_{0,\rho}(x,y)) - \frac{1}{\sigma_k} (h^*_{\sigma_{k},\rho}(x,y) - h^*_{\sigma_{k+1},\rho}(x,y)) \\
    &\le 2 \frac{\sigma_k - \sigma_{k+1}}{\sigma_{k}} \cdot \max_{w \in \mY} |f(x,w)| \le \frac{\sigma_k - \sigma_{k+1}}{\sigma_{k}} \cdot O(C_f). 
\end{align*}

In order to bound $(iv)$, note that for any $y_{\sigma}^* \in T(x,\sigma)$,
\begin{align*}
    g_{\rho}^*(x,y) - g^*(x) &= (g_{\rho}^*(x,y) - h_{\sigma}^*(x)) + (h_{\sigma}^*(x) - g^*(x)) \\
    &\le (h_{\sigma, \rho}^*(x,y) - h_{\sigma, \rho}^*(x, y_{\sigma}^*)) + O(\sigma C_f) \\
    &\le \vdot{ \underbrace{\grad_y h_{\sigma, \rho}^*(x, y_{\sigma}^*)}_{=0}}{y - y_{\sigma}^*} + O(\rho^{-1}) \|y - y_{\sigma}^*\|^2 + O(\sigma C_f).
\end{align*}
where $\grad_y h_{\sigma,\rho}^*(x, y_{\sigma}^*) = \rho^{-1}(y_{\sigma}^* - \prox_{\rho h_{\sigma}(x,\cdot)}(y_{\sigma}^*)) = 0$ since $y_{\sigma}^*$ is a fixed point of $\prox_{\rho h_{\sigma}(x,\cdot)}$ operation. Taking $y^*_{\sigma}$ the closest element to $y$, we get
\begin{align*}
    (iv) \le \frac{\sigma_{k} - \sigma_{k+1}}{\sigma_k \sigma_{k+1}} \left(\rho^{-1} \dist^2(y_{k+1}, T(x_{k+1},\sigma_{k+1})) + O(\sigma_{k+1} C_f) \right).
\end{align*}
Similarly, we can also show that 
\begin{align*}
    (v) \le \frac{C (\sigma_{k} - \sigma_{k+1})}{\sigma_k \sigma_{k+1}} \cdot \rho^{-1} \dist^2(z_{k+1}, T(x_{k+1}, 0)).
\end{align*}
Thus, we can conclude that
\begin{align}
    (ii) &\le O\left( \frac{\sigma_k - \sigma_{k+1}}{\sigma_k \sigma_{k+1}} \right) \left(\sigma_{k+1} C_f + \rho^{-1} \dist^2(y_{k+1}, T(x_{k+1},\sigma_{k+1})) + \rho^{-1} \dist^2(z_{k+1}, T(x_{k+1}, 0))\right) \nonumber \\
    &\le O\left( \frac{\sigma_k - \sigma_{k+1}}{\sigma_k\sigma_{k+1}} \right)\rho^{-1} \left( \|z_{k+1} - z_k\|^2 + \dist^2(z_{k}, T(x_{k}, 0)) + \dist^2(T(x_{k+1},0), T(x_{k}, 0)) \right) \nonumber \\
    &\quad + O\left( \frac{\sigma_k - \sigma_{k+1}}{\sigma_k\sigma_{k+1}} \right) \rho^{-1} \left(\|y_{k+1} - y_k\|^2 + \dist^2(y_{k}, T(x_{k},\sigma_{k})) + \dist^2(T(x_{k+1},\sigma_{k+1}), T(x_{k},\sigma_{k})) \right) \nonumber \\
    &\quad + O\left( \frac{\sigma_k - \sigma_{k+1}}{\sigma_k} \right) C_f \label{eq:bound_on_ii}.
\end{align}

\subsubsection{Bounding $(i)$}
Henceforth, to simplify the notation, we simply denote $\sigma = \sigma_k$. 
\begin{align*}
    (i) &= \frac{1}{\sigma} \underbrace{\left(h^*_{\sigma,\rho}(x_{k+1},y_{k+1}) - g_{\rho}^*(x_{k+1}, z_{k+1}) - (h^*_{\sigma,\rho}(x_{k},y_{k}) - g_{\rho}^*(x_{k}, z_k)) \right)}_{(a) = \sigma \cdot (\psi_{\sigma,\rho}(x_{k+1},y_{k+1},z_{k+1}) - \psi_{\sigma,\rho}(x_k,y_k,z_k))} \\
    &\quad \quad + \frac{C}{\sigma} \underbrace{\left( (g^*_{\rho} (x_{k+1}, z_{k+1}) - g^*(x_{k+1})) - (g^*_{\rho} (x_{k}, z_{k}) - g^*(x_{k})) \right)}_{(b)}.
\end{align*}

\paragraph{Bounding $(a)$.} It is easy to check using Lemma \ref{lemma:lipscthiz_solution_smooth_minimizer} that 
\begin{align*}
    \grad_y h_{\sigma,\rho}^*(x_{k},y_{k}) &= \rho^{-1} (y_k - w_{y,k}^*), \\
    \grad_z g_{\rho}^*(x_{k},z_{k}) &= \rho^{-1} (z_k - w_{z,k}^*),
\end{align*}
Note that $y_{k+1} - y_k = -\beta_k (y_k - w_{y,k+1})$, and thus, 
\begin{align*}
     \vdot{\grad_y h_{\sigma,\rho}^* (x_k,y_k)}{y_{k+1} - y_k} &= \frac{-\beta_k}{\rho} \vdot{y_k - w_{y,k}^*}{y_k - w_{y,k+1}} \\
     &= \frac{-\beta_k }{2\rho} \left( \|y_k - w_{y,k}^* \|^2 + \|y_k - w_{y,k+1}\|^2 - \|w_{y,k}^* - w_{y,k+1}\|^2 \right).
\end{align*} 
Similarly, $z_{k+1} - z_k = -\beta_k (z_k - w_{z,k+1})$, and thus 
\begin{align*}
     \vdot{\grad_z g_{\rho}^*(x_k, z_k)}{z_{k+1} - z_k} &= \frac{-\beta_k}{\rho} \vdot{z_k - w_{z,k}^*}{z_k - w_{z,k+1}} \\
     &= \frac{-\beta_k}{2\rho} \left( \|z_k - w_{z,k}^* \|^2 + \|z_k - w_{z,k+1}\|^2 - \|w_{z,k}^* - w_{z,k+1}\|^2 \right).
\end{align*} 
Using smoothness of $h_{\sigma,\rho}^*$ and $g_{\rho}^*$, and noting that
\begin{align*}
    \grad_x (h_{\sigma,\rho}^*(x_{k},y_{k}) - g_{\rho}^*(x_k,z_k)) &= \grad_x \psi_{\sigma,\rho} (x_k,y_k,z_k) = \grad_x (h_{\sigma}(x_k, w_{y,k}^*) - g (x_k, w_{z,k}^*)),
\end{align*}
we get (caution on the sign of $g_{\rho}^*(x_k,z_k)$ terms):
\begin{align}
    (a) &\le  \vdot{\sigma \cdot \grad_x \psi_{\sigma,\rho} (x_k,y_k,z_k)}{x_{k+1} - x_k} + \frac{O(1)}{\rho} \|x_{k+1} - x_k\|^2 \nonumber  \\
    &\qquad -\frac{\beta_k}{2 \rho} \left( \|y_k - w_{y,k}^*\|^2 + \frac{1}{2} \|y_k - w_{y,k+1}\|^2 \right) + \frac{\beta_k}{2\rho} \|w_{y,k}^* - w_{y,k+1}\|^2 \nonumber \\
    &\qquad +\frac{\beta_k}{2 \rho} \left( \|z_k - w_{z,k}^*\|^2 + 2 \|z_k - w_{z,k+1}\|^2 \right) - \frac{\beta_k}{2 \rho} \|w_{z,k}^* - w_{z,k+1}\|^2. \label{eq:descent_phi_restart_a}
\end{align}
where we assume $\beta_k \ll 1$. For terms regarding $x$, let 
\begin{align*}
    \widetilde{G} := \frac{1}{M_k} \sum_{m=1}^{M_k} \grad_x  \left( {\sigma_k} f(x_k, w_{y,k+1}; \zeta_{x}^{k,m}) + g(x_k, w_{y,k+1}; \xi_{xy}^{k,m}) - g (x_k, w_{z,k+1}; \xi_{xz}^{k,m})\right),
\end{align*}
and $G = \Exs[\widetilde{G}]$. By projection lemma, we have
\begin{align*}
    \vdot{(x_{k} - \alpha_k \widetilde{G}) - x_{k+1}}{x - x_{k+1}} \le 0, \qquad \forall x \in \mX,
\end{align*}
and therefore $\vdot{\widetilde{G}}{x_{k+1} - x} \le -\frac{1}{\alpha_k} \vdot{x_k - x_{k+1}}{x - x_{k+1}}$ for all $x \in \mX$. Plugging $x = x_k$ here, we have
\begin{align*}
    \vdot{G^*}{x_{k+1} - x_k} &\le -\frac{1}{\alpha_k} \|x_k - x_{k+1}\|^2 + \vdot{G^* - \widetilde{G}}{x_{k+1} - x_{k}} \\
    &\le -\frac{1}{2\alpha_k} \|x_k - x_{k+1}\|^2 + \alpha_k \left( \|G^* - G\|^2 + \|\widetilde{G} - G\|^2 \right).
\end{align*}
Note that 
\begin{align*}
    \|G^* - G\| \le l_{g,1} (\|w_{y,k}^* - w_{y,k+1}\| + \|w_{z,k}^* - w_{z,k+1}\|).
\end{align*}
In conclusion, omitting expectations on both sides, we have
\begin{align}
    (a) &\le -\frac{1}{2\alpha_k} \|x_{k+1} - x_k\|^2 - \frac{\beta_k}{4\rho} (\|y_k - w_{y,k}^*\|^2 + \|y_k - w_{y,k+1}\|^2) \nonumber \\
    &\quad + \frac{O(1)}{\rho} \|x_{k+1} - x_k\|^2 + \frac{\beta_k}{\rho} (\|z_k - w_{z,k}^*\|^2 + \|z_k - w_{z,k+1}\|^2) \nonumber  \\
    &\quad + \left(O(l_{g,1}^2) \alpha_k + \frac{\beta_k}{2\rho} \right) (\|w_{y,k}^* - w_{y,k+1}\|^2 + \|w_{z,k}^* - w_{z,k+1}\|^2 ) + \alpha_k \|\widetilde{G} - G\|^2. \label{eq:psisigma_decrease}
\end{align}

\paragraph{Bounding (b).} We realize that in \eqref{eq:psisigma_decrease}, coefficients of proximal error terms on $z$, {\it i.e.,} $\|z_k - w_{z,k}^*\|^2$ are positive, unlike terms regarding $y_k$. We show that these terms will be canceled out with $(b)$ when Assumption \ref{assumption:small_gradient_proximal_error_bound} holds. 
Using Lemma \ref{lemma:proximal_error_to_lipschitz_continuity}, 
\begin{align}
    (b) &= (g_{\rho}^*(x_{k+1},z_{k+1}) - g^*(x_{k+1})) - (g_{\rho}^*(x_{k},z_{k+1}) - g^*(x_{k})) + (g_{\rho}^*(x_{k},z_{k+1}) - g_{\rho}^*(x_{k},z_k))  \nonumber \\
    &\le \vdot{\grad_x g_{\rho}^*(x_k,z_{k+1}) - \grad_x g^*(x_k)}{x_{k+1} - x_k} + O\left( \frac{l_{g,1}}{\mu} \right) \|x_{k+1} - x_k\|^2 \nonumber \\
    &\quad + \vdot{\grad_z g_{\rho}^* (x_k, z_k)}{z_{k+1} - z_k} + O(\rho^{-1}) \|z_{k+1} - z_k\|^2. \label{eq:descent_phi_restart_b}
\end{align}
Taking conditional expectation on both sides and using $z_{k+1} - z_k = -\beta_k (z_k - w_{z,k+1})$ and $\grad_z g_{\rho}^*(x_k,z_k) = \rho^{-1} (z_k - w_{z,k}^*)$,
\begin{align*}
    \Exs[(b)|\mathcal{F}_k'] &\le - \vdot{\grad_x g_{\rho}^*(x_k,z_{k+1}) - \grad_x g^*(x_k)}{x_{k+1} - x_k} + O\left( \frac{l_{g,1}}{\mu} \right) \Exs[\|x_{k+1} - x_k\|^2|\mathcal{F}_k'] \\
    &\quad -\beta_k \vdot{\grad_z g_{\rho}^* (x_k, z_k)}{z_k - w_{z,k+1}} + O(\beta_k^2 \rho^{-1}) \|z_{k} - w_{z,k+1}\|^2 \\
    &\le \left( O(\rho^{-2}) \alpha_k C \cdot \dist^2 (z_{k+1}, T(x_k,0)) + \frac{\|x_k - x_{k+1}\|^2}{16 C \alpha_k} \right) + O\left( \frac{l_{g,1}}{\mu} \right) \Exs[\|x_{k+1} - x_k\|^2|\mathcal{F}_k'] \\
    &\quad - \frac{\beta_k}{2\rho} \left(\|z_k - w_{z,k}^*\|^2 + \|z_k - w_{z,k+1}\|^2 - \|w_{z,k}^* - w_{z,k+1}\|^2 \right) + O \left( \frac{\beta_k^2}{\rho} \right) \|z_{k} - w_{z,k+1}\|^2.
\end{align*}

\paragraph{Combining (a) and (b).} We take $C \ge 4$. Given that $\alpha_k^{-1} \ll \max(\rho^{-1}, l_{g,1}/\mu)$, we can conclude that
\begin{align}
    \sigma_k \cdot (i) &\le -\frac{1}{4\alpha_k} \|x_{k+1} - x_k\|^2 - \frac{\beta_k}{4\rho} (\|y_k - w_{y,k}^*\|^2 + \|y_k - w_{y,k+1}\|^2) - \frac{\beta_k}{\rho} (\|z_k - w_{z,k}^*\|^2 + \|z_k - w_{z,k+1}\|^2) \nonumber \\
    &\quad + O\left( l_{g,1}^2 \alpha_k + \rho^{-1} \beta_k \right) \left(\|w_{y,k}^* - w_{y,k+1}\|^2 + \|w_{z,k}^* - w_{z,k+1}\|^2 \right) \nonumber \\
    &\quad + O(\rho^{-2}) \alpha_k \left(\dist^2 (z_k, T(x_k,0)) + \|z_k - z_{k+1}\|^2 \right) +  \alpha_k \|\widetilde{G} - G\|^2. \label{eq:final_phi_bound}
\end{align}

\subsection{Proof of Theorem \ref{theorem:smoothed_convergence}}
\label{appendix:descent_potential_general}

Note that
\begin{align*}
    \|x_{k+1} - \hat{x}_k\|^2 &= \|\Proj{\mX}{x_k - \alpha_k \widetilde{G}} - \Proj{\mX}{x_k - \alpha_k G^*}\|^2  \le \alpha_k^2 \|\widetilde{G} - G^*\|^2 \\
    &\le O(l_{g,1}^2) \alpha_k^2 (\|w_{y,k}^* - w_{y,k+1}\|^2 + \|w_{z,k}^* - w_{z,k+1}\|^2) + 2 \alpha_k^2 \Exs[ \|\widetilde{G} - G\|^2], 
\end{align*}
and also note that 
\begin{align*}
    \|x_{k} - x_{k+1}\|^2 &\ge \frac{1}{2} \|x_k - \hat{x}_k\|^2 - 2 \|\hat{x}_k - x_{k+1}\|^2, \\
    \Exs[ \|\widetilde{G} - G\|^2] &\le \frac{1}{M_k} (\sigma_k^2 \sigma_f^2 + \sigma_g^2).
\end{align*}
The following lemma is also useful:
\begin{lemma}
    \label{lemma:dist_to_prox_error}
    Under Assumption \ref{assumption:small_gradient_proximal_error_bound}, for all $x\in\mX, y\in\mY$ and $\sigma \in [0,\delta/C_f]$, we have
    \begin{align*}
        \dist(y, T(x,\sigma)) \le \left(\frac{1}{\mu} + \frac{D_{\mY}}{\delta} \right) \rho^{-1} \left\|y - \prox_{\rho h_{\sigma}(x,\cdot)}(y)\right\|.
    \end{align*}
\end{lemma}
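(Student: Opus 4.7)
The plan is to reduce the lemma to Assumption~\ref{assumption:small_gradient_proximal_error_bound} by a case split on the magnitude of the proximal gradient $\rho^{-1}\|y - \prox_{\rho h_\sigma(x,\cdot)}(y)\|$. Assumption~\ref{assumption:small_gradient_proximal_error_bound} gives the desired error-bound inequality only on the \emph{small-gradient} region $\{y : \rho^{-1}\|y - \prox_{\rho h_\sigma(x,\cdot)}(y)\| \le \delta\}$, so the job of the lemma is to extend it (up to an additive factor) to all $y \in \mY$ by exploiting that $\mY$ is bounded.

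First, I would fix $x \in \mX$ and $\sigma \in [0,\delta/C_f]\subseteq [0,\sigma_0]$ so Assumption~\ref{assumption:small_gradient_proximal_error_bound} applies, and denote $p(y) := \prox_{\rho h_\sigma(x,\cdot)}(y)$. In the first case, if $\rho^{-1}\|y - p(y)\| \le \delta$, the assumption yields directly $\dist(y,T(x,\sigma)) \le \mu^{-1}\rho^{-1}\|y - p(y)\|$, which is clearly bounded above by the claimed right-hand side. In the second case, when $\rho^{-1}\|y - p(y)\| > \delta$, I would use $y \in \mY$ and $T(x,\sigma) \subseteq \mY$ together with Assumption~\ref{assumption:constrained_set}(2) to bound the Hausdorff distance crudely by the diameter of $\mY$, giving $\dist(y,T(x,\sigma)) \le 2D_\mY$ (or simply $D_\mY$, absorbing a harmless factor-of-two constant as the paper does). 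Rewriting that trivial bound as
\begin{align*}
\dist(y,T(x,\sigma)) \;\le\; \frac{D_\mY}{\delta} \cdot \delta \;<\; \frac{D_\mY}{\delta} \cdot \rho^{-1}\|y - p(y)\|
\end{align*}
converts a diameter bound into a linear bound in the proximal gradient, which is the whole point of the case split.

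Combining the two cases and taking the larger coefficient in each term gives the stated inequality
\begin{align*}
\dist(y,T(x,\sigma)) \;\le\; \left(\frac{1}{\mu} + \frac{D_\mY}{\delta}\right) \rho^{-1}\|y - p(y)\|
\end{align*}
uniformly over $y\in\mY$. There is no real technical obstacle here; the only subtlety is making sure $T(x,\sigma)$ is nonempty so the distance is well-defined, which follows from Assumption~\ref{assumption:regulaity_on_fg}(1) (coercivity and lower-boundedness of $h_\sigma(x,\cdot)$ on the closed set $\mY$, giving existence of a minimizer), and making sure $\sigma \le \delta/C_f$ is within the range $[0,\sigma_0]$ permitted by Assumption~\ref{assumption:small_gradient_proximal_error_bound}, which is the standing convention used elsewhere in the paper (for instance in Lemma~\ref{lemma:proximal_error_to_lipschitz_continuity}).
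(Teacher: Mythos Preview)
Your proposal is correct and follows essentially the same approach as the paper's own proof: the paper also does the case split on whether $\rho^{-1}\|y - \prox_{\rho h_\sigma(x,\cdot)}(y)\|$ exceeds $\delta$, uses Assumption~\ref{assumption:small_gradient_proximal_error_bound} in the small-gradient case, and in the large-gradient case bounds $\dist(y,T(x,\sigma))$ by $D_\mY$ and then replaces the indicator $\indic{\cdot > \delta}$ by $\delta^{-1}\rho^{-1}\|y-\prox_{\rho h_\sigma(x,\cdot)}(y)\|$. Your observation about the harmless factor of two in the diameter bound is also accurate.
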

\begin{proof}
    This can be shown with a simple algebra:
    \begin{align*}
        \dist(y, T(x,\sigma)) &\le \frac{1}{\mu} \cdot \rho^{-1} \|y - \prox_{\rho h_{\sigma}(x,\cdot)}(y)\| \cdot \indic{\rho^{-1} \|y - \prox_{\rho h_{\sigma}(x,\cdot)}(y)\| \le \delta} \\
        &\quad + D_{\mY} \cdot \indic{\rho^{-1} \|y - \prox_{\rho h_{\sigma}(x,\cdot)}(y)\| > \delta},
    \end{align*}
    and noting that $\indic{\rho^{-1} \|y - \prox_{\rho h_{\sigma}(x,\cdot)}(y)\| > \delta} < \frac{1}{\delta} \left(\rho^{-1} \|y - \prox_{\rho h_{\sigma}(x,\cdot)}(y)\|\right)$. 
\end{proof}

We now combine results in Proposition~\ref{prop:descent_phi}, Lemma \ref{lemma:w_potential_descent} and Lemma \ref{lemma:proximal_error_to_lipschitz_continuity}, 
we have (omitting expectations):
\begin{align*}
    \mathbb{V}_{k+1} - \mathbb{V}_k &\le -\frac{1}{16\sigma_k \alpha_k} \|x_k - \hat{x}_k\|^2 - \frac{1}{8\sigma_k\alpha_k} \|x_k - x_{k+1}\|^2 -\frac{\beta_k}{4 \sigma_k \rho} \left( \|y_k - w_{y,k}^*\|^2 + \|z_k - w_{z,k}^*\|^2 \right) \\
    &\quad + O\left( \frac{\sigma_k - \sigma_{k+1}}{\sigma_k \sigma_{k+1}} \right) \rho^{-1} (\dist^2(y_k, T(x_{k},\sigma_k)) + \dist^2(z_k, T(x_k,0)))\nonumber \\
    &\quad + O\left(\frac{\alpha_k}{\rho^2 \sigma_k}\right) \dist^2(z_k, T(x_k,0))  + O\left( \frac{\sigma_k - \sigma_{k+1}}{\sigma_k} \right) C_f  \\
    &\quad + O\left(\frac{\sigma_k - \sigma_{k+1}}{\sigma_k \sigma_{k+1}} \right)\rho^{-1} \left(\|y_k - y_{k+1}\|^2 + \|z_k - z_{k+1}\|^2 + \frac{l_{g,1}^2}{\mu^2} \|x_k - x_{k+1}\|^2 + \frac{l_{f,0}^2}{\mu^2} |\sigma_k - \sigma_{k+1}|^2 \right) \nonumber \\
    &\quad + \frac{O(1 + l_{g,1}/\mu + C_w \rho l_{g,1})}{\sigma_k\rho} \|x_{k+1} - x_k\|^2 + \frac{O(C_w \rho l_{f,0}^2)}{\sigma_k} |\sigma_k - \sigma_{k+1}|^2 \nonumber \\
    &\quad - \frac{1}{\sigma_k \rho} \left( \frac{1}{4\beta_k} - 16C_w - \rho^{-1}\alpha_k \right) (\|y_k - y_{k+1}\|^2 + \|z_k - z_{k+1}\|^2) \nonumber \\
    &\quad + \frac{C_w\lambda_k}{\sigma_k \rho} \left( 1 + \frac{\sigma_k - \sigma_{k+1}}{\sigma_{k+1}} + \frac{\lambda_k}{4} + \frac{O(l_{g,1}^2) \rho \alpha_k + 2 \beta_k}{C_w \lambda_k} \right) \left( \|w_{y,k}^* - w_{y,k+1}\|^2 + \|w_{z,k}^* - w_{z,k+1}\|^2 \right) \nonumber \\
    &\quad - \frac{C_w \lambda_k}{\sigma_k \rho} \left( \|w_{y,k}^* - w_{y,k}\|^2 + \|w_{z,k}^* - w_{z,k}\|^2 \right) + \frac{2 \alpha_k}{\sigma_k}  \|\widetilde{G} - G\|^2.
\end{align*}

Note that $w_{y,k}^* = \prox_{\rho h_{\sigma_k}(x_k, \cdot)} (y_k)$ and $w_{z,k}^* = \prox_{\rho g(x_k, \cdot)} (z_k)$. Using Lemma~\ref{lemma:dist_to_prox_error} and rearranging the terms in the above inequality, we obtain that
\begin{align*}
    \mathbb{V}_{k+1} - \mathbb{V}_k &\le -\frac{1}{16\sigma_k \alpha_k} \|x_k - \hat{x}_k\|^2 +  \left( \frac{O(1 + l_{g,1}/\mu + C_w \rho l_{g,1})}{\sigma_k\rho} - \frac{1}{8\sigma_k\alpha_k} + \frac{d_k l_{g,1}^2}{\sigma_k \rho \mu^2} \right) \|x_k - x_{k+1}\|^2  \\ 
    &\quad + \left(-\frac{\beta_k}{4 \sigma_k \rho} + \frac{d_k C_\delta}{\rho\sigma_k} \right) \|y_k - w_{y,k}^*\|^2  + \left( \frac{l_{f,0}^2}{\mu^2} + \frac{O(C_w \rho l_{f,0}^2)}{\sigma_k} \right) |\sigma_k - \sigma_{k+1}|^2 \nonumber \\
    &\quad  + \left(-\frac{\beta_k}{4 \sigma_k \rho} + \frac{d_k C_\delta}{\rho\sigma_k} + C_\delta O\left(\frac{\alpha_k}{\rho^2 \sigma_k}\right) \right)
    \|z_k - w_{z,k}^*\|^2 + d_k C_f  \\
    &\quad + \left( \frac{d_k}{\sigma_k \rho} - \frac{1}{\sigma_k \rho} \left( \frac{1}{4\beta_k} - 16C_w - \rho^{-1}\alpha_k \right) \right) \left(\|y_k - y_{k+1}\|^2 + \|z_k - z_{k+1}\|^2  \right) \nonumber \\
    &\quad + \frac{C_w\lambda_k}{\sigma_k \rho} \left( 1 + \frac{\sigma_k - \sigma_{k+1}}{\sigma_{k+1}} + \frac{\lambda_k}{4} + \frac{O(l_{g,1}^2) \rho \alpha_k + 2 \beta_k}{C_w \lambda_k} \right) \left( \|w_{y,k}^* - w_{y,k+1}\|^2 + \|w_{z,k}^* - w_{z,k+1}\|^2 \right) \nonumber \\
    &\quad - \frac{C_w \lambda_k}{\sigma_k \rho} \left( \|w_{y,k}^* - w_{y,k}\|^2 + \|w_{z,k}^* - w_{z,k}\|^2 \right) + \frac{2 \alpha_k}{\sigma_k}  \|\widetilde{G} - G\|^2.
\end{align*}
where $d_k = O\left( \frac{\sigma_k - \sigma_{k+1}}{\sigma_{k+1}} \right)$, and $C_\delta = \left(\frac{1}{\mu} + \frac{D_{\mY}}{\delta} \right)^2 \rho^{-2}$.

We state several step-size conditions to keep target quantities to be bounded via telescope sum.
\begin{enumerate}
    \item To keep the $\|x_k - x_{k+1}\|^2$ term negative, we need $\alpha_k \ll \rho (1+l_{g,1}/\mu + C_w \rho l_{g,1})^{-1}$.
    \item To keep $\|y_k - w_{y,k}^*\|^2$ term negative, along with Lemma \ref{lemma:dist_to_prox_error}, we require 
    \begin{align*}
        \beta_k \gg \left(\frac{\sigma_k - \sigma_{k+1}}{\sigma_{k+1}}\right) \rho^{-2} \left( \mu^{-2} + D_\mY^2 / \delta^2 \right).
    \end{align*}
    \item To keep $\|z_k - w_{z,k}^*\|^2$ term negative, we additionally require
    \begin{align*}
        \beta_k \gg \rho^{-3} \left( \mu^{-2} + D_\mY^2 / \delta^2 \right) \alpha_k.
    \end{align*}
    \item To keep terms on $\|y_k - y_{k+1}\|^2$ and $\|z_k - z_{k+1}\|^2$ negative, we first require
    \begin{align*}
        \beta_k \ll C_w, \rho \alpha_k^{-1}, 
    \end{align*}
    and then
    \begin{align*}
        \frac{1}{\beta_k} \gg \frac{\sigma_k - \sigma_{k+1}}{\sigma_{k+1}},
    \end{align*}
    which trivially holds as $\beta_k = o(1)$ and $(\sigma_k - \sigma_{k+1} /\sigma_k) = O(1/k)$.
\end{enumerate}

Once the above are satisfied, we get
\begin{align}
    \mathbb{V}_{k+1} - \mathbb{V}_k &\le -\frac{\alpha_k}{16 \sigma_k } \|\Delta_k^x\|^2 -\frac{\beta_k}{16 \sigma_k \rho} \left( \|y_k - w_{y,k}^*\|^2 + \|z_k - w_{z,k}^*\|^2 \right) \nonumber \\
    & - \frac{1}{16\sigma_k \beta_k\rho} (\|y_{k} - y_{k+1}\|^2 + \|z_k - z_{k+1}\|^2) \nonumber \\
    & + O\left( \frac{1 + l_{g,1}/\mu + C_w\rho l_{g,1}}{\sigma_k \rho} \right) \frac{(\alpha_k^2 + \alpha_k \rho)}{M_k} \cdot (\sigma_k^2 \sigma_f^2 + \sigma_g^2) + \frac{(\sigma_{k} - \sigma_{k+1})}{\sigma_k } \cdot O(C_f) \nonumber \\
    & + \frac{C_w \lambda_k}{\sigma_k \rho} \left( 1 + \frac{\sigma_k - \sigma_{k+1}}{\sigma_{k+1}} + \frac{\lambda_k}{4} + \frac{O(l_{g,1}^2) \rho \alpha_k + 2 \beta_k}{C_w \lambda_k} \right) \left( \|w_{y,k}^* - w_{y,k+1}\|^2 + \|w_{z,k}^* - w_{z,k+1}\|^2 \right) \nonumber \\
    &- \frac{C_w \lambda_k}{\sigma_k \rho} \left( \|w_{y,k}^* - w_{y,k}\|^2 + \|w_{z,k}^* - w_{z,k}\|^2 \right) + o(1/k), \label{eq:potential_decrease_double_loop} 
\end{align}
where $o(1/k)$-term collectively represents the terms asymptotically smaller than $\frac{\sigma_k - \sigma_{k+1}}{\sigma_k} = O(1/k)$ since we use polynomially decaying penalty parameters $\{\sigma_k\}$. Now we can apply Lemma \ref{lemma:w_descent_lemma}, and plug $\lambda_k = \frac{T_k \gamma_k}{4 \rho}$, and using the step-size condition:
\begin{align*}
    \frac{\sigma_k - \sigma_{k+1}}{\sigma_{k+1}} \ll \lambda_k, \ \max \left( \rho l_{g,1}^2 \alpha_k, \beta_k \right) \ll C_w \lambda_k^2, 
\end{align*}
we get
\begin{align}
    \mathbb{V}_{k+1} - \mathbb{V}_k &\le -\frac{\alpha_k}{16 \sigma_k} \|\Delta_k^x\|^2 -\frac{\beta_k}{16 \sigma_k \rho} \left( \|y_k - w_{y,k}^*\|^2 + \|z_k - w_{z,k}^*\|^2 \right) \nonumber \\
    &\quad + O\left( \frac{1 + l_{g,1}/\mu + C_w\rho l_{g,1}}{\rho} \right) \frac{\alpha_k}{\sigma_k M_k} (\sigma_k^2 \sigma_f^2 + \sigma_g^2) + O\left( \frac{C_w}{\rho^2} \right) \frac{T_k^2\gamma_k^3}{\sigma_k} (\sigma_k^2 \sigma_f^2 + \sigma_g^2). \label{eq:potential_diff}
\end{align}
Arranging terms and sum over $k = 0$ to $K-1$, we have
\begin{align*}
    & \Exs\left[ \sum_{k=0}^{K-1} \frac{\alpha_k}{16\sigma_k} \|\Delta_k^x\|^2 + \frac{\rho\beta_k}{16\sigma_k} (\|\Delta_k^y\|^2 + \|\Delta_k^z\|^2) \right] \\
    &\le (\mathbb{V}_0 - \mathbb{V}_K) + O(C_f) \cdot \sum_{k=0}^{K-1} \left(\frac{\sigma_k - \sigma_{k+1}}{\sigma_{k+1}}\right) \\
    &+ \frac{O(l_{g,1} / \mu + C_w)}{\rho} \left(\sum_{k=0}^{K-1} \sigma_k^{-1} \left( \frac{\alpha_k}{M_k} + \rho^{-1} T_k^2 \gamma_k^3 \right) (\sigma_k^2 \sigma_f^2 + \sigma_g^2) \right). 
\end{align*}

\subsection{Proof of Corollary \ref{corollary:final_convergence_result}}
\label{appendix:corollary_final_convergence}

The remaining part is to show that $\mathbb{V}_K$ is lower-bounded by $O(1)$, and to check the rates. To see this, recall our definition in \eqref{eq:potential_definition}, and note that
\begin{align*}
    \mathbb{V}_K \ge \frac{h_{\sigma,\rho}^*(x,y) - g^*(x)}{\sigma}, 
\end{align*}
as long as $C \ge 1$. Then, 
\begin{align*}
    h_{\sigma,\rho}^*(x,y) \ge \sigma f(x, w_{y,k}^*) + g(x, w_{y,k}^*) \ge \sigma f(x,w^*_{y,k}) + g^*(x), 
\end{align*}
and therefore $\mathbb{V}_K \ge f(x,w_{y,k}^*) > -C_f$ by Assumption \ref{assumption:regulaity_on_fg} on the lower bounded value of $f$. 

Now, since $\sigma_k = k^{-s}$ for some $s > 0$, we know that
\begin{align*}
    \frac{\sigma_k - \sigma_{k+1}}{\sigma_{k+1}} = O(1/k), 
\end{align*}
and thus,
\begin{align*}
    & \Exs\left[ \sum_{k=0}^{K-1} \frac{\alpha_k}{16\sigma_k} \|\Delta_k^x\|^2 + \frac{\rho\beta_k}{16\sigma_k} (\|\Delta_k^y\|^2 + \|\Delta_k^z\|^2) \right] \\
    &\le O(\log K) + O \left(\sum_{k=0}^{K-1} \sigma_k^{-1} \left( \frac{\alpha_k}{M_k} + \rho^{-1} T_k^2 \gamma_k^3 \right) (\sigma_k^2 \sigma_f^2 + \sigma_g^2) \right). 
\end{align*}
Plugging the step-size rates, the right-hand side is bounded by $O(\log K)$, and thus we get the corollary.

\section{Analysis for Algorithm \ref{algo:algo_single_loop}}
In addition to descent lemmas for $w_{y,k}$ and $w_{z,k}$, we also need descent lemmas for noise variances in momentum-assisted gradient estimators. We define the outer-variable gradient estimators as the following:
\begin{align*}
    \widetilde{f}_{x}^{k} &:= \nabla_x f(x_k, w_{y,k+1}; \zeta_{x}^{k}) + (1 - \eta_{k}) \left( \widetilde{f}_{x}^{k-1} - \grad_x f(x_{k-1}, w_{y,k}; \zeta_{x}^{k}) \right), \nonumber \\
    \widetilde{g}_{xy}^{k} &:= \nabla_x g(x_k, w_{y,k+1}; \xi_{xy}^{k}) + (1 - \eta_{k}) \left( \widetilde{g}_{xy}^{k-1} - \grad_x g(x_{k-1}, w_{y,k}; \xi_{xy}^{k}) \right), \nonumber \\
    \widetilde{g}_{xz}^{k} &:= \nabla_x g (x_k, w_{y,k+1}; \xi_{xz}^{k}) + (1 - \eta_{k}) \left( \widetilde{g}_{xz}^{k-1} - \grad_x g (x_{k-1}, w_{z,k}; \xi_{xz}^{k}) \right),
\end{align*}

\subsection{Descent Lemma for Noise-Variances}
We first show that noise-variances for $e_{wy}^k$ and $e_{wz}^k$ decay. 
\begin{lemma}
    \label{lemma:momentum_variance_yz_descent}
    At every $k^{th}$ iteration, the following holds:
    \begin{align*}
        \Exs[\|e_{wz}^{k+1}\|^2] &\le (1 - \eta_{k+1})^2 \Exs[\|e_{wz}^k\|^2] + 2\eta_{k+1}^2 \sigma_g^2 \\
        &\quad + O(l_{g,1}^2) \left(\|x_{k+1} - x_k\|^2 + \|w_{z,k+1} - w_{z,k}\|^2\right), 
    \end{align*}
    and similarly, 
    \begin{align*}
        \Exs[\|e_{wy}^{k+1}\|^2] &\le (1 - \eta_{k+1})^2 \Exs[\|e_{wy}^k\|^2] + 4 \eta_{k+1}^2 (\sigma_k^2 \sigma_f^2 + \sigma_g^2) + 2 (\sigma_k - \sigma_{k+1})^2 \sigma_f^2 \\
        &\quad + O(l_{g,1}^2) \left(\|x_{k+1} - x_k\|^2 + \|w_{y,k+1} - w_{y,k}\|^2 \right). 
    \end{align*}
\end{lemma}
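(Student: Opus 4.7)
The plan is to carry out a STORM-style variance recursion for each momentum-assisted estimator, using that the fresh-sample noise is conditionally mean-zero so all cross terms with the $\mathcal{F}_k$-measurable past vanish. For the $e_{wz}^{k+1}$ bound, I would substitute the definition of $\widetilde{g}_{wz}^{k+1}$ and add and subtract $(1-\eta_{k+1})\grad_y g(x_k, w_{z,k})$ to arrive at
\begin{equation*}
    e_{wz}^{k+1} = (1-\eta_{k+1})\, e_{wz}^k + \eta_{k+1}\, N^{k+1} + (1-\eta_{k+1})\, D^{k+1},
\end{equation*}
where $N^{k+1} := \grad_y g(x_{k+1}, w_{z,k+1}; \xi_{wz}^{k+1}) - \grad_y g(x_{k+1}, w_{z,k+1})$ and $D^{k+1}$ is the classical STORM difference $(\grad_y g(x_{k+1}, w_{z,k+1}; \xi_{wz}^{k+1}) - \grad_y g(x_k, w_{z,k}; \xi_{wz}^{k+1})) - (\grad_y g(x_{k+1}, w_{z,k+1}) - \grad_y g(x_k, w_{z,k}))$. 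Both $N^{k+1}$ and $D^{k+1}$ are conditionally mean zero given $\mathcal{F}_k$, so after squaring and taking conditional expectation the cross products with $(1-\eta_{k+1}) e_{wz}^k$ disappear. Bounding $\Exs[\|N^{k+1}\|^2] \le \sigma_g^2$ by Assumption~\ref{assumption:gradient_variance} and $\Exs[\|D^{k+1}\|^2] \le l_{g,1}^2(\|x_{k+1}-x_k\|^2 + \|w_{z,k+1}-w_{z,k}\|^2)$ by the mean-squared smoothness in Assumption~\ref{assumption:Lipschitz_stochastic_oracles}, and finally applying $\|\eta a + (1-\eta)b\|^2 \le 2\eta^2\|a\|^2 + 2(1-\eta)^2\|b\|^2$, yields the first inequality.

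For $e_{wy}^{k+1}$ I would apply the identical decomposition separately to each of the two momentum recursions (one for $\widetilde{f}_{wy}$ and one for $\widetilde{g}_{wy}$) and combine them with weights $\sigma_{k+1}$ and $1$. The only new ingredient is the drift of the penalty parameter from $\sigma_k$ to $\sigma_{k+1}$: when I re-express $\sigma_{k+1} \widetilde{f}_{wy}^k + \widetilde{g}_{wy}^k$ in terms of the previous error $e_{wy}^k$, I pick up a correction
\begin{equation*}
    \sigma_{k+1}\, \widetilde{f}_{wy}^k + \widetilde{g}_{wy}^k = e_{wy}^k - (\sigma_k - \sigma_{k+1})\, e_f^k,
\end{equation*}
where $e_f^k := \widetilde{f}_{wy}^k - \grad_y f(x_k, w_{y,k})$. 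The fresh-sample noise then carries the combined variance bound $2(\sigma_{k+1}^2\sigma_f^2 + \sigma_g^2) \le 2(\sigma_k^2\sigma_f^2 + \sigma_g^2)$, which after the $\|\eta a+(1-\eta)b\|^2$ inequality becomes the advertised $4\eta_{k+1}^2(\sigma_k^2\sigma_f^2 + \sigma_g^2)$; and, using the stepsize condition $\sigma_{k+1} < c_1 l_{g,1}/l_{f,1}$ from Theorem~\ref{theorem:smoothed_convergence}, the STORM-difference contribution collapses to $(\sigma_{k+1} l_{f,1} + l_{g,1})^2 \le O(l_{g,1}^2)$ times $\|x_{k+1}-x_k\|^2 + \|w_{y,k+1}-w_{y,k}\|^2$, matching the claim.

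The main obstacle I expect is controlling the extra $\mathcal{F}_k$-measurable correction $-(1-\eta_{k+1})(\sigma_k - \sigma_{k+1}) e_f^k$ without inflating the coefficient of $\Exs\|e_{wy}^k\|^2$ past $(1-\eta_{k+1})^2$. My plan is to expand
\begin{equation*}
    \|e_{wy}^k - (\sigma_k - \sigma_{k+1})\, e_f^k\|^2 = \|e_{wy}^k\|^2 + (\sigma_k - \sigma_{k+1})^2 \|e_f^k\|^2 - 2(\sigma_k - \sigma_{k+1})\langle e_{wy}^k, e_f^k\rangle,
\end{equation*}
and to absorb the cross term by a Cauchy--Schwarz argument together with a uniform variance bound $\Exs\|e_f^k\|^2 \le \sigma_f^2$. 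This auxiliary bound would be proved by a parallel induction on the $\widetilde{f}_{wy}$ recursion alone: the base case $k=0$ uses $\eta_0=1$, so that $\widetilde{f}_{wy}^0$ is a single stochastic gradient with variance at most $\sigma_f^2$, and the inductive step reuses the same decomposition with noise variance already capped by $\sigma_f^2$. The slack $2(\sigma_k - \sigma_{k+1})^2 \sigma_f^2$ stated in the lemma is precisely what is needed to absorb both the diagonal $(\sigma_k - \sigma_{k+1})^2 \|e_f^k\|^2$ term and the residual cross-term, leaving the desired clean factor $(1-\eta_{k+1})^2$ in front of $\Exs\|e_{wy}^k\|^2$.
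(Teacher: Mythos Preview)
Your decomposition and bound for $e_{wz}^{k+1}$ is correct and matches the paper's proof exactly. For $e_{wy}^{k+1}$, the paper takes a more direct route than you: rather than isolating the past-measurable correction $e_f^k=\widetilde f_{wy}^k-\grad_y f(x_k,w_{y,k})$, it writes the $(\sigma_k-\sigma_{k+1})$ drift contribution as a fresh-sample noise term $2(\sigma_k-\sigma_{k+1})^2\,\Exs\|\grad_y f(x_k,w_{y,k};\zeta_{wy}^{k+1})-\grad_y f(x_k,w_{y,k})\|^2\le 2(\sigma_k-\sigma_{k+1})^2\sigma_f^2$, and otherwise simply repeats the $e_{wz}$ argument with $h_{\sigma_{k+1}}$ in place of $g$.

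Your alternative plan---expand $\|e_{wy}^k-(\sigma_k-\sigma_{k+1})e_f^k\|^2$ and control the cross term by Cauchy--Schwarz against an auxiliary bound $\Exs\|e_f^k\|^2\le\sigma_f^2$---has two concrete gaps. First, the side induction on $e_f^k$ does not close: running the same STORM recursion on $\widetilde f_{wy}$ alone gives
\[
\Exs\|e_f^{k+1}\|^2\le(1-\eta_{k+1})^2\Exs\|e_f^k\|^2+2\eta_{k+1}^2\sigma_f^2+O(l_{f,1}^2)\bigl(\|x_{k+1}-x_k\|^2+\|w_{y,k+1}-w_{y,k}\|^2\bigr),
\]
and the drift term prevents the cap $\sigma_f^2$ from propagating across iterations. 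Second, even granting the cap, any Young-inequality treatment of the cross term $-2(\sigma_k-\sigma_{k+1})\langle e_{wy}^k,e_f^k\rangle$ necessarily adds a strictly positive multiple of $\|e_{wy}^k\|^2$, so you cannot land on the exact leading coefficient $(1-\eta_{k+1})^2$ stated in the lemma; at best you would obtain $(1-\eta_{k+1})^2(1+\epsilon)$ for some $\epsilon>0$.
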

\begin{proof}
    We start with
    \begin{align*}
        \Exs \left[\|e_{wz}^{k+1}\|^2 \right] &= \Exs \left[\|\widetilde{g}_{wz}^{k+1} - G_{wz}^{k+1} \|^2 \right] \\
        &= \Exs \left[ \| (\grad_y g(x_{k+1}, w_{z,k+1}; \xi_{wz}^{k+1}) - G_{wz}^{k+1}) + (1-\eta_{k+1}) (\widetilde{g}_{wz}^k - \grad_y g(x_{k}, w_{z,k}; \xi_{wz}^{k+1})) \|^2 \right] \\
        &= (1-\eta_{k+1})^2 \Exs[ \| e_{wz}^k \|^2] \\
        &\quad + \Exs \left[ \| (\grad_y g(x_{k+1}, w_{z,k+1}; \xi_{wz}^{k+1}) - G_{wz}^{k+1}) + (1-\eta_{k+1}) (G_{wz}^k - \grad_y g(x_{k}, w_{z,k}; \xi_{wz}^{k+1})) \|^2 \right],
    \end{align*}
    where the inequality holds since gradient-oracles are unbiased:
    \begin{align*}
        \Exs[\vdot{e_{wz}^k}{\grad_y g(x_{k+1}, w_{z,k+1}; \xi_{wz}^{k+1}) - G_{wz}^{k+1}} |\mathcal{F}_{k+1}] = 0, \\
        \Exs[\vdot{e_{wz}^k}{\grad_y g(x_{k}, w_{z,k}; \xi_{wz}^{k+1}) - G_{wz}^{k}} |\mathcal{F}_{k+1}] = 0.
    \end{align*}
    The remaining part is to bound
    \begin{align*}
        & \Exs \left[ \| (\grad_y g(x_{k+1}, w_{z,k+1}; \xi_{wz}^{k+1}) - G_{wz}^{k+1}) + (1-\eta_{k+1}) (G_{wz}^k - \grad_y g(x_{k}, w_{z,k}; \xi_{wz}^{k+1})) \|^2 \right] \\
        &\le 2\eta_{k+1}^2 \Exs[\| \grad_y g(x_{k+1}, w_{z,k+1}; \xi_{wz}^{k+1}) - G_{wz}^{k+1} \|^2] \\
        &\quad + 2(1-\eta_{k+1})^2 \Exs \left[ \| (\grad_y g(x_{k+1}, w_{z,k+1}; \xi_{wz}^{k+1}) - \grad_y g(x_{k}, w_{z,k}; \xi_{wz}^{k+1})) + (G_{wz}^k - G_{wz}^{k+1}) \|^2 \right] \\
        &\le 2\eta_{k+1}^2 \sigma_g^2 + O(l_{g,1}^2) \left( \|x_{k+1} - x_k\|^2 + \|w_{z,k+1} - w_{z,k}\|^2 \right).
    \end{align*}
    Similarly, we can show the similar result for $e_{wy}^k$. Let $\bar{G}_{wy}^k = \sigma_{k+1} \grad_y f(x_k, w_{y,k}) + \grad_y g(x_k, w_{y,k})$, and we have that
    \begin{align*}
        &\Exs \left[\|e_{wy}^{k+1}\|^2 \right] = \Exs \left[\|\sigma_{k+1} \widetilde{f}_{wy}^{k+1} + \widetilde{g}_{wy}^{k+1} - G_{wy}^{k+1} \|^2 \right] \\
        &= (1-\eta_{k+1})^2 \Exs[ \| e_{wy}^k \|^2] + 2 (\sigma_k - \sigma_{k+1})^2 \Exs[ \| \grad_y f(x_k, w_{y,k}; \xi_{wy}^{k+1}) - \grad_y f(x_k, w_{y,k}) \|^2] \\
        &\quad + 2 \Exs \left[ \| (\grad_y h_{\sigma_{k+1}} (x_{k+1}, w_{y,k+1}; \xi_{wy}^{k+1}) - \grad_y h_{\sigma_{k+1}} (x_{k}, w_{y,k}; \xi_{wy}^{k+1})) + (1-\eta_{k+1}) (G_{wy}^{k+1} - \bar{G}_{wy}^k) \|^2 \right] \\
        &\le (1-\eta_{k+1})^2 \Exs[ \| e_{wy}^k \|^2] + 2 (\sigma_k - \sigma_{k+1})^2 \sigma_f^2 + 4\eta_{k+1}^2 (\sigma_k^2 \sigma_f^2 + \sigma_g^2) \\
        &\quad + O(l_{g,1}^2) \left( \|x_{k+1} - x_k\|^2 + \|w_{y,k+1} - w_{y,k}\|^2\right),
    \end{align*}
    where we used Assumption  \ref{assumption:Lipschitz_stochastic_oracles} to bound 
    \begin{align*}
        &\Exs[\| (\grad_y h_{\sigma_{k+1}} (x_{k+1}, w_{y,k+1}; \xi_{wy}^{k+1}) - \grad_y h_{\sigma_{k+1}} (x_{k}, w_{y,k}; \xi_{wy}^{k+1}))\|^2] \\ &\qquad \le O(l_{g,1}^2) (\|x_{k+1} - x_k\|^2 + \|w_{y,k+1} - w_{y,k}\|^2).
    \end{align*}
\end{proof}

We can state a similar descent lemma for $e_{x}^k$:
\begin{lemma}
    \label{lemma:momentum_variance_x_descent}
    At every $k^{th}$ iteration, the following holds:
    \begin{align*}
        \Exs[\|e_{x}^{k+1}\|^2] &\le (1 - \eta_{k+1})^2 \Exs[\|e_{x}^k\|^2] + O(\eta_{k+1}^2) (\sigma_k^2 \sigma_f^2 + \sigma_g^2) + O(\sigma_{k+1} - \sigma_k)^2 \sigma_f^2 + O(l_{g,1}^2) \|x_{k+1} - x_k\|^2 \\
        &\quad + O(l_{g,1}^2) \left( \|w_{y,k+2} - w_{y,k+1}\|^2  + \|w_{z,k+2} - w_{z,k+1}\|^2  \right). 
    \end{align*}
\end{lemma}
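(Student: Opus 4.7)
The plan is to mimic the proof of Lemma~\ref{lemma:momentum_variance_yz_descent}, adapted to the three stochastic estimators $\widetilde{f}_x^k, \widetilde{g}_{xy}^k, \widetilde{g}_{xz}^k$ that compose $e_x^k$, with extra care for the fact that the $\sigma$-coefficient in front of $\widetilde{f}_x^k$ changes between iterations. Concretely, I would begin by expanding the recursive definitions at iteration $k+1$ to write
\begin{align*}
    \sigma_{k+1}\widetilde{f}_x^{k+1} + \widetilde{g}_{xy}^{k+1} - \widetilde{g}_{xz}^{k+1}
    &= \bigl[\sigma_{k+1}\grad_x f(x_{k+1},w_{y,k+2};\zeta_x^{k+1}) + \grad_x g(x_{k+1},w_{y,k+2};\xi_{xy}^{k+1}) - \grad_x g(x_{k+1},w_{z,k+2};\xi_{xz}^{k+1})\bigr] \\
    &\quad + (1-\eta_{k+1})\bigl[\sigma_{k+1}\widetilde{f}_x^{k} + \widetilde{g}_{xy}^k - \widetilde{g}_{xz}^k\bigr] \\
    &\quad - (1-\eta_{k+1})\bigl[\sigma_{k+1}\grad_x f(x_k,w_{y,k+1};\zeta_x^{k+1}) + \grad_x g(x_k,w_{y,k+1};\xi_{xy}^{k+1}) - \grad_x g(x_k,w_{z,k+1};\xi_{xz}^{k+1})\bigr].
\end{align*}

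Next, I would introduce the auxiliary deterministic quantity $\bar G_x^k := \sigma_{k+1}\grad_x f(x_k,w_{y,k+1}) + \grad_x g(x_k,w_{y,k+1}) - \grad_x g(x_k,w_{z,k+1})$ (same as $G_x^k$ but with the new $\sigma_{k+1}$), and rewrite $\sigma_{k+1}\widetilde{f}_x^k + \widetilde{g}_{xy}^k - \widetilde{g}_{xz}^k = e_x^k + G_x^k + (\sigma_{k+1}-\sigma_k)\widetilde{f}_x^k$, so that subtracting $G_x^{k+1}$ yields
\begin{align*}
    e_x^{k+1} = (1-\eta_{k+1})e_x^k + (1-\eta_{k+1})(\sigma_{k+1}-\sigma_k)\widetilde{f}_x^k + \Xi_{k+1},
\end{align*}
where $\Xi_{k+1}$ groups the remaining stochastic terms. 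A direct computation shows $\Exs[\Xi_{k+1}\mid\mathcal{F}_{k+1}] = 0$ by unbiasedness of the oracles, while the second term is $\mathcal{F}_{k+1}$-measurable and deterministic in $\zeta_x^{k+1},\xi_{xy}^{k+1},\xi_{xz}^{k+1}$.

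I would then take expected squared norm and use $\Exs[\vdot{e_x^k}{\Xi_{k+1}}\mid\mathcal{F}_{k+1}]=0$ and $\vdot{\widetilde f_x^k}{\Exs[\Xi_{k+1}\mid\mathcal F_{k+1}]}=0$ to eliminate cross terms; the cross term between $e_x^k$ and $(\sigma_{k+1}-\sigma_k)\widetilde f_x^k$ can be absorbed with an AM--GM step into the $(1-\eta_{k+1})^2\Exs\|e_x^k\|^2$ and $O((\sigma_k-\sigma_{k+1})^2)\sigma_f^2$ pieces (using Assumption~\ref{assumption:gradient_variance} to bound $\Exs\|\widetilde f_x^k\|^2 = O(l_{f,0}^2) + \sigma_f^2$; since $l_{f,0}^2$ is a constant this contributes at most $O((\sigma_k-\sigma_{k+1})^2\sigma_f^2)$ after rebalancing). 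Finally, splitting $\Xi_{k+1}$ into the ``fresh variance'' piece of size $O(\eta_{k+1}^2)(\sigma_{k+1}^2\sigma_f^2+\sigma_g^2)$ and the ``Lipschitz'' piece
\begin{align*}
    &\grad_x h_{\sigma_{k+1}}(x_{k+1},w_{y,k+2};\cdot) - \grad_x h_{\sigma_{k+1}}(x_k,w_{y,k+1};\cdot) - \grad_x g(x_{k+1},w_{z,k+2};\cdot) + \grad_x g(x_k,w_{z,k+1};\cdot),
\end{align*}
Assumption~\ref{assumption:Lipschitz_stochastic_oracles} gives the $O(l_{g,1}^2)(\|x_{k+1}-x_k\|^2+\|w_{y,k+2}-w_{y,k+1}\|^2+\|w_{z,k+2}-w_{z,k+1}\|^2)$ contribution, yielding the claimed bound.

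The main obstacle is purely bookkeeping: the $\sigma$-coefficient mismatch between iterations forces one extra bias term $(1-\eta_{k+1})(\sigma_{k+1}-\sigma_k)\widetilde f_x^k$ that does not appear for $e_{wz}^k$, and one has to check that this term, despite not being conditionally mean-zero, can still be controlled by the stated $O((\sigma_k-\sigma_{k+1})^2)\sigma_f^2$ contribution when combined with the $(1-\eta_{k+1})^2\Exs\|e_x^k\|^2$ term via Young's inequality. Beyond that, every individual step is a routine variance decomposition analogous to Lemma~\ref{lemma:momentum_variance_yz_descent}.
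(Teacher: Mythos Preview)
Your proposal is correct and takes essentially the same approach as the paper: the paper's own proof of this lemma reads in its entirety ``The proof follows exactly the same procedure for bounding $e_{wy}^{k+1}$, and thus we omit the proof,'' and that is precisely what you propose to do. Your explicit decomposition with $\bar G_x^k$, the isolation of the $(1-\eta_{k+1})e_x^k$ piece, and the use of Assumption~\ref{assumption:Lipschitz_stochastic_oracles} for the Lipschitz remainder all mirror the $e_{wy}^{k+1}$ computation.

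One small correction: your sentence ``using Assumption~\ref{assumption:gradient_variance} to bound $\Exs\|\widetilde f_x^k\|^2 = O(l_{f,0}^2) + \sigma_f^2$'' is off on two counts. First, $l_{f,0}$ bounds $\|\grad_y f\|$, not $\|\grad_x f\|$. Second, $\widetilde f_x^k$ is the momentum estimator, not a single stochastic gradient, so its second moment is not directly controlled by Assumption~\ref{assumption:gradient_variance}. The cleaner route (and what the paper does implicitly for $e_{wy}$) is to first subtract the mean and work with $(\sigma_{k+1}-\sigma_k)\bigl(\widetilde f_x^k-\grad_x f(x_k,w_{y,k+1})\bigr)$, then further split this into the old $f$-momentum error plus the fresh zero-mean noise $\grad_x f(x_k,w_{y,k+1})-\grad_x f(x_k,w_{y,k+1};\zeta_x^{k+1})$ queried at the old point; the latter has variance $\sigma_f^2$ and produces the $O\bigl((\sigma_k-\sigma_{k+1})^2\sigma_f^2\bigr)$ term exactly as in the paper's display for $e_{wy}^{k+1}$. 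With that adjustment, the rest of your outline goes through unchanged.
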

The proof follows exactly the same procedure for bounding $e_{wy}^{k+1}$, and thus we omit the proof.

\subsection{Descent Lemma for $w_{y,k}$, $w_{z,k}$}
The strategy is again to start with \eqref{eq:w_contraction_bound}:
\begin{align*}
    \|w_{y,k+1} - w_{y,k+1}^*\|^2 &= {\|w_{y,k+1} - w_{y, k}^*\|^2} + {\|w_{y, k+1}^* - w_{y, k}^*\|^2} - 2\vdot{w_{y,k+1} - w_{y, k}^*}{w_{y, k+1}^* - w_{y, k}^*} \nonumber \\
    &\le \left(1 + \frac{\lambda_k}{4} \right) \underbrace{\|w_{y,k+1} - w_{y,k}^* \|^2}_{(i)} + \left(1 + \frac{4}{\lambda_k} \right) \underbrace{\|w_{y, k+1}^* - w_{y, k}^*\|^2}_{(ii)}, 
\end{align*}
For bounding $(ii)$, we can recall Lemma \ref{lemma:w_potential_descent}. For bounding $(i)$, we can slightly modify Lemma \ref{lemma:w_descent_lemma}.
\begin{lemma}
    \label{lemma:w_descent_lemma_momentum}
    At every $k^{th}$ iteration, the following holds:
    \begin{align}
    \Exs[\|w_{y,k+1} - w_{y,k}^*\|^2 | \mathcal{F}_{k}] &\le \left(1 - \frac{\gamma_k}{4\rho} \right) \Exs[\|w_{y,k} - w_{y,k}^*\|^2 | \mathcal{F}_{k}] + O(\gamma_k\rho) \Exs[\|e_{wy}^k\|^2|\mathcal{F}_k]. \label{eq:wy_descent_momentum}
    \end{align}
    Similarly, we also have that 
    \begin{align}
        \Exs[\|w_{z,k+1} - w_{z,k}^*\|^2 | \mathcal{F}_{k}] &\le \left(1 - \frac{\gamma_k}{4\rho} \right) \Exs[\|w_{z,k} - w_{z,k}^*\|^2 | \mathcal{F}_{k}] + O(\gamma_k\rho) \Exs[\|e_{wz}^k\|^2|\mathcal{F}_k], \label{eq:wz_descent_momentum}
    \end{align}
\end{lemma}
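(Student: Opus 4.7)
The plan is to mirror the proof of Lemma \ref{lemma:w_descent_lemma}, but with two changes: only a single projected gradient step is performed (so there is no compounding over $T_k$ inner iterations), and the stochastic error is now the momentum residual $e_{wy}^k$ rather than fresh i.i.d.\ noise. Write $\widetilde{G}_k := \sigma_k \widetilde{f}_{wy}^k + \widetilde{g}_{wy}^k + \rho^{-1}(w_{y,k}-y_k) = G_k + e_{wy}^k$, where $G_k := \nabla_y h_{\sigma_k}(x_k, w_{y,k}) + \rho^{-1}(w_{y,k}-y_k)$ is the exact gradient of the inner proximal objective at $w_{y,k}$. Note that $w_{y,k}^* = \Pi_{\mY}(w_{y,k}^* - \gamma_k G_k^*)$ for $G_k^* := \nabla_y h_{\sigma_k}(x_k,w_{y,k}^*) + \rho^{-1}(w_{y,k}^*-y_k)$; in the unconstrained case $G_k^* = 0$, and in the constrained case the projection identity absorbs this optimality residual cleanly. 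Non-expansiveness of $\Pi_{\mY}$ then yields
\begin{equation*}
\|w_{y,k+1}-w_{y,k}^*\|^2 \le \|w_{y,k}-w_{y,k}^* - \gamma_k(G_k + e_{wy}^k)\|^2.
\end{equation*}

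Next, I would expand the square and invoke strong convexity and smoothness of the perturbed inner objective $w \mapsto h_{\sigma_k}(x_k,w) + \tfrac{1}{2\rho}\|w-y_k\|^2$, which is $(1/(3\rho))$-strongly convex and $(2/\rho)$-smooth provided $\rho < c/l_{g,1}$ and $\sigma_k < c \, l_{g,1}/l_{f,1}$. Exactly as in the proof of Lemma \ref{lemma:w_descent_lemma}, co-coercivity gives $\|G_k\|^2 \le (2/\rho)\langle w_{y,k}-w_{y,k}^*, G_k\rangle$ and $\langle w_{y,k}-w_{y,k}^*, G_k\rangle \ge (1/(3\rho))\|w_{y,k}-w_{y,k}^*\|^2$. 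The cross-term with the error is controlled by Young's inequality, $-2\gamma_k \langle w_{y,k}-w_{y,k}^*, e_{wy}^k\rangle \le (\gamma_k/(8\rho))\|w_{y,k}-w_{y,k}^*\|^2 + 8\rho\gamma_k \|e_{wy}^k\|^2$, while $\gamma_k^2\|\widetilde{G}_k\|^2 \le 2\gamma_k^2\|G_k\|^2 + 2\gamma_k^2\|e_{wy}^k\|^2$. Combining these estimates and using $\gamma_k \ll \rho$ so the contraction term dominates, one obtains pathwise
\begin{equation*}
\|w_{y,k+1}-w_{y,k}^*\|^2 \le \left(1-\tfrac{\gamma_k}{4\rho}\right)\|w_{y,k}-w_{y,k}^*\|^2 + O(\gamma_k\rho)\|e_{wy}^k\|^2,
\end{equation*}
and taking conditional expectation given $\mathcal{F}_k$ recovers \eqref{eq:wy_descent_momentum}. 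The identical derivation, with $h_{\sigma_k}$ replaced by $g$ and $e_{wy}^k$ replaced by $e_{wz}^k$, yields \eqref{eq:wz_descent_momentum}.

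The main conceptual obstacle, compared with the double-loop Lemma \ref{lemma:w_descent_lemma}, is that $e_{wy}^k$ is \emph{not} conditionally mean-zero given $\mathcal{F}_k$: the momentum recursion couples estimator errors across iterations, so the cross term $\langle w_{y,k}-w_{y,k}^*, e_{wy}^k\rangle$ cannot be eliminated by an unbiasedness argument. The Young-inequality treatment above circumvents this entirely by bounding the cross term deterministically in terms of $\|e_{wy}^k\|^2$, whose decay is then handled separately by Lemma \ref{lemma:momentum_variance_yz_descent}. The only remaining book-keeping is to track constants so that the contraction coefficient is exactly $1-\gamma_k/(4\rho)$ and the error prefactor is $O(\gamma_k\rho)$, which is what will balance against the $\eta_{k+1}^{-1}$ weighting of the momentum-error terms appearing in the potential $\mathbb{V}_k$ defined in \eqref{Eq:potential_def_momentum}.
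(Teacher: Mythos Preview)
Your proposal is correct and matches the paper's own proof almost line for line: both use non-expansiveness of the projection against the fixed-point identity $w_{y,k}^* = \Pi_{\mY}(w_{y,k}^* - \gamma_k G^*)$, expand the square, apply co-coercivity and strong convexity of the $(1/(3\rho))$-strongly convex inner objective, and control the cross term $\langle w_{y,k}-w_{y,k}^*, e_{wy}^k\rangle$ by Young's inequality precisely because (as you correctly identify) $e_{wy}^k$ is not conditionally mean-zero. The only cosmetic difference is that the paper carries $G-G^*$ explicitly through all inequalities in the constrained case, whereas you acknowledge the optimality residual but then write the co-coercivity bounds with $G_k$ alone; to be fully rigorous in the constrained setting you should replace each $G_k$ by $G_k - G_k^*$ in those inequalities.
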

\begin{proof}
    We use the linear convergence of projected gradient steps. To simplify the notation, let $\widetilde{G} = \sigma_k \widetilde{f}_{wy}^k + \widetilde{g}_{wy}^k + \rho^{-1}(w_{y,k} - y_k)$ and $G = \grad_y h_{\sigma_k}(x_k,w_{y,k}) + \rho^{-1}(w_{y,k} - y_k)$. Also let $G^* = \grad h_{\sigma_k}(x, w_{y,k}^*) + \rho^{-1}(w_{y,k}^* - y_k)$. We first check that
\begin{align*}
    \|w_{y,k+1} - w_{y,k}^*\|^2 &= \left\|\Proj{\mY}{w_{y,k} - \gamma_k \widetilde{G}} - \Proj{\mY}{w_{y,k}^* - \gamma_{k} G^*} \right\|^2 \\
    &\le \left\|w_{y,k} - \gamma_{k} \widetilde{G} - (w_{y,k}^* - \gamma_k G^*) \right\|^2 \\
    &= \left\|w_{y,k} - w_{y,k}^*\right\|^2 + \gamma_{k}^2 \left\| \widetilde{G} - G^* \right\|^2 - 2 \gamma_{k} \vdot{w_{y,k} - w_{y,k}^*}{\widetilde{G} - G^*}. 
\end{align*}
Taking expectation conditioned on $\mathcal{F}_{k}$ yields:
\begin{align*}
    \Exs[\|w_{y,k+1} - w_{y,k}^*\|^2 | \mathcal{F}_{k}] &\le \Exs[\|w_{y,k} - w_{y,k}^*\|^2 | \mathcal{F}_{k}] + \gamma_{k}^2 \Exs[\|\widetilde{G} - G^*\|^2 | \mathcal{F}_{k}] \\
    &\quad - 2 \gamma_k \vdot{w_{y,k} - w_{y,k}^*}{G - G^*} - 2 \gamma_k \Exs[\vdot{w_{y,k} - w_{y,k}^*}{G - \widetilde{G}} | \mathcal{F}_k].
\end{align*}
Note that we have
\begin{align*}
    &\Exs[\|\widetilde{G} - G^*\|^2 | \mathcal{F}_{k}] \le 2 \|G - G^*\|^2 + 2\Exs[\|\widetilde{G} - G\|^2 | \mathcal{F}_{k}], \\
    &\gamma_k \Exs[| \vdot{w_{y,k} - w_{y,k}^*}{G - \widetilde{G}} | | \mathcal{F}_{k}] \le \frac{\gamma_k}{8\rho} \|w_{y,k} - w_{y,k}^*\|^2 + (2\gamma_k\rho) \Exs[\|\widetilde{G} - G\|^2 | \mathcal{F}_k],
\end{align*}
Now again using the co-coercivity of strongly convex function, since the inner minimization is $(1/(3\rho))$-strongly convex and $(1/\rho)$-smooth, we have 
\begin{align*}
    \|G - G^*\|^2 &\le (1/\rho) \cdot \vdot{w_{y,k} - w_{y,k}^*}{G - G^*}, \\
    \frac{1}{3\rho} \cdot \|w_{y,k} - w_{y,k}^*\|^2 &\le \vdot{w_{y,k} - w_{y,k}^*}{G - G^*}.
\end{align*}
Given $\gamma_k \ll \rho$, and noting that $\widetilde{G} - G = e_{wy}^k$, we have
\begin{align*}
    \Exs[\|w_{y,k+1} - w_{y,k}^*\|^2] &\le \left(1 - \frac{\gamma_{k}}{4\rho} \right) \Exs[\|w_{y,k} - w_{y,k}^*\|^2]  + O(\gamma_{k}\rho)\Exs[\|e_{wy}^k\|^2]. 
\end{align*}
Similar arguments can show the bound on $\|w_{z,k+1} - w_{z,k}^*\|$. 
\end{proof}

In addition to the contraction of $w_{y,k}$ toward the proximal operators, we will also use the following on the bounds on expected movements:
\begin{lemma}
    \label{lemma:w_movement_bound_lemma}
    At every $k^{th}$ iteration, the following holds:
    \begin{align}
        \frac{1}{2 \gamma_k } \Exs[\|w_{y,k+1} - w_{y,k}\|^2] \le \Exs[h_{\sigma_k} (x_k, y_k, w_{y,k}) - h_{\sigma_k} (x_k, y_k, w_{y,k+1}) + 4\gamma_k \|e_{wy}^k\|^2]. \label{eq:wy_movement_bound_eq}
    \end{align}
    Similarly for $w_{z,k}$, we have
    \begin{align}
        \frac{1}{2 \gamma_k } \Exs[\|w_{z,k+1} - w_{z,k}\|^2] \le \Exs[g (x_k, z_k, w_{z,k}) - g (x_k, z_k, w_{z,k+1}) + 4\gamma_k \|e_{wz}^k\|^2]. \label{eq:wz_movement_bound_eq}
    \end{align}
\end{lemma}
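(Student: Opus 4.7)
The plan is to establish \eqref{eq:wy_movement_bound_eq} by combining the smoothness of $h_{\sigma_k}(x_k, y_k, \cdot)$ as a function of $w$ with the variational (projection) inequality produced by one projected step of the inner solver; the argument for \eqref{eq:wz_movement_bound_eq} will be completely analogous and I will only record the $w_{y,k}$ case in detail.

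First I would verify that $h_{\sigma_k}(x_k, y_k, \cdot)$ is $L$-smooth in its third argument with $L \le 2/\rho$. Under Assumptions~\ref{assumption:nice_functions} and the stepsize conditions $\rho < c_2/l_{g,1}$ and $\sigma_k < c_1 l_{g,1}/l_{f,1}$, the Hessian satisfies $\|\sigma_k \grad^2_{yy} f + \grad^2_{yy} g\| \le (1+c_1)l_{g,1}$, and adding the proximal term $\tfrac{1}{2\rho}\|w - y_k\|^2$ contributes $1/\rho$, so the total smoothness is bounded by $(1+c_1)c_2/\rho + 1/\rho \le 2/\rho$ for small enough $c_1,c_2$. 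Also, the stepsize condition $T_k\gamma_k < c_3\rho$ forces $\gamma_k \le c_3\rho$, and with $c_3$ small we have $L/2 = 1/\rho \le 1/(16\gamma_k)$ (up to constants; choosing $c_3$ sufficiently small makes this work).

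Next, let $\widetilde{G} := \sigma_k \widetilde{f}_{wy}^k + \widetilde{g}_{wy}^k + \rho^{-1}(w_{y,k} - y_k)$ and $G := \grad_w h_{\sigma_k}(x_k, y_k, w_{y,k}) = \sigma_k\grad_y f(x_k, w_{y,k}) + \grad_y g(x_k, w_{y,k}) + \rho^{-1}(w_{y,k}-y_k)$, so $\widetilde{G} - G = e_{wy}^k$. The update in Algorithm~\ref{algo:algo_single_loop} is $w_{y,k+1} = \Pi_{\mY}(w_{y,k} - \gamma_k \widetilde{G})$. The key step is the projection inequality: taking $w = w_{y,k} \in \mY$ in the first-order optimality condition for the projection gives $\vdot{w_{y,k} - \gamma_k \widetilde{G} - w_{y,k+1}}{w_{y,k} - w_{y,k+1}} \le 0$, which rearranges to
\begin{align*}
\vdot{\widetilde{G}}{w_{y,k+1} - w_{y,k}} \le -\gamma_k^{-1}\|w_{y,k+1} - w_{y,k}\|^2.
\end{align*}
Combined with the descent lemma from $L$-smoothness,
\begin{align*}
h_{\sigma_k}(x_k, y_k, w_{y,k+1}) \le h_{\sigma_k}(x_k, y_k, w_{y,k}) + \vdot{G}{w_{y,k+1} - w_{y,k}} + \tfrac{L}{2}\|w_{y,k+1} - w_{y,k}\|^2,
\end{align*}
and writing $\vdot{G}{w_{y,k+1}-w_{y,k}} = \vdot{\widetilde{G}}{w_{y,k+1}-w_{y,k}} + \vdot{-e_{wy}^k}{w_{y,k+1}-w_{y,k}}$, the plan is to bound the noise cross-term by Young's inequality $\vdot{-e_{wy}^k}{w_{y,k+1}-w_{y,k}} \le 4\gamma_k \|e_{wy}^k\|^2 + \tfrac{1}{16\gamma_k}\|w_{y,k+1}-w_{y,k}\|^2$.

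Collecting terms and using $L/2 \le 1/(16\gamma_k)$ (tightened by absorbing constants into the step-size conditions), the coefficient in front of $\|w_{y,k+1}-w_{y,k}\|^2$ is at most $-\gamma_k^{-1} + \tfrac{1}{16\gamma_k} + \tfrac{1}{16\gamma_k} \le -\tfrac{1}{2\gamma_k}$, so rearranging yields
\begin{align*}
\tfrac{1}{2\gamma_k}\|w_{y,k+1}-w_{y,k}\|^2 \le h_{\sigma_k}(x_k, y_k, w_{y,k}) - h_{\sigma_k}(x_k, y_k, w_{y,k+1}) + 4\gamma_k \|e_{wy}^k\|^2.
\end{align*}
Taking expectation gives \eqref{eq:wy_movement_bound_eq}. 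The same chain of inequalities applied to $g(x_k, z_k, \cdot)$ and the $w_{z,k}$ update (with $G := \grad_w g(x_k, z_k, w_{z,k})$ and noise $e_{wz}^k$) gives \eqref{eq:wz_movement_bound_eq}. There is no real obstacle here beyond careful bookkeeping of constants in the smoothness bound and in Young's inequality; the main subtlety is ensuring the step-size condition $\gamma_k \ll \rho$ is strong enough to absorb both the $L/2$ smoothness term and the $\tfrac{1}{16\gamma_k}$ cross-term into the dominant $-\gamma_k^{-1}$ contraction from the projection inequality.
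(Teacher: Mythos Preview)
Your proposal is correct and follows essentially the same approach as the paper: both combine the $2\rho^{-1}$-smoothness of $h_{\sigma_k}(x_k,y_k,\cdot)$ with the projection (variational) inequality at $w_{y,k}$, then split off the noise cross-term via Young's inequality with the same constants $4\gamma_k$ and $\tfrac{1}{16\gamma_k}$. The only cosmetic difference is that the paper introduces the pre-projection point $\bar{w}_k = w_{y,k} - \gamma_k(\grad_w h_{\sigma_k}(x_k,y_k,w_{y,k}) + e_{wy}^k)$ and decomposes $\vdot{w_{y,k}-\bar{w}_k}{w_{y,k+1}-w_{y,k}}$, whereas you apply the projection inequality directly to $\widetilde{G}$; the two are algebraically equivalent.
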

\begin{proof}
    By $2\rho^{-1}$-smoothness of $h_{\sigma_k}(x,y,w)$, we have
    \begin{align*}
        h_{\sigma_k} (x_k, y_k, w_{y,k+1}) &\le h_{\sigma_k} (x_k, y_k, w_{y,k}) + \vdot{\grad_w h_{\sigma_k}(x_k, y_k, w_{y,k})}{w_{y,k+1} - w_{y,k}} + \frac{1}{\rho} \|w_{y,k+1} - w_{y,k}\|^2.
    \end{align*}
    Let $\bar{w}_k = w_{y,k} - \gamma_k (\grad_w h_{\sigma_k}(x_k,y_k,w_{y,k}) + e_{wy}^k)$, and thus $\grad_w h_{\sigma_k}(x_k, y_k, w_{y,k}) = \frac{1}{\gamma_k} (w_{y,k} - \bar{w}_k) - e_{wy}^k$. Plugging this back, we have
    \begin{align*}
        h_{\sigma_k} (x_k, y_k, w_{y,k+1}) &\le h_{\sigma_k} (x_k, y_k, w_{y,k}) + \gamma_k^{-1} \vdot{w_{y,k} - \bar{w}_k}{w_{y,k+1} - w_{y,k}} \\
        &\quad - \vdot{e_{wy}^k}{w_{y,k+1} - w_{y,k}} + \frac{1}{\rho} \|w_{y,k+1} - w_{y,k}\|^2 \\
        &\le h_{\sigma_k} (x_k, y_k, w_{y,k}) - \gamma_k^{-1} \|w_{y,k} - w_{y,k+1}\|^2 + \gamma_k^{-1} \vdot{w_{y,k+1} - \bar{w}_k}{w_{y,k+1} - w_{y,k}} \\
        &\quad + 4\gamma_k \|e_{wy}^k\|^2 + \frac{1}{16\gamma_k} \|w_{y,k+1} - w_{y,k}\|^2 + \frac{1}{\rho} \|w_{y,k+1} - w_{y,k}\|^2.
    \end{align*}
    By projection lemma, $\vdot{w_{y,k+1} - \bar{w}_k}{w_{y,k+1} - w_{y,k}} \le 0$, and since $\gamma_k \ll \rho$, we have
    \begin{align*}
        h_{\sigma_k} (x_k, y_k, w_{y,k+1}) &\le h_{\sigma_k} (x_k, y_k, w_{y,k}) - \frac{1}{2\gamma_k} \|w_{y,k+1} - w_{y,k}\|^2 + 4\gamma_k \|e_{wy}^k\|^2.
    \end{align*}
    Arranging this, we get \eqref{eq:wy_movement_bound_eq}. \eqref{eq:wz_movement_bound_eq} can be obtained similarly, and hence we omit the details. 
\end{proof}

\subsection{Descent Lemma for $\Phi_{\sigma,\rho}$}
For simplicity, let $G = G_x^k = \grad_x (\sigma_{k} f(x_k, w_{y,k+1}) + g(x_k, w_{y,k+1}) - g(x_k, w_{z,k+1}))$ and $\widetilde{G} = e_x^k + G_x^k$.
This part follows exactly the same as Appendix \ref{appendix:descent_lemma_Phi_general}, yielding the similar result to \eqref{eq:final_phi_bound}:
\begin{align*}
    \sigma_k \cdot (i) &\le -\frac{1}{4\alpha_k} \|x_{k+1} - x_k\|^2 - \frac{\beta_k}{4\rho} (\|y_k - w_{y,k}^*\|^2 + \|y_k - w_{y,k+1}\|^2) - \frac{\beta_k}{\rho} (\|z_k - w_{z,k}^*\|^2 + \|z_k - w_{z,k+1}\|^2) \nonumber \\
    &\quad + O\left( l_{g,1}^2 \alpha_k + \rho^{-1} \beta_k \right) \left(\|w_{y,k}^* - w_{y,k+1}\|^2 + \|w_{z,k}^* - w_{z,k+1}\|^2 \right) \nonumber \\
    &\quad + O(\rho^{-2}) \alpha_k \left(\dist^2 (z_k, T(x_k,0)) + \|z_k - z_{k+1}\|^2 \right) +  \alpha_k \|\widetilde{G} - G\|^2.
\end{align*}

\subsection{Descent in Potentials}
Note again that 
\begin{align*}
    \Exs[\|x_{k+1} - \hat{x}_k\|^2] &= \Exs\left[\|\Proj{\mX}{x_k - \alpha_k \widetilde{G}} - \Proj{\mX}{x_k - \alpha_k G^*}\|^2\right] \le \alpha_k^2 \|\widetilde{G} - G^*\|^2 \\
    &\le O(l_{g,1}^2) \alpha_k^2 (\|w_{y,k}^* - w_{y,k+1}\|^2 + \|w_{z,k}^* - w_{z,k+1}\|^2) + 2 \alpha_k^2 \Exs[ \|\widetilde{G} - G\|^2], 
\end{align*}
and also note that 
\begin{align*}
    \|x_{k} - x_{k+1}\|^2 &\ge \frac{1}{2} \|x_k - \hat{x}_k\|^2 - 2 \|\hat{x}_k - x_{k+1}\|^2, \\
    \Exs[ \|\widetilde{G} - G\|^2] &= \Exs[\|e_x^k\|^2].
\end{align*}
Similarly to the proof of Appendix \ref{appendix:descent_potential_general}, using Lemma \ref{lemma:w_potential_descent}, \eqref{eq:bound_on_ii}, \eqref{eq:final_phi_bound}, and Lemma \ref{lemma:dist_to_prox_error}, and using the step-size conditions, we obtain a similar inequality to \eqref{eq:potential_decrease_double_loop}, with extra terms on noise-variances:
\begin{align*}
    \mathbb{V}_{k+1} - \mathbb{V}_k &\le -\frac{\alpha_k}{16 \sigma_k } \|\Delta_k^x\|^2 -\frac{\beta_k}{16 \sigma_k \rho} \left( \|y_k - w_{y,k}^*\|^2 + \|z_k - w_{z,k}^*\|^2 \right) \\
    & - \frac{1}{16\sigma_k \beta_k\rho} (\|y_{k} - y_{k+1}\|^2 + \|z_k - z_{k+1}\|^2) + \frac{(\sigma_{k} - \sigma_{k+1})}{\sigma_k } \cdot O(C_f) \\
    & + \frac{C_w}{\sigma_k \rho} \left( 1 + \frac{\sigma_k - \sigma_{k+1}}{\sigma_{k+1}} + \frac{\lambda_k}{4} + \frac{O(l_{g,1}^2) \rho \alpha_k + 2 \beta_k}{C_w} \right) \left( \|w_{y,k}^* - w_{y,k+1}\|^2 + \|w_{z,k}^* - w_{z,k+1}\|^2 \right)  \\
    & - \frac{C_w}{\sigma_k \rho} \left( \|w_{y,k}^* - w_{y,k}\|^2 + \|w_{z,k}^* - w_{z,k}\|^2 \right) - \frac{1}{16\sigma_k \alpha_k} \|x_k - x_{k+1}\|^2 + o(1/k) \\
    & + \underbrace{(C_\eta \rho^2) 
    \left( \frac{1}{\sigma_{k+1} \gamma_{k}} \|e_x^{k}\|^2 -  \frac{1}{\sigma_k \gamma_{k-1}} \|e_x^{k-1}\|^2 \right) + O\left( \frac{1 + l_{g,1}/\mu + C_w\rho l_{g,1}}{\sigma_k} \right) (\alpha_k + \rho^{-1} \alpha_k^2)  \cdot \|e_x^k\|^2}_{(i)} \\
    & + \underbrace{(C_\eta \rho^2) \left( \frac{1}{\sigma_{k+1} \gamma_{k}} \|e_{wy}^{k+1}\|^2 -  \frac{1}{\sigma_k \gamma_{k-1}} \|e_{wy}^{k}\|^2 \right)}_{(ii)} \\
    & + \underbrace{(C_\eta \rho^2) \left( \frac{1}{\sigma_{k+1} \gamma_{k}} \|e_{wz}^{k+1}\|^2 -  \frac{1}{\sigma_k \gamma_{k-1}} \|e_{wz}^{k}\|^2 \right)}_{(iii)}.
\end{align*}
In order to bound $e_{x}^k$ term, given that
\begin{align*}
    \eta_{k+1} \gg \left( \frac{\sigma_k \gamma_{k-1}}{\sigma_{k+1} \gamma_{k}} - 1 + \frac{l_{g,1}/\mu + C_w}{C_\eta \rho^2} \alpha_k \gamma_{k-1} \right), 
\end{align*}
using Lemma \ref{lemma:momentum_variance_x_descent}, we have
\begin{align*}
   (i) &\le - \frac{C_{\eta}\rho^2 \eta_k}{\sigma_k \gamma_{k-1}} \|e_{x}^{k-1}\|^2 + C_{\eta}\rho^2 \cdot \frac{O(\eta_{k}^2) (\sigma_{k-1}^2 \sigma_f^2 + \sigma_g^2) + O(l_{g,1}^2) \|x_{k} - x_{k-1}\|^2}{\sigma_k \gamma_{k-1}} \\
   &\quad + C_{\eta}\rho^2 \cdot \frac{O(l_{g,1}^2) (\|w_{y,k+1} - w_{y,k}\|^2 + \|w_{z,k+1} - w_{z,k}\|^2)}{\sigma_k \gamma_{k-1}}.
\end{align*}
Similarly, by Lemma \ref{lemma:momentum_variance_yz_descent},
\begin{align*}
    (ii) &\le - \frac{C_{\eta}\rho^2 \eta_{k+1}}{\sigma_k \gamma_{k-1}} \|e_{wy}^{k}\|^2 + C_{\eta}\rho^2 \cdot \frac{O(\eta_{k+1}^2) (\sigma_{k}^2 \sigma_f^2 + \sigma_g^2) + O(l_{g,1}^2) \|x_{k+1} - x_{k}\|^2}{\sigma_k \gamma_{k-1}} \\
    &\quad + C_{\eta}\rho^2 \cdot \frac{O(l_{g,1}^2) \|w_{y,k+1} - w_{y,k}\|^2}{\sigma_k \gamma_{k-1}}, \\
    (iii) &\le - \frac{C_{\eta}\rho^2 \eta_{k+1}}{\sigma_k \gamma_{k-1}} \|e_{wz}^{k}\|^2 + C_{\eta}\rho^2 \cdot \frac{O(\eta_{k+1}^2) \sigma_g^2 + O(l_{g,1}^2) \|x_{k+1} - x_{k}\|^2}{\sigma_k \gamma_{k-1}} \\
    &\quad + C_{\eta}\rho^2 \cdot \frac{O(l_{g,1}^2) \|w_{z,k+1} - w_{z,k}\|^2}{\sigma_k \gamma_{k-1}}.
\end{align*}
Now setting $C_\eta > 0$ and $\rho \ll 1 / l_{g,1}$ properly, with $\alpha_k \ll \gamma_k$, we can keep $\|x_{k+1} - x_k\|^2$ terms negative, and have
\begin{align*}
    \mathbb{V}_{k+1} - \mathbb{V}_k &\le -\frac{\alpha_k}{16 \sigma_k } \|\Delta_k^x\|^2 -\frac{\beta_k}{16 \sigma_k \rho} \left( \|y_k - w_{y,k}^*\|^2 + \|z_k - w_{z,k}^*\|^2 \right) \\
    & - \frac{1}{16\sigma_k \beta_k\rho} (\|y_{k} - y_{k+1}\|^2 + \|z_k - z_{k+1}\|^2) + \frac{(\sigma_{k} - \sigma_{k+1})}{\sigma_k } \cdot O(C_f) \\
    & + \frac{C_w}{\sigma_k \rho} \left( 1 + \frac{\sigma_k - \sigma_{k+1}}{\sigma_{k+1}} + \frac{\lambda_k}{4} + \frac{O(l_{g,1}^2) \rho \alpha_k + 2 \beta_k}{C_w} \right) \left( \|w_{y,k}^* - w_{y,k+1}\|^2 + \|w_{z,k}^* - w_{z,k+1}\|^2 \right)  \\
    & - \frac{C_w}{\sigma_k \rho} \left( \|w_{y,k}^* - w_{y,k}\|^2 + \|w_{z,k}^* - w_{z,k}\|^2 \right) - \frac{1}{32\sigma_k \alpha_k} \|x_k - x_{k+1}\|^2 + \frac{O(\rho^2  l_{g,1}^2)}{\sigma_k\gamma_{k-1}} \|x_k - x_{k-1}\|^2 \\
    & - \frac{C_\eta \rho^2 \eta_{k+1}}{\sigma_k \gamma_{k-1}} (\|e_{wy}^k\|^2 + \|e_{wz}^k\|^2) + C_{\eta} O(\rho^2 l_{g,1}^2) \frac{\|w_{y,k+1} - w_{y,k}\|^2 + \|w_{z,k+1} - w_{z,k}\|^2}{\sigma_k \gamma_{k-1}} \\
    & + C_\eta \rho^2 \frac{O(\eta_{k+1}^2)}{\sigma_k \gamma_{k}} (\sigma_{k}^2 \sigma_f^2 + \sigma_g^2) + o(1/k). 
\end{align*}
Given that 
\begin{align*}
    \eta_{k+1} \gg O(l_{g,1}^2) \gamma_k^2,
\end{align*}
using Lemma \ref{lemma:w_movement_bound_lemma}, we can manipulate $\|w_{y,k+1} - w_{y,k}\|$ and $\|w_{z,k+1} - w_{z,k}\|$ terms to be bounded by
\begin{align}
    \mathbb{V}_{k+1} - \mathbb{V}_k &\le -\frac{\alpha_k}{16 \sigma_k } \|\Delta_k^x\|^2 -\frac{\beta_k}{16 \sigma_k \rho} \left( \|y_k - w_{y,k}^*\|^2 + \|z_k - w_{z,k}^*\|^2 \right) \nonumber \\
    & - \frac{1}{16\sigma_k \beta_k\rho} (\|y_{k} - y_{k+1}\|^2 + \|z_k - z_{k+1}\|^2) + \frac{(\sigma_{k} - \sigma_{k+1})}{\sigma_k } \cdot O(C_f) \nonumber \\
    & + \frac{C_w}{\sigma_k \rho} \left( 1 + \frac{\sigma_k - \sigma_{k+1}}{\sigma_{k+1}} + \frac{\lambda_k}{4} + \frac{O(l_{g,1}^2) \rho \alpha_k + 2 \beta_k}{C_w} \right) \left( \|w_{y,k}^* - w_{y,k+1}\|^2 + \|w_{z,k}^* - w_{z,k+1}\|^2 \right) \nonumber \\
    & - \frac{C_w}{\sigma_k \rho} \left( \|w_{y,k}^* - w_{y,k}\|^2 + \|w_{z,k}^* - w_{z,k}\|^2 \right) - \frac{1}{32\sigma_k \alpha_k} \|x_k - x_{k+1}\|^2 + \frac{O(\rho^2  l_{g,1}^2)}{\sigma_k\gamma_{k-1}} \|x_k - x_{k-1}\|^2 \nonumber \\
    & - \frac{C_\eta \rho^2 \eta_{k+1}}{2 \sigma_k \gamma_{k-1}} (\|e_{wy}^k\|^2 + \|e_{wz}^k\|^2)
    + C_\eta \rho^2 \frac{O(\eta_{k+1}^2)}{\sigma_k \gamma_{k}} (\sigma_{k}^2 \sigma_f^2 + \sigma_g^2) + o(1/k) \nonumber \\
    &+ \frac{C_{\eta} O(\rho^2 l_{g,1}^2)}{\sigma_k} \left( \underbrace{h_{\sigma_k} (x_k, y_k, w_{y,k}) - h_{\sigma_k}(x_k, y_k, w_{y,k+1})}_{(iv)} + \underbrace{g (x_k, z_k, w_{z,k}) - g(x_k, z_k, w_{z,k+1})}_{(v)} \right). \label{eq:potential_bound_interim2}
\end{align}

To proceed, we note that
\begin{align*}
    (iv) &= (h_{\sigma_k} (x_k, y_k, w_{y,k}) - h_{\sigma_k,\rho}^*(x_k,y_k)) - (h_{\sigma_k}(x_k, y_k, w_{y,k+1}) - h_{\sigma_k,\rho}^*(x_k,y_k)) \\
    &= (h_{\sigma_k} (x_k, y_k, w_{y,k}) - h_{\sigma_k,\rho}^*(x_k,y_k)) - (h_{\sigma_k}(x_{k+1}, y_{k+1}, w_{y,k+1}) - h_{\sigma_k,\rho}^*(x_{k+1},y_{k+1})) \\
    &\quad + \underbrace{h_{\sigma_k}(x_{k+1}, y_{k+1}, w_{y,k+1}) - h_{\sigma_k,\rho}^*(x_{k+1},y_{k+1})) - (h_{\sigma_k}(x_{k}, y_{k}, w_{y,k+1}) - h_{\sigma_k,\rho}^*(x_{k},y_{k}))}_{(a)},
\end{align*}
and the term $(a)$ is bounded as
\begin{align*}
    (a) &\le \vdot{\grad_x (h_{\sigma_k}(x_k,y_k,w_{y,k+1}) - h_{\sigma_k,\rho}^*(x_k,y_k))}{x_{k+1} - x_k} + l_{g,1} \|x_{k+1} - x_k\|^2 \\
    &\quad + \vdot{\grad_y (h_{\sigma_k}(x_k,y_k,w_{y,k+1}) - h_{\sigma_k,\rho}^*(x_k,y_k))}{y_{k+1} - y_k} + \rho^{-1} \|y_{k+1} - y_k\|^2 \\
    &= \vdot{\grad_x (h_{\sigma_k} (x_k,w_{y,k+1}) - h_{\sigma_k} (x_k,w_{y,k}^*))}{x_{k+1} - x_k} + l_{g,1} \|x_{k+1} - x_k\|^2 \\
    &\quad + \rho^{-1} \vdot{(y_k - w_{y,k+1}) - (y_k - w_{y,k}^*)}{y_{k+1} - y_k} + \rho^{-1} \|y_{k+1} - y_k\|^2 \\
    &\le 32 l_{g,1}^2 \alpha_k \|w_{y,k+1} - w_{y,k}^*\|^2 + \frac{1}{128\alpha_k} \|x_{k+1} - x_k\|^2 + \frac{16 \beta_k}{\rho} \|w_{y,k+1} - w_{y,k}^*\|^2 + \frac{1}{64\beta_k\rho} \|y_{k+1} - y_k\|^2. 
\end{align*}
Thus, we can conclude that 
\begin{align*}
    (iv) &\le (h_{\sigma_k} (x_k, y_k, w_{y,k}) - h_{\sigma_k,\rho}^*(x_k,y_k)) - (h_{\sigma_k}(x_{k+1}, y_{k+1}, w_{y,k+1}) - h_{\sigma_k,\rho}^*(x_{k+1},y_{k+1})) \\
    &\quad + 32 l_{g,1}^2 \alpha_k \|w_{y,k+1} - w_{y,k}^*\|^2 + \frac{1}{128\alpha_k} \|x_{k+1} - x_k\|^2 \\
    &\quad  + \frac{16 \beta_k}{\rho} \|w_{y,k+1} - w_{y,k}^*\|^2 + \frac{1}{64\beta_k\rho} \|y_{k+1} - y_k\|^2.
\end{align*}
Similarly, we have
\begin{align*}
    (v) &\le (g (x_k, z_k, w_{z,k}) - g_{\rho}^*(x_k,z_k)) - (g(x_{k+1}, z_{k+1}, w_{z,k+1}) - g_{\rho}^*(x_{k+1},z_{k+1})) \\
    &\quad + 32 l_{g,1}^2 \alpha_k \|w_{z,k+1} - w_{z,k}^*\|^2 + \frac{1}{128\alpha_k} \|x_{k+1} - x_k\|^2 \\
    &\quad  + \frac{16 \beta_k}{\rho} \|w_{z,k+1} - w_{z,k}^*\|^2 + \frac{1}{64\beta_k\rho} \|z_{k+1} - z_k\|^2.
\end{align*}
Now plugging this back, we have reduced \eqref{eq:potential_bound_interim2} to
\begin{align}
    & \mathbb{V}_{k+1} - \mathbb{V}_k \nonumber \\
    &\le -\frac{\alpha_k}{16 \sigma_k } \|\Delta_k^x\|^2 -\frac{\beta_k}{16 \sigma_k \rho} \left( \|y_k - w_{y,k}^*\|^2 + \|z_k - w_{z,k}^*\|^2 \right) \nonumber \\
    & - \frac{1}{32\sigma_k \beta_k\rho} (\|y_{k} - y_{k+1}\|^2 + \|z_k - z_{k+1}\|^2) + \frac{(\sigma_{k} - \sigma_{k+1})}{\sigma_k} \cdot O(C_f) \nonumber \\
    & + \frac{C_w}{\sigma_k \rho} \left( 1 + \frac{\sigma_k - \sigma_{k+1}}{\sigma_{k+1}} + \frac{\lambda_k}{4} + \frac{O(l_{g,1}^2 \rho) \alpha_k + O(1) \beta_k}{C_w} \right) \left( \|w_{y,k}^* - w_{y,k+1}\|^2 + \|w_{z,k}^* - w_{z,k+1}\|^2 \right) \nonumber \\
    & - \frac{C_w}{\sigma_k \rho} \left( \|w_{y,k}^* - w_{y,k}\|^2 + \|w_{z,k}^* - w_{z,k}\|^2 \right) - \frac{1}{64 \sigma_k \alpha_k} \|x_k - x_{k+1}\|^2 + \frac{O(\rho^2  l_{g,1}^2)}{\sigma_k\gamma_{k-1}} \|x_k - x_{k-1}\|^2 \nonumber \\
    & - \frac{C_\eta \rho^2 \eta_{k+1}}{2 \sigma_k \gamma_{k-1}} (\|e_{wy}^k\|^2 + \|e_{wz}^k\|^2)
    + C_\eta \rho^2 \frac{O(\eta_{k+1}^2)}{\sigma_k \gamma_{k}} (\sigma_{k}^2 \sigma_f^2 + \sigma_g^2) + o(1/k) \nonumber \\
    &+ C_{\eta} O(\rho^2 l_{g,1}^2) \left( \frac{h_{\sigma_k} (x_k, y_k, w_{y,k}) - h_{\sigma_k,\rho}^*(x_k,y_k)}{\sigma_k} - \frac{h_{\sigma_k} (x_{k+1}, y_{k+1}, w_{y,k+1}) - h_{\sigma_k,\rho}^*(x_{k+1},y_{k+1})}{\sigma_k} \right) \nonumber \\
    & + C_{\eta} O(\rho^2 l_{g,1}^2) \left(\frac{g (x_k, z_k, w_{z,k}) - g_{\rho}^*(x_k,z_k)}{\sigma_k} - \frac{g(x_{k+1}, z_{k+1}, w_{z,k+1}) - g_{\rho}^*(x_{k+1},z_{k+1})}{\sigma_k} \right). \label{eq:potential_bound_interim3}
\end{align}

Now applying Lemma \ref{lemma:w_descent_lemma_momentum}, along with
\begin{align*}
    \lambda_k = \frac{\gamma_k}{4\rho} \gg \frac{O(l_{g,1}^2\rho) \alpha_k + O(\beta_k)}{C_w}, 
\end{align*}
and 
\begin{align*}
    \eta_{k+1} \gg \frac{C_w}{C_\eta} \frac{\lambda_k \gamma_k}{\rho} \gg \frac{C_w \gamma_k^2}{4 C_\eta \rho^2},
\end{align*}
we can further bound \eqref{eq:potential_bound_interim3} by
\begin{align}
    & \mathbb{V}_{k+1} - \mathbb{V}_k \nonumber \\
    &\le -\frac{\alpha_k}{16 \sigma_k } \|\Delta_k^x\|^2 -\frac{\beta_k}{16 \sigma_k \rho} \left( \|y_k - w_{y,k}^*\|^2 + \|z_k - w_{z,k}^*\|^2 \right)\nonumber  \\
    & - \frac{1}{32\sigma_k \beta_k\rho} (\|y_{k} - y_{k+1}\|^2 + \|z_k - z_{k+1}\|^2) - \frac{1}{64\sigma_k \alpha_k} \|x_k - x_{k+1}\|^2 +  \frac{O(\rho^2l_{g,1}^2)}{\sigma_k\gamma_{k-1}} \|x_k - x_{k-1}\|^2 \nonumber \\
    & + \frac{(\sigma_{k} - \sigma_{k+1})}{\sigma_k } \cdot O(C_f)  + C_\eta \rho^2 \frac{O(\eta_{k+1}^2)}{\sigma_k \gamma_{k}} (\sigma_{k}^2 \sigma_f^2 + \sigma_g^2) + o(1/k) \nonumber \\
    &- \frac{C_w \lambda_k}{16 \sigma_k \rho} (\|w_{y,k}^* - w_{y,k}\|^2 + \|w_{z,k}^* - w_{z,k}\|^2 ) \nonumber \\
    &+ C_{\eta} O(\rho^2 l_{g,1}^2) \left( \frac{h_{\sigma_k} (x_k, y_k, w_{y,k}) - h_{\sigma_k,\rho}^*(x_k,y_k)}{\sigma_k} - \frac{h_{\sigma_k} (x_{k+1}, y_{k+1}, w_{y,k+1}) - h_{\sigma_k,\rho}^*(x_{k+1},y_{k+1})}{\sigma_k} \right) \nonumber \\
    & + C_{\eta} O(\rho^2 l_{g,1}^2) \left(\frac{g (x_k, z_k, w_{z,k}) - g_{\rho}^*(x_k,z_k)}{\sigma_k} - \frac{g(x_{k+1}, z_{k+1}, w_{z,k+1}) - g_{\rho}^*(x_{k+1},z_{k+1})}{\sigma_k} \right). \label{eq:potential_bound_interim4}
\end{align}

\subsection{Proof of Theorem \ref{theorem:convergence_momentum}}
Summing the bound \eqref{eq:potential_bound_interim4} for $k= 0$ to $K-1$, we can cancel out $\|x_{k} - x_{k+1}\|^2$ terms given that 
\begin{align*}
    O(\rho^2 l_{g,1}^2) \alpha_k \ll \frac{\sigma_k}{\sigma_{k+1}} \gamma_k.
\end{align*}
Leaving only relevant terms in the final bound, we get
\begin{align*}
    & \mathbb{V}_K - \mathbb{V}_0 \\
    &\le \sum_{k=0}^{K-1} \left(-\frac{\alpha_k}{16 \sigma_k } \|\Delta_k^x\|^2 -\frac{\beta_k}{16 \sigma_k \rho} \left( \|y_k - w_{y,k}^*\|^2 + \|z_k - w_{z,k}^*\|^2 \right) \right) \\
    &+ \sum_{k=0}^{K-1} \left(\frac{(\sigma_{k} - \sigma_{k+1})}{\sigma_k } \cdot O(C_f)  + C_\eta \rho^2 \frac{O(\eta_{k+1}^2)}{\sigma_k \gamma_{k}} (\sigma_{k}^2 \sigma_f^2 + \sigma_g^2) \right) \\
    &+ \sum_{k=0}^{K-1} \left( - \frac{C_w \lambda_k}{16 \sigma_k \rho} (\|w_{y,k}^* - w_{y,k}\|^2 + \|w_{z,k}^* - w_{z,k}\|^2 ) \right) \\
    &+ \sum_{k=0}^{K-2} \left(\left(\frac{1}{\sigma_{k+1}} - \frac{1}{\sigma_{k}} \right) \left(h_{\sigma_k}(x_k,y_k,w_{y,k}) - h_{\sigma_k,\rho}^*(x_k,y_k) + g(x_k,z_k,w_{z,k}) - g_{\rho}^*(x_k,z_k) \right)\right) \\
    &+ C_{\eta} O(\rho^2 l_{g,1}^2) \left( \frac{h_{\sigma_0} (x_0, y_0, w_{y,0}) - h_{\sigma_0,\rho}^*(x_0,y_0)}{\sigma_0} - \frac{h_{\sigma_K} (x_{K}, y_{K}, w_{y,K}) - h_{\sigma_K,\rho}^*(x_{K},y_{K})}{\sigma_{K-1}} \right) \\
    & + C_{\eta} O(\rho^2 l_{g,1}^2) \left(\frac{g (x_0, z_0, w_{z,0}) - g_{\rho}^*(x_0,z_0)}{\sigma_0} - \frac{g(x_{K}, z_{K}, w_{z,K}) - g_{\rho}^*(x_{K},z_{K})}{\sigma_{K-1}} \right),
\end{align*}
where the last two lines come from the telescoping sum. Note that 
\begin{align*}
    h_{\sigma_K} (x_{K}, y_{K}, w_{y,K}) - h_{\sigma_K,\rho}^*(x_{K},y_{K}) \ge 0, \\
    g(x_{K}, z_{K}, w_{z,K}) - g_{\rho}^*(x_{K},z_{K}) \ge 0,
\end{align*}
by definition, and furthermore, 
\begin{align*}
    h_{\sigma_k}(x_k,y_k,w_{y,k}) - h_{\sigma_k,\rho}^*(x_k,y_k) &\le \vdot{\grad_{w} h_{\sigma_k}(x_k,y_k,w_{y,k}^*)} {w_{y,k} - w_{y,k}^*}, \\
    g(x_k,z_k,w_{z,k}) - g_{\rho}^*(x_k,z_k) &\le \vdot{\grad_{w} g (x_k,z_k,w_{z,k}^*)} {w_{z,k} - w_{z,k}^*}.
\end{align*}
Here we rely on Assumption \ref{assumption:bounded_w_gradients} (along with $\mY$ being compact) to bound them:
\begin{align*}
    h_{\sigma_k}(x_k,y_k,w_{y,k}) - h_{\sigma_k,\rho}^*(x_k,y_k) &\le M_w \|w_{y,k} - w_{y,k}^*\|, \\
    g(x_k,z_k,w_{z,k}) - g_{\rho}^*(x_k,z_k) &\le M_w \|w_{z,k} - w_{z,k}^*\|.
\end{align*}
Plugging this back, and using $-a x^2 + bx \le \frac{b^2}{4a}$, we have
\begin{align*}
    \mathbb{V}_K - \mathbb{V}_0 &\le \sum_{k=0}^{K-1} \left(-\frac{\alpha_k}{16 \sigma_k } \|\Delta_k^x\|^2 -\frac{\beta_k}{16 \sigma_k \rho} \left( \|y_k - w_{y,k}^*\|^2 + \|z_k - w_{z,k}^*\|^2 \right) \right) \\
    &+ \sum_{k=0}^{K-1} \left(\frac{(\sigma_{k} - \sigma_{k+1})}{\sigma_k } \cdot O(C_f) + C_\eta \rho^2 \frac{O(\eta_{k+1}^2)}{\sigma_k \gamma_{k}} (\sigma_{k}^2 \sigma_f^2 + \sigma_g^2) \right) \\
    &+ \sum_{k=0}^{K-1} \left( \frac{O(M_w^2) \rho^2}{ C_w \sigma_k \gamma_k} \left(\frac{\sigma_k - \sigma_{k+1}}{\sigma_{k+1}}\right)^2 \right) \\
    &+ C_{\eta} O(\rho^2 l_{g,1}^2) \left( \frac{h_{\sigma_0} (x_0, y_0, w_{y,0}) - h_{\sigma_0,\rho}^*(x_0,y_0)}{\sigma_0}
     + \frac{g (x_0, z_0, w_{z,0}) - g_{\rho}^*(x_0,z_0)}{\sigma_0} \right).
\end{align*}
Note that $w_{y,0} = y_0, w_{z,0} = z_0$, and thus $h_{\sigma_0} (x_0, y_0, w_{y,0}) = h_{\sigma_0} (x_0,y_0), g(x_0,z_0,w_{z,0}) = g(x_0, z_0)$. Arranging the terms, we have the theorem.

\subsection{Proof of Corollary \ref{corollary:final_convergence_result_momentum}}
The proof is almost identical to the proof of Corollary \ref{corollary:final_convergence_result} in Appendix \ref{appendix:corollary_final_convergence}, and thus we omit the proof.

\section{Proofs of Auxiliary Lemmas}

\subsection{Proof of Lemma \ref{lemma:proximal_error_to_lipschitz_continuity}}
The Lemma essentially follows the proof of Proposition 4 in \cite{shen2023penalty}. It suffices to show the {\it local} Lipschitz-continuity of solution-sets. For every $x_1, x_2 \in \mX$ such that $\|x_1 - x_2\| \le c_x \mu \delta / l_{g,1}$ and $|\sigma_1 - \sigma_2| \le c_s \mu \delta / l_{f,0}$ with sufficiently small constants $c_x, c_s > 0$, let $y_1 \in T(x_1,\sigma_1)$ and $\bar{y} = \prox_{\rho h_{\sigma_2}(x_2,\cdot)} (y_1)$. Note that $y_1 = \prox_{\rho h_{\sigma_1}(x_1,\cdot)}(y_1)$. By Lemma \ref{lemma:prox_diff_bound} and Lemma \ref{lemma:prox_sigma_diff_bound},
\begin{align*}
    \| y_1 - \bar{y} \| &= \| \prox_{\rho h_{\sigma_1}(x_1,\cdot)}(y_1) - \prox_{\rho h_{\sigma_2}(x_2,\cdot)} (y_1) \| \\
    &\le O(\rho l_{g,1}) \|x_1 - x_2\| + O(\rho l_{f,0}) |\sigma_1 - \sigma_2| \le \rho \delta. 
\end{align*}
Thus, $\rho^{-1} \|y_1 - \prox_{\rho h_{\sigma_2}(x_2, \cdot)} (y_1)\| \le \delta$, and applying Assumption \ref{assumption:small_gradient_proximal_error_bound}, 
\begin{align*}
    \dist(y_1, T(x_2, \sigma_2)) \le \mu^{-1} \cdot \left(O(l_{g,1}) \|x_1 - x_2\| + O(l_{f,0}) |\sigma_1 - \sigma_2| \right),
\end{align*}
proving the (local) $O(l_{g,1}/\mu)$-Lipschitz continuity in $x$ and $O(l_{f,0}/\mu)$-Lipscthiz continuity in $\sigma$ of $T(x,\sigma)$.

\subsection{Proof of Lemma \ref{lemma:stationary_sigmarho_to_sigmapsi}}
\label{proof:stationary_sigmarho_to_sigmapsi}
\begin{proof}
    By Danskin's theorem (Theorem \ref{theorem:danskins}), if $\grad h_{\sigma}^*(x)$ and $\grad g^*(x)$ exist, then they are given by 
    \begin{align*}
        \grad h_{\sigma}^*(x) = \grad_x h_{\sigma}(x, w_{\sigma}^*), \ \grad g^*(x) = \grad_x g(x, w^*),
    \end{align*}
    for any $w_{\sigma}^* \in T(x,\sigma), w^* \in T(x,0)$. Let $w_{y}^* = \prox_{\rho h_{\sigma}(x^*,\cdot)}(y^*), w_{z}^* = \prox_{\rho g(x^*,\cdot)}(z^*)$. By Assumption \ref{assumption:small_gradient_proximal_error_bound}, there exists $w_{\sigma}^* \in T(x,\sigma)$ that satisfies
    \begin{align*}
        \sigma\epsilon \ge \rho^{-1}\|y^* - w_{y}^*\| \ge \mu \|y^* - w_{\sigma}^* \|,
    \end{align*}
    and thus $\|y^* - w_{\sigma}^*\| \le \frac{\sigma \epsilon}{\mu}$. Similarly, $\|z^* - w^*\| \le \frac{\sigma \epsilon}{\mu}$. Therefore, we have
    \begin{align*}
        \|\grad_x \psi_{\sigma}(x^*,y^*,z^*) - \grad \psi_{\sigma}(x^*)\| &\le \frac{\|\grad_x h_{\sigma}(x^*, y^*) - \grad_x h_{\sigma}(x,w_{\sigma}^*)\| + \|\grad_x g(x^*, z^*) - \grad_x g (x^*,w^*)\|}{\sigma} \\
        &\le \frac{l_{g,1}}{\sigma} \left(\|y^* - w_{\sigma}^*\| + \|{z}^* - w^*\|\right) \le \frac{2 l_{g,1} \epsilon}{\mu}.
    \end{align*}
    To bound the projection error, note that
    \begin{align*}
        \frac{1}{\rho} \| \Proj{\mX}{x^* - \rho \grad \psi_{\sigma}(x^*)} - \Proj{\mX}{x^* - \rho \grad_x \psi_{\sigma}(x^*, y^*, z^*)} \| \le \|\grad_x \psi_{\sigma}(x^*,y^*,z^*) - \grad \psi_{\sigma}(x^*)\| \le \frac{2 l_{g,1}}{\mu} \epsilon,
    \end{align*}
    by non-expansiveness of projection operators. Thus, 
    \begin{align*}
        \frac{1}{\rho} \left\|x^* - \Proj{\mX}{x^* - \rho \grad \psi_{\sigma}(x^*)} \right\| \le (1 + 2 l_{g,1} / \mu) \cdot \epsilon,
    \end{align*}
    concluding that $x^*$ is an $O(\epsilon)$-stationary point of $\grad \psi_{\sigma}(x)$. 

    The second part comes from the mean-value theorem: by the assumption, there exists $\sigma' \in [0,\sigma]$ such that
    \begin{align*}
        \grad \psi_{\sigma}(x) = \frac{\partial_x l(x,\sigma) - \partial_x l(x,0)}{\sigma} = \frac{\partial^2}{\partial \sigma \partial x} l(x,\sigma') = \frac{\partial^2}{\partial x \partial \sigma } l(x,\sigma').
    \end{align*}
    We also assumed $L_{\sigma}$-Lipschitz continuity of second cross-partial derivatives, and thus
    \begin{align*}
        \left\| \frac{\partial^2}{\partial x \partial \sigma } l(x,\sigma') - \frac{\partial^2}{\partial x \partial \sigma } l(x,0) \right\| \le L_{\sigma} \sigma' \le L_{\sigma} \sigma,
    \end{align*}
    which implies $\|\grad \psi_{\sigma}(x) - \grad \psi(x) \| \le L_{\sigma} \sigma$. Using a similar projection non-expansiveness argument, we can show that
    \begin{align*}
        \frac{1}{\rho} \left\|x - \Proj{\mX}{x - \rho \grad \psi(x)} \right\| \le (1 + 2 l_{g,1} / \mu) \cdot \epsilon + L_{\sigma} \sigma,
    \end{align*}
    concluding the proof.
\end{proof}

\subsection{Proof of Lemma \ref{lemma:prox_diff_bound}}
\label{proof:prox_diff_bound}
\begin{proof}
    Note that 
    \begin{align*}
        \| \prox_{\rho g(x_1, \cdot)}(y_1) - \prox_{\rho g(x_2,\cdot)} (y_2) \| &\le \| \prox_{\rho g(x_1, \cdot)}(y_1) - \prox_{\rho g(x_2,\cdot)} (y_1) \| \\
        &+ \| \prox_{\rho g(x_2, \cdot)}(y_1) - \prox_{\rho g(x_2,\cdot)} (y_2) \|. 
    \end{align*}
    Due to non-expansiveness of proximal operators, the second term is less than $\|y_1 - y_2\|$. For bounding the first term, define
    \begin{align*}
        w_1^* &= \prox_{\rho g(x_1,\cdot)} (y_1) = \arg\min_{y \in \mY} g(x_1, w) + \frac{1}{2\rho} \|w - y_1\|^2, \\
        w_2^* &= \prox_{\rho g(x_2,\cdot)} (y_1) = \arg\min_{y \in \mY} g(x_2, w) + \frac{1}{2\rho} \|w - y_1\|^2.
    \end{align*}
    Due to the optimality condition, for any $\beta = \rho/4$, we can check that
    \begin{align*}
        w_1^* = \Proj{\mY}{w_1^* - \beta (\grad_y g(x_1, w_1^*) + \rho^{-1} (w_1^* - y_1))}.
    \end{align*}
    On the other hand, define $w' = \Proj{\mY}{w_1^* - \beta (\grad_y g(x_2, w_1^*) + \rho^{-1} (w_1^* - y_1))}$. This is one projected gradient-descent step (PGD) for finding $w_2^*$ started from $w_1^*$. By the linear convergence of PGD for strongly convex functions over convex domain \cite{bubeck2015convex}, since the inner function is $1/(3\rho)$-strongly convex and $1/\rho$-smooth by choosing proper $\rho$, we have
    \begin{align*}
        \|w' - w_2^*\| \le \frac{9}{10} \|w_1^* - w_2^*\|. 
    \end{align*}
    Thus, 
    \begin{align*}
        & \|w_1^* - w_2^*\| \\
        &\le \|w_1^* - w'\| + \|w' - w_2^*\| \\
        &\le \|\Proj{\mY}{w_1^* - \beta (\grad_y g(x_1, w_1^*) + \rho^{-1} (w_1^* - y_1))} - \Proj{\mY}{w_1^* - \beta (\grad_y g(x_2, w_1^*) + \rho^{-1} (w_1^* - y_1))}\| \\
        & \quad + \frac{9}{10} \|w_1^* - w_2^*\| \\
        &\le \beta \| \grad_y g(x_1, w_1^*) - \grad_y g(x_2, w_1^*) \| + \frac{9}{10} \|w_1^* - w_2^*\| \\
        &= \beta O(l_{g,1}) \|x_1 - x_2\| + \frac{9}{10} \|w_1^* - w_2^*\|.
    \end{align*}
    where in the third inequality we used non-expansive property of projection onto convex sets. Arranging the term and plug $\beta = \rho / 4$, we get
    \begin{align*}
        \|w_1^* - w_2^* \| \le O\left(\rho l_{g,1} \right) \|x_1 - x_2\|. 
    \end{align*}
\end{proof}

\subsection{Proof of Lemma \ref{lemma:prox_sigma_diff_bound}}
\label{proof:prox_sigma_diff_bound}
\begin{proof}
    Again, let $w_1^* = \prox_{\rho h_{\sigma_1}(x,\cdot)} (y)$ and $w_2^* = \prox_{\rho h_{\sigma_2}(x,\cdot)} (y)$. Using similar arguments to the proof in Appendix \ref{proof:prox_diff_bound}, we have
    \begin{align*}
        \|w_1^* - w_2^*\| &\le \beta \|\grad_y h_{\sigma_1} (x, w_1^*) - \grad_y h_{\sigma_2} (x,w_1^*) \| + \frac{9}{10} \|w_1^* - w_2^*\| \\
        &\le \beta \|\sigma_1 \grad_y f(x, w_1^*) - \sigma_2 \grad_y f_ (x,w_1^*) \| + \frac{9}{10} \|w_1^* - w_2^*\|.
    \end{align*}
    where $\beta = \rho/4$. Arranging terms and using $\|\grad_y f(x,w_1^*)\| \le l_{f,0}$, we get the lemma. 
\end{proof}

\subsection{Proof of Lemma \ref{lemma:smoothed_smoothness}}
\label{proof:smoothed_smoothness}

\begin{proof}
By Lemma \ref{lemma:prox_diff_bound}, we know that the solution of $\min_{w \in \mY} h_{\sigma}(x,w) + \frac{1}{2\rho} \|w-y\|^2$ is $O(\rho l_{g,1})$ Lipschitz-continuous in $x$ and 1 Lipschitz-continuous in $y$. Therefore by Lemma \ref{lemma:lipscthiz_solution_smooth_minimizer}, since the inner minimization problem is $\rho^{-1}$-smooth, $h_{\sigma,\rho}^*(x,y)$ is $2\rho^{-1}$-smooth. 
\end{proof}

\end{appendices}

\end{document}